\numberwithin{equation}{section}
\theoremstyle{plain}
\newtheorem{theorem}{Theorem}[section]
\newtheorem{lemma}{Lemma}[section]
\newtheorem{corollary}{Corollary}[section]
\newtheorem{definition}{Definition}[section]
\theoremstyle{definition}
\newtheorem{remark}{Remark}[section]
\newtheorem{example}{Example}[section]
\begin{document}

\begin{bibunit}[imsart-number]

\begin{frontmatter}
\title{The Cost of Adaptation under Differential Privacy: Optimal Adaptive Federated Density Estimation}
\runtitle{The Cost of Adaptation under Differential Privacy}

\begin{aug}
\author[A]{\fnms{T. Tony} \snm{Cai}\ead[label=e1]{tcai@wharton.upenn.edu}},
\author[B]{\fnms{Abhinav} \snm{Chakraborty}\ead[label=e2]{ac4662@columbia.edu}}
\and
\author[C]{\fnms{Lasse} \snm{Vuursteen}\ead[label=e3]{lv121@duke.edu}}

\address[A]{Department of Statistics and Data Science, \\
\printead[presep={The Wharton School, University of Pennsylvania,\ }]{e1}}

\address[B]{Department of Statistics, \\
\printead[presep={Columbia University,\ }]{e2}}

\address[C]{Department of Statistical Science,\\
\printead[presep={Duke University,\ }]{e3}}


\runauthor{T. T. Cai, A. Chakraborty and L. Vuursteen}
\end{aug}
  
\begin{abstract}
    Privacy-preserving data analysis has become a central challenge in modern statistics. At the same time, a long-standing goal in statistics is the development of adaptive procedures\textemdash methods that achieve near-optimal performance across diverse function classes without prior knowledge of underlying smoothness or complexity. While adaptation is often achievable at no extra cost in the classical non-private setting, this naturally raises a fundamental question: to what extent is adaptation still possible under privacy constraints?

    We address this question in the context of density estimation under federated differential privacy (FDP), a framework that encompasses both central and local DP models. We establish sharp results that characterize the cost of adaptation under FDP for both global and pointwise estimation, revealing fundamental differences from the non-private case. We then propose an adaptive FDP estimator that achieves explicit performance guarantees by introducing a new noise mechanism, enabling one-shot adaptation via post-processing. This approach strictly improves upon existing adaptive DP methods. Finally, we develop new lower bound techniques that capture the limits of adaptive inference under privacy and may be of independent interest beyond this problem.
     
   Our findings reveal a sharp contrast between private and non-private settings. For global estimation, where adaptation can be achieved for free in the classical non-private setting, we prove that under FDP an intrinsic adaptation cost is unavoidable. For pointwise estimation, where a logarithmic penalty is already known to arise in the non-private setting, we show that FDP introduces an additional logarithmic factor, thereby compounding the cost of adaptation. Taken together, these results provide the first rigorous characterization of the adaptive privacy-accuracy trade-off.
\end{abstract}

\begin{keyword}[class=MSC]
    \kwd[Primary ]{62G07}
    \kwd{62G20}
    \kwd[; secondary ]{62C20}
    \end{keyword}
    
    \begin{keyword}
    \kwd{Adaptation}
    \kwd{Differential Privacy}
    \kwd{Density Estimation}
    \kwd{Minimax Rates}
    \end{keyword}

\end{frontmatter}


\section{Introduction}


Privacy protection has become a critical concern in modern data analysis. Differential privacy (DP), introduced by \cite{dwork2006calibrating}, provides a rigorous mathematical framework that guarantees statistical analyses can be published without compromising the privacy of individual data subjects. Many differentially private statistical methods have since been developed, see for example \cite{dwork2010differential}, \cite{abadi2016deep}, and \cite{dwork2017exposed}. Among the various privacy protection frameworks available today, DP has emerged as particularly popular both for its theoretical foundations and its many applications across academic disciplines and industrial applications, see for example \cite{ficek2021differential, pan2024differential, jiang2021differential}, and is finding applications within major technology companies such as Google, Amazon, Microsoft, and Apple, and governmental institutions such as the U.S. Census Bureau. Estimation and inference under differential privacy become a major focus in statistics and machine learning. Rigorous studies of privacy-utility trade-offs have established fundamental performance limits in diverse statistical settings. These include work on estimation \cite{wasserman2010privacy,hall2013differential,duchi2013local, duchi2018minimax,steinberger2020geometrizing, cai2021cost, li2024federated, cai2024optimal, xue2024optimalestimationprivatedistributed}, testing \cite{pmlr-v89-acharya19b, berrett2020locally, narayanan2022tight, pmlr-v178-narayanan22a}, classification \cite{pmlr-v19-chaudhuri11a} and uncertainty quantification \cite{karwa2017finite,steinberger2024efficiencylocaldifferentialprivacy}.

For many complicated statistical problems, the performance of methods depends on unknown hyperparameters, which need to be tuned to unknown underlying regularity parameters. This is known as the problem of \emph{adaptation}. For example, high-dimensional regression procedures typically require tuning to the sparsity level, while in nonparametric regression or density estimation, the smoothness of the underlying function is often unknown and methods must adapt to this unknown regularity. Such adaptation problems have been extensively studied and are well understood in the classical non-private setting, see for example \cite{lepski1991problem, donoho1995adaptingvia, spokoiny_adaptive_1996, lepski1997optimalpointwise, tsybakov1998pointwise,cai2005adaptiveDifferentPerformanceMeasures, bickel2009simultaneous}.

In the context of differential privacy, however, adaptation poses a significant challenge and remains poorly understood. While adaptive DP procedures have been developed for various statistical problems -— including density estimation \cite{butucea_LDP_adaptation,butucea2023interactive,kroll_density_at_a_point_LDP,RandrianarisoaSteinbergerSzabo2025}, goodness-of-fit testing \cite{lam2022minimax, cai2024privateTesting}, classification \cite{auddy2024minimax}, hyperparameter tuning for stochastic gradient descent \cite{papernot2022hyperparametertuningrenyidifferential} and adaptation to sparsity in linear regression \cite{ZhangNakadaZhang2024} -— all of these methods exhibit performance gaps between the optimal non-adaptive DP rate and the rate achieved by the adaptive DP procedure. To date, no specific adaptive lower bounds have been established to determine whether these performance gaps are an intrinsic consequence of adaptation under differential privacy constraints, or merely artifacts of current state-of-the-art adaptive procedures.



In this work, we address the fundamental question of adaptation under differential privacy constraints in the context of federated nonparametric density estimation. Density estimation serves as a canonical nonparametric problem with well-understood optimality theory in the non-private setting, that allows us to isolate the intrinsic cost of adaptation under privacy constraints. We consider both global risk (mean squared error) and pointwise risk, which capture fundamentally different aspects of adaptation and exhibit distinct phenomena. Moreover, density estimation has particular relevance for privacy applications, enabling synthetic data generation that preserves statistical properties while protecting individual privacy.

While most existing work on differential privacy focuses on either \emph{central} differential privacy (CDP), where a trusted curator holds all data, or \emph{local} differential privacy (LDP), where privacy is enforced at the individual level before data collection, our analysis uses the more general \emph{federated differential privacy} (FDP) framework. FDP distributes data across multiple servers with privacy guarantees enforced at each server, generalizing both CDP (a single server with multiple observations) and LDP (many servers with one observation each). This unified framework makes our results applicable across the entire spectrum of privacy frameworks.

\subsection{Our contributions and related work}

Our work addresses a critical gap in the literature by providing the first complete characterization of the fundamental cost of adaptation in differentially private estimation. While adaptation has been extensively studied in classical nonparametric statistics, the additional constraints imposed by differential privacy introduce new complexities that have remained theoretically unresolved.

The main contributions of our paper are as follows:

\begin{itemize}
    \item We establish the first sharp minimax rates for adaptive density estimation under differential privacy constraints for both global and pointwise risk, fully characterizing the fundamental cost of adaptation under privacy for both LDP and CDP settings.
    
	\item We develop a new privacy method that enables optimal adaptive estimation. This mechanism provides DP guarantees while avoiding the variance inflation that typically plagues adaptive DP methods, allowing us to achieve the theoretical limits.
    
	\item We derive novel lower bound techniques that precisely quantify the fundamental cost of adaptation under differential privacy. Our results demonstrate that this cost manifests as an unavoidable logarithmic penalty relative to non-private adaptive estimation, resolving an open question in the literature. 
\end{itemize}

Our findings reveal fundamental differences between private and non-private adaptation. In the classical setting, global risk adaptation can be achieved without cost, while our results show that differential privacy introduces an inherent penalty. For pointwise risk, we show that adaptation can incur additional costs beyond those already present in the non-private case. In particular, this closes the open question raised by \cite{butucea2023interactive} and \cite{RandrianarisoaSteinbergerSzabo2025} on whether such log-factors can be circumvented.

Existing work on adaptive density estimation under LDP includes methods based on Lepski-type procedures \cite{kroll_density_at_a_point_LDP, RandrianarisoaSteinbergerSzabo2025, schluttenhofer2022adaptivepointwisedensityestimation} and a wavelet thresholding approach \cite{butucea_LDP_adaptation}. However, these methods suffer from suboptimal rates because they either compute estimators for the worst-case regularity level or maintain multiple estimators for each regularity level considered. In both cases, the required privacy noise variance scales proportionally with the number of regularity levels, leading to rate-suboptimal performance. Our noise mechanism overcomes these limitations by carefully accounting for dependencies across different regularity levels, allowing us to achieve optimal adaptive rates with a single pass through the data while maintaining the same privacy guarantees.

Various minimax lower bound techniques in private settings have been developed using modifications of classical methods, such as Fano's inequality, Assouad's lemma, and Le Cam's method \cite{lalanne2023statistical, acharya2021differentially}, as well as van Trees-based bounds \cite{barnes2020fisher, Cai2024FL-NP-Regression}. Alternative approaches, including fingerprinting methods, tracing attack, and score attack, have also been explored in private learning theory \cite{dwork2015robust, cai2023score, Kamath2024BiasAccuracyPrivacy}. Despite these advances, none of these methods have successfully characterized the exact cost of adaptation under differential privacy constraints. To the best of our knowledge, our work presents the first lower bound techniques that precisely quantifies the adaptation cost in the differentially private setting, establishing a fundamental limit on adaptive private estimation. While our optimal adaptive procedure is one-shot, our lower bound techniques are more broadly applicable to sequential protocols, showing that the cost of adaptation is unavoidable even in such interactive settings.

\subsection{Problem formulation}\label{sec:problem_formulation}

We consider a federated setting with $m$ servers, where server $j \in \{1,\ldots,m\}$ holds $n$ i.i.d. observations $X^{(j)} = (X^{(j)}_1, \ldots, X^{(j)}_n)$ drawn from an unknown density $f$ on $[0,1]$, yielding $N = mn$ total samples. Each server computes a local transcript $T^{(j)}$ based on its data, and these transcripts are aggregated centrally to form estimators $\hat{f}$ or $\hat{T}$. We consider two estimation objectives: global risk $\mathbb{E}_f \|\hat{f} - f\|_2^2$ for estimating the entire density function, and pointwise risk $\mathbb{E}_f[(\hat{T} - f(t_0))^2]$ for estimating the density at a specific point $t_0 \in (0,1)$.

We shall assume that the true density $f$ belongs to a Besov space $\mathcal{B}^{\alpha}_{p,q}$, which provides a flexible framework capturing diverse smoothness classes including Sobolev and H\"older spaces. Loosely speaking, $\mathcal{B}^{\alpha}_{p,q}$ contains functions with $\alpha$ derivatives, bounded in $L_p$-space, where the parameters $(\alpha,p,q)$ together control different aspects of regularity. We give a formal definition of Besov spaces in Section \ref{sec:besov_definitions}. 

Crucially, optimal estimation rates depend on these smoothness parameters, but in practice they are unknown. The adaptive estimation challenge is to develop data driven procedures achieve optimal performance across a range of smoothness parameters, without requiring prior knowledge of $(\alpha,p,q)$. 

Privacy constraints are formalized through {federated differential privacy}: Each server's transcript must satisfy differential privacy with respect to changes in its local dataset.

\begin{definition}\label{def:federated_differential_privacy}
A distributed estimation protocol is $(\varepsilon,\delta)$-\emph{FDP} if, for each server $j=1,\ldots,m$, its local transcript $T^{(j)}$ satisfies
\begin{equation*}
	\mathbb{P}\left(T^{(j)} \in A \mid X^{(j)} = x \right) \leq e^\varepsilon \mathbb{P}\left(T^{(j)} \in A \mid X^{(j)} = x' \right) + \delta,
\end{equation*}
for any pair of local datasets $x,x' \in [0,1]^n$ differing in at most one observation, and for any measurable subset $A$ in the range of $T^{(j)}$, where the probability is over the randomness of the local mechanism.
\end{definition}

The parameters $\varepsilon > 0$ and $\delta \in [0,1)$ control privacy strength, with smaller values providing stronger guarantees. The FDP framework encompasses CDP (which is recovered when $m=1$) and LDP (when $n=1$) as special cases. Let $\mathcal{F}^{\varepsilon,\delta}$ and $\mathcal{T}^{\varepsilon,\delta}$ denote the classes of all $(\varepsilon,\delta)$-FDP protocols producing estimators in $L_2[0,1]$ and $\mathbb{R}$, respectively.  The FDP framework for density estimation is illustrated in Figure \ref{fig: FDP-Illustration}.

\begin{figure}[htb]
	\centering
	\includegraphics[width=0.75\textwidth]{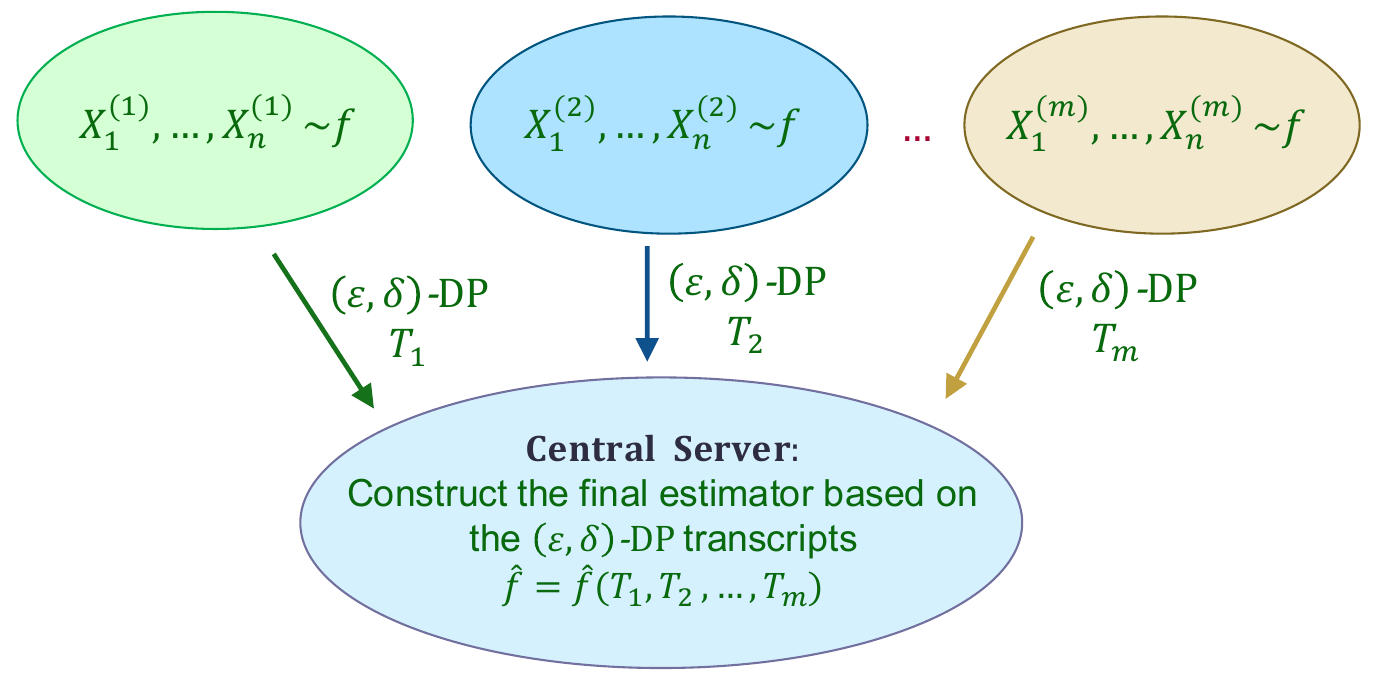} 
	\caption{An illustration of federated density estimation under DP.}
	\label{fig: FDP-Illustration}
\end{figure}

To understand the challenge of adaptation under privacy constraints, it is instructive to first consider the non-adaptive case where smoothness parameters are assumed to be known. When $(\alpha,p,q)$ are given, \cite{Cai2024FL-NP-Regression} established the minimax estimation rates for both risk types:
\begin{align}
	\inf_{\hat{f} \in \mathcal{F}^{\varepsilon,\delta}} \sup_{f \in \bar{\mathcal{B}}^{\alpha, R}_{p,q}} \mathbb{E}_f \|\hat{f} - f\|_2^2 &\asymp N^{-\frac{2\alpha}{2\alpha + 1}} + \left(m n^2\varepsilon^2\right)^{-\frac{2\alpha}{2\alpha + 2}} =: \rho^2_{\textsc{MSE,non-adaptive}}(\alpha), \label{eq:nonadaptive-rates} \\
	\inf_{\hat{T} \in \mathcal{T}^{\varepsilon,\delta}} \sup_{f \in \bar{\mathcal{B}}^{\alpha,R}_{p,q}} \mathbb{E}_f[(\hat{T} - f(t_0))^2] &\asymp N^{-\frac{2\nu}{2\nu + 1}} + \left(m n^2\varepsilon^2\right)^{-\frac{2\nu}{2\nu + 2}} =: \rho^2_{\textsc{point,non-adaptive}}(\alpha,p), \nonumber
\end{align}
where $\nu = \alpha - 1/p$ and $\bar{\mathcal{B}}^{\alpha,R}_{p,q}$ denotes the Besov ball of radius $R > 0$ intersected with the space of probability densities on $[0,1]$.


For both risk types, the rate consists of a non-private term and a privacy cost that dominates when $\varepsilon$ is small. The term representing the non-private rate -- which is of the form ${N}^{-\frac{2r}{2r + 1}}$ -- is the rate achievable when privacy is not a concern (e.g., $\varepsilon = \infty$). The second term --  which is of the form $({m n^2 \varepsilon^2})^{-\frac{2r}{2r + 2}}$ -- represents the additional cost of adaptation due to privacy constraints, which becomes substantial whenever $\varepsilon \ll n^{-\frac{r}{2r + 1}} m^{\frac{1}{2(2r + 1)}}$; where the privacy cost begins to dominate the classical statistical estimation error. Notably, when a fixed number of data points $N = mn$ is distributed across sufficiently many servers $m$, the privacy cost always dominates.

The rates depend critically on Besov parameters $\alpha$ and $p$: smaller $\alpha$ (less smooth densities) makes estimation harder. The parameter $p$ affects the pointwise problem through $\nu = \alpha - 1/p$. The pointwise problem is potentially more difficult than the global problem for small values of $p$. For example, in the the Sobolev space $\cB^{\alpha}_{2,2}$, where $\nu = \alpha - 1/2$. For H\"older smooth densities -- corresponding to $\cB^{\alpha}_{\infty,\infty}$ -- $\nu = \alpha$ and the problems of global and pointwise estimation are equally difficult in terms of $\alpha$.

Since these rates and optimal procedures of \cite{Cai2024FL-NP-Regression} depend on the (in practice) unknown parameters $(\alpha,p,q)$, adaptation becomes essential. This leads to a fundamental question: \emph{When $(\alpha,p,q)$ are unknown, to what extent can the minimax rates in \eqref{eq:nonadaptive-rates} still be attained?}

Our objective is to characterize the optimal rates under federated privacy constraints in the adaptive setting, where the regularity of the underlying function is unknown. Formally, given a collection $\mathcal{I}$ of smoothness parameter values, we seek to understand for which functions $(\alpha,p) \mapsto \rho^2_{\alpha,p;m,n,\varepsilon}$ the following adaptive minimax risks can be attained:
\begin{equation*}
	\inf_{\hat{f} \in \mathcal{F}^{\varepsilon,\delta}} \sup_{(\alpha,p,q) \in \mathcal{I}} \sup_{f \in \bar{\mathcal{B}}^{\alpha,R}_{p,q}}  \frac{\mathbb{E}_f \|\hat{f} - f\|_2^2}{\rho_{\textsc{MSE}}^2(\alpha;m,n,\varepsilon)} \quad \text{and} \quad \inf_{\hat{T} \in \mathcal{T}^{\varepsilon,\delta}} \sup_{(\alpha,p,q) \in \mathcal{I}} \sup_{f \in \bar{\mathcal{B}}^{\alpha,R}_{p,q}} \frac{\mathbb{E}_f[(\hat{T} - f(t_0))^2]}{\rho^2_{\textsc{point}}( \alpha,p ;m,n,\varepsilon)}.
\end{equation*}
That is, we seek estimators that automatically adapt to unknown smoothness, achieving (near-)optimal performance across all parameter values in $\mathcal{I}$. In the non-private setting, adaptation to unknown smoothness incurs no additional cost for global risk, but pointwise estimation pays a logarithmic penalty: the optimal adaptive rate becomes $(\tfrac{\log N}{N})^{\frac{2\nu}{2\nu + 1}}$, see e.g. \cite{cai1999adaptive,cai2003pointwiseBesov}. Our central question is whether differential privacy fundamentally alters these adaptation costs.


\subsection{Organization of the paper}

The remainder of this paper is organized as follows.  Section \ref{sec:main_results} presents our main theoretical results, including sharp minimax rates for both global and pointwise risks under adaptive differential privacy constraints. Section \ref{sec:adaptive_density_estimation_at_a_point} develops our optimally adaptive estimation procedures, introducing the novel noise mechanism and wavelet thresholding estimator that achieve the theoretical limits. Section \ref{sec:lower-bound} derives the fundamental lower bounds that characterize the unavoidable cost of adaptation under differential privacy. Section \ref{sec:discussion} concludes with a discussion of our findings and directions for future research. Proofs and auxiliary results are collected in the Supplementary Material \cite{cai2025supplementary}.

\subsection{Notation, definitions and assumptions}

For positive sequences $a_k$, $b_k$, we write $a_k \lesssim b_k$ if $a_k \leq Cb_k$ for some universal constant $C$, and $a_k \asymp b_k$ if $a_k \lesssim b_k$ and $b_k \lesssim a_k$. We denote $a_k \ll b_k$ when $a_k/b_k = o(1)$. We use $a \vee b$ and $a \wedge b$ for the maximum and minimum of $a$ and $b$, respectively. The $\ell_p$-norm of $v \in \mathbb{R}^d$ is $\|v\|_p$. Throughout, $c$ and $C$ are universal constants that may change from line to line.

\section{Main results}\label{sec:main_results}

Our central finding is that differential privacy fundamentally alters the cost of adaptation in nonparametric estimation. While classical statistics shows that adaptation to unknown smoothness can be achieved without penalty for global risk, we prove that differential privacy introduces an unavoidable logarithmic deterioration in the privacy-dominated regime.

This is formalized in our first main result, which characterizes the global risk when smoothness parameters are unknown. Recall that $N=mn$ is the total number of data points.

\begin{theorem}\label{thm:global-FDP-rate}
	Let $\delta \ll \varepsilon^2 / m$ and define
	\begin{equation}\label{eq:adaptive-global-rate}
		\rho_{\textsc{MSE}}^2(\alpha) \equiv \rho_{\textsc{MSE}}^2(\alpha,m,n,\varepsilon) = N^{-\frac{2\alpha}{2\alpha + 1}} + \left( \frac{m n^2 \varepsilon^2}{\log (N)} \right)^{-\frac{2\alpha}{2\alpha + 2}}.
	\end{equation}
For any $p \geq 2$, $q \geq 1$, $\alpha_{\max} > \alpha_{\min} > 1/2$, it holds that
\begin{equation}\label{eq:global-lb-to-show-thm}
	\inf_{\hat{f} \in \cF^{\varepsilon,\delta}} \, \sup_{ \alpha \in (\alpha_{\min}, \alpha_{\max})} \, \sup_{f \in \bar{\cB}_{pq}^{\alpha,R}} \, \E_f \| \hat{f} - f \|_2^2 \; \rho_{\textsc{MSE}}^{-2}(\alpha) \asymp 1.
\end{equation}
\end{theorem}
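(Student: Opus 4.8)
The plan is to prove the two inequalities hidden in $\asymp 1$ separately: an upper bound, via an explicit adaptive $(\varepsilon,\delta)$-FDP estimator attaining $\rho_{\textsc{MSE}}^2(\alpha)$ for every $\alpha\in(\alpha_{\min},\alpha_{\max})$, and a matching lower bound valid for all such protocols. Throughout I fix a compactly supported wavelet basis $\{\phi_k\}\cup\{\psi_{\ell k}\}$ of $L_2[0,1]$ of regularity exceeding $\alpha_{\max}$, and use the standard facts that for $p\ge 2$ the $L_2$-bias of truncating an $f\in\bar{\cB}^{\alpha,R}_{pq}$ at resolution $L$ is $\lesssim 2^{-2L\alpha}$, and that $\alpha_{\min}>1/2$ (so $\alpha-1/p>0$) makes the densities in any such ball uniformly bounded, hence the empirical wavelet coefficients have variance $\lesssim 1/N$.

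\emph{Upper bound.} On server $j$ I form the empirical coefficients $\hat\theta^{(j)}_{\ell k}=n^{-1}\sum_i\psi_{\ell k}(X^{(j)}_i)$ for all $\ell\le J_{\max}$, where $2^{J_{\max}}\asymp N$, so the candidate truncation levels form a dyadic grid of cardinality $\asymp\log N$. The decisive step is privatization. Rather than releasing $\asymp\log N$ separate estimators (one per candidate level), which would split the budget $\log N$ ways and inflate the per-coordinate privacy variance by $(\log N)^2$, I would release a single object: the coefficient vector up to level $J_{\max}$ perturbed by block-wise Gaussian noise whose variance at level $\ell$ is proportional to $2^\ell$, with the privacy budget distributed across the $\asymp\log N$ levels in proportion to their $\ell_2$-sensitivities $\asymp 2^{\ell/2}/n$; Gaussian/R\'enyi composition then certifies $(\varepsilon,\delta)$-FDP of the whole release, the assumption $\delta\ll\varepsilon^2/m$ absorbing the $\delta$-dependence. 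This is the new mechanism of Section~\ref{sec:adaptive_density_estimation_at_a_point}: each level ends up with effective budget $\asymp\varepsilon^2/\log N$, so after averaging the $m$ transcripts the per-coordinate privacy variance at level $\ell$ is $\asymp 2^\ell\log N/(mn^2\varepsilon^2)$ — only a single logarithmic inflation. Every candidate linear estimator $\hat f_L=\sum_{\ell<L}\sum_k\widetilde\theta_{\ell k}\,\psi_{\ell k}$ (with $\widetilde\theta$ the privatized coefficients) is a post-processing of this release, so a Lepski-type comparison over the $\asymp\log N$ levels selects $\hat L$ at no further privacy cost. Bounding $\E_f\|\hat f_{\hat L}-f\|_2^2$ through bias $\lesssim 2^{-2L\alpha}$, stochastic variance $\lesssim 2^L/N$, and privacy variance $\lesssim 2^{2L}\log N/(mn^2\varepsilon^2)$, invoking the standard Lepski oracle inequality (whose price for uniform control over $\asymp\log N$ levels is a sub-polynomial $\log\log N$ factor, negligible), and optimizing over $L$ recovers exactly the two terms of $\rho_{\textsc{MSE}}^2(\alpha)$ in \eqref{eq:adaptive-global-rate}.

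\emph{Lower bound.} It suffices to exhibit, for each $\hat f\in\cF^{\varepsilon,\delta}$, a pair $(\alpha,f)$ with $f\in\bar{\cB}^{\alpha,R}_{pq}$ and $\E_f\|\hat f-f\|_2^2\gtrsim\rho_{\textsc{MSE}}^2(\alpha)$; since $\rho_{\textsc{MSE}}^2$ is a maximum, I would combine two arguments. The term $N^{-2\alpha/(2\alpha+1)}$ is the classical nonparametric minimax lower bound — Assouad's lemma with $\asymp 2^J$ sign-indexed bumps at $2^J\asymp N^{1/(2\alpha+1)}$ for a fixed $\alpha$ — which transfers verbatim since $\cF^{\varepsilon,\delta}$ is a subclass of all estimators. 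The term $(mn^2\varepsilon^2/\log N)^{-2\alpha/(2\alpha+2)}$ is where adaptivity bites, and where the supremum over $\alpha$ is essential. The mechanism above suggests the right ``conservation law'': allocating effective per-coordinate privacy noise $v_\ell$ at level $\ell$ costs privacy budget $\asymp 2^\ell/v_\ell$, so any protocol with total budget $(\varepsilon,\delta)$ should obey $\sum_\ell 2^\ell/v_\ell\lesssim mn^2\varepsilon^2$ up to $\log$ factors. I would establish this as a protocol-independent FDP information inequality for the transcripts about the wavelet coefficients of the uniform density $f_0\equiv 1$ — a trace-type bound on the transcript Fisher information (or an aggregated $\chi^2$ bound), strictly stronger than the operator-norm-type bounds underlying the non-adaptive rate \eqref{eq:nonadaptive-rates}, and valid for sequentially interactive protocols under $\delta\ll\varepsilon^2/m$. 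Since the privacy-optimal levels $L^*(\alpha)$, defined by $2^{L^*(\alpha)(2\alpha+2)}=mn^2\varepsilon^2/\log N$, occupy order $\log N$ distinct dyadic levels as $\alpha$ ranges over $[\alpha_{\min},\alpha_{\max}]$, pigeonholing the conservation law forces, for some $\alpha_\star$ in this range with $J_\star:=L^*(\alpha_\star)$, that $v_{J_\star}\gtrsim 2^{J_\star}\log N/(mn^2\varepsilon^2)$; placing there the Besov-boundary sign-hypercube $f_\sigma=f_0+c\,2^{-J_\star(\alpha_\star+1/2)}\sum_k\sigma_k\psi_{J_\star k}$ — a genuine density in $\bar{\cB}^{\alpha_\star,R}_{pq}$ whose coefficients sit at or below the protocol's noise level (signal-to-noise $\lesssim 1$ per coordinate) — Assouad's lemma yields $\E_{f_\sigma}\|\hat f-f_\sigma\|_2^2\gtrsim 2^{-2J_\star\alpha_\star}\asymp(mn^2\varepsilon^2/\log N)^{-2\alpha_\star/(2\alpha_\star+2)}$. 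Combining the two constructions — the classical one dominating when the first term of $\rho_{\textsc{MSE}}^2$ is larger, the privacy one when the second is — gives the claimed $\gtrsim 1$.

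\emph{Main obstacle.} The crux is the conservation law: a single-scale Assouad or Fano argument only reproduces the non-adaptive rate, so one genuinely needs an FDP information inequality that simultaneously controls all $\asymp\log N$ resolution scales and certifies that the privacy budget is split among them. The hard points are (i) proving such a trace-type bound for arbitrary, sequentially interactive $(\varepsilon,\delta)$-FDP protocols rather than for noise-addition mechanisms, and (ii) exploiting the near-orthogonality of perturbations at distinct resolution levels under the privacy channel so that their contributions genuinely add. The remaining ingredients are routine by comparison: verifying Besov membership and positivity of the perturbed densities and uniform boundedness of densities on the balls (where $p\ge 2$ and $\alpha_{\min}>1/2$ enter), the Gaussian-composition accounting for the new mechanism, and the Lepski oracle inequality under the heavier-than-Gaussian privacy noise.
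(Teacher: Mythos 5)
Your lower-bound strategy is essentially the paper's: a multiscale family of perturbations spread over $\asymp\log N$ resolution levels (one per regularity value on a grid), a protocol-level, dimension-free trace bound on the transcript-induced Fisher information of order $mn^2\varepsilon^2$ valid for sequentially interactive $(\varepsilon,\delta)$-FDP protocols, and a dilution/pigeonhole step showing some level retains only $\lesssim mn^2\varepsilon^2/\log N$ information, at which a hypercube (the paper uses a prior plus van Trees rather than Assouad, together with a saddle-point argument to choose the prior for the protocol-dependent worst level) yields the $(mn^2\varepsilon^2/\log N)^{-2\alpha/(2\alpha+2)}$ term. You correctly identify the trace inequality as the crux, but you only assert it; in the paper it is Lemma \ref{lem:data-processing}, proved via a score-attack-type argument, and the care about non-dominated transcripts and the min--max exchange over levels is where the real work lies.

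The genuine gap is in your upper bound. The estimator must lie in $\cF^{\varepsilon,\delta}$ for the \emph{given} $\delta$, and the hypothesis $\delta\ll\varepsilon^2/m$ is an upper bound on $\delta$ (it may be $0$ or super-polynomially small); it does not "absorb the $\delta$-dependence" — it works against a Gaussian mechanism. Calibrating Gaussian noise to $(\varepsilon,\delta)$ via R\'enyi/zCDP composition over $\asymp\log N$ levels forces a per-coordinate variance of order $2^{\ell}\,\log N\,\log(1/\delta)/(n^2\varepsilon^2)$, and under $\delta\ll\varepsilon^2/m$ one typically has $\log(1/\delta)\gtrsim\log N$ (e.g.\ in the LDP regime $n=1$, $m=N$, $\varepsilon\asymp 1$), so the privacy variance is $2^{\ell}\log^2 N/(mn^2\varepsilon^2)$ and your procedure attains at best $(mn^2\varepsilon^2/\log^2N)^{-2\alpha/(2\alpha+2)}$; the normalized risk in \eqref{eq:global-lb-to-show-thm} then diverges like a power of $\log N$, and at $\delta=0$ the mechanism is inadmissible altogether. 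This is exactly the variance-versus-tails obstruction the paper's construction is built to evade: it uses a pure $(\varepsilon,0)$-DP exponential ($K$-norm-type) mechanism with respect to the multiscale-oscillation norm, whose sensitivity is $1/n$ \emph{independently of the number of levels} (Lemma \ref{lem:sensitivity-osc-norm}), giving aggregated per-coordinate variance $\asymp 2^{\ell}/(mn^2\varepsilon^2)$ with no logarithmic inflation, while the Gamma-times-uniform decomposition yields tails sharp enough for block soft-thresholding with blocks of size $\asymp\log N$; the single $\log N$ penalty then enters only through the threshold (Lemma \ref{lem:block-thresholding-tailbound-privacy-noise-only} and Theorem \ref{thm:upper-bound-global-risk}) and matches the lower bound. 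Your Lepski-over-truncation-levels post-processing is a reasonable alternative to thresholding for $p\ge 2$, but without a mechanism of this kind it cannot close the logarithmic gap across the full range of $\delta$ permitted by the theorem.
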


The theorem establishes that $\rho_{\textsc{MSE}}^2(\alpha)$ in \eqref{eq:adaptive-global-rate} is the sharp adaptive rate. Comparing to the non-adaptive rate $N^{-2\alpha/(2\alpha+1)} + (mn^2\varepsilon^2)^{-2\alpha/(2\alpha+2)}$, adaptation introduces the logarithmic factor $\log N$ in the denominator of the privacy term, making the privacy cost strictly worse. This penalty is unavoidable: no estimator can achieve better than constant normalized risk across smoothness levels in any interval $(\alpha_{\min}, \alpha_{\max})$.

The result follows from matching upper and lower bounds. The lower bound is established in Section \ref{sec:lower-bound}, while our constructive upper bound (Theorem \ref{thm:upper-bound-global-risk} in Section \ref{sec:adaptive_density_estimation_at_a_point}) provides an explicit $(\varepsilon,0)$-FDP estimator that achieves the rate in \eqref{eq:adaptive-global-rate}. This demonstrates that the larger class of $(\varepsilon,\delta)$-FDP protocols offers no improvement over $\delta=0$ protocols for the adaptive problem. 

Our second main result characterizes the pointwise estimation problem, for which the situation is more complex than the global case. Even in the non-private setting, adaptation to unknown smoothness incurs a logarithmic penalty, yielding the rate $(\log N/N)^{2\nu/(2\nu+1)}$ compared to the non-adaptive rate $N^{-2\nu/(2\nu+1)}$. Under differential privacy, this adaptation cost is either `equal' or compounded by additional logarithmic deterioration in the privacy-dominated regime, depending on the number of servers $m$ in relation to the total number of samples $N$.

Our characterization, formally given in the theorem below, shows that the sharp adaptive rate for pointwise estimation is given by:

\begin{equation}\label{eq:adaptive-pointwise-rate}
	\rho_{\textsc{point}}^2(\nu):= \rho_{\textsc{point}}(\nu; m,n,\varepsilon) = \left(\frac{N }{\log N}\right)^{-\frac{2\nu}{2\nu + 1}} + \left(\frac{ m n^2\varepsilon^2}{L_{m,N}}\right)^{-\frac{2\nu}{2\nu + 2}},
\end{equation}
where $\nu = \alpha - 1/p$ and
\begin{equation}\label{eq:elbow-effect-factor}
	L_{m,N} = \begin{cases}
		\frac{\log^2 N}{m}  \quad \text{ if } m \leq \log N, \\ 
		{\log N} \quad \text{ if } m > \log N.
	\end{cases}
\end{equation}

Compared to the non-adaptive pointwise rate $N^{-2\nu/(2\nu+1)} + (mn^2\varepsilon^2)^{-2\nu/(2\nu+2)}$, adaptation introduces logarithmic penalties in both terms: $\log N$ appears in the denominator of the first term (extending the non-private adaptation cost), while $L_{m,N}$ creates additional deterioration in the privacy term that depends on the server configuration. 

\begin{theorem}\label{thm:pointwise_cost_of_adaptation}
	Let $\delta \ll \varepsilon / (N \log N)$. Consider any collection $\mathcal{I} \subset (0,\infty)^2$ with $\alpha - 1/p > 1/2$ for all $(\alpha,p) \in \mathcal{I}$ and $(\alpha,p),(\alpha',p') \in \mathcal{I}$ such that $ \alpha - 1/p > \alpha' - 1/p'$.
	
	It holds that 
	\begin{equation*}
		\inf_{\hat{T} \in  \mathcal{T}^{\varepsilon, \delta}} \, \sup_{(\alpha,p) \in \mathcal{I}} \, \sup_{f \in \bar{\mathcal{B}}_{p,q}^{\alpha,R}} \, \mathbb{E}_f ( \hat{T} - f(t_0) )^2 \; \rho_{\textsc{point}}^{-2}(\alpha - 1/p) \asymp 1.
	\end{equation*}
\end{theorem}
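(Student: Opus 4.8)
For the \emph{upper bound}, I would use the adaptive estimator constructed in Section~\ref{sec:adaptive_density_estimation_at_a_point}. The estimator is wavelet-based: each server computes noisy empirical wavelet coefficients $\hat\beta_{\ell k}$ up to a maximal resolution level $\ell_{\max}$ (chosen so that $2^{\ell_{\max}}$ is of order $(mn^2\varepsilon^2/L_{m,N}) \wedge (N/\log N)$), injecting privacy noise via the new mechanism that avoids variance inflation across levels. The aggregated estimator $\hat T$ of $f(t_0)$ is obtained by hard-thresholding these coefficients at a level proportional to $\sqrt{\log N}$ times the per-coefficient standard deviation (which now has both a sampling component $\asymp 1/N$ and a privacy component $\asymp L_{m,N}/(mn^2\varepsilon^2)$ per coefficient, the $\log N$ and $L_{m,N}$ factors being exactly the price of the union bound over the $\asymp 2^{\ell_{\max}} \asymp \mathrm{poly}(N)$ coefficients and, in the $m\le\log N$ regime, over the servers). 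A standard oracle-inequality / bias-variance decomposition then shows that for every $f\in\bar{\mathcal B}^{\alpha,R}_{p,q}$ with $\nu=\alpha-1/p>1/2$, the squared error is bounded by a constant times $\rho^2_{\textsc{point}}(\nu)$, uniformly over $(\alpha,p)\in\mathcal I$ — the key point being that thresholding automatically selects the right resolution level without knowing $(\alpha,p)$. The two-case structure of $L_{m,N}$ arises from whether the dominant source of the "number of tests" in the union bound is the number of coefficients (always $\asymp\log N$) or, additionally, the number of servers $m$ when $m$ is small enough that per-server noise dominates.

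For the \emph{lower bound}, which I expect to be the main obstacle, I would adapt the new lower bound technique announced in Section~\ref{sec:lower-bound}. The standard non-private adaptive pointwise lower bound (à la Lepski, Tsybakov) pits a smooth null $f_0$ against a family of perturbations $f_0 + \gamma_j \psi_{\ell k}$ at the finest admissible resolution, using that no single procedure can detect a small bump of unknown location/scale without a $\log N$ enlargement; the private version must additionally show that the privacy constraint forces the "effective sample size" in this argument to shrink by the factor $L_{m,N}$. Concretely I would: (i) fix two smoothness indices $(\alpha,p),(\alpha',p')\in\mathcal I$ with $\nu=\alpha-1/p>\nu'=\alpha'-1/p'$ and reduce to a two-point-style testing problem where under one hypothesis $f$ is very smooth (forcing the estimator to be near $f_0(t_0)$ to control risk there) and under the other $f$ contains a bump of height $\asymp\rho_{\textsc{point}}(\nu')$ localized near $t_0$; (ii) invoke a private data-processing / strong-composition argument — combining a van~Trees or Assouad-type bound with the $(\varepsilon,\delta)$-FDP constraint as in \cite{Cai2024FL-NP-Regression} — to bound the mutual information between the bump parameter and the transcripts $T^{(1)},\dots,T^{(m)}$, picking up the factor $n^2\varepsilon^2$ per server and $m$ servers; (iii) crucially insert the extra $\log N$ (and, when $m\le\log N$, the extra $m/\log N$ refinement giving $L_{m,N}$) by taking a \emph{composite} alternative ranging over $\asymp\mathrm{poly}(N)$ possible bump locations/resolutions, so that the estimator must simultaneously hedge against all of them — this is where the private analogue of Lepski's lower bound machinery does the work, and is the delicate new ingredient.

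The principal difficulty is step (iii) of the lower bound: showing that the privacy noise interacts with the "multiplicity" of candidate bumps to produce exactly $L_{m,N}$ rather than a single $\log N$ — i.e., explaining the elbow at $m=\log N$. This requires a careful accounting of how $(\varepsilon,\delta)$-FDP limits a server's ability to report information about many coordinates at once: when $m$ is small, each server individually faces the full multiplicity and the noise per server must be correspondingly large, whereas when $m$ is large the multiplicity is effectively spread across servers. I would handle this by a chi-square / KL tensorization over servers combined with the private information inequality, tracking the dependence on the number of active coordinates, and verifying that the worst-case configuration of bumps yields the claimed $L_{m,N}$. The matching of constants up to $\asymp$ then follows by checking that the upper-bound threshold and the lower-bound bump height scale identically in $m,n,\varepsilon,N$.
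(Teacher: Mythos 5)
Your upper bound sketch is essentially the paper's route (thresholded wavelet coefficients privatized by the new mechanism, oracle inequality, data-driven level selection), with one misattribution: in the paper the elbow factor $L_{m,N}$ in the threshold does not come from a union bound over coefficients and servers, but from the tail behavior of the \emph{averaged} privacy noise $\bar V_{lk}=m^{-1}\sum_j V^{(j)}_{lk}$ (Lemma \ref{lem:term-thresholding-tailbound-privacy-noise-only}): each server's noise is sub-exponential, and averaging over $m\ge \log N$ servers makes the relevant deviation regime sub-Gaussian, so the threshold can be taken a factor $\sqrt{\log N/m}$ smaller. This is a presentational issue rather than a gap.

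The lower bound is where your plan genuinely diverges and, as described, would not deliver the theorem. Your step (iii) inserts the logarithm through a \emph{composite} alternative over $\mathrm{poly}(N)$ bump locations/resolutions, with the elbow explained by how the "multiplicity" is shared among servers. The paper's proof (Theorem \ref{thm:adaptation-lower-bound-pointwise} via Lemma \ref{lem:private-constrained-risk}) uses no composite alternative at all: it is a \emph{two-point} constrained-risk (super-efficiency) argument in the spirit of Brown--Low. The quantitative engine you are missing is this: adaptivity over the smoother class forces the estimator's risk at the single null $f_0$ (the uniform density, which sits in the interior of the rougher class) to be smaller than the rougher-class minimax rate by a \emph{polynomial} factor $A_N\asymp N^{\gamma}$, i.e. $\gamma^2\lesssim A_N^{-1}$ in the constrained-risk inequality, so $\log\gamma\asymp-\log N$. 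The private constrained-risk inequality then says the risk at a single bump alternative $g=f_0+a\,h(b(t_0-\cdot))$ stays of order $\Delta^2$ unless $m(\bar\varepsilon\wedge\bar\varepsilon^2)\gtrsim\log A_N$, where $\bar\varepsilon\asymp n\varepsilon\|P_{f_0}-P_g\|_{\mathrm{TV}}$ is the per-server group-privacy parameter. Solving $m(\bar\varepsilon\wedge\bar\varepsilon^2)\asymp\log N$ for the bump size is exactly what produces the inflation $\sqrt{L_{m,N}}$, and the elbow at $m=\log N$ is the switch between the quadratic (chi-square, $\bar\varepsilon\le$ const) and linear ($\bar\varepsilon\ge$ const) regimes of the divergence bound across servers --- not a statement about how many candidate bumps a server must hedge against. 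Without the super-efficiency quantification at $f_0$, a two-point or composite testing argument of the kind you outline only reproduces the non-adaptive rate $(mn^2\varepsilon^2)^{-\nu/(\nu+2)}$ with no logarithmic loss, and it is not clear how your multiplicity accounting would force one; in particular your heuristic that "each server individually faces the full multiplicity" has no obvious lower-bound counterpart under the sequential $(\varepsilon,\delta)$-FDP model, which is handled in the paper by a coupling/group-privacy lemma bounding each server's transcript likelihood ratio by $e^{\bar\varepsilon}$ up to a $\bar\delta$ correction. So the missing idea is the private constrained-risk inequality together with the explicit super-efficiency factor $A_N$; the composite-alternative Lepski machinery you propose is both unnecessary here and, as sketched, does not yield the claimed $L_{m,N}$.
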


Contrasting the above theorem with Theorem \ref{thm:global-FDP-rate} reveals a fundamental difference between global and pointwise adaptation under privacy. While global risk suffers only from the logarithmic deterioration in the privacy term, pointwise estimation incurs logarithmic penalties in both the statistical and privacy components. The factor $L_{m,N}$ captures an interesting elbow-effect: when $m \leq \log N$ (few servers), the privacy cost degrades by an additional logarithmic factor, but when $m > \log N$ (many servers), this extra penalty disappears. In particular, this implies that the adaptation cost for the pointwise problem under LDP is (in relative terms) less severe than for the CDP setting, as highlighted in Corollaries \ref{cor:central-dp} and \ref{cor:local-dp} below.


As with the global risk case, this result follows from matching upper and lower bounds established in Sections \ref{sec:adaptive_density_estimation_at_a_point} and \ref{sec:lower-bound}, showing that the rate in \eqref{eq:adaptive-pointwise-rate} is both necessary and achievable. Furthermore, also the pointwise adaptive rate remains valid for the larger class of chained sequentially interactive DP protocols, eventhough the rate is attained by a one-shot $(\varepsilon,0)$-FDP protocol.  

\subsection{Special cases: Central and local differential privacy}

Our general federated framework encompasses the classical central and local DP settings as special cases. To better understand the implications of our results for each of these respective extremes of the federated spectrum, we now examine them separately, which reveal different behaviors for the cost of adaptation.

The first corollary considers the cost of adaptation in the CDP setting, where a single trusted server holds all $N$ samples.

\begin{corollary}\label{cor:central-dp}
	Consider the CDP setting, where a single server holds all $n=N$ samples ($m=1$). Let $\delta \ll \varepsilon/(N \log N)$. 

	It holds that
\begin{align}
	\inf_{\hat{f} \in \mathcal{F}^{\varepsilon,\delta}} \, \sup_{ \alpha \in (\alpha_{\min}, \alpha_{\max})} \, \sup_{f \in \bar{\mathcal{B}}_{p,q}^{\alpha,R}} \, \mathbb{E}_f \| \hat{f} - f \|_2^2 & \left[ N^{-\frac{2\alpha}{2\alpha + 1}} + \left( \frac{N^2 \varepsilon^2}{\log N} \right)^{-\frac{2\alpha}{2\alpha + 2}} \right]^{-1} \asymp 1, \\
	\inf_{\hat{T} \in  \mathcal{T}^{\varepsilon, \delta}} \, \sup_{(\alpha,p) \in \mathcal{I}} \, \sup_{f \in \bar{\mathcal{B}}_{p,q}^{\alpha,R}} \, \mathbb{E}_f ( \hat{T} - f(t_0) )^2 &\left[ \left(\frac{N}{\log N}\right)^{-\frac{2\nu}{2\nu + 1}} + \left( \frac{N^2 \varepsilon^2}{\log^2 N} \right)^{-\frac{2\nu}{2\nu + 2}} \right]^{-1} \asymp 1,
\end{align}
for any $\alpha_{\max} > \alpha_{\min} > 0$ and collection $\mathcal{I} \subset (0,\infty)^2$ for which there exists $(\alpha,p),(\alpha',p') \in \cI$ such that $\alpha - 1/p \neq \alpha' - 1/p'$ and $\alpha - 1/p > 1/2$ for all $(\alpha,p) \in \mathcal{I}$.
\end{corollary}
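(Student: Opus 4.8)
The plan is to obtain both displays directly from the general federated theorems by specializing to the single-server case $m=1$, $n=N$. For the global statement, apply Theorem~\ref{thm:global-FDP-rate} with $m=1$ and $n=N$. Then $mn^2\varepsilon^2 = N^2\varepsilon^2$, so the adaptive rate in \eqref{eq:adaptive-global-rate} becomes
\begin{equation*}
\rho_{\textsc{MSE}}^2(\alpha) = N^{-\frac{2\alpha}{2\alpha+1}} + \left(\frac{N^2\varepsilon^2}{\log N}\right)^{-\frac{2\alpha}{2\alpha+2}},
\end{equation*}
which is exactly the bracketed quantity in the first display of the corollary. With $m=1$ the hypothesis $\delta \ll \varepsilon^2/m$ of Theorem~\ref{thm:global-FDP-rate} reads $\delta \ll \varepsilon^2$, which is implied by the corollary's more restrictive assumption $\delta \ll \varepsilon/(N\log N)$ in the regime where the rate is $o(1)$ (the only regime in which the normalization is non-degenerate); the remaining hypotheses $p\geq 2$, $q\geq 1$ and $\alpha_{\max}>\alpha_{\min}$ transfer verbatim, so Theorem~\ref{thm:global-FDP-rate} yields the first $\asymp 1$.

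For the pointwise statement, apply Theorem~\ref{thm:pointwise_cost_of_adaptation} with $m=1$, $n=N$. The one point needing a moment's attention is the elbow factor $L_{m,N}$ of \eqref{eq:elbow-effect-factor}: since $m=1\leq \log N$ for all $N\geq 3$, we fall in the first branch and $L_{1,N}=\log^2 N/m=\log^2 N$. Together with $mn^2\varepsilon^2 = N^2\varepsilon^2$, the rate \eqref{eq:adaptive-pointwise-rate} specializes to
\begin{equation*}
\rho_{\textsc{point}}^2(\nu) = \left(\frac{N}{\log N}\right)^{-\frac{2\nu}{2\nu+1}} + \left(\frac{N^2\varepsilon^2}{\log^2 N}\right)^{-\frac{2\nu}{2\nu+2}},
\end{equation*}
with $\nu=\alpha-1/p$, matching the bracketed quantity in the second display. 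The $\delta$-hypothesis $\delta\ll\varepsilon/(N\log N)$ of Theorem~\ref{thm:pointwise_cost_of_adaptation} is precisely the corollary's assumption, and the conditions on $\mathcal{I}$ agree once one relabels the pair $(\alpha,p),(\alpha',p')$ with $\alpha-1/p\neq\alpha'-1/p'$ so that $\alpha-1/p>\alpha'-1/p'$.

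I do not expect a substantive obstacle here: the corollary is a pure specialization of Theorems~\ref{thm:global-FDP-rate} and~\ref{thm:pointwise_cost_of_adaptation}, and all that is required is (i) the substitution $m=1$, $n=N$ in the two rate formulas, (ii) the evaluation $L_{1,N}=\log^2 N$, and (iii) checking that the stated $\delta$- and regularity-parameter conditions imply those of both theorems; one may further note that, as in the theorems, the upper bounds are attained by $(\varepsilon,0)$-FDP protocols, so the $\delta$-conditions are invoked only for the lower bounds. The single place to be careful in the write-up is reconciling the range ``$\alpha_{\max}>\alpha_{\min}>0$'' in the global part of the corollary with the restriction $\alpha_{\min}>1/2$ in Theorem~\ref{thm:global-FDP-rate}; I would either carry over $\alpha_{\min}>1/2$ (as it underlies the constructions behind that theorem) or, if the wider range is genuinely intended, verify that for $m=1$ the $(\varepsilon,0)$-FDP upper bound of Theorem~\ref{thm:upper-bound-global-risk} and the lower-bound packing construction of Section~\ref{sec:lower-bound} remain valid for $\alpha\in(0,1/2]$.
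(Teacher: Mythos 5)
Your proposal is correct and matches the paper's treatment: the corollary is obtained exactly as you describe, by substituting $m=1$, $n=N$ into Theorems~\ref{thm:global-FDP-rate} and~\ref{thm:pointwise_cost_of_adaptation}, evaluating $L_{1,N}=\log^2 N$, and checking that the stated $\delta$-condition implies those of the theorems. Your caveat about reconciling ``$\alpha_{\min}>0$'' in the corollary with the restriction $\alpha_{\min}>1/2$ in Theorem~\ref{thm:global-FDP-rate} is well taken and reflects a genuine (minor) discrepancy in the paper's statement rather than a gap in your argument.
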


In the central case, adaptation costs manifest as an additional logarithmic factor in the denominator of privacy term, while the non-private adaptive statistical rates remain unchanged. Specifically, in the regime where privacy dominates, the minimax global risk incurs a logarithmic penalty relative to the non-adaptive private rate, while the pointwise risk suffers logarithmic penalties in both the statistical and privacy components, but the private penalty is a squared logarithm.

Turning to the other extreme of the federated framework: LDP, where each server holds a single observation. We have the following corollary characterizing the cost of adaptation in this setting.

\begin{corollary}\label{cor:local-dp}
	Consider the LDP setting ($n=1$ sample per server and $m=N$ total servers). Let $\delta \ll \varepsilon/(N \log N)$. 

	The adaptation risks satisfy
\begin{align}
	\inf_{\hat{f} \in \mathcal{F}^{\varepsilon,\delta}} \, \sup_{ \alpha \in (\alpha_{\min}, \alpha_{\max})} \, \sup_{f \in \bar{\mathcal{B}}_{p,q}^{\alpha,R}} \, \mathbb{E}_f \| \hat{f} - f \|_2^2 & \left( \frac{N \varepsilon^2}{\log N} \right)^{-\frac{2\alpha}{2\alpha + 2}} \asymp 1, \\
	\inf_{\hat{T} \in  \mathcal{T}^{\varepsilon, \delta}} \, \sup_{(\alpha,p) \in \mathcal{I}} \, \sup_{f \in \bar{\mathcal{B}}_{p,q}^{\alpha,R}} \, \mathbb{E}_f ( \hat{T} - f(t_0) )^2 & \left( \frac{N \varepsilon^2}{\log N} \right)^{\frac{2\nu}{2\nu + 2}} \asymp 1,
\end{align}
for any $\alpha_{\max} > \alpha_{\min} > 0$ and collection $\mathcal{I} \subset (0,\infty)^2$ for which there exists $(\alpha,p),(\alpha',p') \in \cI$ such that $\alpha - 1/p \neq \alpha' - 1/p'$ and $\alpha - 1/p > 1/2$ for all $(\alpha,p) \in \mathcal{I}$.
\end{corollary}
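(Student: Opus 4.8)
The plan is to derive Corollary~\ref{cor:local-dp} directly by specializing Theorems~\ref{thm:global-FDP-rate} and~\ref{thm:pointwise_cost_of_adaptation} to the case $n=1$, $m=N$, and then simplifying the resulting rate expressions. Since the theorems are stated as two-sided bounds ($\asymp 1$), there is nothing new to prove about achievability or impossibility; the entire content is an algebraic reduction, together with a check that the hypotheses of the two theorems hold under the LDP specialization.

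First I would handle the global risk. Setting $n=1$ and $m=N$ in $\rho^2_{\textsc{MSE}}(\alpha)$ from~\eqref{eq:adaptive-global-rate}, the privacy term becomes $(N \varepsilon^2 / \log N)^{-2\alpha/(2\alpha+2)}$ while the statistical term is $N^{-2\alpha/(2\alpha+1)}$. The key observation is that in the LDP regime the privacy term always dominates: since $2\alpha/(2\alpha+2) < 2\alpha/(2\alpha+1)$ and (for $\varepsilon$ bounded, or more generally under the standing privacy regime) $N\varepsilon^2/\log N \lesssim N$, one has $(N\varepsilon^2/\log N)^{-2\alpha/(2\alpha+2)} \gtrsim N^{-2\alpha/(2\alpha+1)}$, so the sum is $\asymp (N\varepsilon^2/\log N)^{-2\alpha/(2\alpha+2)}$. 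I would also need to confirm that the condition $\delta \ll \varepsilon^2/m = \varepsilon^2/N$ required by Theorem~\ref{thm:global-FDP-rate} is implied by the corollary's hypothesis $\delta \ll \varepsilon/(N\log N)$ (which it is, at least when $\varepsilon \lesssim \log N$, i.e. in any reasonable privacy regime — this is the one place to be slightly careful about which regime of $\varepsilon$ is intended). Note that Theorem~\ref{thm:global-FDP-rate} requires $\alpha_{\min} > 1/2$, whereas the corollary only asks $\alpha_{\min} > 0$; this is fine because the condition $\alpha - 1/p > 1/2$ for all $(\alpha,p)\in\mathcal{I}$ — wait, for the global statement the corollary uses a range $(\alpha_{\min},\alpha_{\max})$ with $\alpha_{\min}>0$, so I should flag that the global statement is really only meaningful/proved for $\alpha_{\min} > 1/2$ (or invoke an embedding argument). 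The cleanest route is to simply note $p \geq 2$ is assumed implicitly via Theorem~\ref{thm:global-FDP-rate} and carry the hypotheses through verbatim.

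Next, the pointwise risk. With $m = N > \log N$ we are in the second branch of~\eqref{eq:elbow-effect-factor}, so $L_{m,N} = \log N$. Substituting $n=1$, $m=N$ into~\eqref{eq:adaptive-pointwise-rate} gives statistical term $(N/\log N)^{-2\nu/(2\nu+1)}$ and privacy term $(N\varepsilon^2/\log N)^{-2\nu/(2\nu+2)}$. Again the privacy term dominates by the same exponent comparison $2\nu/(2\nu+2) < 2\nu/(2\nu+1)$ together with $N\varepsilon^2/\log N \lesssim N/\log N$ (valid for $\varepsilon \lesssim 1$, or whatever the standing regime is), so the sum collapses to $(N\varepsilon^2/\log N)^{-2\nu/(2\nu+2)}$, which matches the displayed expression $(N\varepsilon^2/\log N)^{2\nu/(2\nu+2)} \asymp 1$ after multiplying through. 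The hypotheses of Theorem~\ref{thm:pointwise_cost_of_adaptation} ($\delta \ll \varepsilon/(N\log N)$, $\alpha - 1/p > 1/2$, and existence of two parameter tuples with distinct $\nu$) are exactly the hypotheses stated in the corollary, so no further checking is needed there.

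The main (and really only) obstacle is bookkeeping the privacy regime: the claim that the privacy term dominates and that $\delta \ll \varepsilon/(N\log N)$ implies $\delta \ll \varepsilon^2/N$ both hinge on $\varepsilon$ not being too large (roughly $\varepsilon \lesssim \log N$, and for dominance $\varepsilon^2/\log N \lesssim 1$ in the worst case, i.e. $\varepsilon \lesssim \sqrt{\log N}$). I would either state the corollary under the implicit standing assumption $\varepsilon \leq 1$ that is conventional in this literature, or explicitly note that when the statistical term dominates the corollary's simplified rate should be read as the full two-term rate; in the regime of interest for privacy ($\varepsilon$ small, or many servers) the simplification is exact. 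Beyond this, the proof is a one-paragraph substitution; I would write it as: ``Apply Theorem~\ref{thm:global-FDP-rate} (resp.\ Theorem~\ref{thm:pointwise_cost_of_adaptation}) with $n=1$, $m=N$; the stated $\delta$ condition implies the hypothesis on $\delta$; since $n^2 m \varepsilon^2 = N\varepsilon^2 \lesssim N$ and the privacy exponent is smaller than the statistical exponent, the privacy term dominates, giving the claimed rate.''
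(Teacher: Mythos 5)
Your proposal takes the same route as the paper: the corollary is exactly the specialization of Theorems~\ref{thm:global-FDP-rate} and~\ref{thm:pointwise_cost_of_adaptation} to $n=1$, $m=N$, using that $m>\log N$ puts $L_{m,N}=\log N$ and that for $\varepsilon\lesssim 1$ the privacy term dominates both statistical terms (by the exponent comparison you give), and your handling of the hypotheses ($\alpha_{\min}$, the two distinct values of $\nu$) is what the paper intends. One correction to the caveat you flagged: the implication $\delta\ll\varepsilon/(N\log N)\Rightarrow\delta\ll\varepsilon^{2}/m=\varepsilon^{2}/N$ needed for the global theorem holds when $\varepsilon\gtrsim 1/\log N$, not when $\varepsilon\lesssim\log N$ as you wrote; this is harmless in the usual LDP regime of constant (or slowly vanishing) $\varepsilon$, but the direction of the condition should be stated correctly.
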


In the case of LDP, the privacy constraints are so stringent that they completely dominate the estimation error, eliminating the non-private rate terms entirely, as was found in e.g. \cite{Cai2024FL-NP-Regression}. However, regarding adaptation costs, we observe another striking asymmetry when comparing to the CDP case: for pointwise risk, the adaptation cost matches that of the non-private case with no additional privacy-induced penalty. This behavior contrasts sharply with the central case, where privacy consistently worsens adaptation costs for both risk types.
 
The relative adaptation costs -- measured as the factor by which rates deteriorate compared to their non-adaptive private counterparts -- are thus less severe under LDP than CDP, despite LDP yielding uniformly worse absolute rates. This counterintuitive phenomenon arises from the distributed nature of local privacy mechanisms. To sketch an intuition for why this happens: DP procedures add some form of noise to (some statistic of) the data. To ensure DP, the noise needs to be, in an appropriate sense, `sufficiently heavy-tailed'. When adapting to unknown smoothness, each of the $m$ servers can add independent noise to its transcript. Aggregating these $m$ noisy transcripts effectively produces a distribution for the `aggregated noise' with lighter tails than the original noise distributions. This improved tail behavior can be exploited to obtain a performance gain when methods have to be adaptive. We note that this gain is relative to the otherwise more severe privacy costs inherent to the LDP model. In contrast, the CDP setting offers no such aggregation benefits. All $N$ samples are held by a single server, and the privacy noise added to the transcript cannot be averaged across multiple sources. This forces CDP mechanisms to either accept heavier-tailed noise or maintain lighter tails at the cost of increased variance, both of which exacerbate adaptation costs relative to the local setting.

\section{Optimally adaptive and differentially private procedures}\label{sec:adaptive_density_estimation_at_a_point}


In this section, we develop a differentially private estimator that achieves the optimal adaptive rate established in Theorems \ref{thm:global-FDP-rate} and \ref{thm:pointwise_cost_of_adaptation}. We begin by introducing the Besov space framework and the wavelet thresholding estimator, which is a standard technique for adaptive estimation in non-private settings. We then describe the limitations of the Laplace and Gaussian mechanisms for thresholding in the private setting, motivating the need for a novel noise distribution that balances tail decay and sensitivity in an optimal way. This novel noise distribution and the corresponding optimal thresholding estimator is then introduced in Section \ref{sec:optimal-noise-geometry}. In order to contextualize the method properly, we start with a recap introducing wavelet estimation in Besov space in Section \ref{sec:besov_definitions} and cover private estimation when smoothness is known in Section \ref{sec:known-smoothnes-DP-method}.

\subsection{Wavelet thresholding for estimation in Besov space}\label{sec:besov_definitions}

We start by giving a formal definition of a Besov space. For a function $f \in L_p[0,1]$, the $K$-th order difference operator $\Delta_{h}^{(K)}$ with step size $h > 0$ and integer $K > \alpha$ maps $f$ to a function that equals $\sum_{k=0}^K (-1)^{K-k} {K \choose k}^{} f(t+kh)$ when $t \in [0,1-Kh]$ and zero elsewhere. The Besov space $\cB^{\alpha}_{p,q}$ is defined as the set of all $f \in L_p[0,1]$ such that the \emph{Besov norm}, defined as
\begin{equation}\label{eq:besov-norm}
	\|f\|_{\alpha,p,q}:=\begin{cases}
		\|f\|_p + \left(\int \left[h^{-\alpha} \|\Delta_{h}^{(K)} f\|_p\right]^q \frac{dh}{h} \right)^{1/q} \quad & \text{ if } q < \infty,\\
		\|f\|_p + \sup_{h>0} h^{-\alpha} \|\Delta_{h}^{(K)} f\|_p \quad & \text{ if } q = \infty,
	\end{cases}
\end{equation}
is finite. Loosely speaking, $\alpha$ measures the degree of smoothness and $2 \leq p \leq \infty$ and $1 \leq q \leq \infty$ specify the norm used to measure the size of its derivatives. Besov spaces contain many important function classes, such as the Sobolev spaces ($p=q=2$), the H\"older spaces ($p=q=\infty$) and the space of functions with bounded variation.

Wavelets are known to have many favourable properties when using them for function estimation in classical settings, see for example \cite{donoho1998minimax,hall1999minimax,cai1999adaptive}. Under DP constraints, wavelet constructions have other desirable properties: they allow for exact control of the estimator's \emph{sensitivity} to changes in the data. Loosely speaking, this allows us to control the ``influence'' each individual observation has on the outcome of the estimator, whilst retaining the information the full sample has to a large extent.

Consider compactly supported, $A$-regular wavelets. Wavelet bases allow characterization of Besov spaces, with $\alpha$, $p$, and $q$ capturing the decay of wavelet coefficients. 

Following the Daubechies' construction, for any $A \in\mathbb{N}$ one can obtain an $A$-regular `father' wavelet $\phi(\cdot)$ with support on $[0,2A-1]$, and a `mother' $\psi(\cdot)$ wavelet with $A$ vanishing moments and support on $[-A+1,A]$. We refer to \cite{daubechies1992ten} for details. The basis functions are then obtained as dilations and translations of these functions:
\begin{align*}
	\big\{ \phi_{r},\psi_{lk}:\, r\in\{1,...,2^{l_0}\},\quad l \geq l_0,\quad k\in\{1,...,2^{l}\} \big\},
\end{align*}
with $\psi_{lk}(x)=2^{l/2}\psi(2^lx-k)$, and $\phi_{k}(x)=2^{l_0/2}\phi(2^{l_0}x-k)$. For a large enough primary resolution level $l_0$ and appropriate treatment of the boundary, the wavelet basis forms an orthonormal basis of $L_2[0,1]$. Hence, any function $f\in L_2[0,1]$ can be represented as
\begin{align}\label{eq:wavelet_transform}
	f = \sum_{k=1}^{2^{l_0}} f_{0k} \phi_{lk} +  \sum_{l=l_0}^{\infty}\sum_{k=1}^{2^{l}} f_{lk} \psi_{lk},
\end{align}
where $f_{0k} := \int f(x) \phi_{lk}(x) dx$ and $f_{lk} := \int f(x) \psi_{lk}(x) dx$ are the wavelet coefficients. Roughly speaking, the part of the wavelet decomposition corresponding to $\phi_k$ approximates the `low-frequency' or `global' part of the function, while the part corresponding to $\psi_{lk}$ captures the `high-frequency' or `local' part.

We describe non-private wavelet based estimation first. Define $\hat{f}_{0k} = N^{-1} \sum_{i=1}^N \phi_{lk}(X_i)$ and $\hat{f}_{lk} = N^{-1} \sum_{i=1}^N \psi_{lk}(X_i)$. A \emph{truncated wavelet estimator} $\hat{f}$ is then defined as follows:
\begin{equation*}
	\hat{f}(t) = \sum_{k=1}^{2^{l_0}} \hat{f}_{0k} \, \phi_{k}(t) + \sum_{l=l_0}^{L} \sum_{k =1}^{2^l} \hat{f}_{lk} \, \psi_{lk}(t).
\end{equation*}
It is a common knowledge that the above estimator achieves the non-private minimax rate for the global risk if $L$ is chosen as $L = \lceil 1/(2\alpha+1) \log (N) \rceil$, which requires knowledge of $\alpha$. Similarly, for the pointwise risk, the estimator $\hat{f}(t_0)$ achieves the minimax rate if $L$ is chosen as $L = \lceil 1/(2\nu+1) \log (N) \rceil$, requireing knowledge of both $\alpha$ and $p$.

In the non-private setting with $\alpha$ and $p$ unknown, a successful approach to constructing rate optimal adaptive estimators is through \emph{wavelet thresholding}. Here, the wavelet coefficients are grouped together in blocks and thresholded at an appropriate level. 

More specifically, consider blocks $B_{lj} = {k: j b_l \leq k \leq (j+1)b_l}$ with $b_l \in \N$ such that the $B_{lj}$'s partition ${1,\dots,2^l}$ for $j \in \cJ_l := {1,\dots, 1 \wedge (2^l/b_l)}$ and $b_l$ an integer such that $2^l/b_l \in \N$. Consider the \emph{$b_l$-block soft-thresholding operator} $\eta_{\tau}^{b_l}: \R^{b_l} \to \R^{b_l}$ defined as
\begin{equation}\label{eq:soft-thresholding-operator}
	\eta_{\tau}^{b_l}(y) = \left( 1 - \frac{\tau}{\|y\|_2} \right)_+y,
\end{equation} 
and write $v_{lB_{lj}} = (v_{lk})_{k \in B_{lj}}$ for the elements of a vector $v = \{v_{lk} : l = 1,\dots,L^*, k =1,\dots,2^l\}$ in the block $B_{lj}$. Wavelet (block) soft-thresholding estimators are of the form
\begin{equation}\label{eq:classical_thresholding_estimator}
	\hat{f}^{\text{WT}}(t) = \sum_{k =1}^{2^{l_0}}  \hat{f}_{0k} \, \phi_{k}(t) + \sum_{l=l_0}^{L^*} \sum_{j \in \cJ_l} \left(\psi_{lB_{lj}}(t) \right)^\top \eta_{\tau}^{b_l}\left(\hat{f}_{lB_j}\right).
\end{equation}
For $b_l = 1$ for all $l$, this corresponds to term-by-term thresholding estimator of \cite{donoho1998minimax}, and are known to be minimax adaptive for the pointwise risk \cite{cai2003pointwiseBesov} for the choice of threshold $\tau = \sqrt{2 \log (N) / N }$. For the global risk, block thresholding with $b_l \asymp \log (N)$ is known to be minimax adaptive \cite{cai1999adaptive}.

The idea behind thresholding, is that the wavelet coefficients decay at a rate that depends on the smoothness of the function. By thresholding the estimated wavelet coefficients, roughly speaking, one can remove the coefficient estimates that are dominated by noise, and retain the large coefficients, for which the coefficient estimates surpass the threshold. In addition, by grouping the coefficients in blocks, a borrowing strength effect is achieved, which allows for optimal performance in the global risk. We note that for pointwise risk, such grouping in blocks is not needed \cite{cai1999adaptive,cai2003pointwiseBesov}.

\subsection{Optimal FDP wavelet estimators when smoothness is known}\label{sec:known-smoothnes-DP-method}

When the smoothness parameters $\alpha$ and $p$ are known, optimal private density estimation under federated differential privacy can be achieved using the Laplace mechanism applied to wavelet coefficients, by using truncated wavelet estimator approach (without a thresholding step). In the FDP setting, each server $j = 1,\ldots,m$ computes empirical wavelet coefficients from its local data $X^{(j)} = (X_1^{(j)}, \ldots, X_n^{(j)})$:
\begin{equation}\label{eq:local-wavelet-coeffs}
	\hat{f}_{0r}^{(j)} = \frac{1}{n}\sum_{i=1}^n \phi_r(X_i^{(j)}), \quad \hat{f}_{lk}^{(j)} = \frac{1}{n}\sum_{i=1}^n \psi_{lk}(X_i^{(j)}),
\end{equation}
for $r = 1,\ldots,2^{l_0}$ and $(l,k)$ with $l_0 < l \leq L$, $k = 1,\ldots,2^l$.

Each server then adds calibrated Laplace noise to its local coefficients before transmitting them to the aggregator. The noise scale is determined by the $L_1$-\emph{sensitivity} of the wavelet coefficients: the maximum change in $L_1$-norm when altering one observation. For compactly supported, $A$-regular wavelets, this sensitivity is bounded by $c_{\phi,\psi} 2^{L/2} / n$ for some wavelet-dependent constant $c_{\phi,\psi} > 0$. Server $j$ transmits the noisy transcript:
\begin{equation}\label{eq:local-noisy-transcript}
	T^{(j)} = \left\{ \hat{f}_{0r}^{(j)} + W_{0r}^{(j)}, \hat{f}_{lk}^{(j)} + W_{lk}^{(j)} : r = 1,\ldots,2^{l_0}, \; l_0 < l \leq L, \; k = 1,\ldots,2^l \right\},
\end{equation}
where $W_{0r}^{(j)}, W_{lk}^{(j)} \sim \text{Laplace}(c_{\phi,\psi} 2^{l/2} / (n \varepsilon))$ are independent. The final estimator aggregates these noisy transcripts:
\begin{equation}\label{eq:fdp-laplace-estimator}
	\hat{f}^{\text{Lap}}(t) = \sum_{r=1}^{2^{l_0}} \left( \hat{f}_{0r} + \frac{1}{m}\sum_{j=1}^m  W_{0r}^{(j)}\right) \phi_r(t) + \sum_{l=l_0}^{L} \sum_{k=1}^{2^l} \left(\hat{f}_{lk} + \frac{1}{m}\sum_{j=1}^m  W_{lk}^{(j)}\right) \psi_{lk}(t).
\end{equation}

With appropriate choice of truncation level $L$ (again depending on the smoothness), this $(\varepsilon,0)$-FDP estimator achieves the optimal non-adaptive rates established in \cite{Cai2024FL-NP-Regression}. 

However, similarly to the non-private setting, this approach fails when smoothness is unknown. Adaptive estimation requires data-driven selection of the truncation level $L$, typically through thresholding of the aggregated noisy coefficients. Naively applying soft-thresholding to the Laplace noise-peturbed aggregated coefficients leads to suboptimal performance. Even sophisticated approaches such as the $L_1$-norm rescaling methods of \cite{butucea_LDP_adaptation} achieve suboptimal rates.

The fundamental issue, loosely speaking, lies in the fact that current privacy mechanisms do not account for dependencies present across the  resolution levels. Privacy mechanisms such as the Laplace mechanism add effectively independent noise to each wavelet coefficient. However, this independent treatment ignores the joint structure present in the wavelet coefficients. The resulting noise, while individually calibrated, creates an unfavorable trade-off between variance and tail behavior, hampering the concentration properties essential for adaptive procedures such as thresholding. 

To achieve optimal adaptive rates under FDP constraints, we introduce a novel noise mechanism that accounts for dependence accross resolution levels, whilst aiming at optimal tail decay to facilitate thresholding.

\subsection{Private thresholding: finding the right noise geometry}\label{sec:optimal-noise-geometry}

The oracle inequality for soft-thresholding reveals why standard privacy mechanisms struggle with adaptive estimation. 

We motivate the choice of noise distribution by considering the following oracle inequality for the (block) soft-thresholding error. For a proof, see \cite{CaiZhou2010NEF}, Lemma 6. 

\begin{lemma}\label{lem:private-block-thresholding-oracle}
	Consider $f \in \R^d$, a random vector $Z$ taking values in $\R^d$, and let $Y = f + Z$. Let $\eta_{\tau}^{d}: \R^d \to \R^d$ be the soft-thresholding operator for threshold $\tau > 0$. Then,
	\begin{equation}\label{eq:oracle-inequality}
		\E \| \eta_{\tau}^{d}(Y) - f \|_2^2 \leq \min \{ \|f\|_2^2, \, 4\tau^2 \} + 4 \E \| Z \|_2^2 \mathbbm{1}\{ \|Z\|_2 > \tau \}.
	\end{equation}
\end{lemma}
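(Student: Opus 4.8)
The plan is to prove the oracle inequality \eqref{eq:oracle-inequality} directly by case analysis on whether the noise vector $Z$ is small or large relative to the threshold $\tau$, mirroring the classical argument for soft-thresholding (as in \cite{CaiZhou2010NEF}, Lemma 6). Write $\eta := \eta_{\tau}^{d}$ and note the elementary, geometrically transparent fact that soft-thresholding is the proximal map of $\tau \| \cdot \|_2$, hence a contraction: $\| \eta(y) - \eta(y') \|_2 \leq \| y - y' \|_2$ for all $y,y'$, and moreover $\| \eta(y) - y \|_2 \leq \tau$ for every $y$. These two properties, plus the trivial bound $\eta(y) = 0$ when $\|y\|_2 \leq \tau$, are the only structural facts about $\eta$ that I need.

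First I would split the expectation according to the event $\{\|Z\|_2 \leq \tau\}$ and its complement. On the complement $\{\|Z\|_2 > \tau\}$, I bound crudely: using $\|\eta(Y) - f\|_2 \leq \|\eta(Y) - Y\|_2 + \|Y - f\|_2 \leq \tau + \|Z\|_2 \leq 2\|Z\|_2$, so this part contributes at most $4\,\E\|Z\|_2^2 \mathbbm{1}\{\|Z\|_2 > \tau\}$, which is exactly the last term in \eqref{eq:oracle-inequality}. (One can also just use $\|\eta(Y)-f\|_2^2 \le 2\|\eta(Y)-Y\|_2^2 + 2\|Y-f\|_2^2 \le 2\tau^2 + 2\|Z\|_2^2 \le 4\|Z\|_2^2$ on this event.) The work is then confined to the event $\{\|Z\|_2 \leq \tau\}$, where I must show the contribution is at most $\min\{\|f\|_2^2, 4\tau^2\}$.

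On $\{\|Z\|_2 \leq \tau\}$, I would produce the two competing bounds separately. For the bound by $4\tau^2$: when $\|Y\|_2 \leq \tau$ we have $\eta(Y) = 0$ so $\|\eta(Y) - f\|_2^2 = \|f\|_2^2 \leq (\|Y\|_2 + \|Z\|_2)^2 \leq (2\tau)^2 = 4\tau^2$; when $\|Y\|_2 > \tau$ we use $\|\eta(Y) - f\|_2 \leq \|\eta(Y) - Y\|_2 + \|Y - f\|_2 \leq \tau + \tau = 2\tau$, again giving $4\tau^2$. For the bound by $\|f\|_2^2$: compare $\eta(Y)$ with $\eta(Z)$ (i.e.\ thresholding applied at the ``true value zero''), using the contraction property, $\|\eta(Y) - \eta(Z)\|_2 \leq \|Y - Z\|_2 = \|f\|_2$; since $\|Z\|_2 \leq \tau$ forces $\eta(Z) = 0$, this yields $\|\eta(Y)\|_2 \leq \|f\|_2$ on this event, and then $\|\eta(Y) - f\|_2 \leq \|\eta(Y)\|_2 + \|f\|_2$ — this gives $4\|f\|_2^2$, which is too lossy, so instead I would argue more carefully: write $\eta(Y) = Y - P$ where $\|P\|_2 \le \tau$ and $P$ is the projection-type correction, so $\eta(Y) - f = Z - P$; on $\{\|Y\|_2 \le \tau\}$, $\eta(Y)-f = -f$ directly, and on $\{\|Y\|_2 > \tau\}$ one shows $\langle \eta(Y) - f, \eta(Y)\rangle \le \langle Z, \eta(Y)\rangle$ via the characterization of $\eta$ as a proximal map, leading after expansion to $\|\eta(Y)-f\|_2^2 \le \|f\|_2^2$. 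Taking the minimum of the two bounds and combining with the crude bound off the good event gives \eqref{eq:oracle-inequality}.

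The main obstacle is the sharp constant in the $\|f\|_2^2$ bound on the event $\{\|Z\|_2 \le \tau, \|Y\|_2 > \tau\}$: the naive triangle-inequality route loses a factor and gives $4\|f\|_2^2$ rather than $\|f\|_2^2$, so one genuinely needs to exploit the variational characterization of soft-thresholding — namely that $\eta(Y)$ minimizes $u \mapsto \tfrac12\|u - Y\|_2^2 + \tau\|u\|_2$, which yields the subgradient inequality $\langle Y - \eta(Y), \eta(Y) \rangle = \tau \|\eta(Y)\|_2 \ge \langle Y - \eta(Y), f\rangle - \tau\|f\|_2$ (using $\|f\|_2 \ge \langle f, \eta(Y)/\|\eta(Y)\|_2\rangle$ isn't quite it; the clean route is Moreau's identity plus monotonicity of the proximal operator). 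Organizing this convex-analysis step cleanly — rather than through brute-force coordinate-free inequalities — is where care is required; everything else is routine triangle-inequality bookkeeping. Since the result is quoted from \cite{CaiZhou2010NEF}, I would in fact present the above as the proof sketch and refer there for the detailed verification of the sharp-constant step.
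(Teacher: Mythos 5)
The paper does not actually prove this lemma; it simply points to Lemma 6 of \cite{CaiZhou2010NEF}, so your proposal is being measured against that standard argument rather than an in-paper proof. Your overall structure is right and most steps are complete and correct: splitting on $\{\|Z\|_2\le\tau\}$, bounding the complement via $\|\eta(Y)-f\|_2\le\|\eta(Y)-Y\|_2+\|Z\|_2\le 2\|Z\|_2$ to get the term $4\,\E\|Z\|_2^2\mathbbm{1}\{\|Z\|_2>\tau\}$, and the pointwise $4\tau^2$ bound on the good event are all fine, and a pointwise bound on the good event does suffice for the $\min\{\|f\|_2^2,4\tau^2\}$ term.

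The one substantive step, $\|\eta(Y)-f\|_2^2\le\|f\|_2^2$ on $\{\|Z\|_2\le\tau\}$, is exactly where your write-up stops, and the inequality you propose for it, $\langle \eta(Y)-f,\eta(Y)\rangle\le\langle Z,\eta(Y)\rangle$, is too lossy on its own: expanding it gives only $\|\eta(Y)\|_2^2\le\langle Y,\eta(Y)\rangle$, which holds automatically (since $\eta(Y)$ is a contraction of $Y$ along its own direction) and carries no information about $f$. The fix is to keep the term you dropped. When $\|Y\|_2>\tau$ one has the exact identity $Y-\eta(Y)=\tau Y/\|Y\|_2$, and since $\eta(Y)$ is a nonnegative multiple of $Y$ this gives $\langle \eta(Y)-f,\eta(Y)\rangle=\langle Z,\eta(Y)\rangle-\tau\|\eta(Y)\|_2\le(\|Z\|_2-\tau)\|\eta(Y)\|_2\le 0$ on the good event, i.e.\ $\langle f,\eta(Y)\rangle\ge\|\eta(Y)\|_2^2$, whence $\|\eta(Y)-f\|_2^2=\|f\|_2^2-2\langle f,\eta(Y)\rangle+\|\eta(Y)\|_2^2\le\|f\|_2^2-\|\eta(Y)\|_2^2\le\|f\|_2^2$; when $\|Y\|_2\le\tau$ the bound holds with equality since $\eta(Y)=0$. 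So no Moreau identity or prox monotonicity is needed\textemdash Cauchy--Schwarz plus $\|Z\|_2\le\tau$ closes the argument. With that one line inserted, your proof is complete and self-contained (indeed more than the paper provides); as written, deferring the sharp-constant step to \cite{CaiZhou2010NEF} leaves a gap in the proposal itself, albeit the same gap the paper tolerates by citation.
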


The oracle inequality decomposes the thresholding error into bias and variance components. The term $\min \{ \|f\|_2^2, \, 4\tau^2 \}$ represents a bias-variance trade-off: smaller thresholds $\tau$ reduce bias for small signals but increase sensitivity to noise. The tail term $4 \E \| Z \|_2^2 \mathbbm{1}\{ \|Z\|_2 > \tau \}$ heavily depends on the noise distribution of $Z$, in particular its tail decay around the threshold $\tau$. Ignoring the presence of statistical noise, this term captures the contribution of privacy noise to the thresholding error.

For optimal thresholding, we want the smallest possible threshold $\tau$ such that the tail term remains controlled. This requires noise $Z$ with two key properties: (1) variance scaling appropriately with the sensitivity of the underlying statistic, and (2) rapid tail decay to ensure $\P(\|Z\|_2 > \tau)$ decreases quickly as $\tau$ increases. Standard privacy mechanisms fail to achieve both properties simultaneously. The Laplace mechanism does not achieve this balance (even after applying sophisticated rescaling techniques as in \cite{butucea_LDP_adaptation}), and while the Gaussian mechanism has good tail decay properties, it requires larger variance to achieve the same privacy guarantees for small values of $\delta$.

Our solution is based on using a carefully designed norm that exploits the multiscale structure of wavelets, and employing the exponential mechanism with respect to that norm, akin to the $K$-norm mechanism of \cite{hardt2010knorm}. This approach allows us to achieve the desired balance between sensitivity and tail decay.

Define the \emph{oscillation} of a function $g \in L_2[0,1]$ as $\operatorname{osc}(g) := \sup_{t \in [0,1]} g(t) - \inf_{t \in [0,1]} g(t)$. The \emph{multiscale-oscillation norm} of the wavelet coefficient vectors is defined as
\begin{equation}\label{eq:osc-norm}
	\|u\|_{V_L} = \sup_{\substack{g \in \operatorname{span}\{\psi_{lk} : (l,k) \in V_L\} \\ \operatorname{osc}(g) \leq 1}} \left| \left\langle \sum_{(l,k) \in V_L} u_{lk} \psi_{lk}, g \right\rangle_{L^2[0,1]} \right|,
\end{equation}
where for $l_0, L \in \N$, $V_L = \{ (l,k) \, : \, l=l_0,\dots,L, \, k = 1,\dots,2^l \}$. Let $s_L = \sum_{l=l_0}^L 2^l$. This norm has the crucial property that its sensitivity with respect to changing one data point is exactly $1/n$, independent of the resolution levels considered. 

\begin{lemma}\label{lem:sensitivity-osc-norm}
	For any datasets $ x, x' \in [0,1]^n $ that differ in at most one coordinate, say $ x_i \neq x'_i $, and let $\Delta_{x,x'}$ denote the coordinate-wise difference of the corresponding empirical wavelet coefficients in \eqref{eq:local-wavelet-coeffs}. In multi-scale-oscillation norm it satisfies the following bound:
	\begin{equation*}
		\|\Delta_{x,x'}\|_{V_L} \leq \frac{1}{n}.
	\end{equation*}
\end{lemma}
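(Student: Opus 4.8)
The plan is to unpack the definition of the multiscale-oscillation norm \eqref{eq:osc-norm} applied to the difference vector $\Delta_{x,x'}$ and reduce the bound to a pointwise estimate on a single test function. Write $x_i \neq x_i'$ for the one coordinate in which the datasets differ, and set $y = x_i$, $y' = x_i'$. By \eqref{eq:local-wavelet-coeffs}, the $(l,k)$-entry of $\Delta_{x,x'}$ is $\tfrac1n\bigl(\psi_{lk}(y) - \psi_{lk}(y')\bigr)$ (and similarly for the father-wavelet entries, which I will absorb into the same argument by noting $\phi_r$ can be written through the $\psi$-basis at level $l_0$, or handled identically). Hence for any admissible test function $g$ — i.e.\ $g \in \operatorname{span}\{\psi_{lk} : (l,k)\in V_L\}$ with $\operatorname{osc}(g)\le 1$ — the inner product in \eqref{eq:osc-norm} becomes, using the reproducing property $\langle \sum_{(l,k)\in V_L} u_{lk}\psi_{lk},\, g\rangle = \sum_{(l,k)\in V_L} u_{lk}\, g_{lk}$ where $g_{lk} = \langle g,\psi_{lk}\rangle$ are the wavelet coefficients of $g$ (which coincide with $g$ itself since $g$ lies in the span):
\[
\Bigl\langle \sum_{(l,k)\in V_L} (\Delta_{x,x'})_{lk}\,\psi_{lk},\; g \Bigr\rangle
= \frac1n \sum_{(l,k)\in V_L} \bigl(\psi_{lk}(y) - \psi_{lk}(y')\bigr)\, g_{lk}
= \frac1n\bigl( g(y) - g(y') \bigr),
\]
where the last equality is precisely the wavelet expansion $g = \sum_{(l,k)\in V_L} g_{lk}\psi_{lk}$ evaluated at the two points $y$ and $y'$.

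From here the bound is immediate: for any $g$ with $\operatorname{osc}(g)\le 1$ we have $|g(y) - g(y')| \le \sup_t g(t) - \inf_t g(t) = \operatorname{osc}(g) \le 1$, so taking the supremum over all admissible $g$ in \eqref{eq:osc-norm} yields $\|\Delta_{x,x'}\|_{V_L} \le 1/n$. The father-wavelet block contributes an identical term: its entries are $\tfrac1n(\phi_r(y) - \phi_r(y'))$, and including $\sum_r (\phi_r(y)-\phi_r(y'))\langle\phi_r,g'\rangle$-type terms only changes the reconstructed function by its low-frequency projection, so the total is still $\tfrac1n(g(y)-g(y'))$ for the appropriate $g$ in the combined span; the oscillation bound applies verbatim.

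The only genuine subtlety — and the step I would treat most carefully — is the \emph{reproducing identity} $\langle \sum_{(l,k)\in V_L} u_{lk}\psi_{lk}, g\rangle = \sum_{(l,k)\in V_L} u_{lk} g_{lk}$ together with the pointwise reconstruction $g(y) = \sum_{(l,k)\in V_L} g_{lk}\psi_{lk}(y)$. Both rely on the orthonormality of the (boundary-corrected) wavelet basis on $[0,1]$ and on $g$ being a \emph{finite} linear combination over $V_L$, so there are no convergence issues; I would state explicitly that we use the orthonormal basis property from Section \ref{sec:besov_definitions} and that $V_L$ is finite, making the sum $g = \sum_{(l,k)\in V_L} g_{lk}\psi_{lk}$ exact and its pointwise evaluation unambiguous. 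I do not anticipate any real obstacle beyond bookkeeping the boundary-wavelet and father-wavelet terms, which behave exactly like the interior mother-wavelet terms for this argument.
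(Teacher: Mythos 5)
Your proposal is correct and follows essentially the same route as the paper's proof: both reduce $\langle f_\Delta, g\rangle$ to $\tfrac1n(g(x_i)-g(x_i'))$ via the orthonormal expansion of $g$ over the finite index set $V_L$ (the paper phrases this through the reproducing kernel $K_L$, which is the same identity) and then invoke $\operatorname{osc}(g)\le 1$. No gap.
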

\begin{proof}
	Let $f_{\Delta}$ denote the inverse wavelet transform of the vector $\Delta_{x,x'}$. We compute
\begin{equation*}
\|\Delta_{x,x'}\|_{V_L} = \sup_{\substack{g \in \operatorname{span}\{\psi_{lk}\} \\ \operatorname{osc}(g) \leq 1}} \left| \left\langle f_{\Delta}, g \right\rangle_{L^2} \right|.
\end{equation*}
Consider the reproducing kernel $ K_L(s,t) = \sum_{(l,k) \in V_L} \psi_{lk}(s) \psi_{lk}(t) $, such that for $g \in \operatorname{span}\{\psi_{lk} : (l,k) \in V_L\}$, it holds that 
\begin{align*}
	g(s) &= \sum_{(l,k) \in V_L} \psi_{lk}(s) \left\langle \psi_{lk}, g \right\rangle_{L^2} = \left\langle K_L(s, \cdot), g \right\rangle_{L^2}.
\end{align*}
By linearity of the integral,
\begin{align*}
	\left\langle f_{\Delta}, g \right\rangle_{L^2} &= \frac{1}{n} \int_0^1 \left( \sum_{(l,k) \in V_L} (\psi_{lk}(x_i) - \psi_{lk}(x'_i)) \psi_{lk}(t) \right) g(t) \, dt \\ 
	&= \frac{1}{n} \left[ \left\langle K_L(x_i, \cdot), g \right\rangle_{L^2} + \left\langle K_L(x_i', \cdot), g \right\rangle_{L^2} \right] = \frac{1}{n} \left[ g(x_i) - g(x'_i) \right].
\end{align*}
As $ |g(x_i) - g(x'_i)| \leq \operatorname{osc}(g) \leq 1 $, it follows that $\|\Delta_{x,x'}\|_{V_L} \leq \frac{1}{n}$.
\end{proof}

The multiscale-oscillation norm can be used to generate noise through the exponential mechanism: $V^{(j)} = (V_{lk}^{(j)})_{l,k}$ with density proportional to the map $v\mapsto \exp(-\varepsilon n \|v\|_{V_L})$. This yields noisy coefficients
\begin{equation}\label{eq:osc-mechanism-coefficients}
	T_{lk}^{(j)} = \hat{f}_{lk}^{(j)} + V_{lk}^{(j)}, \quad (l,k) \in V_L,
\end{equation}
that are $(\varepsilon,0)$-differentially private. Averaging these noisy coefficients across servers yields a private estimate of the wavelet coefficients:
\begin{equation*}
	\hat{f}_{lk}^{\textsc{PW}} = \frac{1}{m} \sum_{j=1}^m T_{lk}^{(j)} = \hat{f}_{lk} + \bar{V}_{lk}, 
\end{equation*}
where $\bar{V}_{lk} = m^{-1} \sum_{j=1}^m V_{lk}^{(j)}$. As with thresholding in the non-private case, we choose a a sufficiently large truncation level $L^* = \lceil \log(N) \rceil$, and then apply (block) soft-thresholding to the wavelet coefficients as a post-processing step to obtain the final estimator. This final post-processing differs depending on whether we consider pointwise or global risk.

\subsubsection{Adaptive global risk post-processing step}

For global risk estimation, we employ block thresholding to leverage borrowing-of-strength effects across wavelet coefficients. We partition the coefficients at each resolution level $l$ into blocks $B_{lj} = \{k: jb_l \leq k \leq (j+1)b_l\}$ for $j \in \mathcal{J}_l := \{1,\ldots, \lfloor 2^l/b_l \rfloor\}$, where $b_l$ is the integer closest to $\lceil \log N \rceil$ such that $2^l/b_l$ is an integer.

The adaptive estimator applies soft-thresholding to each block of noisy wavelet coefficients:
\begin{equation}\label{eq:private-block-soft-thresholding-estimator}
	\hat{f}^{\textsc{BTPW}}(t) = \sum_{r=1}^{2^{l_0}} \hat{f}_{0r}^{\textsc{PW}} \phi_r(t) + \sum_{l=l_0}^{L^*} \sum_{j \in \mathcal{J}_l} \psi_{lB_j}^T(t) \, \eta_{\tau_l}^{b_l}\left(\hat{f}_{lB_{lj}}^{\textsc{PW}}\right),
\end{equation}
where $\psi_{lB_j}(t) = (\psi_{lk}(t))_{k \in B_{lj}}$ and $\hat{f}_{lB_{lj}}^{\textsc{PW}} = (\hat{f}_{lk}^{\textsc{PW}})_{k \in B_{lj}}$ denotes the vector of aggregated noisy coefficients in block $B_{lj}$.

What remains is to choose thresholds that balance bias and variance for both statistical and privacy noise. We set:
\begin{equation}\label{eq:block-threshold-choice}
	\tau_l = \sqrt{\kappa_1 \frac{\log N}{N} + \kappa_2 \frac{2^l \log^2 N}{mn^2\varepsilon^2}},
\end{equation}
where $\kappa_1, \kappa_2 > 0$ are wavelet-dependent constants. The first term corresponds to the statistical noise threshold, while the second term accounts for the privacy noise, with the additional $\log N$ factor reflecting the adaptation cost.

The threshold choice is motivated by the following tight tailbound for our multiscale-oscillation noise mechanism.

\begin{lemma}\label{lem:block-thresholding-tailbound-privacy-noise-only}
	Let $\bar{V}_{lk} = \frac{1}{m} \sum_{j=1}^m V_{lk}^{(j)}$ be the aggregated privacy noise and $\bar{V}_{lB_j} = (\bar{V}_{lk})_{k \in B_{lj}}$. For sufficiently large $\kappa > 0$ and all $l \in \{l_0,\ldots,L^*\}$, $j \in \mathcal{J}_l$,
	\begin{equation*}
		\mathbb{E} \|\bar{V}_{lB_{lj}}\|_2^2 \mathbbm{1}\left\{ \|\bar{V}_{lB_{lj}}\|_2 \geq \kappa \frac{2^{l/2} b_l}{\sqrt{m} n \varepsilon} \right\} \leq C \frac{2^l b_l^2}{mn^2\varepsilon^2} e^{-b_l}
	\end{equation*}
	for a constant $C > 0$.
\end{lemma}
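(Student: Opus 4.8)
The plan is to understand the tail behaviour of the aggregated noise $\bar V_{lB_{lj}} = m^{-1}\sum_{j=1}^m V^{(j)}_{lB_{lj}}$, where each $V^{(j)}$ has density proportional to $v \mapsto \exp(-\varepsilon n\|v\|_{V_{L^*}})$. First I would record the scaling structure: by the homogeneity of the norm $\|\cdot\|_{V_{L^*}}$, the vector $\varepsilon n V^{(j)}$ has density proportional to $\exp(-\|v\|_{V_{L^*}})$, so all probabilistic statements can be made for this normalized object and then rescaled. The key geometric input is a two-sided comparison between the Euclidean norm restricted to a block $B_{lj}$ at level $l$ and the oscillation norm: I expect that $\|v\|_{V_{L^*}} \gtrsim 2^{-l/2} b_l^{-1}\, \|v_{lB_{lj}}\|_2$ on the relevant coordinates, i.e. controlling a single block's $\ell_2$-mass costs at least order $2^{-l/2}b_l^{-1}$ in oscillation norm. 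This is because a function built from $b_l$ neighbouring wavelets at level $l$ with $\ell_2$-coefficient mass $s$ has oscillation of order at least $2^{l/2} s / b_l^{1/2}$ — actually one should be careful and use that the relevant dual bound gives the stated power of $b_l$; deriving the correct power here is one of the places to be precise.

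Second, with that norm comparison in hand, I would obtain a tail bound for a single $V^{(j)}$ projected to a block: for the normalized variable $U = \varepsilon n V^{(j)}$, $\mathbb{P}(\|U_{lB_{lj}}\|_2 \ge t) \le C\, t^{b_l-1} e^{-c\, 2^{-l/2} b_l^{-1} t}$ for $t$ large, by a standard polar-coordinates / Laplace-type computation in the $b_l$-dimensional block (the polynomial prefactor comes from the dimension $b_l$ of the block and the surface-area growth, the exponential from the norm lower bound). Equivalently, rescaling back, $\|V^{(j)}_{lB_{lj}}\|_2$ has sub-exponential-type tails at scale $2^{l/2} b_l/(n\varepsilon)$. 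Then I would aggregate: $\bar V_{lB_{lj}}$ is an average of $m$ i.i.d.\ copies, so by a Bernstein/Bennett-type concentration argument for sums of sub-exponential vectors, $\|\bar V_{lB_{lj}}\|_2$ concentrates at scale $2^{l/2} b_l/(\sqrt m\, n\varepsilon)$ with Gaussian-type behaviour in the bulk and the single-summand sub-exponential tail taking over far out; in particular $\mathbb{P}(\|\bar V_{lB_{lj}}\|_2 \ge \kappa\, 2^{l/2}b_l/(\sqrt m n\varepsilon))$ decays like $e^{-c\kappa b_l}$ once $\kappa$ is large, using that $b_l \asymp \log N$ so the dimensional prefactor $t^{b_l-1}$ is absorbed.

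Third, I would convert the tail bound into the claimed truncated second-moment bound. Writing $r_l := 2^{l/2} b_l/(\sqrt m n\varepsilon)$, integration by parts gives
\begin{equation*}
\mathbb{E}\,\|\bar V_{lB_{lj}}\|_2^2\, \mathbbm 1\{\|\bar V_{lB_{lj}}\|_2 \ge \kappa r_l\}
= \int_0^\infty 2t\, \mathbb{P}\big(\|\bar V_{lB_{lj}}\|_2 \ge t \vee \kappa r_l\big)\, dt,
\end{equation*}
and splitting at $t = \kappa r_l$ and plugging in the exponential tail yields a bound of the form $C\, r_l^2\, e^{-c\kappa b_l}$; choosing $\kappa$ large enough that $c\kappa \ge 1$ and noting $r_l^2 = 2^l b_l^2/(mn^2\varepsilon^2)$ gives exactly $C\, 2^l b_l^2 (mn^2\varepsilon^2)^{-1} e^{-b_l}$, matching $L^* = \lceil \log N\rceil$, $b_l \asymp \log N$.

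The main obstacle is the first geometric step: establishing a sharp lower bound for $\|v\|_{V_{L^*}}$ in terms of a single block's Euclidean mass, with the correct powers of $2^l$ and $b_l$, since the oscillation norm is defined via a supremum over oscillation-bounded test functions and one must exhibit a good test function (or use the reproducing-kernel representation from the proof of Lemma \ref{lem:sensitivity-osc-norm}) that is simultaneously localized to the block and has controlled oscillation. A secondary technical point is ensuring the aggregation step is uniform over all $l \le L^*$ and $j \in \mathcal J_l$ with the single constant $C$; this should follow from a union bound absorbed into the $e^{-b_l}$ factor since the number of $(l,j)$ pairs is polynomial in $N$ while $b_l \asymp \log N$, but one must check the constants line up.
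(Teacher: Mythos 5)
There is a genuine gap at the heart of your plan, namely the single-server tail bound in your second step. Your route is: pointwise norm comparison $\|v\|_{V_{L^*}}\gtrsim 2^{-l/2}b_l^{-1}\|v_{lB_{lj}}\|_2$ (which in fact holds even without the $b_l^{-1}$, since only $O(1)$ wavelets at level $l$ overlap at any point, so a block test function $g\propto\sum_{k\in B_{lj}}a_k\psi_{lk}$ has oscillation $O(2^{l/2})$), followed by a ``polar-coordinates/Laplace-type computation in the $b_l$-dimensional block'' to conclude $\P(\|U_{lB_{lj}}\|_2\ge t)\lesssim t^{b_l-1}e^{-c\,2^{-l/2}b_l^{-1}t}$. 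This inference is not valid: the block marginal of the density $\propto e^{-n\varepsilon\|v\|_{V_{L^*}}}$ in ambient dimension $s_{L^*}\asymp 2^{L^*}\asymp N$ is not governed by the norm restricted to the block. Writing $V^{(j)}=D^{(j)}U^{(j)}$ with $D^{(j)}\sim\Gamma(s_{L^*}+1,\tfrac{1}{n\varepsilon})$ and $U^{(j)}$ uniform on the norm ball (the paper's Lemma on the $K$-norm mechanism), the radius concentrates at $\asymp N/(n\varepsilon)$, so the block marginal has a Gaussian-type bulk with variance $\asymp 2^l b_l/(n\varepsilon)^2$ that is far heavier than $e^{-c\,n\varepsilon 2^{-l/2}t}$ at moderate $t$ (the same phenomenon is transparent for the Euclidean norm: the one-coordinate marginal of $e^{-\theta\|v\|_2}$ in $\R^d$ has standard deviation $\asymp\sqrt d/\theta$, not $1/\theta$). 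Your polynomial prefactor $t^{b_l-1}$ keeps the claimed bound from being outright false, but it makes it uselessly weak: its implied second moment is off by factors of order $(Cb_l)^{b_l}$, and it does not yield the mixed sub-Gaussian/sub-exponential tail $\exp(-c\,m\min(t^2n^2\varepsilon^2/2^l,\,tn\varepsilon/2^{l/2}))$ that the final bound needs. Obtaining the correct tail requires analyzing the concentration of the averaged \emph{direction} $\bar U_{B_{lj}}$ on the high-dimensional oscillation-norm ball; the paper does this by stochastically dominating the uniform law on that ball by the uniform law on a tractable convex superset (built from wavelet conflict graphs, with Dirichlet-type marginals), an $\varepsilon$-net over the block sphere giving the $5^{b_l}$ entropy factor, and Bernstein for Rademacher--Dirichlet products, and then recombines with Gamma tail bounds for the radius.

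A second, independent problem is your aggregation step even if one grants your single-server scale. With sub-exponential parameter $K_1\asymp 2^{l/2}b_l/(n\varepsilon)$ and variance $\sigma_1^2\asymp 2^lb_l/(n\varepsilon)^2$, Bernstein at the threshold $s=\kappa\,2^{l/2}b_l/(\sqrt m\,n\varepsilon)$ gives exponent $-c\min(\kappa^2 b_l,\ \kappa\sqrt m)$; for small $m$ (in particular the CDP case $m=1$) the second branch dominates and yields only $e^{-c\kappa}$, a constant, not the required $e^{-b_l}\asymp N^{-c}$. The claimed conclusion only follows if the single-server exponential scale is $2^{l/2}/(n\varepsilon)$ with no $b_l$ loss and with a controlled ($e^{Cb_l}$-type, not $t^{b_l}$-type) prefactor --- which is exactly the sharp bound whose derivation is the missing core of the argument, and which the paper obtains through the radial--angular decomposition and stochastic-domination machinery described above.
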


The exponential decay in the block size $b_l$ shown in Lemma~\ref{lem:block-thresholding-tailbound-privacy-noise-only} is crucial for controlling the tail term in the oracle inequality and enables the use of small thresholds necessary for optimal adaptation. 

Analysis of the multiscale-oscillation noise is considerably more complex than for standard Laplace or Gaussian noise. An important step in the analysis is that the multiscale-oscillation noise mechanism admits a tractable decomposition as a Gamma-distributed `radius' and a uniformly distributed `direction' on the norm's unit ball:
\begin{equation*}
	(V_{lk}^{(j)})_{(l,k) \in V_{L^*}} \overset{d}{=} D^{(j)} U^{(j)}, \quad D^{(j)} \sim \Gamma\left(s_{L^*} + 1, \frac{1}{n\varepsilon}\right), \quad U^{(j)} \sim \mathcal{U}\left( \{ u: \|u\|_{V_{L^*}} \leq 1 \} \right),
\end{equation*}
where $s_{L^*} = \sum_{l=l_0}^{L^*} 2^l$ is the total number of detail coefficients, see e.g. \cite{hardt2010knorm}. Exploiting this decomposition, the analysis of the tail behavior boils down to of the averaged ${U}_{lB_j}^{(j)}$. This is possible by combining stochastic domination arguments where we compare with more tractable uniform distributions on convex compacts. Complete technical details of this analysis are provided in Section~\ref{sec:multiscale-oscillation-norm-properties} in the Supplementary Material.

Combining the oracle inequality with the exponential tail bounds yields the following performance guarantee for our estimator.

\begin{theorem}\label{thm:upper-bound-global-risk}
	For any $\alpha, p, q$ with $0 < \alpha < A$, $p \geq 2$, and $q \geq 1$,
	\begin{equation*}
		\sup_{f \in \mathcal{B}_{p,q}^\alpha(R)} \mathbb{E}_f \| \hat{f}^{\textsc{BTPW}} - f \|_2^2 \lesssim N^{-\frac{2\alpha}{2\alpha + 1}} + \left( \frac{\log N}{mn^2\varepsilon^2} \right)^{\frac{2\alpha}{2\alpha + 2}}.
	\end{equation*}
\end{theorem}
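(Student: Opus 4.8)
The plan is to pass to the wavelet-coefficient domain and decompose via Parseval's identity:
\begin{equation*}
\mathbb{E}_f\|\hat{f}^{\textsc{BTPW}}-f\|_2^2 = \sum_{r=1}^{2^{l_0}}\mathbb{E}_f\big(\hat{f}_{0r}^{\textsc{PW}}-f_{0r}\big)^2 + \sum_{l=l_0}^{L^*}\sum_{j\in\mathcal{J}_l}\mathbb{E}_f\big\|\eta_{\tau_l}^{b_l}\big(\hat{f}_{lB_{lj}}^{\textsc{PW}}\big)-f_{lB_{lj}}\big\|_2^2 + \sum_{l>L^*}\sum_{k=1}^{2^l}f_{lk}^2 .
\end{equation*}
The first sum runs over only $O(1)$ scaling coefficients, each carrying statistical variance $O(1/N)$ and, after the bounded-sensitivity privacy noise is averaged over the $m$ servers, privacy variance $O(1/(mn^2\varepsilon^2))$; hence this term is $O(1/N+1/(mn^2\varepsilon^2))$, which is of lower order in the standard regime $mn^2\varepsilon^2\gtrsim1$. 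For the last sum, the hypothesis $p\ge2$ places us in the homogeneous Besov zone, where the sequence-norm characterisation gives $\big(\sum_k f_{lk}^2\big)^{1/2}\lesssim 2^{-\alpha l}\|f\|_{\mathcal{B}^\alpha_{p,q}}$; since $L^*=\lceil\log N\rceil$ is taken so that $2^{L^*}\asymp N$, this yields $\sum_{l>L^*}\sum_k f_{lk}^2\lesssim 2^{-2\alpha L^*}\lesssim N^{-2\alpha}$, negligible compared with $N^{-\frac{2\alpha}{2\alpha+1}}$. So everything reduces to the middle sum.

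For each block write $\hat{f}_{lB_{lj}}^{\textsc{PW}}=f_{lB_{lj}}+Z_{lj}$ with $Z_{lj}=S_{lj}+\bar{V}_{lB_{lj}}$, where $S_{lj}=\hat{f}_{lB_{lj}}-f_{lB_{lj}}$ is the mean-zero statistical fluctuation and $\bar{V}_{lB_{lj}}$ is the aggregated privacy noise, independent of $S_{lj}$. Lemma~\ref{lem:private-block-thresholding-oracle} applied block by block gives
\begin{equation*}
\mathbb{E}_f\big\|\eta_{\tau_l}^{b_l}\big(\hat{f}_{lB_{lj}}^{\textsc{PW}}\big)-f_{lB_{lj}}\big\|_2^2 \le \min\{\|f_{lB_{lj}}\|_2^2,\,4\tau_l^2\} + 4\,\mathbb{E}\|Z_{lj}\|_2^2\mathbbm{1}\{\|Z_{lj}\|_2>\tau_l\}.
\end{equation*}
For the \emph{bias part}, I would sum the elementary inequality $\sum_j\min\{a_j,b\}\le\min\{\sum_j a_j,\,|\mathcal{J}_l|\,b\}$, using $|\mathcal{J}_l|\asymp 2^l/b_l$, $b_l\asymp\log N$ and $\sum_j\|f_{lB_{lj}}\|_2^2=\|f_{l\cdot}\|_2^2\lesssim 2^{-2\alpha l}$, to obtain at each level $\sum_j\min\{\|f_{lB_{lj}}\|_2^2,4\tau_l^2\}\lesssim\min\{2^{-2\alpha l},\ 2^l/N+2^{2l}\log N/(mn^2\varepsilon^2)\}$; here the crucial arithmetic point is that the factor $b_l\asymp\log N$ dividing $\tau_l^2$ in $|\mathcal{J}_l|\,\tau_l^2\asymp 2^l\tau_l^2/b_l$ cancels exactly one of the two logarithmic factors carried by $\tau_l^2$ in \eqref{eq:block-threshold-choice}. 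Summing over $l$ and splitting at the (analysis-only, $\alpha$-dependent) level $\bar l$ where the two quantities inside the minimum coincide — the series being geometric on either side of $\bar l$ — produces $2^{-2\alpha\bar l}\asymp N^{-\frac{2\alpha}{2\alpha+1}}+(\log N/(mn^2\varepsilon^2))^{\frac{2\alpha}{2\alpha+2}}$, exactly the target rate (capping $\bar l$ at $L^*$ when needed, which happens only when the privacy term is already dominated by the statistical one). The estimator uses no knowledge of $\bar l$, i.e. of $(\alpha,p,q)$: adaptivity is supplied entirely by thresholding.

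For the \emph{tail part} I would use $\|Z_{lj}\|_2^2\le 2\|S_{lj}\|_2^2+2\|\bar{V}_{lB_{lj}}\|_2^2$ and $\{\|Z_{lj}\|_2>\tau_l\}\subseteq\{\|S_{lj}\|_2>\tau_l/2\}\cup\{\|\bar{V}_{lB_{lj}}\|_2>\tau_l/2\}$, which by independence of $S_{lj}$ and $\bar{V}_{lB_{lj}}$ splits the expectation into a pure statistical tail $\mathbb{E}\|S_{lj}\|_2^2\mathbbm{1}\{\|S_{lj}\|_2>\tau_l/2\}$, a pure privacy tail $\mathbb{E}\|\bar{V}_{lB_{lj}}\|_2^2\mathbbm{1}\{\|\bar{V}_{lB_{lj}}\|_2>\tau_l/2\}$, and two cross terms of the form (variance)$\times$(tail probability). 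Since $\tau_l/2\ge\tfrac12\sqrt{\kappa_2}\,2^{l/2}\log N/(\sqrt{m}n\varepsilon)$, which for $\kappa_2$ large dominates the cutoff $\kappa\,2^{l/2}b_l/(\sqrt{m}n\varepsilon)$ of Lemma~\ref{lem:block-thresholding-tailbound-privacy-noise-only}, the privacy tail is $\lesssim 2^l b_l^2 e^{-b_l}/(mn^2\varepsilon^2)$; summing over $j\in\mathcal{J}_l$ and $l\le L^*$ leaves $\lesssim (mn^2\varepsilon^2)^{-1}\sum_l 2^{2l}b_l e^{-b_l}\lesssim (mn^2\varepsilon^2)^{-1}$ once the block-size constant is taken large enough that $e^{-b_l}$ outweighs $2^{2l}\lesssim N^2$ at $l=L^*$, and this is of lower order than the privacy term of the rate for $mn^2\varepsilon^2\gtrsim1$. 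The statistical tail is handled exactly as in the classical density block-thresholding argument: using the uniform bound $\|f\|_\infty\lesssim R$ on the Besov ball (automatic for $\alpha>1/p$, which covers the range $\alpha>1/2\ge1/p$ underlying Theorem~\ref{thm:global-FDP-rate}) one has $\mathbb{E}\|S_{lj}\|_2^2\le CR\,b_l/N$ uniformly in $(l,j)$, so for $\kappa_1$ large a vector Bernstein inequality makes $\sum_{l,j}\mathbb{E}\|S_{lj}\|_2^2\mathbbm{1}\{\|S_{lj}\|_2>\tau_l/2\}\lesssim N^{-\frac{2\alpha}{2\alpha+1}}$; the cross terms combine this Bernstein estimate with $\mathbb{P}(\|\bar{V}_{lB_{lj}}\|_2>\tau_l/2)\lesssim e^{-b_l}$ (Markov applied to Lemma~\ref{lem:block-thresholding-tailbound-privacy-noise-only}) and are negligible. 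Adding the scaling, thresholded-detail, and discarded-level contributions yields the claim; that $\hat{f}^{\textsc{BTPW}}$ is $(\varepsilon,0)$-FDP is immediate from Lemma~\ref{lem:sensitivity-osc-norm} and post-processing.

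I expect the main obstacle to be the tail part, and within it the simultaneous calibration of the thresholds $\tau_l$: they must be large enough to suppress the aggregated privacy noise at the resolution-dependent scale of Lemma~\ref{lem:block-thresholding-tailbound-privacy-noise-only} \emph{and} large enough to suppress the statistical fluctuation at scale $\sqrt{\log N/N}$, yet small enough that the bias sum $\sum_{l,j}\min\{\|f_{lB_{lj}}\|_2^2,4\tau_l^2\}$ collapses to $(\log N/(mn^2\varepsilon^2))^{\frac{2\alpha}{2\alpha+2}}$ rather than to something with a superfluous logarithm — the absorption of one of the two logarithms of $\tau_l^2$ by the block size $b_l\asymp\log N$ being precisely what makes this compatible. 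A secondary difficulty is keeping the statistical tail under control at the highest resolution levels $l\asymp\log N$, where $2^{l/2}\asymp\sqrt N$ and the naive per-coefficient variance bound is no longer useful; this is where the uniform sup-norm control of $f$ over the Besov ball enters.
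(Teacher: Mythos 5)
Your proposal is correct and follows essentially the same route as the paper's proof: Parseval decomposition into scaling, thresholded-detail, and truncated terms, the blockwise oracle inequality of Lemma~\ref{lem:private-block-thresholding-oracle}, the bias/variance balance at an analysis-only level with the block size $b_l\asymp\log N$ absorbing one logarithm from $\tau_l^2$, and separate tail control of the privacy noise (Lemma~\ref{lem:block-thresholding-tailbound-privacy-noise-only}) and the statistical noise (a Bernstein-type bound, Lemma~\ref{lem:non-private-block-thresholding-tailbound} in the paper). Your extra care with the cross terms in the tail split and with the sup-norm control of $f$ at the highest resolution levels only makes explicit what the paper handles tersely via its union bound and auxiliary lemma.
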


Since the estimator is a post-processing of an $(\varepsilon,0)$-FDP protocol, it inherits the same privacy guarantee. The theorem shows that the estimator achieves the optimal adaptive rate for the global risk simultaneously over all Besov spaces $\cB_{pq}^\alpha(R)$ with $0 < \alpha < A$, $p \geq 2$ and $q \geq 1$. Taking $A$ large enough, we see that the estimator attains the optimal adaptive rate of Theorem \ref{thm:global-FDP-rate}. We defer the proof to Section \ref{sec:proof-upper-bound-thms} of the Supplementary Material.

\begin{remark}
	As an alternative to soft-thresholding, one can consider the hard-thresholding operator. The estimator is then given by
	\begin{equation}\label{eq:private-block-thresholding-estimator}
		\breve{f}^{\textsc{BTPW}}(t) = \sum_{r=1}^{2^{l_0}} \hat{f}_{0r}^{\textsc{PW}} \phi_r(t) + \sum_{l=l_0}^{L^*} \sum_{j \in \mathcal{J}_l} \mathbbm{1} \left\{ \|\hat{f}_{lB_{lj}}^{\textsc{PW}}\|_2 \geq \tau_l \right\} \psi_{lB_j}^T(t) \hat{f}_{lB_{lj}}^{\textsc{PW}}.
	\end{equation}
	A similar proof yields the same upper bound as in Theorem \ref{thm:upper-bound-global-risk}. 
\end{remark}

\subsubsection{Adaptive pointwise risk estimation}

For pointwise risk with unknown $\alpha$ and $p$, we use term-by-term thresholding (setting $b_l = 1$ for all $l$). While block thresholding can  achieve optimal adaptive pointwise rates in the non-private setting \cite{cai1999adaptive}, term-by-term thresholding offers superior performance under the multiscale-oscillation norm privacy mechanism. We expand on the reason for this below, in Remark \ref{rmk:term-by-term-versus-pt-eval}. 

The pointwise estimator is given by
\begin{equation}\label{eq:private-pointwise-soft-thresholding-estimator}
	\hat{f}^{\textsc{TTPW}}(t_0) = \sum_{r=1}^{2^{l_0}} \hat{f}_{0r}^{\textsc{PW}} \phi_r(t_0) + \sum_{l=l_0}^{L^*} \sum_{k=1}^{2^l} \eta_{\tau_l}\left(\hat{f}_{lk}^{\textsc{PW}}\right) \psi_{lk}(t_0),
\end{equation}
where $\eta_{\tau_l}(\cdot)$ denotes the soft-thresholding operator applied coordinate-wise.

The pointwise setting requires different threshold calibration. We set:
\begin{equation}\label{eq:pointwise-threshold-choice}
	\tau_l = \sqrt{\kappa_{1\psi} \frac{\log N}{N} + \kappa_{2\psi} \frac{2^l L_{m,N}}{mn^2\varepsilon^2}},
\end{equation}
where $\kappa_{1\psi}, \kappa_{2\psi} > 0$ are constants depending on the choise of wavelets and
\begin{equation*}
	L_{m,N} = \begin{cases}
		\log (N), & m \geq \log (N), \\
		\frac{\log^2 (N)}{m}, & m < \log (N).
	\end{cases}
\end{equation*}

The factor $L_{m,N}$ captures how server distribution affects noise tail behavior around the threshold. When $m \geq \log N$, averaging across many servers transforms the tail decay from sub-exponential (for individual servers) to sub-Gaussian rates, reducing the adaptation penalty from up to $\log^2 N$ to $\log N$. When $m < \log N$, averaging effects are insufficient and the sub-exponential tails necessitate larger thresholds. This is captured by the following tail bound for the pointwise setting, for which we defer its proof to Section \ref{sec:multiscale-oscillation-norm-properties} of the Supplementary Material.

\begin{lemma}\label{lem:term-thresholding-tailbound-privacy-noise-only}
	Consider $\bar{V}_{lk} = \frac{1}{m} \sum_{j=1}^m V_{lk}^{(j)}$ for $(l,k) \in V_{L^*}$. For all $l \in \{l_0, \dots, L^*\}$, $k = 1, \dots, 2^l$, and $t \ge c_0 \frac{2^{l/2}}{\sqrt{m} n \varepsilon}$,
	\begin{equation*}
		\mathbb{E} \bar{V}_{lk}^2 \mathbbm{1}\left\{ |\bar{V}_{lk}| \geq t \right\} \lesssim \left( t^2 + \frac{2^l}{m n^2 \varepsilon^2} \right) \exp \left( - c_1 m \min \left\{ \frac{t^2 n^2 \varepsilon^2}{2^l}, \frac{t n \varepsilon}{2^{l/2}} \right\} \right)
	\end{equation*}
	for constants $c_0, c_1 > 0$.
\end{lemma}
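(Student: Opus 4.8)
The plan is to exploit the radial decomposition $(V^{(j)}_{lk})_{(l,k)\in V_{L^*}} \overset{d}{=} D^{(j)} U^{(j)}$ with $D^{(j)} \sim \Gamma(s_{L^*}+1, 1/(n\varepsilon))$ and $U^{(j)}$ uniform on the unit $\|\cdot\|_{V_{L^*}}$-ball, and to reduce the tail estimate for the single averaged coordinate $\bar V_{lk}$ to a Bernstein-type concentration inequality. First I would record the marginal behavior of one coordinate of $V^{(j)}$: since $\|\cdot\|_{V_{L^*}}$ controls the coefficient $u_{lk}$ (up to the scale $2^{l/2}$ coming from the normalization of $\psi_{lk}$) — this is where a bound of the form $|u_{lk}| \le c\, 2^{l/2} \|u\|_{V_{L^*}}$ is needed, which I would extract from the definition~\eqref{eq:osc-norm} by testing against a suitably oscillating $g$ localized near the support of $\psi_{lk}$ — the coordinate $V^{(j)}_{lk}$ is stochastically dominated (in $|\cdot|$) by $c\,2^{l/2}\, D^{(j)}/s_{L^*}$-type quantities, or more precisely one shows $V^{(j)}_{lk}$ is sub-exponential with variance proxy $\asymp 2^l/(n^2\varepsilon^2)$ and scale parameter $\asymp 2^{l/2}/(n\varepsilon)$. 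The concentration of $U^{(j)}$'s coordinate around $0$ — uniform-on-a-convex-body coordinates being sub-Gaussian-ish on the bulk but only as good as the body's geometry allows — combined with the $\Gamma(s_{L^*}+1,1/(n\varepsilon))$ radius (which concentrates around $s_{L^*}/(n\varepsilon) \asymp 2^{L^*}/(n\varepsilon)$) yields these parameters; the factor $2^{l/2}$ rather than $2^{L^*/2}$ comes from the direction being spread over all $s_{L^*}$ coordinates.

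The second step is the averaging. Writing $\bar V_{lk} = m^{-1}\sum_{j=1}^m V^{(j)}_{lk}$ as a sum of $m$ i.i.d.\ mean-zero sub-exponential variables, Bernstein's inequality gives
\begin{equation*}
	\mathbb{P}\!\left( |\bar V_{lk}| \ge t \right) \lesssim \exp\!\left( - c\, m \min\!\left\{ \frac{t^2 n^2 \varepsilon^2}{2^l}, \frac{t n \varepsilon}{2^{l/2}} \right\} \right),
\end{equation*}
which is exactly the exponent appearing in the statement: the quadratic (sub-Gaussian) branch is active for $t \lesssim 2^{l/2}/(n\varepsilon)$ and the linear (sub-exponential) branch for larger $t$. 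The threshold $t \ge c_0\, 2^{l/2}/(\sqrt m n\varepsilon)$ in the hypothesis guarantees we are above the standard-deviation scale $\sqrt{\mathrm{Var}(\bar V_{lk})} \asymp 2^{l/2}/(\sqrt m n\varepsilon)$, so the tail probability is genuinely small and the "leading-order" mass is negligible.

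The final step converts the tail probability into the truncated-second-moment bound by the layer-cake / integration-by-parts identity
\begin{equation*}
	\mathbb{E}\,\bar V_{lk}^2 \mathbbm{1}\{|\bar V_{lk}| \ge t\} = t^2 \mathbb{P}(|\bar V_{lk}| \ge t) + \int_t^\infty 2s\, \mathbb{P}(|\bar V_{lk}| \ge s)\, ds,
\end{equation*}
and then estimating the integral: on the range where the sub-Gaussian branch dominates one gets a contribution $\lesssim (2^l/(mn^2\varepsilon^2))\exp(-cm t^2 n^2\varepsilon^2/2^l)$, and on the sub-exponential range a contribution $\lesssim (2^l/(mn^2\varepsilon^2))\exp(-cm t n\varepsilon/2^{l/2})$, both absorbed into the claimed form $(t^2 + 2^l/(mn^2\varepsilon^2))\exp(-c_1 m \min\{\cdot,\cdot\})$. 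I expect the main obstacle to be the first step: precisely pinning down the sub-exponential parameters of a single coordinate $V^{(j)}_{lk}$ of the multiscale-oscillation mechanism. The norm $\|\cdot\|_{V_{L^*}}$ is defined variationally through oscillation-constrained test functions, so controlling how a single wavelet coefficient sits inside its unit ball requires understanding the geometry of that ball — in particular a two-sided comparison of $\{u : \|u\|_{V_{L^*}}\le 1\}$ with more tractable convex bodies (e.g.\ weighted $\ell_\infty$ or $\ell_1$ balls with level-dependent weights $2^{-l/2}$), exactly the stochastic-domination strategy the text alludes to. Once that comparison is in place and transferred through the $D^{(j)} U^{(j)}$ decomposition, the remaining steps are the routine Bernstein and layer-cake computations sketched above; the details are carried out in Section~\ref{sec:multiscale-oscillation-norm-properties} of the Supplementary Material.
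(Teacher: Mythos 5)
Your overall architecture (the radial decomposition $V^{(j)} \overset{d}{=} D^{(j)}U^{(j)}$, comparison of the $\|\cdot\|_{V_{L^*}}$-ball with tractable convex bodies, a Bernstein step, and the layer-cake identity) is the right one and is essentially how the paper proceeds: its proof of this lemma is simply an application of the general block result, Lemma \ref{lem:osc-exponential-mechanism-tails}, with $b_L=1$ and $\theta=n\varepsilon$, the extra $\tfrac{2^{4L^*}}{m\theta^2}e^{-c_2 2^{L^*}}$ term being absorbed. However, your proposal has a genuine gap exactly at the step you yourself flag as the crux: the claim that a single coordinate $V^{(j)}_{lk}=D^{(j)}U^{(j)}_{lk}$ is sub-exponential with variance proxy $\asymp 2^l/(n^2\varepsilon^2)$ and scale $\asymp 2^{l/2}/(n\varepsilon)$ is asserted but not established, and the justification you sketch does not suffice. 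The coordinate-wise bound $|u_{lk}|\le c\,2^{l/2}\|u\|_{V_{L^*}}$ (obtained by testing with $g\propto\psi_{lk}$) only places the unit ball inside a weighted $\ell_\infty$ box; a uniform draw from such a box has $|U_{lk}|$ of typical size $2^{l/2}$, which is too large by a factor of order $2^{L^*}$. The correct typical size $2^{l/2}/2^{L^*}$ comes from an aggregate $\ell_1$-type constraint coupling $\gtrsim 2^{L^*}$ coordinates simultaneously, which the paper extracts through the bounded-overlap (conflict-graph) construction in the proof of Lemma \ref{lem:osc-ball-concentration}, followed by the stochastic-domination comparison (Lemma \ref{lem:stochastic-domination-convex-bodies}) and a Rademacher--Dirichlet/Beta representation fed into Bernstein (Lemma \ref{lem:sub-exponential-zeta-beta-product}). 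Naming ``weighted $\ell_\infty$ or $\ell_1$ balls'' does not carry this out, and deferring the details to Section \ref{sec:multiscale-oscillation-norm-properties} of the Supplementary Material is circular for a blind proof.

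A second, smaller issue: even granted the right geometry for $U^{(j)}_{lk}$, passing the sub-exponential parameters through the product with the unbounded radius $D^{(j)}\sim\Gamma(s_{L^*}+1,1/(n\varepsilon))$ needs an explicit argument. A naive bound using $|U^{(j)}_{lk}|\lesssim 1$ gives a $\psi_1$-scale of order $2^{L^*}/(n\varepsilon)$, far too large; one must either truncate the radius (the paper splits on $\{\max_j D^{(j)}\ge(1+\delta)\mu\}$ and uses Cauchy--Schwarz together with the Gamma tail bound, which is precisely the source of the negligible $2^{4L^*}e^{-c2^{L^*}}$ remainder) or verify the mixed moment generating function $\E\bigl[(1-\lambda U^{(j)}_{lk}/(n\varepsilon))^{-(s_{L^*}+1)}\bigr]$ directly, with the constant in the admissible range of $\lambda$ taken small relative to the constant in the tail of $U^{(j)}_{lk}$. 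Once these two points are supplied, your Bernstein-across-servers plus layer-cake conclusion does deliver the stated bound; the remaining difference from the paper is then only one of packaging, since the paper first averages the directions across servers and handles the radii separately, rather than establishing per-server, per-coordinate sub-exponentiality.
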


The next theorem confirms that the pointwise estimator achieves optimal adaptive rates. Its proof follows by the oracle inequality in Lemma \ref{lem:private-block-thresholding-oracle} (applied with $d=1$) combined with the tail bound in Lemma \ref{lem:term-thresholding-tailbound-privacy-noise-only}, and is provided in Section \ref{sec:proof-upper-bound-thms}.

\begin{theorem}\label{thm:upper-bound-pointwise}
	Let $t_0 \in (0,1)$ be given and consider the estimator $\hat{T} := \hat{f}^{\textsc{TTPW}}(t_0)$. For any $p \in [2,\infty]$, $q \in [1,\infty]$, and $\alpha$ such that $\nu := \alpha - 1/p > 1/2$ and $\alpha < A$, it holds that
	\begin{equation*}
		\sup_{f \in \mathcal{B}_{p,q}^{\alpha}(R)} \mathbb{E}_f |\hat{T} - f(t_0)|^2 \lesssim \left(\frac{\log N}{N}\right)^{\frac{2\nu}{2\nu + 1}} + \left(\frac{L_{m,N}}{mn^2\varepsilon^2}\right)^{\frac{2\nu}{2\nu + 2}}.
	\end{equation*}
\end{theorem}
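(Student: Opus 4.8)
The plan is to bound $\E_f|\hat T - f(t_0)|^2$ by decomposing the wavelet expansion of $f(t_0)$ into a low-resolution part, a mid-resolution part handled by the thresholding oracle inequality, and a high-resolution tail controlled by the truncation at $L^* = \lceil \log N\rceil$. First I would write $f(t_0) = \sum_{r} f_{0r}\phi_r(t_0) + \sum_{l\ge l_0}\sum_k f_{lk}\psi_{lk}(t_0)$ and split the error of $\hat T = \hat f^{\textsc{TTPW}}(t_0)$ into (i) the error of the linear (father-coefficient) part, which is standard since there are only $2^{l_0}$ terms with noise variance of order $1/N$ plus averaged privacy noise of order $1/(mn^2\varepsilon^2)$; (ii) the thresholded detail part $\sum_{l=l_0}^{L^*}\sum_k\big(\eta_{\tau_l}(\hat f_{lk}^{\textsc{PW}}) - f_{lk}\big)\psi_{lk}(t_0)$; and (iii) the truncation bias $\sum_{l>L^*}\sum_k f_{lk}\psi_{lk}(t_0)$, which for $f\in\mathcal B^\alpha_{p,q}(R)$ with $\nu = \alpha-1/p>1/2$ is of order $2^{-\nu L^*} \lesssim N^{-\nu}$, negligible against the claimed rate. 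For term (ii), I would bring in the Cauchy--Schwarz / localization structure: since $|\psi_{lk}(t_0)| \le 2^{l/2}\|\psi\|_\infty$ and for each $l$ only $O(1)$ indices $k$ have $\psi_{lk}(t_0)\ne 0$ (compact support of $\psi$), the contribution of level $l$ to the squared error is $\lesssim 2^{l}\,\E\big(\eta_{\tau_l}(\hat f_{lk}^{\textsc{PW}}) - f_{lk}\big)^2$ for the relevant $k=k_l$.

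Next I would apply the oracle inequality of Lemma~\ref{lem:private-block-thresholding-oracle} with $d=1$, $Y = \hat f_{lk}^{\textsc{PW}} = f_{lk} + Z_{lk}$ where $Z_{lk} = (\hat f_{lk} - f_{lk}) + \bar V_{lk}$ is the sum of statistical and aggregated privacy noise. This gives, for each level $l$ and the active index $k_l$,
\begin{equation*}
\E\big(\eta_{\tau_l}(\hat f_{lk_l}^{\textsc{PW}}) - f_{lk_l}\big)^2 \le \min\{f_{lk_l}^2,\,4\tau_l^2\} + 4\,\E Z_{lk_l}^2\mathbbm 1\{|Z_{lk_l}|>\tau_l\}.
\end{equation*}
The tail term splits: the statistical part $\hat f_{lk} - f_{lk}$ is an average of $N$ bounded (by $2^{l/2}\|\psi\|_\infty$) i.i.d. terms and is sub-Gaussian-ish with variance $\lesssim 2^l/N$, so $\E(\hat f_{lk}-f_{lk})^2\mathbbm 1\{|\hat f_{lk}-f_{lk}|>\tau_l/2\} \lesssim (2^l/N)N^{-c}$ provided $\tau_l^2 \gtrsim 2^l\log N/N$, which holds by the first term in the threshold choice \eqref{eq:pointwise-threshold-choice} since $2^l\le N$; whereas the privacy part $\bar V_{lk}$ is controlled by Lemma~\ref{lem:term-thresholding-tailbound-privacy-noise-only} with $t = \tau_l/2$: the exponent $-c_1 m\min\{t^2n^2\varepsilon^2/2^l, tn\varepsilon/2^{l/2}\}$ is, by the choice $\tau_l^2 \gtrsim 2^l L_{m,N}/(mn^2\varepsilon^2)$, at least $-c_1 m\min\{L_{m,N}, \sqrt{mL_{m,N}}\}$; plugging in $L_{m,N}$ from \eqref{eq:elbow-effect-factor} one checks this is $\lesssim -\log N$ in both regimes $m\ge\log N$ and $m<\log N$, so the privacy tail term is $\lesssim (2^l/(mn^2\varepsilon^2))N^{-c}$, negligible. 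Hence effectively $\E\big(\eta_{\tau_l}(\hat f_{lk_l}^{\textsc{PW}}) - f_{lk_l}\big)^2 \lesssim \min\{f_{lk_l}^2, \tau_l^2\}$ up to negligible remainders.

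Finally I would sum $\sum_{l=l_0}^{L^*} 2^l \min\{f_{lk_l}^2, \tau_l^2\}$ using the Besov membership. With $\tau_l^2 \asymp \log N/N + 2^l L_{m,N}/(mn^2\varepsilon^2)$, split at a level $l^\dagger$ where the two parts of $\tau_l^2$ balance, and within the "variance-dominated" regime $l\le l^\dagger$ sum $2^l\tau_l^2$ as a geometric-type series, while in the "bias-dominated" regime $l>l^\dagger$ use that $f\in\mathcal B^\alpha_{p,q}$ forces $\sum_k|f_{lk}|^p\lesssim 2^{-l(\alpha p - p/2 + 1)}R^p$, hence $2^l f_{lk_l}^2 \lesssim 2^{-2\nu l}$ at the active coefficient (this is the standard embedding computation giving the exponent $\nu = \alpha - 1/p$); summing the geometric tail over $l>l^\dagger$ gives $2^{-2\nu l^\dagger}$, and balancing with the variance side reproduces exactly $(\log N/N)^{2\nu/(2\nu+1)} + (L_{m,N}/(mn^2\varepsilon^2))^{2\nu/(2\nu+2)}$. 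I expect the main obstacle to be the bias-side summation: one must carefully handle the two contributions to $\tau_l$ separately (choosing potentially two different transition levels, one for each), and, more delicately, verify the sub-optimal-$p$ Besov embedding bound $\sup_k|f_{lk}| \lesssim 2^{-l(\alpha - 1/p + 1/2)}$ and confirm that the worst case is realized when a single coefficient carries all the mass at each level so that the pointwise (rather than $L_2$) aggregation is tight — this is precisely where the $\nu = \alpha - 1/p$ (as opposed to $\alpha$) rate, and thus the sharpness matching Theorem~\ref{thm:pointwise_cost_of_adaptation}, comes from.
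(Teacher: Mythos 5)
Your proposal follows essentially the same route as the paper's proof: the same three-part decomposition (father part, thresholded detail part up to $L^*$, truncation bias), the oracle inequality of Lemma~\ref{lem:private-block-thresholding-oracle} with $d=1$, the privacy tail bound of Lemma~\ref{lem:term-thresholding-tailbound-privacy-noise-only} checked in both regimes of $L_{m,N}$, the same threshold calibration, and the same bias--variance balancing over levels producing the two terms of the rate. Two local steps need repair, though neither changes the architecture. First, your statement that ``the contribution of level $l$ to the squared error is $\lesssim 2^l\,\E(\eta_{\tau_l}(\hat f^{\textsc{PW}}_{lk_l})-f_{lk_l})^2$'', followed by summing these over $l$, silently converts the square of a sum over levels into a sum of squared level contributions; the noise (in particular the multiscale-oscillation privacy noise) is correlated across levels, so this is not automatic. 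The paper instead applies Minkowski/Cauchy--Schwarz to get $\bigl(\sum_{l,k}(\E(\cdot)^2)^{1/2}|\psi_{lk}(t_0)|\bigr)^2$ and then uses that the summands $2^{l/2}\sqrt{\min(f_{lk}^2,\tau_l^2)}$ are geometric (increasing then decreasing), so the sum is comparable to its maximum; your sum-of-squares bound is valid only through this same geometric argument, which you should make explicit — a blunt Cauchy--Schwarz over the $\asymp\log N$ levels would cost an extra $\log N$ and miss the stated statistical term.

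Second, your treatment of the statistical tail is internally inconsistent: you assign the noise $\hat f_{lk}-f_{lk}$ variance $\lesssim 2^l/N$ and then require $\tau_l^2\gtrsim 2^l\log N/N$, claiming this holds ``by the first term in the threshold choice since $2^l\le N$''; but the first term of \eqref{eq:pointwise-threshold-choice} is $\kappa_{1\psi}\log N/N$, which is \emph{smaller} than $2^l\log N/N$, so it does not supply the condition you invoke. The fix is the sharper variance bound $\Var(\hat f_{lk}-f_{lk})\le N^{-1}\int\psi_{lk}^2 f\lesssim \|f\|_\infty/N\lesssim 1/N$ (boundedness of $f$ follows from the Besov embedding when $\nu>1/2$), under which a threshold component of order $\sqrt{\log N/N}$ suffices and the tail contribution is $O(N^{-1})$ up to logarithmic factors; this is precisely the role of the paper's Lemma~\ref{lem:non-private-block-thresholding-tailbound}, which you should cite (or reprove) in place of your condition. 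With these two repairs your argument coincides with the paper's proof and yields the claimed bound.
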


The theorem confirms that pointwise adaptive estimation incurs logarithmic penalties in both the statistical term (changing $N^{-2\nu/(2\nu+1)}$ to $(N/\log N)^{-2\nu/(2\nu+1)}$, as in the non-private setting) and the privacy term (through $L_{m,N}$). 

\begin{remark}\label{rmk:term-by-term-versus-pt-eval}
	When $m=1$ (the CDP setting), the plug-in block thresholding estimator $\hat{f}^{\textsc{BTPW}}(t_0)$ performs equally well as the term-by-term approach, since averaging across machines provides no reduction in the heaviness of the tails of the privacy noise.
\end{remark}

\section{Deriving the adaptation lower bounds}\label{sec:lower-bound}

This section establishes fundamental lower bounds that characterize the unavoidable cost of adaptation under differential privacy constraints. We prove that any differentially private estimator must pay additional logarithmic penalties beyond the classical non-private adaptation cost, and this penalty is inherent to the privacy requirement rather than an artifact of any particular estimation procedure.

Our analysis separates the pointwise and global estimation settings, which each exhibit different behavior and require different proof strategies. The pointwise case, treated in Section \ref{sec:pointwise-lower-bound}, requires novel modifications to classical constrained risk inequality techniques. The global case, covered in Section \ref{sec:global-lower-bound}, is proven through a dimension-free Fisher information bound that captures the interaction between adaptation and privacy across multiple regularity levels.

Our impossibility results extend beyond the one-shot FDP protocols used in our upper bounds. The lower bounds apply to sequential protocols, which are known to show improvement in certain LDP settings (see e.g. \cite{acharya2022interactive,butucea2023interactive,butucea2025nonparametric} and references therein). In sequential protocols, servers communicate in a single round through a chain (server $j$ sends to server $j+1$, who sends to server $j+2$, etc.), allowing each server to condition on message from a previous server. Hence, our lower bounds demonstrate that the logarithmic adaptation penalties are fundamental limitations that cannot be circumvented even when servers can leverage information from earlier steps in the communication chain.

For completeness, we formalize the sequential FDP setting below: servers communicate in a chain where server $j$ receives message $T^{(j-1)}$ from the previous server and computes $T^{(j)}$ based on its local data $X^{(j)}$ and the received message. Each step must satisfy local differential privacy with respect to the server's own data.

\begin{definition}[Sequential FDP]\label{def:federated_differential_privacy_chained}
A sequential distributed protocol is $(\varepsilon,\delta)$-FDP if each server $j$'s transcript $T^{(j)}$, conditioned on any fixed input $T^{(j-1)}=t$, satisfies: for datasets $x,x' \in [0,1]^n$ differing in one observation,
\begin{equation*}
	\mathbb{P}\left( T^{(j)} \in A \mid X^{(j)} = x, T^{(j-1)} =t \right) \leq e^\varepsilon \mathbb{P}\left( T^{(j)} \in A \mid X^{(j)} = x', T^{(j-1)} = t \right) + \delta.
\end{equation*}
\end{definition}

We note that, unlike previous work, we do not require the conditional distributions of the transcripts to be dominated. This condition, which mandates that the distribution of the current transcript---conditional on the data and all previous transcripts---be dominated by a reference measure for every possible realization of the conditioning variables, is typically assumed in the literature studying global risk metrics (e.g. $L_2$-error) \cite{barnes2020fisher,Cai2024FL-NP-Regression, xue2024optimalestimationprivatedistributed}, either explicitly or implicitly by restricting the transcript space to take values in a countable space. In Section \ref{sec:properties-of-induced-fisher-information-matrix}, we demonstrate that our lower bound techniques remain valid without these restrictive assumptions. 

In the remainder of this section, for pointwise risk, let $\mathcal{T}(\varepsilon,\delta)$ include sequential $(\varepsilon,\delta)$-FDP protocols, and let $\cF(\varepsilon,\delta)$ denote the corresponding class for global risk. We note that this forms a strictly larger class than the one-shot protocols considered in Definition \ref{def:federated_differential_privacy}.

\subsection{Adaptation lower bound for pointwise estimation}\label{sec:pointwise-lower-bound}

Our approach builds upon the classical constrained risk inequality line of thought developed in \cite{brown1996constrained}, which establishes adaptation costs by showing that improved performance at one function necessarily creates worse performance elsewhere in the parameter space. 

Differential privacy fundamentally alters such an analysis in two important ways. First, privacy constraints limit the distinguishability between probability measures, making total variation distance a natural distance to capture distinguishability, as observed by for example \cite{steinberger2020geometrizing}. Second, the standard change-of-measure techniques used in classical proofs must be replaced with privacy-specific methods that capture how the randomness introduced by the privacy mechanism affects distinguishability.

Our key technical contribution is the a constrained risk inequality for federated differential privacy, which we derive in a general setting in Section \ref{sec:private-constrained-risk-inequality}, in the form of Lemma \ref{lem:private-constrained-risk}. This lemma has general consequences for super-efficient private estimation which might be of independent interest (see for an illustration Example \ref{example}). The lower bound on the pointwise risk is established in Section \ref{sec:pointwise-risk-lower-bound-subsection} by combining Lemma \ref{lem:private-constrained-risk} with arguments specific to the adaptive setting.

\subsubsection{A differentially private constrained risk inequality}\label{sec:private-constrained-risk-inequality}

We formalize the super-efficiency trade-off between differential privacy and risk in the following lemma, which we state here in the simpler $(\varepsilon,0)$-FDP case. The general $(\varepsilon,\delta)$-FDP version appears in Section \ref{sec:pointwise-risk-lower-bound-proofs} of the Supplementary Material, together with its proof.

\begin{lemma}\label{lem:private-constrained-risk}
Consider a model $\{P_f \, : \, f \in \Theta\}$ on $(\mathcal{X},\mathscr{X})$ indexed by a semi-metric space $(\Theta,\mathrm{d})$, and $f,g \in \Theta$ such that $\mathrm{d}(f, g) \geq \Delta$ for some $\Delta > 0$. Consider servers $j=1,\ldots,m$ each with i.i.d. samples $X_1^{(j)},\ldots,X_n^{(j)}$ with distribution $P_h$ for $h \in \Theta$. 

If an $(\varepsilon,0)$-FDP estimation protocol $\hat{T}$ on the basis of $(X^{(j)}_i)_{i=1,\dots,n}^{j=1,\dots,m}$ satisfies 
\begin{align*}
\mathbb{E}_f \, \mathrm{d}(\hat{T}, f)^2 &\leq \gamma^2 \Delta^2 \quad \text{for some } \gamma > 0,
\end{align*}
then
\begin{align*}
\mathbb{E}_g \, \mathrm{d}(\hat{T}, g)^2 \geq \frac{\Delta^2}{4} \left[1 - 2 \exp \left( m (\bar{\varepsilon} \wedge \bar{\varepsilon}^2) + \log \gamma \right) \right],
\end{align*}
where $\bar{\varepsilon} = 6 n \varepsilon \|P_f - P_g \|_{\text{TV}}$.
\end{lemma}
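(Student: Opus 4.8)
\textbf{Proof plan for Lemma \ref{lem:private-constrained-risk}.}

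The plan is to adapt the classical constrained risk inequality of \cite{brown1996constrained} to the privacy setting, replacing the likelihood-ratio change of measure with a total-variation-based bound that exploits the privacy contraction. First I would observe that, by the triangle inequality and $\mathrm{d}(f,g) \geq \Delta$, on the event $\{\mathrm{d}(\hat{T},g) < \Delta/2\}$ we have $\mathrm{d}(\hat{T},f) > \Delta/2$, so that
\begin{equation*}
	\mathbb{E}_g \, \mathrm{d}(\hat{T},g)^2 \geq \frac{\Delta^2}{4} \, \mathbb{P}_g\!\left( \mathrm{d}(\hat{T},g) \geq \Delta/2 \right) \geq \frac{\Delta^2}{4}\left[ 1 - \mathbb{P}_g\!\left( \mathrm{d}(\hat{T},f) > \Delta/2 \right)^{c} \right],
\end{equation*}
so it suffices to upper bound $\mathbb{P}_g(\mathrm{d}(\hat{T},f) \leq \Delta/2)$; equivalently, I want to show $\mathbb{P}_g(\mathrm{d}(\hat{T},f) > \Delta/2)$ is not much larger than the corresponding probability under $P_f^{\otimes N}$, which by Markov's inequality and the hypothesis $\mathbb{E}_f \mathrm{d}(\hat{T},f)^2 \leq \gamma^2\Delta^2$ is at most $4\gamma^2$. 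The crux is therefore a transfer inequality: the law of the transcript vector $T = (T^{(1)},\dots,T^{(m)})$ under $g$ cannot be too far from its law under $f$, because each server's channel is $\varepsilon$-DP and the data differ only through the product measure $P_f^{\otimes n}$ versus $P_g^{\otimes n}$.

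The key step is the privacy-to-TV bound on transcripts. For a single $\varepsilon$-DP channel $Q(\cdot\mid x)$ with $x \in [0,1]^n$, the standard ``group privacy / hypercube coupling'' argument (as in \cite{duchi2018minimax}, or the reformulation in \cite{cai2024privateTesting}) gives that for any two product inputs $P_f^{\otimes n}$ and $P_g^{\otimes n}$, the induced output marginals $M_f^{(j)}, M_g^{(j)}$ satisfy $\|M_f^{(j)} - M_g^{(j)}\|_{\text{TV}} \leq (e^{n\varepsilon\wedge 1} - 1)\,\|P_f - P_g\|_{\text{TV}}$ up to an absolute constant, which is what produces $\bar\varepsilon = 6 n\varepsilon\|P_f - P_g\|_{\text{TV}}$ after bounding $e^{t}-1 \le$ const$\cdot(t\wedge t^?)$ and absorbing constants; more precisely one uses that $\|M_f^{(j)}-M_g^{(j)}\|_{\text{TV}}\lesssim n\varepsilon \|P_f-P_g\|_{\text{TV}}$ when this is small. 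Since the $m$ servers (in the one-shot case) produce independent transcripts, tensorization gives $\|M_f - M_g\|_{\text{TV}} \leq 1 - (1 - \bar\varepsilon/6)_+^m$ or, cleaner, via the KL/Hellinger route, $\|M_f - M_g\|_{\text{TV}}^2 \lesssim m \cdot (\text{per-server divergence})$; combining these and exponentiating yields a bound of the form $\|M_f - M_g\|_{\text{TV}} \leq 1 - \tfrac12 e^{-m(\bar\varepsilon\wedge\bar\varepsilon^2)}$, i.e. the two transcript laws overlap with probability at least $\tfrac12 e^{-m(\bar\varepsilon\wedge\bar\varepsilon^2)}$. For the sequential (chained) protocol the independence fails, and here one invokes the DP composition-style bound showing the conditional channels still contract TV at each link; this gives the same $m(\bar\varepsilon\wedge\bar\varepsilon^2)$ exponent because each of the $m$ links contributes additively in the exponent.

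Putting the pieces together: since $\hat{T}$ is a (possibly randomized) post-processing of the transcript $T$, the event $E = \{\mathrm{d}(\hat{T},f) \leq \Delta/2\}$ satisfies $\mathbb{P}_g(E) \leq \mathbb{P}_f(E) + \|M_f - M_g\|_{\text{TV}} \leq 1$, which is vacuous; the non-vacuous direction is to bound $\mathbb{P}_g(\mathrm{d}(\hat T,f)>\Delta/2)$ from below is not what we want — instead I bound $\mathbb{P}_g(\mathrm{d}(\hat T,g)\le \Delta/2)\le \mathbb{P}_g(\mathrm{d}(\hat T,f)> \Delta/2)$ and then relate $\mathbb{P}_g(\mathrm{d}(\hat T,f)>\Delta/2)$ to $\mathbb{P}_f$ of the same event using a \emph{one-sided} likelihood-type comparison: writing $\beta := \mathbb{P}_f(\mathrm{d}(\hat T,f)>\Delta/2) \le 4\gamma^2$, one shows $\mathbb{P}_g(\mathrm{d}(\hat T,f)>\Delta/2)$ can be as large as $1$ only if $\|M_f-M_g\|_{\text{TV}}$ is large, and quantitatively $\mathbb{P}_g(A) \le \mathbb{P}_f(A)\cdot e^{m(\bar\varepsilon\wedge\bar\varepsilon^2)}$ for any measurable event $A$ in the transcript $\sigma$-algebra — this multiplicative ``group-privacy at the population level'' inequality, rather than the additive TV one, is the real engine. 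Then $\mathbb{P}_g(\mathrm{d}(\hat T,g)\le\Delta/2) \le 4\gamma^2 e^{m(\bar\varepsilon\wedge\bar\varepsilon^2)} = 2\exp(m(\bar\varepsilon\wedge\bar\varepsilon^2) + \log\gamma)$ after adjusting the constant $4\gamma^2 = e^{\log 16 + 2\log\gamma}$ and reabsorbing (the statement's ``$2$'' and single ``$\log\gamma$'' suggest a slightly sharper Chebyshev/Paley–Zygmund step than the crude $4\gamma^2$, so I would track constants carefully there), giving exactly
\begin{equation*}
	\mathbb{E}_g \, \mathrm{d}(\hat T,g)^2 \geq \frac{\Delta^2}{4}\left[1 - 2\exp\!\left(m(\bar\varepsilon\wedge\bar\varepsilon^2) + \log\gamma\right)\right].
\end{equation*}
The main obstacle is establishing the multiplicative population-level privacy inequality $\mathbb{P}_g(A)\le e^{m(\bar\varepsilon\wedge\bar\varepsilon^2)}\mathbb{P}_f(A)$ with the correct dependence on $n\varepsilon\|P_f-P_g\|_{\text{TV}}$ and with it holding for \emph{sequentially interactive} protocols and without assuming the transcript distributions are dominated — the undominated case requires working with the definition of DP directly on the abstract transcript space rather than with densities, and the sequential case requires an inductive composition argument along the chain. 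The reduction from risk to this probability bound (first paragraph) and the single-channel TV contraction (a known computation) are comparatively routine.
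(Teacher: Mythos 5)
Your opening reduction (triangle inequality plus Markov, giving $\mathbb{P}_f(E^c)\le 4\gamma^2$ for $E=\{\mathrm{d}(\hat T,g)\ge \Delta/2\}$) and your use of population-level group privacy to get the per-server quantity $\bar\varepsilon=6n\varepsilon\|P_f-P_g\|_{\mathrm{TV}}$ match the paper's proof. The genuine gap is the engine you then rely on: the pointwise multiplicative inequality $\mathbb{P}_g(A)\le e^{m(\bar\varepsilon\wedge\bar\varepsilon^2)}\mathbb{P}_f(A)$ for all transcript events $A$ is false in the branch $\bar\varepsilon<1$. What group privacy gives you is only a per-server likelihood-ratio bound in $[e^{-\bar\varepsilon},e^{\bar\varepsilon}]$ with mean one under $f$; composing $m$ such bounds pointwise can never do better than $e^{m\bar\varepsilon}$. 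Indeed, transcript marginals that are per-server Bernoulli$(1/2)$ under $f$ and Bernoulli$(e^{\bar\varepsilon}/2)$ under $g$ satisfy all these constraints, yet for $A=\{\text{all transcripts equal }1\}$ one has $\mathbb{P}_g(A)/\mathbb{P}_f(A)=e^{m\bar\varepsilon}\gg e^{m\bar\varepsilon^2}$. The $\bar\varepsilon^2$ branch is intrinsically a second-moment phenomenon and cannot be obtained as a pointwise probability-ratio bound; and it is exactly the branch the paper needs downstream (in the regime $m>\log A_N$ of the pointwise lower bound, $m\bar\varepsilon^2\asymp\log A_N$ while $m\bar\varepsilon\asymp\sqrt{m\log A_N}$ is too large, so the elbow effect would be lost with your route).

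The paper instead bounds the second moment $\mathbb{E}_f\bigl[(d\mathbb{P}_g^{T}/d\mathbb{P}_f^{T})^2\bigr]$, which factorizes over the (conditionally independent) server transcripts; each factor has a two-sided ratio bound $[e^{-\bar\varepsilon},e^{\bar\varepsilon}]$ from group privacy (Karwa--Vadhan), and an extremal two-point argument shows each conditional second moment is at most $2\cosh(\bar\varepsilon)-1\le e^{\bar\varepsilon^2}$ (and trivially $\le e^{2\bar\varepsilon}$). Cauchy--Schwarz then transfers the event: $\mathbb{P}_g(E^c)\le \sqrt{\mathbb{E}_f[(d\mathbb{P}_g^T/d\mathbb{P}_f^T)^2]}\,\sqrt{\mathbb{P}_f(E^c)}\le 2\gamma\, e^{m(\bar\varepsilon\wedge\bar\varepsilon^2/2)}$. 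This also resolves the constant mismatch you flagged: the single $\log\gamma$ is not a sharper Chebyshev/Paley--Zygmund step but the square-root \emph{loss} from Cauchy--Schwarz, which is the price paid for the sub-Gaussian-type exponent. (Your fallback $\mathbb{P}_g(A)\le e^{m\bar\varepsilon}\mathbb{P}_f(A)$, yielding $4\gamma^2e^{m\bar\varepsilon}$, is correct but proves only the $\bar\varepsilon$ branch, not the lemma as stated.) For the sequential, possibly undominated setting the paper works with a coupled version of the transcripts (needed mainly when $\delta>0$) and factorizes the second moment along the chain; your proposed induction along the chain is the right idea there, but it must be carried out at the level of conditional second moments rather than conditional probability ratios.
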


The lemma captures the fundamental trade-off imposed by differential privacy: when an estimator achieves risk $\gamma^2 \Delta^2$ under $P_f$ (where $\gamma$ quantifies the improvement factor at $f$), its performance under the alternative measure $P_g$ degrades necessarily; unless the privacy constraint is weak (large $\varepsilon$) or the measures are far apart in total variation. This degradation is proportional to the logarithm of the improvement factor $\gamma$.


The lemma hence establishes a private version of a constraint risk inequality: it states that super-efficiency at a particular point in the parameter space comes at a cost in terms of increased risk at other points in the parameter space. We exemplify this in the following simple example.

\begin{example}\label{example}
	Consider $(\varepsilon,0)$-differentially private estimation of a population proportion $p$ over a neighborhood of $1/2$, on the basis of observations $Y^{(j)}_1,\dots,Y^{(j)}_n \iid \operatorname{Ber}(p)$ for $j = 1,\dots,m$, where $\varepsilon,m,n$ are allowed to have the asymptotics as introduced in Section \ref{sec:problem_formulation}. The minimax rate for the problem is easy to derive: after adding Laplace noise, the server averages can be privately communicated, where averaging over the servers results in an $(\varepsilon,0)$-FDP estimator attaining the rate $(mn)^{-1}+(mn^2\varepsilon^2)^{-1}$. Known techniques (for instance, via Lemma \ref{lem:private-constrained-risk}) yield a matching lower bound, confirming the optimality of this rate. 

However, Lemma \ref{lem:private-constrained-risk} provides a more nuanced insight: it demonstrates that super-efficiency at a single point incurs a penalty elsewhere. To illustrate this, consider $\hat{T}$ to be an $(\varepsilon,0)$-FDP estimator that is \emph{super-efficient} at $p = 1/2$; meaning that $\E_{1/2} | \hat{T} - 1/2 |^2 \lesssim (m n^2 \varepsilon^2)^{- C}+ (mn)^{-C}$ for some $C > 1$ (we construct such an estimator in Section \ref{sec:appendix:DP-hodge} of the Supplementary Material). Lemma \ref{lem:private-constrained-risk} then implies that for any fixed neighborhood $B$ of $1/2$,
\begin{equation*}
	\sup_{p \in B} \, \E_p | \hat{T} - p |^2 \gtrsim \frac{L_{m,N}}{m n^2 \varepsilon^2}\left(1 - 2 e^{ - c \log (N)} \right)
\end{equation*}
for some constant $c > 0$, where $L_{m,N}$ is the elbow-effect factor from \eqref{eq:elbow-effect-factor}. To see this, one can apply the lemma with $p_{m,n,\varepsilon} = 1/2 + c' \sqrt{{L_{m,N}}/(m n^2 \varepsilon^2)}$ for sufficiently small $c' > 0$, noting that the total variation distance between $\operatorname{Ber}(1/2)$ and $\operatorname{Ber}(p_{m,n,\varepsilon})$ scales as $|p_{m,n,\varepsilon} - 1/2|$.

This example shows that super-efficient estimators performs worse than the minimax rate by a logarithmic factor $L_{m,N}$ over the neighborhood $B$. Crucially, this it mirrors the elbow effect inherent to adaptation under federated privacy constraints: the elbow effect in super-efficiency penalties manifest differently across the privacy spectrum from central to local models. For CDP ($m=1$), the penalty is $\log^2 N$, whereas for LDP ($n=1$), it is only $\log N$.


\end{example}

Armed with Lemma \ref{lem:private-constrained-risk}, we proceed with the lower bound for the pointwise risk.

\subsubsection{The pointwise risk lower bound}\label{sec:pointwise-risk-lower-bound-subsection}

 Theorem \ref{thm:adaptation-lower-bound-pointwise} below provides a fundamental super-efficiency lower bound for pointwise density estimation under differential privacy that. Corollary \ref{cor:pointwise-adaptation-lb} distills the result to the adaptive setting, which combined with the upper bound in Theorem~\ref{thm:upper-bound-pointwise}, precisely characterizes the unavoidable cost of adaptation. It generalizes the lower bound result stated earlier in Theorem~\ref{thm:pointwise_cost_of_adaptation} and provides the technical foundation for understanding why the additional logarithmic adaptation penalty is inherent to the privacy constraint.

 \begin{theorem}\label{thm:adaptation-lower-bound-pointwise}
	Let $m,n\in\N$ and $N:=mn$. Fix a sequence $A_N\ge e$ and an $(\varepsilon,\delta)$-FDP estimator $\hat T$ with
	\begin{equation}\label{eq:delta-condition}
	  \delta \;\ll\; \frac{\varepsilon}{mn\,A_N}.
	\end{equation}
	
	Suppose there exists $f_0\in\cB^\alpha_{p,q}(R')$ with $R'<R$ and $f_0(t_0)>0$ such that
	\begin{equation}\label{eq:thm:adaptation-lower-bound-pointwise-assumption-clean}
	  \E_{f_0}\!\left(\hat T - f_0(t_0)\right)^2
	  \;\lesssim\; \frac{1}{A_N}\left\{\, N^{-\frac{2\nu}{2\nu+1}}
	  \,\vee\, (mn^2\varepsilon^2)^{-\frac{2\nu}{2\nu+2}} \right\}
	  =o(1).
	\end{equation}
	Then,
	\begin{equation}\label{eq:thm:adaptation-lower-bound-pointwise-conclusion}
	  \sup_{f\in\cB^\alpha_{p,q}(R)} \E_f\!\left(\hat T - f(t_0)\right)^2
	  \;\gtrsim \;  \left(\frac{N}{\log A_N}\right)^{-\frac{2\nu}{2\nu+1}}
	  \,\vee\, \left(\frac{mn^2\varepsilon^2}{L_{m,N}}\right)^{-\frac{2\nu}{2\nu+2}},
	\end{equation}
where $L_{m,N} := \log A_N \left( 1 \vee \frac{\log A_N}{m} \right) $.
	\end{theorem}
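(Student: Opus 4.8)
The plan is to derive \eqref{eq:thm:adaptation-lower-bound-pointwise-conclusion} by applying the private constrained risk inequality (the $(\varepsilon,\delta)$-version of Lemma \ref{lem:private-constrained-risk}) along a carefully chosen one-parameter family of perturbations of $f_0$, separately handling the two terms of the maximum. First I would fix a resolution level $L$ (to be chosen) and a single detail wavelet $\psi_{Lk}$ whose support contains $t_0$; define the perturbed density $g = f_0 + \theta\, 2^{-L(\nu+1/2)} \psi_{Lk}$, scaled so that $g$ lies in $\bar{\cB}^\alpha_{p,q}(R)$ (using $R'<R$ to leave room) and remains a valid density with $g(t_0) > 0$. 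The key quantities are: the separation at $t_0$, namely $|g(t_0) - f_0(t_0)| \asymp \theta\, 2^{-L(\nu+1/2)} 2^{L/2} = \theta\, 2^{-L\nu} =: \Delta$; and the total variation distance $\|P_{f_0} - P_g\|_{\mathrm{TV}} \asymp \|f_0 - g\|_1 \asymp \theta\, 2^{-L(\nu+1/2)} \|\psi_{Lk}\|_1 \asymp \theta\, 2^{-L\nu - L/2} 2^{L/2} 2^{-L/2}\cdot 2^{L/2}$ — more carefully, $\|\psi_{Lk}\|_1 = 2^{-L/2}\|\psi\|_1$, so $\|f_0 - g\|_1 \asymp \theta\, 2^{-L(\nu+1)}$. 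Then $\bar\varepsilon = 6 n \varepsilon \|P_{f_0}-P_g\|_{\mathrm{TV}} \asymp n\varepsilon\, \theta\, 2^{-L(\nu+1)}$.

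The mechanism of the argument is as follows. Set $\mathrm{d}$ to be the pseudo-metric $\mathrm{d}(T,h) = |T - h(t_0)|$ on densities. Assumption \eqref{eq:thm:adaptation-lower-bound-pointwise-assumption-clean} says $\E_{f_0} \mathrm{d}(\hat T, f_0)^2 \le A_N^{-1} \{N^{-2\nu/(2\nu+1)} \vee (mn^2\varepsilon^2)^{-2\nu/(2\nu+2)}\}$, so with $\gamma^2 = \E_{f_0}\mathrm{d}(\hat T,f_0)^2 / \Delta^2$ we get, from the $(\varepsilon,\delta)$-version of Lemma \ref{lem:private-constrained-risk},
\begin{equation*}
	\E_g \mathrm{d}(\hat T, g)^2 \;\gtrsim\; \Delta^2\left[1 - 2\exp\!\big(m(\bar\varepsilon \wedge \bar\varepsilon^2) + \tfrac12\log(\E_{f_0}\mathrm{d}(\hat T,f_0)^2/\Delta^2)\big) - (\text{a }\delta\text{-term})\big)\right],
\end{equation*}
the $\delta$-term being negligible by \eqref{eq:delta-condition}. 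To make the bracket bounded below by a constant, I need $m(\bar\varepsilon\wedge\bar\varepsilon^2) + \tfrac12\log(C A_N^{-1}\cdot(\text{rate})/\Delta^2) \le -c$. This forces two constraints to be balanced simultaneously: first, the log term contributes roughly $-\tfrac12\log A_N$ of slack (after choosing $\Delta^2 \asymp$ the claimed rate), and second, $m(\bar\varepsilon\wedge\bar\varepsilon^2)$ must not exceed $\tfrac14\log A_N$. For the statistical term I would take $\bar\varepsilon$ large so that $\bar\varepsilon\wedge\bar\varepsilon^2 = \bar\varepsilon$ is irrelevant — actually the statistical (non-private) term should come from a version of the classical constrained risk inequality of \cite{brown1996constrained} where only the product of $n$ samples matters, giving the $\log A_N$ factor in $(N/\log A_N)^{-2\nu/(2\nu+1)}$; choosing $2^L$ so that $N\Delta^2 2^{L}\asymp \log A_N$, i.e. $2^{L(2\nu+1)}\asymp N/\log A_N$, produces exactly that term. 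For the privacy term I would instead optimize $m(\bar\varepsilon\wedge\bar\varepsilon^2)\asymp \log A_N$: when $\bar\varepsilon \le 1$ this reads $m\bar\varepsilon^2 \asymp m n^2\varepsilon^2 \theta^2 2^{-2L(\nu+1)} \asymp \log A_N$, and combined with $\Delta^2 = \theta^2 2^{-2L\nu}$ one solves for $2^L$ to get $\Delta^2 \asymp (mn^2\varepsilon^2/\log A_N)^{-2\nu/(2\nu+2)}$; when $\bar\varepsilon > 1$ one uses $m\bar\varepsilon \asymp \log A_N$, which is the relevant branch when $m$ is small relative to $\log A_N$, and yields the extra $\log A_N/m$ factor in $L_{m,N}$. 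Taking the maximum over these choices of $L$ (equivalently, the worse of the two perturbations) gives \eqref{eq:thm:adaptation-lower-bound-pointwise-conclusion}.

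The main obstacle I anticipate is twofold. First, verifying that the perturbed function $g$ genuinely stays in the Besov ball $\bar{\cB}^\alpha_{p,q}(R)$ and remains a bona fide density — this requires the wavelet amplitude $\theta\, 2^{-L(\nu+1/2)}$ to be small enough in sup-norm that $g \ge 0$, while the Besov-norm budget $R - R'$ must accommodate a single-wavelet bump of the right size; one checks $\|\theta 2^{-L(\nu+1/2)}\psi_{Lk}\|_{\alpha,p,q} \asymp \theta 2^{-L(\nu+1/2)} 2^{L(\alpha+1/2-1/p)} = \theta$ (using $\nu = \alpha - 1/p$), so $\theta$ must be a small constant, which is exactly what all the rate computations above assumed — good, but this needs to be stated carefully. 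Second, and more delicate, is correctly tracking the $\bar\varepsilon \wedge \bar\varepsilon^2$ dichotomy so that the elbow at $m \asymp \log A_N$ emerges with the right constant: one must check that for $m > \log A_N$ the minimizing configuration indeed has $\bar\varepsilon \lesssim 1$ (sub-Gaussian regime, penalty $\log A_N$), whereas for $m \le \log A_N$ it has $\bar\varepsilon \gtrsim 1$ (sub-exponential regime, penalty $\log^2 A_N / m$), matching the definition $L_{m,N} = \log A_N(1 \vee \log A_N / m)$. The $\delta$-term bookkeeping under \eqref{eq:delta-condition} is routine once the $(\varepsilon,\delta)$-form of Lemma \ref{lem:private-constrained-risk} is invoked, since $mn\delta/\varepsilon \ll 1/A_N$ kills any $\delta$-contribution relative to the constant-order bracket. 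Corollary \ref{cor:pointwise-adaptation-lb} then follows by a standard reduction: if an adaptive estimator attained a rate faster than $\rho_{\textsc{point}}^2(\nu')$ at the smoother parameter $(\alpha',p')$, the hypothesis \eqref{eq:thm:adaptation-lower-bound-pointwise-assumption-clean} would hold with $A_N$ a power of $\log N$, and the theorem's conclusion at $(\alpha,p)$ would contradict the claimed uniform rate.
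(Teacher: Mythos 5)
Your proposal is correct and follows essentially the same route as the paper: a localized bump perturbation of $f_0$ at $t_0$ calibrated to stay in the Besov ball (the paper uses a generic smooth bump $h$ rather than a single wavelet $\psi_{Lk}$, a cosmetic difference), the $(\varepsilon,\delta)$ private constrained risk inequality with $\mathrm{TV}\asymp\|g-f_0\|_1$, the calibration $m(\bar\varepsilon\wedge\bar\varepsilon^2)\asymp\log A_N$ producing the elbow at $m\asymp\log A_N$, and the classical Brown--Low argument for the statistical term. The only blemish is a typo in the statistical-regime calibration ($N\Delta^2 2^{L}\asymp\log A_N$ should be $N\|g-f_0\|_2^2\asymp N\Delta^2 2^{-L}\asymp\log A_N$), which does not affect your stated conclusion $2^{L(2\nu+1)}\asymp N/\log A_N$.
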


We highlight that the theorem is a consequence of Lemma \ref{lem:private-constrained-risk}, combined with arguments specific to the nonparametric setting which we postpone to Section \ref{sec:pointwise-risk-lower-bound-proofs} of the Supplementary Material. The theorem establishes that any differentially private estimator that performs well at a specific function $f_0$ (better than the minimax rate by a factor of $A_N$) must necessarily perform worse on some other functions in the Besov ball. The better the performance at $f_0$, the worse the performance on other functions, where the loss is a logarithmic factor in $A_N$.

The fundamental trade-off posed by Theorem \ref{thm:adaptation-lower-bound-pointwise} has direct implications for adaptation across smoothness classes. Consider an estimator that achieves near-optimal rates for functions in a smoother Besov class $\mathcal{B}_{p,q}^{\alpha}$. Since such an estimator necessarily performs much better than the minimax rate on functions that lie in the interior of a less smooth class $\mathcal{B}_{p',q'}^{\alpha'}$ (corresponding to a large improvement factor $A_N$), Theorem~\ref{thm:adaptation-lower-bound-pointwise} implies it must pay a logarithmic penalty when estimating other functions in the less smooth class. This adaptivity trade-off is formalized in the following corollary, which shows that achieving optimal rates up to a polylogarithmic factor for one smoothness class forces a logarithmic adaptation penalty when estimating functions from any less smooth class.

	\begin{corollary}\label{cor:pointwise-adaptation-lb}
		Consider an estimator $\hat{T} \in \cT^{\varepsilon, \delta}$ for $\delta \ll \varepsilon / \log(N)$ in the federated setting with $N = m n$ total samples across $m$ servers. Suppose that, for some $(\alpha,p,q)$ such that $\nu := \alpha - 1/p > 1/2$ and $g \in \cB_{p,q}^{\alpha,R}$ it holds that
		\begin{equation}\label{eq:optimal-upperbound}
		 \E_{g} ( \hat{T} - g(t_0) )^2 \lesssim (\log N)^{O(1)} \left( \left(\frac{1}{N}\right)^{\frac{2\nu}{2\nu + 1}} + \left(\frac{1}{m n^2 \varepsilon^2}\right)^{\frac{2\nu}{2\nu + 2}} \right).
		\end{equation}
		Then, for any $(\alpha',p',q')$ such that $\nu' := \alpha' - 1/p' < \nu$, we have that
		\begin{equation}\label{eq:Besov_space_adaptation_lower_bound}
		\sup_{f \in \cB_{p',q'}^{\alpha',R}} \E_{f} ( \hat{T} - f(t_0) )^2 \gtrsim \left(\frac{N}{\log N}\right)^{-\frac{2\nu'}{2\nu' + 1}} + \left(\frac{m n^2 \varepsilon^2}{L_{m,N}}\right)^{-\frac{2\nu'}{2\nu' + 2}},
		\end{equation}
		where $L_{m,N}$ is as defined in \eqref{eq:elbow-effect-factor}.
		\end{corollary}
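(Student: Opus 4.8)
The plan is to obtain Corollary~\ref{cor:pointwise-adaptation-lb} as a direct instance of Theorem~\ref{thm:adaptation-lower-bound-pointwise}, applied to the \emph{less smooth} Besov class $\cB_{p',q'}^{\alpha'}$ with smoothness index $\nu' = \alpha' - 1/p' < \nu$. First I would fix a reference density $f_0$ to play the role of ``$f_0$'' in Theorem~\ref{thm:adaptation-lower-bound-pointwise}: take $f_0 = (1-\rho)\,u + \rho\, h$, where $u$ is the uniform density on $[0,1]$, $h$ is a fixed smooth density with $h(t_0) > 0$, and $\rho \in (0,1)$ is small enough that $\|f_0\|_{\alpha,p,q} \le R$ and $\|f_0\|_{\alpha',p',q'} < R$; then $f_0$ is smooth, $f_0(t_0) > 0$, and $f_0 \in \cB_{p',q'}^{\alpha',R'}$ for some $R' < R$, so the radius and positivity requirements of the theorem are met. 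Since the hypothesis \eqref{eq:optimal-upperbound} expresses that $\hat T$ is rate-optimal up to polylogarithmic factors over $\cB_{p,q}^{\alpha,R}$, in the intended application it holds at every point of that ball, in particular at $f_0$ (if one only has \eqref{eq:optimal-upperbound} at a single, less regular $g$, a short preliminary step using Lemma~\ref{lem:private-constrained-risk} relocates the super-efficiency to such an $f_0$).

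Next I would calibrate the improvement factor $A_N$. Write $R_{\nu'}^2 := N^{-\frac{2\nu'}{2\nu'+1}} \vee (m n^2 \varepsilon^2)^{-\frac{2\nu'}{2\nu'+2}}$ for the non-adaptive minimax rate at level $\nu'$, and set $\gamma_1 := \frac{2\nu}{2\nu+1} - \frac{2\nu'}{2\nu'+1} > 0$ and $\gamma_2 := \frac{2\nu}{2\nu+2} - \frac{2\nu'}{2\nu'+2} > 0$, which are strictly positive because $x \mapsto \frac{2x}{2x+1}$ and $x \mapsto \frac{2x}{2x+2}$ are increasing and $\nu > \nu'$. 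The elementary identities $N^{-\frac{2\nu}{2\nu+1}} = N^{-\gamma_1} N^{-\frac{2\nu'}{2\nu'+1}} \le N^{-\gamma_1} R_{\nu'}^2$ and $(mn^2\varepsilon^2)^{-\frac{2\nu}{2\nu+2}} = (mn^2\varepsilon^2)^{-\gamma_2}(mn^2\varepsilon^2)^{-\frac{2\nu'}{2\nu'+2}} \le (mn^2\varepsilon^2)^{-\gamma_2} R_{\nu'}^2$, inserted into \eqref{eq:optimal-upperbound}, yield
\[
  \E_{f_0}\big(\hat T - f_0(t_0)\big)^2 \;\lesssim\; (\log N)^{O(1)}\big( N^{-\gamma_1} + (mn^2\varepsilon^2)^{-\gamma_2} \big)\, R_{\nu'}^2 ,
\]
which is $o(1)$. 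Hence the super-efficiency hypothesis \eqref{eq:thm:adaptation-lower-bound-pointwise-assumption-clean} of Theorem~\ref{thm:adaptation-lower-bound-pointwise} (with $\nu$ there replaced by $\nu'$) is satisfied with $A_N \asymp \min\{ N^{\gamma_1},\, (mn^2\varepsilon^2)^{\gamma_2} \} / (\log N)^{O(1)}$. In the standard regime where $m n^2 \varepsilon^2$ exceeds a fixed positive power of $N$ — which is in any case required for the claimed rates to be $o(1)$ — this $A_N$ is polynomial in $N$, so that $\log A_N \asymp \log N$; and the condition \eqref{eq:delta-condition} on $\delta$ then holds under the hypothesis on $\delta$ of the corollary.

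Finally, applying Theorem~\ref{thm:adaptation-lower-bound-pointwise} with the class $\cB_{p',q'}^{\alpha'}$, the function $f_0$, and this $A_N$ gives
\[
  \sup_{f \in \cB_{p',q'}^{\alpha',R}} \E_f\big(\hat T - f(t_0)\big)^2 \;\gtrsim\; \Big(\tfrac{N}{\log A_N}\Big)^{-\frac{2\nu'}{2\nu'+1}} \vee \Big(\tfrac{m n^2 \varepsilon^2}{\log A_N\,(1 \vee \log A_N/m)}\Big)^{-\frac{2\nu'}{2\nu'+2}} .
\]
Since $\log A_N \asymp \log N$, the factor $\log A_N(1 \vee \log A_N/m)$ is of the same order as the elbow-effect factor $L_{m,N}$ of \eqref{eq:elbow-effect-factor}, and since $a \vee b \asymp a+b$ this is exactly the bound \eqref{eq:Besov_space_adaptation_lower_bound}. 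I expect the main obstacle to be the bookkeeping that pins down $\log A_N \asymp \log N$: this is precisely what forces the penalty to enter through $L_{m,N}$ (which carries $\log N$) rather than through a smaller iterated logarithm, and it is also the point at which a non-degenerate parameter regime must be invoked (or the privacy-dominated case handled separately). Exhibiting the reference density $f_0$ inside both Besov balls, with strictly smaller radius in the less smooth one and with $f_0(t_0) > 0$, is routine given the explicit convex-combination construction above.
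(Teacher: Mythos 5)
Your proposal is correct and takes essentially the same route as the paper: its proof also reduces the corollary to Theorem \ref{thm:adaptation-lower-bound-pointwise} by taking the uniform density as the reference $f_0$ (which lies in the interior of the less smooth ball with $f_0(t_0)=1>0$), defining $A_N^{-1}$ as the ratio of the $\nu$-level to the $\nu'$-level rate so that $A_N\gtrsim N^{\gamma}$ and hence $\log A_N\asymp\log N$ under the standing regime $\varepsilon\gtrsim N^{-\omega}$, and then reading off the bound with $L_{m,N}$. Your mixture construction of $f_0$ and the parenthetical ``relocation'' step via Lemma \ref{lem:private-constrained-risk} are unnecessary embellishments of this same argument—the paper simply evaluates hypothesis \eqref{eq:optimal-upperbound} at the uniform density.
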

	
\begin{remark}
	Corollary \ref{cor:pointwise-adaptation-lb} shows that the cost of adaptation is at least $\log N$ in the case of the Besov classes $\cB^{\alpha_1}_{p_1,q_1}$ and $\cB^{\alpha_2}_{p_2,q_2}$ with $\alpha_1 - 1/p > \alpha_2 - 1/p$. Under DP, the phenomenon of pointwise adaptive estimation where one can `trade-off' regularity versus integrability of derivatives remains like in the non-private case (see Theorem 1 of \cite{cai2003pointwiseBesov}): Whenever $\alpha_1 - 1/p = \alpha_2 - 1/p$, no adaptive cost is paid. For example, adapting between a $2$-smooth Sobolev ball and a $3/2$-smooth H\"older ball is possible without adaptive cost for the pointwise risk. 
\end{remark}



\subsection{Adaptation lower bound for the global risk}\label{sec:global-lower-bound}

We now turn to the lower bound for global risk adaptation. The theorem below establishes that any $(\varepsilon,\delta)$-FDP estimator attempting to adapt across any range of smoothness levels $(\alpha_{\min}, \alpha_{\max})$ must incur the logarithmic penalty $\log N$ in the privacy term, uniformly across all smoothness levels. Unlike pointwise estimation, global risk exhibits a uniform logarithmic adaptation penalty: there is no benefit from distributing data across many servers.

\begin{theorem}\label{thm:global-lower-bound}
	Assume $\delta \ll n \varepsilon^2 / N$. Consider any $\alpha_{\min} > 1/2$ and $\alpha_{\max} > \alpha_{\min}$ and let 
	\begin{equation}\label{eq:adaptive-global-rate}
		\rho^2(\alpha) \equiv \rho^2_{\alpha,m,n,\varepsilon} = \left( \frac{1}{N} \right)^{\frac{2\alpha}{2\alpha + 1}} + \left( \frac{\log N}{m n^2 \varepsilon^2} \right)^{\frac{2\alpha}{2\alpha + 2}}.
	\end{equation}
	 
	Then,
	\begin{equation}\label{eq:global-lb-to-show-thm}
	\inf_{\hat{f} \in \cF(\varepsilon,\delta)} \sup_{ \alpha \in (\alpha_{\min}, \alpha_{\max})} \, \sup_{f \in \mathcal{B}_{p,q}^\alpha(R)} \, \mathbb{E}_f \| \hat{f} - f \|_2^2 \, \rho(\alpha)^{-2} \gtrsim 1.
	\end{equation}
\end{theorem}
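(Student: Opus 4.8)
The plan is to peel off the two terms of $\rho(\alpha)^2$ and treat them separately. The term $N^{-2\alpha/(2\alpha+1)}$ is the classical (non-private) minimax lower bound over $\mathcal{B}^\alpha_{p,q}(R)$ for each fixed $\alpha$; since $\rho(\alpha)^{-2}\lesssim N^{2\alpha/(2\alpha+1)}$ this already delivers $\sup_{f\in\mathcal{B}^\alpha_{p,q}(R)}\mathbb E_f\|\hat f-f\|_2^2\,\rho(\alpha)^{-2}\gtrsim 1$ whenever the classical term dominates. It therefore suffices to produce \emph{some} $\alpha\in(\alpha_{\min},\alpha_{\max})$ with
\[
\sup_{f\in\mathcal{B}^\alpha_{p,q}(R)}\mathbb E_f\|\hat f-f\|_2^2\;\gtrsim\;\Big(\tfrac{\log N}{m n^2\varepsilon^2}\Big)^{\frac{2\alpha}{2\alpha+2}} .
\]
The mechanism is that an estimator forced to adapt over $(\alpha_{\min},\alpha_{\max})$ must spread a fixed "privacy information budget" of order $m n^2\varepsilon^2$ over $\asymp\log N$ genuinely different regularity scales, so at least one scale is under-resolved by a factor $\log N$.

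\textbf{Construction.} Center everything at the uniform density $f_0\equiv 1$, which belongs to every $\mathcal{B}^\alpha_{p,q}(R)$. Pick smoothness values $\alpha_{\min}<\alpha_1<\dots<\alpha_K<\alpha_{\max}$ and take $l_i$ to be the critical level for the \emph{inflated} rate, $2^{\,l_i(2\alpha_i+2)}\asymp m n^2\varepsilon^2/\log N$. Since $l_i\asymp\log(m n^2\varepsilon^2/\log N)/(2\alpha_i+2)$ and the $\alpha_i$ range over a fixed interval, the $l_i$ can be chosen as $K\asymp\log N$ distinct integers (in the privacy‑dominated regime $m n^2\varepsilon^2=N^{\Theta(1)}$). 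For each $i$, perturb only the level-$l_i$ coefficients: draw $\theta^{(i)}_k$ i.i.d.\ from a smooth symmetric density $\mu_i$ with variance $v_i\asymp 2^{-l_i(2\alpha_i+1)}$ and Fisher information $I(\mu_i)\asymp 1/v_i$, and set $f_{\theta^{(i)}}=f_0+\sum_k\theta^{(i)}_k\psi_{l_i k}$; with $\mu_i$ compactly supported near $0$ these are bona fide densities lying in $\mathcal{B}^{\alpha_i}_{p,q}(R)$ (the constraints $\alpha_i>1/2$, $p\ge2$ and the usual boundary/positivity corrections make the perturbation admissible).

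\textbf{Fisher information and the pigeonhole.} The key analytic input is a \emph{dimension‑free Fisher information bound}: for any sequential $(\varepsilon,\delta)$-FDP protocol with $\delta\ll n\varepsilon^2/N$, the Fisher information matrix $\mathcal I_T(\theta)$ of the full transcript in the wavelet‑coefficient parametrization satisfies $\operatorname{tr}\mathcal I_T(\theta)\lesssim m n^2\varepsilon^2$, uniformly in $\theta$ and independently of the ambient dimension, and—crucially—without the customary dominated‑transcript hypothesis (Section~\ref{sec:properties-of-induced-fisher-information-matrix}). Writing $\operatorname{tr}_i$ for the partial trace over the level-$l_i$ block and evaluating at $\theta=0$ gives $\sum_{i=1}^K\operatorname{tr}_i\mathcal I_T(0)\le\operatorname{tr}\mathcal I_T(0)\lesssim m n^2\varepsilon^2$, so for the given $\hat f$ there is an index $i^\star$ with $\operatorname{tr}_{i^\star}\mathcal I_T(0)\lesssim m n^2\varepsilon^2/K\asymp m n^2\varepsilon^2/\log N$ (and, by continuity of $\theta\mapsto\operatorname{tr}_{i^\star}\mathcal I_T(\theta)$ and concentration of $\mu_{i^\star}$, the same bound integrated against the prior). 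A multivariate van Trees / Bayesian Cramér–Rao inequality for the $2^{l_{i^\star}}$-dimensional sub-experiment at level $l_{i^\star}$ then yields
\[
\sup_{f\in\mathcal{B}^{\alpha_{i^\star}}_{p,q}(R)}\mathbb E_f\|\hat f-f\|_2^2\;\ge\;\mathbb E_{\mu_{i^\star}^{\otimes}}\mathbb E_\theta\|\hat f-f_\theta\|_2^2\;\gtrsim\;\frac{(2^{l_{i^\star}})^2}{m n^2\varepsilon^2/\log N+2^{l_{i^\star}}/v_{i^\star}}\;\asymp\;\Big(\tfrac{m n^2\varepsilon^2}{\log N}\Big)^{-\frac{2\alpha_{i^\star}}{2\alpha_{i^\star}+2}}\;\asymp\;\rho(\alpha_{i^\star})^2,
\]
where the last step is exactly the balance defining $l_{i^\star}$. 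This gives $\sup_\alpha\sup_f\mathbb E_f\|\hat f-f\|_2^2\,\rho(\alpha)^{-2}\gtrsim 1$. The absence of an $m$-dependent elbow (unlike the pointwise case) is structural: the bound is linear in $m$ through $m n^2\varepsilon^2$ with no $\varepsilon\wedge\varepsilon^2$ truncation, because the $L_2$ risk only ever sees the \emph{trace} of the Fisher information, not the tail geometry of the per-coordinate aggregated privacy noise.

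\textbf{Main obstacle.} The hard part is the dimension‑free Fisher information bound in the interactive, non-dominated regime: existing Fisher‑information treatments (including the non-adaptive analysis of \cite{Cai2024FL-NP-Regression}) either restrict to one-shot protocols or assume dominated transcripts, and removing both hypotheses requires analyzing how the induced Fisher information propagates along the communication chain when the conditional transcript laws need not be dominated. Secondary technical points I would need to discharge: verifying van Trees in this setting (smoothness of the $\mu_i$, the constraints $\int f=1$ and $f\ge0$, and integrability of $\|\hat f-f\|_2^2$, handled by projecting $\hat f$ onto the Besov ball); transferring the pigeonholed bound on $\operatorname{tr}_{i^\star}\mathcal I_T(0)$ to a prior-averaged Fisher information via uniform control near $f_0$; and checking that $K\asymp\log N$ admissible integer levels genuinely exist, which is where the privacy-dominated scaling enters.
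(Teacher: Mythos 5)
Your proposal follows essentially the same route as the paper: the same reduction to the privacy-dominated regime, the same sub-model of wavelet perturbations of the uniform density at $\asymp\log N$ distinct critical resolution levels, the same dimension-free trace bound on the transcript-induced Fisher information for sequential non-dominated FDP protocols (the paper's Lemma \ref{lem:data-processing}), and the same van Trees plus averaging-over-levels argument. The only organizational difference is that you pigeonhole the partial traces at $\theta=0$ and then must transfer to a prior-averaged quantity (a step you flag but leave open), whereas the paper sidesteps this by a minimax/saddle-point choice of a single prior over the union of hypercubes before bounding $\min_\alpha$ by the average.
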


Although the estimators for both risk types are similar, the global risk setting exhibits fundamentally different behavior and requires fundamentally different techniques from the pointwise case. In the non-private setting, adaptation for global risk can be achieved without cost. Capturing the cost of adaptation under privacy constraints through a lower bound thus requires a novel argument. 


Before giving a formal proof below, we sketch the argument of our technique. The central idea is to exploit the fact that under FDP constraints, the `Fisher information induced by an FDP protocol' within a finite-dimensional sub-model is, for well-behaved sub-models, dimension-free. This phenomenon was first observed in a non-adaptive LDP setting by \cite{barnes2020fisher}, and subsequently in non-adaptive FDP settings in \cite{Cai2024FL-NP-Regression,xue2024optimalestimationprivatedistributed}. While in these earlier works this property was used to characterize the cost of privacy for fixed regularity, the adaptive setting requires a more delicate construction: it is precisely this dimension-free structure, hitherto unexploited in this context, that enables our rate-optimal lower bound.

The first part of the construction is familiar (see e.g. \cite{spokoiny_adaptive_1996}): we formulate a sub-model that partitions into further sub-models, each indexed by a resolution level $L$ and consisting of random wavelet perturbations at that level. Due to the multiscale nature of wavelets, we can consider a grid of regularity values with cardinality of order $\log N$, where each regularity value corresponds to a different component of the partition. We then define a prior over the perturbations to ensure that the resulting densities belong to the appropriate Besov balls almost surely.


This construction allows us to invoke the van Trees inequality, which relates the `adaptive Bayes risk' to the \emph{minimum} transcript-induced Fisher information across the partition (Lemma \ref{lem:global-lower-bound-key-lemma}). Intuitively, successful adaptation requires the protocol to retain sufficient information for every sub-model. While this relationship holds generally (that is; we have hitherto not invoked privacy specific arguments), the privacy constraint introduces a bottleneck: the \emph{total} induced Fisher information is bounded by a dimension-free quantity (Lemma \ref{lem:data-processing}). With approximately $\log N$ sub-models sharing this limited information budget, the signal available for any single smoothness level is diluted. This dilution manifests as the $\log N$ penalty—the unavoidable cost of adaptation under privacy constraints.



\begin{proof}[Proof of Theorem \ref{thm:global-lower-bound}]

	Let $\cA \subset (\alpha_{\min}, \alpha_{\max})$ such that for all $\alpha \in \cA$,
	\begin{equation}\label{eq:privacy-rate-dominates-global-lb}
		\rho(\alpha) \leq 2\left( \frac{\log (N)}{m n^2 \varepsilon^2} \right)^{-\frac{\alpha}{2\alpha + 2}} \iff \rho(\alpha)^{-2} \geq \frac{1}{4} \left( \frac{\log (N)}{m n^2 \varepsilon^2} \right)^{\frac{2\alpha}{2\alpha + 2}} =: \tilde{\rho}_\alpha^{-2}.
	\end{equation}
	If the complement of $\cA$ in $(\alpha_{\min}, \alpha_{\max})$ is non-empty,
	\begin{equation*}
		\inf_{\hat{f} \in \cF(\varepsilon,\delta)} \sup_{ \alpha \in \cA^c} \, \sup_{f \in B_{pq}^\alpha(R)} \, \E_f \,  {N} ^{\frac{2\alpha}{2\alpha + 1}} \, \| \hat{f} - f \|_2^2 
	\end{equation*}
	lower bounds \eqref{eq:global-lb-to-show-thm}, and the latter quantity can be further lower bounded by a constant following standard arguments (e.g., using Assouad's lemma or Fano's inequality (ignoring privacy constraints); see \cite{tsybakov2009introduction}, Chapter 2). In case $\cA$ is empty as $N \to \infty$, the statement of the theorem follows.

	Assume next that $\cA$ is not empty. Since $\alpha \mapsto \rho(\alpha)$ is continuous, we can take $\cA$ such that there exists an open neighborhood $\cA_* \subset \cA$ for which \eqref{eq:privacy-rate-dominates-global-lb} holds for all $\alpha \in \cA_*$. Without loss of generality, write ${\cA_*} = (\alpha_{\min}, \alpha_{\max})$ and consider an (approximately) equispaced grid $\tilde{\cA}$ of $(\alpha_{\min}, \alpha_{\max})$ of size $\lceil \log N \rceil$; such that for $\alpha_1, \alpha_2 \in \tilde{\cA}$ with $\alpha_1 > \alpha_2$, we have $\alpha_1 - \alpha_2 \geq 1/ (2 \log N)$. 

	For each $\alpha \in \tilde{\cA}$, let $L_\alpha$ be the integer closest to the solution of $ 2^{L(\alpha+1)} = \tilde{\rho}^{-1}_\alpha$ and consider the set $\cL := \{ L_\alpha : \alpha \in \tilde{\cA} \}$. Recall that $N^{-\omega} \lesssim \varepsilon \lesssim 1$ for some $\omega \in [0,1)$, which implies that $|\cL| \asymp \log N$ and there exists a constant $c > 0$ such that $L \geq c \log N$ for all $L \in \cL$. 

	Consider now the random $L_2[0,1]$-valued elements 
	\begin{equation}\label{eq:global-lb-submodel-def}
		 F_\cL^U(x) = 1 + \sum_{L \in \cL} \sum_{k = 1}^{2^L} U_{Lk} \psi_{Lk}(x) \quad x \in [0,1],
	\end{equation}
	where the collection $U = \left\{U_{L} : L \in \cL \right\}$ of coefficient vectors $U_L = (U_{Lk})_{k=1}^{2^L}$ is drawn from a `prior' distribution supported on the hypercube 
	\begin{equation*}
		\cU = \cup_{\alpha \in \tilde{\cA}} \, [-C_R 2^{-L_\alpha(\alpha+1/2)}, C_R 2^{-L_\alpha(\alpha+1/2)}]^{2^{L_\alpha}}.
	\end{equation*}
	Here, $\psi_{Lk}$ are the elements of a wavelet basis that is $A> \alpha_{\max}$-smooth and satisfies $\int \psi_{Lk} = 0$ for all $k$ and $L \in \cL$. Since $\alpha > 0$ and $\min_{L \in \cL} L \to \infty$, $F_\cL$ is a probability density over the full support of the prior whenever $m n^2 \varepsilon^2$ is large enough. The following lemma establishes a lower bound on adaptive minimax risk in terms of the trace of the Fisher information of the submodels induced by $F_\cL^U$, $U \in \cU$.

	\begin{lemma}\label{lem:global-lower-bound-key-lemma}
		Consider $\hat{f} \in \cF(\varepsilon,\delta)$, let $T = (T^{(j)})_{j=1,\dots,m}$ denote the corresponding FDP transcripts. 
		
		There exists a distribution $\P^U$ of $U$ over $\cU$ such that \eqref{eq:global-lb-to-show-thm} is lower bounded by 
		\begin{equation*}
			 \left({  \, \frac{\log (N)}{m n^2\varepsilon^2} \, \min_{\alpha \in \tilde{\cA}} \, \E^{U} \text{Tr}(\cI_{L_\alpha}(U)) + 1}\right)^{-1},
		\end{equation*}
		where $\cI_{L_\alpha}(U) = \E_{F_\cL^U} \E_{F_\cL^U} \left[ S_{L_\alpha} | T \right] \E_{F_\cL^U} \left[ S_{L_\alpha} | T \right]^\top$, with
		\begin{equation*}
			S_L \equiv S_L(X_1,\dots,X_N) := \nabla_{(u_{Lk})_{k=1}^{2^L}}  \sum_{j=1}^m \sum_{i=1}^n \log F_\cL^u(X_i^{(j)}) \mid_{u=U} \,,
			\end{equation*}
		and where $\E^U$ denotes the push-forward expectation with respect to the distribution $\P^U$.
	\end{lemma}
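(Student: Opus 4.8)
The plan is to combine a Bayesian (van Trees) argument with the data-processing structure of FDP protocols. First I would fix a product prior on $U$: for each $\alpha \in \tilde{\cA}$, equip the block $U_{L_\alpha} \in [-C_R 2^{-L_\alpha(\alpha+1/2)}, C_R 2^{-L_\alpha(\alpha+1/2)}]^{2^{L_\alpha}}$ with a smooth, compactly supported density on that cube (e.g. a product of rescaled, mollified bump densities vanishing at the endpoints with their derivatives), and take the $U_{L_\alpha}$ independent across $\alpha$. This guarantees (i) $F_\cL^U$ is a genuine probability density in the appropriate Besov ball $\mathcal{B}^\alpha_{p,q}(R)$ almost surely — using the wavelet characterization of Besov norms together with the scaling $2^{-L_\alpha(\alpha+1/2)}$ on a single resolution level $L_\alpha$, and $\int\psi_{Lk}=0$ to keep it a density — and (ii) the per-coordinate prior Fisher information of each block scales like $2^{2L_\alpha(\alpha+1/2)}$ up to a constant depending only on the chosen bump density and $C_R$.

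Next, for a fixed $\alpha\in\tilde{\cA}$ I would restrict attention to the sub-model obtained by freezing all blocks $U_{L_{\alpha'}}$, $\alpha'\neq\alpha$, at (say) $0$, leaving only the $2^{L_\alpha}$-dimensional parameter $U_{L_\alpha}$ active. Since $\hat f\in\mathcal F(\varepsilon,\delta)$ has worst-case risk at least the Bayes risk over this sub-model, the multivariate van Trees inequality (as in \cite{gill1995applications}; or the version used in \cite{barnes2020fisher,Cai2024FL-NP-Regression}) gives
\begin{equation*}
	\sup_{f}\E_f\|\hat f - f\|_2^2 \;\geq\; \frac{\big(\sum_{k=1}^{2^{L_\alpha}} 1\big)^2}{\operatorname{Tr}\big(\mathcal I_{L_\alpha}^{T}\big) + \mathcal I(\text{prior})},
\end{equation*}
where $\mathcal I_{L_\alpha}^{T}$ is the Fisher information about $U_{L_\alpha}$ contained in the transcripts $T$ — which, by the chain rule / Bayes-conditional-expectation identity for Fisher information under a sufficient-statistic reduction, equals $\E^U\mathcal I_{L_\alpha}(U)$ with $\mathcal I_{L_\alpha}(U)$ as defined in the statement (the conditional-score characterization $\E_{F^U_\cL}[S_{L_\alpha}\mid T]$ is exactly the score of the transcript model) — and the prior term is $\asymp 2^{L_\alpha}\cdot 2^{2L_\alpha(\alpha+1/2)}$. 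Using orthonormality of the wavelets, $\|\hat f - f\|_2^2$ controls the $\ell_2$-error on the active block, so the numerator is of order $(2^{L_\alpha})^2$ and the prior term in the denominator, after dividing through, contributes a bounded constant times $2^{2L_\alpha(\alpha+1/2)}$; by the calibration $2^{L_\alpha(\alpha+1)}\asymp\tilde\rho_\alpha^{-1}$ and $2^{L_\alpha}\asymp$ the relevant dimension, one checks that this prior contribution matches (up to constants) the target rate $\tilde\rho_\alpha^2\asymp(\log N/(mn^2\varepsilon^2))^{2\alpha/(2\alpha+2)}$, while the transcript-information term contributes $\tilde\rho_\alpha^2\cdot\operatorname{Tr}(\E^U\mathcal I_{L_\alpha}(U))/(2^{2L_\alpha(\alpha+1/2)}\cdot 2^{L_\alpha})$ — rearranging and taking the worst $\alpha$ over the grid $\tilde{\cA}$ (of size $\lceil\log N\rceil$), together with $2^{L_\alpha(\alpha+1)}\asymp\tilde\rho^{-1}_\alpha$, produces exactly the claimed lower bound $\big(\tfrac{\log N}{mn^2\varepsilon^2}\min_{\alpha\in\tilde\cA}\E^U\operatorname{Tr}(\mathcal I_{L_\alpha}(U)) + 1\big)^{-1}$. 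I would be careful here that the same transcript $T$ is used for every $\alpha$ — this is what forces the single quantity $T$ to simultaneously serve all sub-models, and it is why the $\min$ over $\alpha$ (rather than a per-$\alpha$ choice) appears.

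The main obstacle I anticipate is the Fisher-information bookkeeping under the FDP transcript reduction, specifically justifying that the transcript Fisher information for the sub-model parameter $U_{L_\alpha}$ is precisely $\E^U[\E_{F^U_\cL}[S_{L_\alpha}\mid T]\E_{F^U_\cL}[S_{L_\alpha}\mid T]^\top]$ without any domination assumption on the conditional transcript laws (the paper explicitly drops this standard assumption, deferring to Section \ref{sec:properties-of-induced-fisher-information-matrix}). This requires defining the induced Fisher information via the conditional-score projection rather than via densities, checking it is well-defined and that van Trees still applies in this generality, and verifying the tower/Jensen step $\mathcal I^T_{L_\alpha}\preceq$ (data-processing) the full-sample information — which I would invoke from Lemma \ref{lem:data-processing}. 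A secondary technical point is ensuring the frozen-coordinates reduction does not lose the Besov-ball membership for the active $\alpha$ (it does not, since zeroing out other resolution levels only shrinks the Besov norm) and that the prior-information constant is genuinely independent of $L_\alpha$ and $N$ (true because it is computed for a fixed bump shape and then rescaled). Everything else — the van Trees inequality itself, the wavelet norm equivalences, and the counting $|\tilde{\cA}|\asymp\log N$ — is routine given the results already cited in the excerpt.
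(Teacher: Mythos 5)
Your proposal follows the same general route as the paper (per-$\alpha$ single-level wavelet sub-models, Plancherel, the multivariate van Trees inequality with the conditional-score definition of the transcript-induced information, and the calibration $2^{L_\alpha(\alpha+1)}\asymp\tilde\rho_\alpha^{-1}$ turning the prior-information term into the ``$+1$''), but it misses the step that is actually the crux of this lemma: producing a \emph{single} distribution $\P^U$ on $\cU$ under which the bound holds with $\min_{\alpha\in\tilde\cA}\E^U\mathrm{Tr}(\cI_{L_\alpha}(U))$, where every $\cI_{L_\alpha}(U)$ is computed under the \emph{same} data law $F_\cL^U$. Your argument runs van Trees separately for each $\alpha$ in the sub-model with the other blocks frozen at $0$; each such bound involves the information $\E^{U_{L_\alpha}}\mathrm{Tr}(\cI_{L_\alpha})$ evaluated under that sub-model's own prior and data distribution. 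Taking the maximum of the reciprocals does give a ``min over $\alpha$'', but of quantities living under \emph{different} priors/sub-models, which is not what the lemma asserts, and your identification of the frozen sub-model's information with ``$\E^U\cI_{L_\alpha}(U)$ as defined in the statement'' is false under your product prior (there the nuisance blocks are random and nonzero, so the data distribution differs). The single-prior form is not cosmetic: it is what allows the next step of the theorem's proof to bound $\min_\alpha\E^U\mathrm{Tr}(\cI_{L_\alpha}(U))\le|\cL|^{-1}\E^U\mathrm{Tr}(\cI_\cL(U))$ by stacking the blocks into one matrix under one transcript distribution and then invoking the dimension-free bound (Lemma \ref{lem:data-processing}, which belongs there, not inside this lemma as you suggest). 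The paper closes this gap with a minimax/saddle-point argument: the map $(q,\P^U)\mapsto\sum_{L}q_L\,\E^U\mathrm{Tr}(\cI_L(U))$ is bilinear and continuous on (simplex over $\cL$) $\times$ (weak-$*$ compact set of priors on $\cU$), so $\min_L\sup_{\P^U}=\sup_{\P^U}\min_L$ and the supremum is attained, yielding the required single $\P^U$. Nothing in your plan plays this role.

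A secondary problem is your prior construction itself. With all blocks simultaneously active at amplitude $2^{-L_{\alpha'}(\alpha'+1/2)}$, the draw $F_\cL^U$ has $\cB^{\alpha}_{p,q}$-norm contribution of order $2^{L_{\alpha'}(\alpha-\alpha')}$ from each rougher level $\alpha'<\alpha$, so it does \emph{not} lie in $\cB^{\alpha}_{p,q}(R)$ for the smoother classes and the reduction from the sup-sup risk to the Bayes risk fails for those $\alpha$. Freezing the other blocks (as you then do) restores Besov membership but reintroduces the mismatch described above; the paper avoids the dilemma by supporting the prior on $\cU=\cup_\alpha\cU_{L_\alpha}$, i.e.\ on configurations with a single active level, and letting the saddle-point argument select the prior. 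So the proposal, as written, has a genuine gap at exactly the point where the lemma's statement goes beyond a routine per-$\alpha$ van Trees bound.
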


	A proof of the above lemma can be found in Section \ref{sec:global-risk-lower-bound-proofs} of the Supplementary Material. To see that the object $\cI_{L_\alpha}(U)$ deserves the name \emph{transcript-induced Fisher information matrix} and is in fact well-defined, we refer the reader to Section \ref{sec:properties-of-induced-fisher-information-matrix}. 
		
	 We have that  
				\begin{align}
				\min_{\alpha \in \tilde{\cA}} \, \E^{U}  \text{Tr}(\cI_{L_\alpha}(U)) &\leq 	\frac{1}{|\cL|} \sum_{L \in \cL} \E^{U} \text{Tr}(\cI_{L_\alpha}(U)) = \frac{1}{|\cL|} \E^{U} \text{Tr} \left( \cI_{\cL}(U) \right), \label{eq:stacking-fisher-infos}
				\end{align}
				where $\cI_{\cL}(U)$ denotes the `full' induced Fisher information matrix corresponding to the model in \eqref{eq:global-lb-submodel-def}: 
				\begin{equation*}
					\cI_{\cL}(U) = \E_{F_\cL^U} \E_{F_\cL^U} \left[ S_{\cL} | T \right] \E_{F_\cL^U} \left[ S_{\cL} | T \right]^\top,
				\end{equation*}
				where $S_{\cL} = \text{vec}(S_L : L \in \cL)$ is the vector obtained by stacking the vectors $S_L$ for $L \in \cL$. 
		
				The proof now follows by combining the observation of \eqref{eq:stacking-fisher-infos} with the following data-processing inequality, which captures the rather striking phenomenon that the trace of the transcript-induced Fisher information matrix is free of dimension in the differential privacy setting.
		
				\begin{lemma}\label{lem:data-processing}
					Whenever $\delta  \ll n^{} \varepsilon^2 / N $, it holds that $\text{Tr}(\cI_{\cL}(u)) \leq C m n^2 \varepsilon^2$ for some absolute constant $C > 0$ independent of $n$, $m$, $\delta$ and $\varepsilon$.
					\end{lemma}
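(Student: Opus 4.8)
The plan is to bound $\text{Tr}(\cI_{\cL}(u))$ by first reducing it to a sum of $m$ per-server contributions and then establishing, for each server, a \emph{dimension-free} bound of order $n^2\varepsilon^2$ — one that does not depend on the number of coordinates $\sum_{L\in\cL}2^L$ — following the local-privacy Fisher-information philosophy of \cite{barnes2020fisher} and its federated refinements in \cite{Cai2024FL-NP-Regression,xue2024optimalestimationprivatedistributed}. Write $S_{\cL}=\sum_{j=1}^m S_{\cL}^{(j)}$ with $S_{\cL}^{(j)}=\sum_{i=1}^n s_u(X_i^{(j)})$, where $s_u(x):=\nabla_u\log F_{\cL}^u(x)$ has $(L,k)$-entry $\psi_{Lk}(x)/F_{\cL}^u(x)$; note $\E_{F_{\cL}^u}[s_u(X)]=0$ because every $\psi_{Lk}$ integrates to zero. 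For a one-shot protocol the pairs $(X^{(j)},T^{(j)})$ are independent across $j$, so $\E[S_{\cL}\mid T]=\sum_j\E[S_{\cL}^{(j)}\mid T^{(j)}]$ is a sum of independent mean-zero vectors, the cross terms vanish in expectation, and $\text{Tr}(\cI_{\cL}(u))=\sum_{j=1}^m \E_{F_{\cL}^u}\bigl\|\E[S_{\cL}^{(j)}\mid T^{(j)}]\bigr\|_2^2$. For sequential protocols this step is replaced by conditioning on the past: given $T^{(j-1)}$, server $j$'s mechanism is $(\varepsilon,\delta)$-DP in $X^{(j)}$, the increments $\E[S_{\cL}\mid T^{(1:j)}]-\E[S_{\cL}\mid T^{(1:j-1)}]$ are conditionally uncorrelated, and the trace of the full induced Fisher information is bounded by the sum of the per-server conditional contributions — carrying this out without assuming that the conditional transcript laws are dominated is precisely the content of Section \ref{sec:properties-of-induced-fisher-information-matrix}.

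For the per-server bound I would combine a leave-one-out decomposition with Bessel's inequality, which is the step that eliminates the dimension. Fix a server, write $X=(X_1,\dots,X_n)$ for its sample, $K(\cdot\mid x)$ for its mechanism and $q=\E_{F_{\cL}^u}K(\cdot\mid X)$ for the induced marginal. First, $\bigl\|\E[S_{\cL}^{(j)}\mid T^{(j)}=t]\bigr\|_2^2\le n\sum_{i=1}^n\bigl\|\E[s_u(X_i)\mid T^{(j)}=t]\bigr\|_2^2$. By Bayes' rule $\E[s_u(X_i)\mid T^{(j)}=t]=\E_X\bigl[s_u(X_i)\,K(t\mid X)/q(t)\bigr]$; conditioning on $X_{-i}$ and setting $q_{-i}(t\mid X_{-i})=\E_{X_i'}K(t\mid(X_i',X_{-i}))$, the \emph{single}-coordinate $(\varepsilon,0)$-DP property gives $K(t\mid(x,X_{-i}))/q_{-i}(t\mid X_{-i})\in[e^{-\varepsilon},e^{\varepsilon}]$, and, since $\E_{X_i}[s_u(X_i)]=0$, one may subtract $q_{-i}(t\mid X_{-i})$ to obtain $\E[s_u(X_i)\mid T^{(j)}=t]=\E_{X_{-i}}\bigl[\tfrac{q_{-i}(t\mid X_{-i})}{q(t)}\,\E_{X_i}[\,s_u(X_i)\tilde g_t(X_i)\mid X_{-i}]\bigr]$ with $\tilde g_t(x):=K(t\mid(x,X_{-i}))/q_{-i}(t\mid X_{-i})-1$. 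The decisive identity is that, for any $g$,
\begin{equation*}
	\Bigl\|\E_{X_i\sim F_{\cL}^u}\bigl[s_u(X_i)\,g(X_i)\bigr]\Bigr\|_2^2=\sum_{L\in\cL}\sum_{k=1}^{2^L}\Bigl(\int_0^1\psi_{Lk}(x)g(x)\,dx\Bigr)^2\le\|g\|_{L^2[0,1]}^2\lesssim\E_{X_i}\bigl[g(X_i)^2\bigr],
\end{equation*}
where the equality holds because the sampling density cancels the $1/F_{\cL}^u$ in $s_u$, the middle inequality is Bessel's inequality for the orthonormal system $\{\psi_{Lk}\}$ (so the sum over all coordinates collapses to a single $L^2$-norm), and the last step uses $\inf_{[0,1]}F_{\cL}^u\ge c>0$. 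Applying this with $g=\tilde g_t$, then Jensen's inequality in $X_{-i}$ (using $\E_{X_{-i}}[q_{-i}(t\mid X_{-i})]=q(t)$), then integrating in $t$ and recognising $\int q_{-i}(t\mid X_{-i})\tilde g_t(x)^2\,dt=\chi^2\bigl(K(\cdot\mid(x,X_{-i}))\,\big\|\,q_{-i}(\cdot\mid X_{-i})\bigr)\le(e^{\varepsilon}-1)^2\lesssim\varepsilon^2$, gives $\int q(t)\bigl\|\E[s_u(X_i)\mid T^{(j)}=t]\bigr\|_2^2\,dt\lesssim\varepsilon^2$ for every $i$, hence $\E_{F_{\cL}^u}\|\E[S_{\cL}^{(j)}\mid T^{(j)}]\|_2^2\lesssim n^2\varepsilon^2$. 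Summing over the $m$ servers yields $\text{Tr}(\cI_{\cL}(u))\lesssim mn^2\varepsilon^2$.

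For $\delta>0$ the pointwise bound $K(t\mid(x,X_{-i}))/q_{-i}(t\mid X_{-i})\le e^{\varepsilon}$ holds only outside an event of $q_{-i}$-probability $O(\delta)$, so $\tilde g_t$ is no longer uniformly bounded. I would split $t$ into a ``good'' region, on which the argument above runs verbatim, and a ``bad'' region of probability $O(\delta)$, on which the contribution is controlled crudely — the likelihood ratios and $\|s_u(X_i)\|_2$ are at most polynomial in $N$ over the relevant parameter set, or one invokes the standard reduction of an $(\varepsilon,\delta)$-DP mechanism to an $(\varepsilon,0)$ mechanism up to total variation $\delta$ — so that this residual is $o(mn^2\varepsilon^2)$ under $\delta\ll n\varepsilon^2/N$. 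The two genuinely delicate points are (i) making the sequential-protocol reduction of the first paragraph rigorous without the usual domination assumptions on the conditional transcript laws, and (ii) the bookkeeping of the $\delta$-bad event so that it is provably dominated when $\delta\ll n\varepsilon^2/N$; by contrast, the Bessel collapse in the second paragraph is the clean ingredient that makes the final bound independent of $\sum_{L\in\cL}2^L$.
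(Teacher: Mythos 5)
Your overall architecture coincides with the paper's: reduce $\text{Tr}(\cI_{\cL}(u))$ to a sum of per-server contributions via the conditional-independence/martingale structure (this is Lemma \ref{lem:fisher-conditional-independence}, established without domination assumptions in Section \ref{sec:properties-of-induced-fisher-information-matrix}), and then prove a dimension-free per-server bound of order $n^2\varepsilon^2$. Where you genuinely differ is the per-server step. The paper does not reprove it: it invokes the score-attack inequality of Lemma 5.3 in \cite{Cai2024FL-NP-Regression}, which yields a self-bounding inequality $\text{Tr}(C_{T^{(j)}})\le C n\varepsilon\sqrt{\text{Tr}(C_{T^{(j)}})}\sqrt{\lambda_{\max}}+2M\delta+\text{tail terms}$, kills the tails by the pointwise bound $|G_{j,i}|\lesssim n 2^{L}$, uses orthonormality of the wavelets through $\lambda_{\max}\lesssim 1$ (the analogue of your Bessel collapse), and solves the quadratic. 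Your route — Cauchy--Schwarz over the $n$ observations, leave-one-out conditioning, the two-sided likelihood-ratio bound from per-coordinate DP, Bessel's inequality, and $\chi^2\le(e^{\varepsilon}-1)^2\lesssim\varepsilon^2$ — is a correct, self-contained alternative in the $\delta=0$ case (the Bayes-rule manipulations are legitimate there because pure DP forces mutual absolute continuity of the conditional transcript laws, so densities with respect to the marginal exist), under $\varepsilon\lesssim 1$, which the paper assumes throughout. It buys independence from the external lemma, at the price of a cruder handling of the relaxation $\delta>0$.

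That $\delta>0$ bookkeeping is the genuine gap: you flag it as delicate but assert, rather than establish, that the residual is $o(mn^2\varepsilon^2)$ under $\delta\ll n\varepsilon^2/N$. Decompose $K(\cdot\mid(x,X_{-i}))$ into a part dominated by $e^{\varepsilon}q_{-i}$ and an excess of mass at most $\delta$ (for $\delta>0$ you must work with this Lebesgue-type decomposition anyway, since absolute continuity with respect to $q_{-i}$ can fail). The excess then contributes per observation at most about $\delta\,\sup_x\|s_u(x)\|_2^2\asymp\delta\,2^{L_{\max}}$, and after your observation-wise Cauchy--Schwarz (factor $n$) and the sum over $i$ (factor $n$) the per-server residual is of order $n^2\delta\,2^{L_{\max}}$, which is $o(n^2\varepsilon^2)$ only if $\delta\ll\varepsilon^2 2^{-L_{\max}}$. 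Since $2^{L_{\max}}$ is polynomially large in $N$, this is strictly stronger than the stated hypothesis $\delta\ll n\varepsilon^2/N=\varepsilon^2/m$ whenever $2^{L_{\max}}\gg m$, e.g.\ in the central regime $m=1$. The paper's route does not suffer this loss because, as invoked, the $\delta$-term enters the per-server bound only additively as $2M\delta$ with $M\lesssim n 2^{L_{\max}}\lesssim nN$, without the extra factor $n$ that your splitting introduces, so $\delta\ll n\varepsilon^2/N$ suffices there. The coupling/TV reduction you mention as an alternative does not repair this: it is a per-pair-of-datasets statement and leads to the same kind of $\delta\times\sup$ bound. To salvage your argument under the stated hypothesis you would need to treat the bad event at the level of the full-score correlations $\langle\E[S_{\cL}^{(j)}\mid T],s_u(X_i^{(j)})\rangle$ (as in the score-attack formulation) instead of after isolating each observation, so as to gain back a factor of $n$ in the bad-event term.
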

		
		We provide a proof of this lemma in Section \ref{sec:global-risk-lower-bound-proofs} of the Supplementary Material \cite{cai2025supplementary}.
		\end{proof}

\section{Discussion}\label{sec:discussion}
	
	The results presented in this paper provide a unified and comprehensive framework for understanding the cost of adaptation in federated differentially private (FDP) density estimation. Our analysis shows that privacy constraints introduce an intrinsic penalty to adaptive estimation\textemdash a subtle yet fundamental cost that distinguishes the private setting from its classical, non-private counterpart. Although logarithmic in magnitude (and thus modest compared with polynomial penalties), this adaptation cost is conceptually significant: it reflects a fundamental limitation imposed by privacy protection and has practical implications for valid statistical inference beyond point estimation. These findings open several avenues for further investigation in related statistical models and problem settings. For instance, understanding the exact cost of adaptation is essential for constructing adaptive confidence intervals or confidence bands with guaranteed coverage, as the adaptation penalty directly determines the required bandwidth \cite{Cai2004adaptation, gine2010confidence, Cai2014adaptive}. Extending our framework to such inferential tasks would clarify how privacy noise interacts with the uncertainty inherent in adaptation.

Beyond density estimation, the principles and tools developed here\textemdash particularly our lower bound techniques, noise mechanism design, and adaptive thresholding strategy\textemdash have the potential to inform a wide range of nonparametric and high-dimensional problems. In settings such as sparse regression or function estimation under shape constraints, adaptation to unknown sparsity or structural parameters poses analogous challenges. Applying or extending our methods in these contexts could lead to new privacy-preserving procedures that attain minimax-optimal or near-optimal adaptive performance.

Another promising direction concerns adaptation to more complex functional characteristics, such as spatial inhomogeneity, anisotropy, or locally varying smoothness. These features arise frequently in real-world applications and introduce dependencies that may amplify the privacy-adaptivity trade-off. Understanding whether such structure-dependent adaptation leads to more substantial costs under differential privacy could deepen our insight into statistical efficiency under privacy constraints.

Our findings also have implications for statistical tasks beyond estimation, including hypothesis testing and model assessment. Although recent work has shown that adaptive nonparametric goodness-of-fit testing under differential privacy is feasible \cite{lam2022minimax, cai2024privateTesting}, the fundamental cost of adaptation in these testing problems remains largely open. Determining whether similar penalties arise\textemdash or whether privacy induces different trade-offs\textemdash would be an important direction for future research.

Taken together, our framework highlights the broader potential for studying adaptation under privacy constraints across diverse statistical models. By systematically characterizing the interplay between privacy, adaptability, and statistical efficiency, this work deepens our theoretical understanding of privacy-utility trade-offs and lays the groundwork for developing principled, adaptive, and privacy-preserving methodologies for modern data analysis.
 


%
%

\begin{funding}
The research of Tony Cai was supported in part by NSF grant NSF DMS-2413106 and NIH grants
R01-GM123056 and R01-GM129781.
\end{funding}


\begin{supplement}

    \stitle{Supplementary Material to ``{The} Cost of Adaptation under Differential Privacy: Optimal Adaptive Federated Density Estimation''}
    
    \sdescription{The supplementary material \cite{cai2025supplementary} contains proofs of the main results, technical lemmas, and additional details on the construction of the estimators and lower bounds.}

    \end{supplement}

\putbib[references]
\end{bibunit}

\newpage



\begin{bibunit}[imsart-number]

\begin{frontmatter}
\title{Supplementary Material to: ``The Cost of Adaptation under Differential Privacy: Optimal Adaptive Federated Density Estimation''}
\runtitle{The Cost of Adaptation under Differential Privacy}

\begin{aug}

\author[A]{\fnms{T. Tony} \snm{Cai}\ead[label=e1]{tcai@wharton.upenn.edu}},
\author[B]{\fnms{Abhinav} \snm{Chakraborty}\ead[label=e2]{ac4662@columbia.edu}}
\and
\author[C]{\fnms{Lasse} \snm{Vuursteen}\ead[label=e3]{lv121@duke.edu}}

\address[A]{Department of Statistics and Data Science, \\
\printead[presep={The Wharton School, University of Pennsylvania,\ }]{e1}}

\address[B]{Department of Statistics, \\
\printead[presep={Columbia University,\ }]{e2}}

\address[C]{Department of Statistical Science,\\
\printead[presep={Duke University,\ }]{e3}}


\runauthor{T. T. Cai, A. Chakraborty and L. Vuursteen}
\end{aug}
\begin{abstract}
    The supplementary material contains proofs of the main results, technical lemmas, and additional details on the construction of the estimators and lower bounds \cite{cai2025adaptation}.
\end{abstract}
\end{frontmatter}

\appendix

\section{Proofs relating to estimator guarantees: Theorems \ref{thm:upper-bound-global-risk} and \ref{thm:upper-bound-pointwise}}

\subsection{Multiscale-oscillation norm properties}\label{sec:multiscale-oscillation-norm-properties}

We first recall some notation. Throughout consider for $l_0, L \in \N$, $V_L = \{ (l,k) \, : \, l=l_0,\dots,L, \, k = 1,\dots,2^l \}$. Let $s_L = \sum_{l=l_0}^L 2^l$. Define for $g \in L_2[0,1]$ the \emph{oscillation} as $\operatorname{osc}(g) := \sup_{t \in [0,1]} g(t) - \inf_{t \in [0,1]} g(t)$. Define the \emph{multiscale-oscillation norm} on the space of functions spanned by the wavelets $\psi_{lk}$ as follows:
\begin{equation}\label{eq:osc-norm-supplement}
    \|u\|_{V_L} = \sup_{\substack{g \in \operatorname{span}\{\psi_{lk} : (l,k) \in V_L\} \\ \operatorname{osc}(g) \leq 1}} \left| \left\langle \sum_{(l,k) \in V_L} u_{lk} \psi_{lk}, g \right\rangle_{L^2[0,1]} \right|,
\end{equation}
where $f = \sum_{(l,k) \in V_L} u_{lk} \psi_{lk}$.

Given a vector $x\in\R^{s_L}$ indexed as $x=(x_{ik} : (i,k) \in V)$, let $x_{B} = (x_{ik} : (i,k) \in B)$ for any $B \subset V$. The remainder of this section is devoted to the proof of the following concentration result for random vector generated from the exponential mechanism corresponding to the multiscale-oscillation norm defined in \eqref{eq:osc-norm}; in the form of the lemma below.

\begin{lemma}\label{lem:osc-exponential-mechanism-tails}
    Consider for $j=1,\dots,m$, i.i.d. draws $W^{(j)} \in\R^{s_L}$ with density proportional to $w\mapsto \exp(-\theta\|w\|_{V_L})$ with $\theta>0$ and set $\bar{W} = \frac{1}{m} \sum_{j=1}^m W^{(j)}$. Fix $l\in\{l_0,\dots,L\}$ and $S\subset\{1,\dots,2^l\}$ with $|S|=:b_L$, and write $B_l=\{(l,k):k\in S\}$.
    Assume $b_L/2^L<1$. 
    
    Then, there exist constants $\kappa,c_0,c_1,c_2>0$ depending only on the mother wavelet ($A,\|\psi\|_\infty$) such that, for all
    \begin{equation*}
    t\;\ge\; c_0\,\frac{2^{l/2}}{\sqrt{m} \theta} ,
    \end{equation*}
    \begin{equation*}
    \mathbb{E}\!\left[\|\bar{W}_{B_l}\|_2^2\,\mathbbm{1}\{\|\bar{W}_{B_l}\|_2\ge t\}\right]
    \lesssim  \left(t^2 + \frac{1}{m \theta^2} \right) e^{ b_L \log(5) - c_1 m \min\left( \frac{t^2 \theta^2}{2^l }, \frac{t \theta}{2^{l/2} } \right) }+ \frac{ 2^{4L}}{m \theta^2} e^{-c_2 2^L}.
    \end{equation*}
    In particular, it holds that for some constant $C>0$,
    \begin{equation*}
    \mathbb{E}\! \|\bar{W}_{B_l}\|_2^2 
    \leq  C \frac{2^{l}b_L}{m \theta^2}.
    \end{equation*}
\end{lemma}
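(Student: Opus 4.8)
The plan is to exploit the radial symmetry of the exponential mechanism and then reduce everything to a geometric analysis of the unit ball of $\|\cdot\|_{V_L}$. First I would invoke the standard polar decomposition for $K$-norm mechanisms (see \cite{hardt2010knorm}): a draw $W^{(j)}$ with density $\propto e^{-\theta\|w\|_{V_L}}$ on $\R^{s_L}$ can be written as $W^{(j)}\overset{d}{=}D^{(j)}U^{(j)}$, where $D^{(j)}\sim\Gamma(s_L+1,\theta^{-1})$ is a ``radius'', $U^{(j)}\sim\mathcal{U}(K)$ is uniform on the unit ball $K:=\{u:\|u\|_{V_L}\le 1\}$, and the two are independent. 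Since $K$ is symmetric, each $W^{(j)}_{B_l}$ has mean zero, so $\E\|\bar W_{B_l}\|_2^2=\tfrac1m\,\E[(D^{(1)})^2]\,\E\|U^{(1)}_{B_l}\|_2^2$, and since $\E[(D^{(1)})^2]=(s_L+1)(s_L+2)\theta^{-2}\asymp 2^{2L}\theta^{-2}$, the ``in particular'' assertion reduces to the estimate $\E\|U^{(1)}_{B_l}\|_2^2\lesssim b_L\,2^{l-2L}$ — which is also the key input for the tail bound.

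The geometric heart of the argument, and the step I expect to be hardest, is the control of the uniform law $\mathcal{U}(K)$ and in particular of its projection onto a single-level block $B_l$. The obstacle is that $K$ is the polar of the oscillation body $\{v:\operatorname{osc}(\sum v_{lk}\psi_{lk})\le 1\}$: it is not a product set, and the nesting of wavelet supports across scales couples the coordinates, so the block marginal has no closed form. My plan is to bracket $K$ between tractable convex bodies and argue by stochastic domination, using two facts. First, a lower bound on the norm: the reproducing-kernel computation from the proof of Lemma~\ref{lem:sensitivity-osc-norm}, combined with the bounded overlap of $\{\psi_{lk}\}_k$ within level $l$, yields $\|u\|_{V_L}\gtrsim 2^{-l/2}\|u_{B_l}\|_2$ for \emph{every} $u\in\R^{s_L}$, hence $\|U^{(1)}_{B_l}\|_2\le C\,2^{l/2}$ almost surely. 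Second, the equivalence $\|u\|_{V_L}\asymp\bigl\|\sum_{lk}u_{lk}\psi_{lk}\bigr\|_{L^1[0,1]}$ (oscillation is equivalent to $\|\cdot\|_\infty$ on mean-zero functions, whose predual is $L^1$; the $L^\infty$-boundedness of wavelet projections disposes of the ``test function in the span'' restriction), which exhibits $K$ as comparable to the $L^1$-ball of the wavelet span — a cross-polytope-type body of dimension $s_L\asymp 2^L$ with reach $\asymp 2^{l/2}$ along each level-$l$ axis. A volume-ratio estimate, valid uniformly in $l$ precisely because we project only onto the single block $B_l$ and not onto all of $V_L$, then gives $\E\|U^{(1)}_{B_l}\|_2^2\lesssim b_L 2^{l-2L}$; and, by log-concavity of linear marginals of a uniform law on a convex body, that for every unit $v\in\R^{b_L}$ the variable $\langle U^{(1)}_{B_l},v\rangle$ is symmetric, sub-exponential with parameter $\lesssim 2^{l/2-L}$, and bounded by $C\,2^{l/2}$.

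With this control in hand, the tail bound follows from a net-plus-Bernstein argument, splitting on the event $\mathcal{E}=\bigcap_{j}\{D^{(j)}\le 2s_L/\theta\}$. On $\mathcal{E}^c$, the crude bound $\|\bar W_{B_l}\|_2^2\le\tfrac1m\sum_j(D^{(j)})^2\|U^{(j)}_{B_l}\|_2^2\lesssim\tfrac{2^l}{m}\sum_j(D^{(j)})^2$ together with the Gamma tail $\E[(D^{(j)})^2\mathbbm{1}\{D^{(j)}>2s_L/\theta\}]\lesssim 2^{2L}\theta^{-2}e^{-c\,2^L}$ produces, after a union over $j$, the additive term $\tfrac{2^{4L}}{m\theta^2}e^{-c_2 2^L}$ (polynomial-in-$N$ prefactors being absorbed into the exponent, using $s_L\asymp 2^L$ and $m\le 2^L$ in the regime of interest). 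On $\mathcal{E}$, fix a $1/2$-net $\mathcal{N}$ of the sphere $S^{b_L-1}$ with $|\mathcal{N}|\le 5^{b_L}$. For each $v\in\mathcal{N}$, $\langle\bar W_{B_l},v\rangle=\tfrac1m\sum_j D^{(j)}\langle U^{(j)}_{B_l},v\rangle$ is an average of $m$ i.i.d. mean-zero terms, each of variance $\lesssim 2^l\theta^{-2}$ and sub-exponential parameter $\lesssim 2^{l/2}\theta^{-1}$ (combining the concentration of $D^{(j)}$ near $s_L/\theta\asymp 2^L/\theta$ with the sub-exponentiality and boundedness of $\langle U^{(j)}_{B_l},v\rangle$); Bernstein's inequality for sub-exponential sums then gives $\P(|\langle\bar W_{B_l},v\rangle|\ge t/2)\lesssim\exp\!\bigl(-c_1 m\min\{t^2\theta^2/2^l,\,t\theta/2^{l/2}\}\bigr)$ for $t\ge c_0 2^{l/2}/(\sqrt m\theta)$, and the relation $\|\bar W_{B_l}\|_2\le 2\max_{v\in\mathcal{N}}\langle\bar W_{B_l},v\rangle$ plus a union bound over $\mathcal{N}$ supplies the $5^{b_L}$ factor. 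Inserting this tail into $\E[\|\bar W_{B_l}\|_2^2\mathbbm{1}\{\|\bar W_{B_l}\|_2\ge t\}]=t^2\P(\|\bar W_{B_l}\|_2\ge t)+\int_t^\infty 2s\,\P(\|\bar W_{B_l}\|_2\ge s)\,ds$ gives the displayed bound, the boundary term contributing the $t^2$ and the sub-Gaussian part of the integral the $1/(m\theta^2)$ in the prefactor.

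To reiterate, I expect the genuine difficulty to lie entirely in the geometric step: understanding the block marginal of the uniform distribution on the unit ball of the multiscale-oscillation norm. The cross-level coupling of wavelet supports means this body is far from a product, and the two-sided bracketing needed both for the second-moment bound and for the per-direction sub-exponentiality must be carried out with constants that do not degrade with the number of resolution levels — which is exactly why one projects onto a single block $B_l$ rather than all of $V_L$, and what ultimately produces the scale $2^{l/2}$ rather than $2^{L/2}$ inside the exponent. Once that is established, the polar decomposition, the Gamma concentration, and the net/Bernstein/tail-integration steps are routine.
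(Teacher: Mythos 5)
Your overall architecture matches the paper's: the polar decomposition $W^{(j)}=D^{(j)}U^{(j)}$ of the $K$-norm mechanism, Gamma tail bounds for the radius producing the additive $\tfrac{2^{4L}}{m\theta^2}e^{-c_2 2^L}$ term, a $1/2$-net of cardinality $5^{b_L}$ combined with a Bernstein-type bound, and the layer-cake identity are all present in the paper's proof as well. The genuine gap is exactly the step you yourself flag as the heart of the matter: the bound $\E\|U^{(1)}_{B_l}\|_2^2\lesssim b_L 2^{l-2L}$ (equivalently, per-direction fluctuations of order $2^{l/2-L}$) for $U^{(1)}$ uniform on the unit ball $K$ of $\|\cdot\|_{V_L}$. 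Your route to it does not deliver it. The equivalence $\|u\|_{V_L}\asymp\|\sum u_{lk}\psi_{lk}\|_{L^1}$ is plausibly correct, but it only sandwiches $K$ between two constant dilates of the $L^1$-ball of the span; since the volume discrepancy between constant dilates is exponential in the dimension $s_L\asymp 2^L$, such a sandwich gives no control whatsoever on marginals of the uniform law. Worse, taken at face value the identification points to the wrong answer: the $L^1$-ball of the span is \emph{not} a cross-polytope in wavelet coordinates across levels (unconditionality fails in $L^1$; only the inclusion of the weighted cross-polytope \emph{into} the $L^1$-ball holds coordinatewise, and the reverse inclusion costs a factor of order the number of levels $\asymp\log N$, a loss this lemma cannot afford). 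For Haar-type wavelets one can compute directly that the uniform law on the $L^1$-ball of the span has level-$l$ coefficient fluctuations of order $2^{-L/2}$ \emph{uniformly in $l$} (the coefficient is a balanced signed sum of $\asymp 2^{L-l}$ cell values, each of order one under the Dirichlet-type law of an $\ell^1$-ball), not the required $2^{l/2-L}$. So no argument that uses only the $L^1$-equivalence, plus an unspecified ``volume-ratio estimate,'' can produce the level-dependent gain $2^{l-2L}$; the gain must come from a finer containment than the one you propose. The subsequent claims (log-concavity giving sub-exponential marginals once the variance is known, the product with the concentrated Gamma radius, net and Bernstein, absorption of the bad event) are fine as sketches, but they all sit on this unproved variance bound.

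The paper closes this gap with a different geometric construction. Using bounded overlap of the wavelet supports (a conflict-graph argument), it exhibits a single test function of oscillation at most one that simultaneously involves the level-$l$ block and $\asymp 2^L$ level-$L$ wavelets; by duality this yields the containment of $K$ in an explicit body cut out by \emph{one} joint weighted $\ell^1$ constraint of budget $O(1)$ coupling the block $B_l$ with order $2^L$ other coordinates (this two-level coupling is what produces the dimension factor $2^{2L}$, hence $2^{l-2L}$, without logarithmic loss). The uniform law on that body has an explicit Rademacher-times-Dirichlet/Beta structure, a peakedness (stochastic domination) lemma transfers its tails back to $K$, and only then do the net, the Bernstein inequality for Rademacher--Beta sums, and the layer-cake step that you describe complete the proof. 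Until you supply a containment of this type (or some other argument establishing $\E\|U_{B_l}\|_2^2\lesssim b_L2^{l-2L}$ with constants independent of the number of levels), both displayed bounds of the lemma remain unproved in your proposal.
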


The proof of this lemma, which we defer to the end of this section, relies on multiple auxiliary results. The distribution of the exponential mechanism induced by the norm \eqref{eq:osc-norm} is not straightforward to analyze. To this end, we first introduce two auxiliary lemmas that allow us to analyze.

The first of which decomposes as a Gamma distributed `radius' and a `direction' uniformly distributed on the set $K$. See \cite{hardt2010knorm} for details. This decomposition into a `radial' and `angular' part proves to be crucial for the analysis of the tail behavior of the noise of the multiscale oscillation-norm defined in \eqref{eq:osc-norm}. Given a set $K \subset \R^{s_L}$ of finite volume, let $\cU(K)$ denote the uniform distribution on $K$.

\begin{lemma}\label{lem:K-norm-mechanism}
    Let $\|\cdot\|_K$ be a norm on $\R^d$ and let $K$ denote its unit ball. Let $W^K$ be a random vector with density $\varphi_K$ is proportional to $w \mapsto \exp(-\frac{\varepsilon}{\Delta_K} \|w\|_K)$ with respect to the Lebesgue measure. 
    
    Then, 
    \begin{equation*}
        W^K = D U, \quad \text{ where } \quad D \sim \Gamma\left(d + 1, \frac{\Delta_K}{\varepsilon}\right) \text{ and } U \sim \cU(K).
    \end{equation*} 
\end{lemma}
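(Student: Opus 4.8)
The plan is to verify the distributional identity directly: with $D$ and $U$ taken independent, it suffices to show that for every bounded measurable $h \colon \R^d \to \R$,
\[
  \E\bigl[h(DU)\bigr] \;=\; \int_{\R^d} h(w)\,\varphi_K(w)\,dw .
\]
First I would record two elementary facts. Since $\|\cdot\|_K$ is a norm on the finite-dimensional space $\R^d$, its unit ball $K$ is a bounded convex body with $0 < \mathrm{vol}(K) < \infty$, so $\mathcal{U}(K)$ is well defined with density $\mathrm{vol}(K)^{-1}\mathbbm{1}_K$. Writing $\lambda := \varepsilon/\Delta_K$, the radial identity $\mathrm{vol}(\{w : \|w\|_K \leq s\}) = s^d\,\mathrm{vol}(K)$ together with one integration by parts gives $\int_{\R^d} e^{-\lambda\|w\|_K}\,dw = \mathrm{vol}(K)\,d!/\lambda^{d}$, so $\varphi_K(w) = \tfrac{\lambda^{d}}{d!\,\mathrm{vol}(K)}\,e^{-\lambda\|w\|_K}$. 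The $\Gamma(d+1,\Delta_K/\varepsilon)$ density is $g_D(r) = \tfrac{\lambda^{d+1}}{d!}\,r^{d}e^{-\lambda r}\mathbbm{1}\{r>0\}$.

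Next I would compute, using independence of $D$ and $U$, that $\E[h(DU)] = \int_0^\infty g_D(r)\,\mathrm{vol}(K)^{-1}\!\int_K h(ru)\,du\,dr$. For fixed $r>0$ I substitute $v = ru$ in the inner integral; the map $u \mapsto ru$ is a linear bijection of $K$ onto $\{v : \|v\|_K \leq r\}$ with Jacobian $r^{d}$, whence $\int_K h(ru)\,du = r^{-d}\int_{\{\|v\|_K \leq r\}} h(v)\,dv$. Interchanging the order of integration by Tonelli (applied to $|h|$, which is bounded, all relevant measures being $\sigma$-finite) yields
\[
  \E\bigl[h(DU)\bigr] \;=\; \int_{\R^d} h(v)\left[\frac{1}{\mathrm{vol}(K)}\int_{\|v\|_K}^{\infty} \frac{g_D(r)}{r^{d}}\,dr\right]dv .
\]
Since $g_D(r)/r^{d} = \tfrac{\lambda^{d+1}}{d!}e^{-\lambda r}$, the bracketed radial integral equals $\tfrac{1}{\mathrm{vol}(K)}\cdot\tfrac{\lambda^{d+1}}{d!}\cdot\tfrac{1}{\lambda}\,e^{-\lambda\|v\|_K} = \varphi_K(v)$, which is exactly the claimed identity; hence $W^K \overset{d}{=} DU$.

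There is no genuine analytic obstacle: the only real content is the observation that the product $DU$ of an independent nonnegative scalar and a uniform draw from a norm ball collapses to a one-dimensional radial integral, after which the Gamma shape parameter $d+1$ is forced by matching $r^{-d}g_D(r)$ against an exponential. The steps needing care are purely bookkeeping — the fixed-$r$ change of variables and its Jacobian, the Tonelli interchange, and tracking normalizing constants so that both sides are genuine probability densities. An equivalent ``reverse-engineering'' route, which I would mention for intuition, is to write the density of $W = DU$ as $g_W(w) = \int_0^\infty r^{-d} g_D(r)\,g_U(w/r)\,dr = \mathrm{vol}(K)^{-1}\int_{\|w\|_K}^\infty r^{-d}g_D(r)\,dr$ and differentiate the requirement $\int_{s}^\infty r^{-d}g_D(r)\,dr \propto e^{-\lambda s}$ in $s$ to obtain $g_D(r) \propto r^{d}e^{-\lambda r}$, i.e.\ $D \sim \Gamma(d+1,\lambda^{-1})$; this is precisely the argument of \cite{hardt2010knorm}.
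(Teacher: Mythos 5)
Your verification is correct: the normalization $\int_{\R^d} e^{-\lambda\|w\|_K}\,dw = \mathrm{vol}(K)\,d!/\lambda^d$, the Jacobian $r^d$ for the fixed-$r$ substitution, and the radial integral $\int_{\|v\|_K}^\infty r^{-d}g_D(r)\,dr$ all check out, and the Gamma scale parameter $\Delta_K/\varepsilon$ matches the paper's convention (cf.\ its Lemma on Gamma tail bounds, where $\E[D]=\alpha\theta$). The paper itself offers no proof of this lemma and simply defers to \cite{hardt2010knorm}, whose argument is exactly the radial decomposition you describe, so your write-up is a faithful and complete substitute for the omitted proof (the only implicit point worth stating explicitly is the independence of $D$ and $U$, which you do note).
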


With Lemma \ref{lem:K-norm-mechanism} in hand, it is easy to derive a bound on the second moment of the exponential mechanism corresponding to the multiscale-oscillation norm. However, in light of Lemma \ref{lem:private-block-thresholding-oracle}, the quantity we are looking to bound is $\mathbb{E}\!\left[\|W_{B_l}\|_2^2\,\mathbbm{1}\{\|W_{B_l}\|_2\ge t\} \right]$, not just the second moment. Lemma \ref{lem:K-norm-mechanism}, combined with a union bound and a straightforward tail bound for the Gamma distribution (Lemma \ref{lem:sub-exponentiality-parameter-gamma}) allow us to focus our efforts onto the concentration of a uniform draw from the unit ball in the multiscale-oscillation norm. Whilst the geometry of the multiscale-oscillation norm ball is not straightforward in and of itself, the following well known result (see e.g. \cite{kanter1977unimodality}) allows us to compare the marginal distributions of a random variable uniformly distributed on the unit ball in the multiscale-oscillation norm with the marginal distributions of a random variable uniformly distributed on a centrally symmetric convex superset whose marginal distributions are easier to analyze.

A set $K \subset \R^d$ is (origin) \emph{centrally symmetric} if $x \in K \iff -x \in K$. Recall also that a random variable $X$ is said to \emph{stochastically dominate} another random variable $Y$ if $\mathbb P\{X \geq t\} \geq \mathbb P\{Y \geq t\}$ for all $t$. 

\begin{lemma}\label{lem:stochastic-domination-convex-bodies}
    Let $d\in\N$ and let $K,K'\subset\R^d$ be centrally symmetric convex bodies with $0<\text{Vol}(K),\text{Vol}(K')<\infty$ and $K'\subset K$.
    For $m\in\N$, take $U^{(1)},\dots,U^{(m)}\iid\sim \cU(K)$ and
    ${U'}^{(1)},\dots,{U'}^{(m)}\iid\sim \cU(K')$, and set
    $\bar U=\frac1m\sum_{j=1}^m U^{(j)}$, $\bar U'=\frac1m\sum_{j=1}^m {U'}^{(j)}$. Then, for any subspace $V\subset\R^d$ and all $t>0$,
    \begin{equation*}
        \P \!\left(\,\|\pi_V \bar U\|_2 \le t\,\right)
    \;\le\;
    \P\!\left(\,\|\pi_V \bar U'\|_2 \le t\,\right),
    \end{equation*}
    i.e. $\|\pi_V \bar U\|_2$ stochastically dominates $\|\pi_V \bar U'\|_2$.
    \end{lemma}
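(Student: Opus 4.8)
The plan is to recast the assertion in terms of the \emph{peakedness order} on origin-symmetric distributions and then invoke Kanter's theorem on the stability of that order under convolution. Recall that a symmetric probability measure $\mu$ on $\R^d$ is \emph{more peaked} than $\nu$ if $\mu(C)\ge\nu(C)$ for every origin-symmetric convex set $C$, and that a symmetric law is \emph{central convex unimodal} if it is a weak limit of finite mixtures of uniform distributions on origin-symmetric convex bodies. Two classical facts (see \cite{kanter1977unimodality} and the references therein) will be used: (i) central convex unimodality is preserved under convolution, and $\cU(B)$ is central convex unimodal for every origin-symmetric convex body $B$; (ii) if $\mu$ is more peaked than $\nu$ (both symmetric) and $\lambda$ is central convex unimodal, then $\mu*\lambda$ is more peaked than $\nu*\lambda$.

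First I would observe that $\{x\in\R^d:\|\pi_V x\|_2\le t\}$ is the preimage $\pi_V^{-1}(tB_V)$ of the Euclidean ball $tB_V$ of the subspace $V$ under the linear map $\pi_V$, hence an origin-symmetric convex set; it is unbounded when $V\neq\R^d$, but it is the increasing union of the bounded origin-symmetric convex sets $\pi_V^{-1}(tB_V)\cap rB$ as $r\uparrow\infty$, where $B$ is the Euclidean unit ball of $\R^d$. So it suffices to show that the law of $\bar U'$ is more peaked than that of $\bar U$, i.e. $\P(\bar U\in C)\le\P(\bar U'\in C)$ for every origin-symmetric convex $C$, and then let $r\to\infty$.

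For the base case $m=1$ one must show that $\cU(K')$ is more peaked than $\cU(K)$. I would reduce this to a single slab: writing $K'$ as the decreasing limit of the bodies $K\cap H_1\cap\dots\cap H_j$, where the $H_i$ run over the symmetric supporting slabs of $K'$, and using continuity of volume along decreasing sequences, it suffices to prove, for an arbitrary origin-symmetric convex body $L$ and symmetric slab $H$, the correlation bound $\cU(L)(C\cap H)\ge\cU(L)(C)\,\cU(L)(H)$ (equivalently $\cU(L)(C)\le\cU(L\cap H)(C)$), and to iterate it along the chain. After rotating so that $H=\{x:|x_d|\le h\}$, this bound is obtained from Pr\'ekopa's theorem: the slice function $s\mapsto |(L\cap C)_s|$ is symmetric and log-concave, hence non-increasing in $|s|$, so that conditioning $\cU(L)$ on $C$ concentrates its $x_d$-marginal. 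Given the base case, the passage to general $m$ is a short induction. With $\mu=\cU(K/m)$ and $\tilde\mu=\cU(K'/m)$ one has $\bar U\sim\mu^{*m}$, $\bar U'\sim\tilde\mu^{*m}$, and (since $K'/m\subseteq K/m$) $\tilde\mu$ is more peaked than $\mu$; by induction on $k$ one shows $\tilde\mu^{*k}*\mu^{*(m-k)}$ is more peaked than $\mu^{*m}$, the step from $k$ to $k+1$ replacing one $\mu$-factor by a $\tilde\mu$-factor via (ii) with common factor $\tilde\mu^{*k}*\mu^{*(m-k-1)}$ (central convex unimodal by (i)) and using transitivity of the order. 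Taking $k=m$ and applying the resulting peakedness comparison to $C=\pi_V^{-1}(tB_V)$ through the exhaustion above yields $\P(\|\pi_V\bar U\|_2\le t)\le\P(\|\pi_V\bar U'\|_2\le t)$.

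The step I expect to be the real obstacle is the base case $m=1$. A naive radial / layer-cake comparison of $|K'\cap C|/|K'|$ with $|K\cap C|/|K|$ does not suffice: passing to the radial functions reduces the matter to an inequality for arbitrary nonnegative ``radial'' functions that is false, so convexity has to be used genuinely, through the slab reduction together with the log-concavity of the slice function. This slab-correlation step is precisely where any structural feature of the pair $(K',K)$ that is actually exploited downstream (for example $K$ being a product body over the resolution blocks, or a homothet of $K'$) enters the argument. Once the $m=1$ comparison is secured, the lift to general $m$ via Kanter's convolution theorem is routine, the only remaining care being the unboundedness of the cylinder $\pi_V^{-1}(tB_V)$, handled by the monotone exhaustion.
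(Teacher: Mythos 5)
Your overall architecture is the same as the paper's: identify $\{x:\|\pi_V x\|_2\le t\}$ (or its product analogue) as a centrally symmetric convex set, prove that $\cU(K')$ is more peaked than $\cU(K)$, and lift this to the averages via Kanter's stability results (the paper applies Corollary 3.2 of the cited Kanter paper directly to the symmetric convex set $C_t\subset(\R^d)^m$; your convolution-plus-exhaustion route is an equivalent packaging, and that part of your plan is fine). You have also correctly located the crux: the $m=1$ claim that $K'\subseteq K$ forces $\operatorname{Vol}(K\cap C)/\operatorname{Vol}(K)\le\operatorname{Vol}(K'\cap C)/\operatorname{Vol}(K')$ for every symmetric convex $C$, which is exactly the step the paper disposes of in a single line.

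The genuine gap is in your proof of that base case. The slab-correlation inequality $\cU(L)(C\cap H)\ge\cU(L)(C)\,\cU(L)(H)$ is false for uniform measures on symmetric convex bodies: take $L$ the unit disk in $\R^2$, $C=\{|x_1|\le\delta\}$, $H=\{|x_2|\le\epsilon\}$; then $\cU(L)(C\cap H)\approx 4\delta\epsilon/\pi$ while $\cU(L)(C)\,\cU(L)(H)\approx 16\delta\epsilon/\pi^2$, and $4/\pi<16/\pi^2$, i.e.\ the two strips are negatively correlated. The Pr\'ekopa step is where this breaks: both $s\mapsto|(L\cap C)_s|$ and $s\mapsto|L_s|$ are symmetric, log-concave and non-increasing in $|s|$, but to conclude that conditioning on $C$ concentrates the last coordinate you need the ratio $|(L\cap C)_s|/|L_s|$ to be non-increasing in $|s|$; for the disk and a vertical strip this ratio is proportional to $1/\sqrt{1-s^2}$, which is increasing. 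Worse, the base case itself fails in the stated generality: with $K$ the unit disk, $K'=[-0.9,0.9]\times[-\epsilon,\epsilon]\subset K$, $m=1$ and $V=\operatorname{span}(e_1)$, one has $\P(|U_1|\le t)\approx(4/\pi)t>t/0.9\approx\P(|U_1'|\le t)$, so $\cU(K')$ is not more peaked than $\cU(K)$ and the asserted stochastic domination is violated. Hence no correct proof of your slab step can exist, and, as you yourself anticipated, additional structure of the specific pair of bodies and of the coordinate projections used downstream (the oscillation-norm ball inside the weighted $\ell_1$-type polytope, projected onto a resolution block) must enter the argument; this is also precisely the point that the paper's one-line volume-ratio assertion leaves unjustified, so your attempt does not diverge from the paper in strategy but neither establishes the critical step.
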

\begin{proof}
    Let $\mu, \mu'$ denote the uniform distributions on $K$ and $K'$ respectively. As $K' \subset K$, we have that for any centrally symmetric and convex set $C \subset \R^d$, 
    \begin{equation*}
        \mu(C) = \frac{\operatorname{Vol}(K \cap C)}{\operatorname{Vol}(K)} \leq \frac{\operatorname{Vol}(K' \cap C)}{\operatorname{Vol}(K')} = \mu'(C),
    \end{equation*}
    meaning that $\mu'$ is more peaked than $\mu$ in the sense of \cite{kanter1977unimodality}. Corollary 3.2 of \cite{kanter1977unimodality} then states that $ \mu' \times \mu'$ is more peaked than $\mu \times \mu$. 
    The result follows by applying the above reasoning to the $m$-product of the uniform measure on the sets
    \begin{equation*}
        C_t = \left\{ (x_1,\dots,x_m) \in (\R^{d})^m : \| \pi_V m^{-1} \sum_{j=1}^m x_j \|_2 \leq t \right\}, \quad t > 0,
    \end{equation*}
    for which is straightforward to verify that each $C_t$ is centrally symmetric and convex. 
\end{proof}

The following lemma provides a concentration result for the marginal distribution of a random variable uniformly distributed on the unit ball in the multiscale-oscillation norm.

\begin{lemma}\label{lem:osc-ball-concentration}
    Let $K_{V_L}$ denote the unit ball in the multiscale-oscillation norm defined in \eqref{eq:osc-norm}. Let $U^{(1)},\dots,U^{(m)} \iid \sim \cU(K_{V_L})$. Let $\bar{U} = \frac{1}{m} \sum_{j=1}^m U^{(j)}$. Consider $l \in \{ l_0,\dots,L \}$ and $S \subset \{1,\dots,2^l\}$ of size $|S| := b_L \in \N$ and let $B_l = \{ (l,k) : k \in S \}$.
    
    Whenever $b_L/2^L < 1$, there exists a constant $c > 0$ that depends only on the wavelet basis such that
    \begin{equation*}
        \P\left( \|\bar{U}_{B_l}\|_2 \geq t \right) \leq 2 \exp\left( b_L \log(5) - c m \min\left( \frac{2^{2L} t^2}{2^{l}}, \frac{2^L t }{2^{l/2}} \right) \right).
    \end{equation*}
\end{lemma}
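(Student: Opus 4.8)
The plan is to reduce $\|\bar U_{B_l}\|_2$ to one-dimensional marginals by a net argument, to estimate each marginal using the log-concavity of linear images of $\cU(K_{V_L})$, and to supply the only nontrivial ingredient — a sharp variance bound — by comparing $K_{V_L}$, via Lemma~\ref{lem:stochastic-domination-convex-bodies}, with a weighted $\ell_1$-ball whose weights peak at the resolution level $l$.

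First I would fix a $\tfrac12$-net $\mathcal{N}$ of $S^{b_L-1}$ with $|\mathcal{N}|\le 5^{b_L}$, so that $\|\bar U_{B_l}\|_2\le 2\max_{w\in\mathcal{N}}|\langle\bar U_{B_l},w\rangle|$ and hence $\P(\|\bar U_{B_l}\|_2\ge t)\le 5^{b_L}\sup_{w\in\mathcal{N}}\P(|\langle\bar U_{B_l},w\rangle|\ge t/2)$. For a fixed unit $w$ supported on the block, writing $v\in\R^{s_L}$ for its zero-extension, $\langle\bar U_{B_l},w\rangle=\tfrac1m\sum_{j=1}^m Y_j$ with $Y_j:=\langle U^{(j)},v\rangle$ i.i.d., centred, symmetric (as $K_{V_L}$ is centrally symmetric), and log-concave (linear images of the log-concave measure $\cU(K_{V_L})$). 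Since a symmetric log-concave real variable has $\psi_1$-norm at most a universal multiple of its standard deviation, Bernstein's inequality for independent sub-exponentials would give $\P(|\tfrac1m\sum_j Y_j|\ge t/2)\le 2\exp\big(-c\,m\min(t^2/\sigma^2,\,t/\sigma)\big)$ with $\sigma^2:=\E Y_1^2$. Everything then reduces to the estimate $\sigma^2=\E_{U\sim\cU(K_{V_L})}\langle U,v\rangle^2\lesssim 2^l/2^{2L}$.

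To prove that estimate I would first record that $\|u\|_{V_L}\le 1$ forces $\langle u,g\rangle\le\operatorname{osc}(f_g)$ for every coefficient vector $g$ (rescale $g$ in the supremum defining $\|\cdot\|_{V_L}$), where $f_g:=\sum_{(l',k)\in V_L}g_{l'k}\psi_{l'k}$. Let $c_A$ be the wavelet-dependent constant for which $\operatorname{osc}\big(\sum_k\sigma_k\psi_{l'k}\big)\le c_A 2^{l'/2}$ for all sign patterns $\sigma_k\in\{\pm1\}$ (at each point at most $2A-1$ level-$l'$ wavelets are nonzero), set $w_{l'}:=2^{-l'/2-|l'-l|}$, and put $\rho_l:=c_A\sum_{l'=l_0}^L 2^{l'/2}w_{l'}=c_A\sum_{l'=l_0}^L 2^{-|l'-l|}\le 3c_A$. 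Testing the inequality above at $g^*:=(w_{l'}\operatorname{sgn}(u_{l'k}))_{(l',k)}$ and using subadditivity of $\operatorname{osc}$ gives $\sum_{(l',k)}w_{l'}|u_{l'k}|=\langle u,g^*\rangle\le\operatorname{osc}(f_{g^*})\le c_A\sum_{l'}2^{l'/2}w_{l'}=\rho_l$, i.e.\ $K_{V_L}\subseteq K'_l:=\{u:\sum_{(l',k)}w_{l'}|u_{l'k}|\le\rho_l\}$. Lemma~\ref{lem:stochastic-domination-convex-bodies} (with $m=1$, $V=\operatorname{span}(v)$) then gives $\E_{\cU(K_{V_L})}\langle U,v\rangle^2\le\E_{\cU(K'_l)}\langle U,v\rangle^2$, and the linear change of variables $v_{l'k}=w_{l'}u_{l'k}$ identifies $\cU(K'_l)$ with the uniform law on the $\ell_1$-ball of radius $\rho_l$ in $\R^{s_L}$, whose covariance is diagonal with equal entries $2\rho_l^2/((s_L+1)(s_L+2))$; hence $\E_{\cU(K'_l)}\langle U,v\rangle^2=2\rho_l^2/\big(w_l^2(s_L+1)(s_L+2)\big)=2\rho_l^2\cdot2^l/\big((s_L+1)(s_L+2)\big)\lesssim 2^l/2^{2L}$, using $w_l=2^{-l/2}$ and $s_L\ge 2^L$.

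Plugging this into the Bernstein bound and taking the union over the $\le 5^{b_L}$ net points produces $\P(\|\bar U_{B_l}\|_2\ge t)\le 2\exp\big(b_L\log5-cm\min(2^{2L}t^2/2^l,\,2^Lt/2^{l/2})\big)$, as claimed (and the same variance estimate will yield the second, moment, bound in Lemma~\ref{lem:osc-exponential-mechanism-tails}). I expect the main obstacle to be exactly the variance estimate: the naive comparison bodies — a Euclidean ball, a single global $\ell_1$-ball, or the per-level $\ell_1$-product $\prod_{l'}\{\|u_{(l')}\|_1\le c_\psi 2^{l'/2}\}$ — each overshoot by a power of $2^L$ or a $\log$-power, and the role of the level-peaked weights is precisely to hold the $\ell_1$-radius $\rho_l$ at $O(1)$ while pinning the level-$l$ coordinates at the scale $2^{l/2}/s_L$; this is what turns the ambient dimension $s_L\asymp 2^L$ into the gain $2^{-2L}$ and hence into the sharp exponent. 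A secondary point is that the linear regime of the exponent must come from the $\psi_1$-norm of the log-concave marginals (bounded by their variance), not from the almost-sure bound $|Y_j|\lesssim 2^{l/2}$, which would only produce the weaker linear rate $t/2^{l/2}$.
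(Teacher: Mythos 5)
Your proof is correct, and while it shares the paper's overall skeleton (a $\tfrac12$-net of the sphere in $\R^{b_L}$ with the $5^{b_L}$ union bound, an inclusion of $K_{V_L}$ into a weighted $\ell_1$-type body obtained by testing the oscillation norm against sign patterns, the comparison Lemma~\ref{lem:stochastic-domination-convex-bodies}, and a Bernstein-type bound), it replaces the paper's two technical devices with different ones. The paper builds a conflict graph on wavelet supports, extracts a bounded-clique subgraph containing the block plus $\gtrsim 2^L$ level-$L$ vertices, transfers the \emph{entire distribution} of the averaged noise to the dominating body, identifies its coordinates as Rademacher-times-Dirichlet variables, and then invokes the bespoke MGF bound of Lemma~\ref{lem:sub-exponential-zeta-beta-product}. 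You instead take sign test functions at \emph{every} level with geometrically decaying weights $w_{l'}=2^{-l'/2-|l'-l|}$, using bounded overlap within a level and subadditivity of $\operatorname{osc}$ across levels, which gives the cleaner containment $K_{V_L}\subseteq\{u:\sum_{(l',k)}w_{l'}|u_{l'k}|\le\rho_l\}$ with $\rho_l=O(1)$ over all $s_L\asymp 2^L$ coordinates; you then use the domination lemma (with $m=1$) only to transfer the \emph{variance}, computing it exactly for the $\ell_1$-ball ($2\rho_l^2/((s_L+1)(s_L+2))$ per coordinate, coordinates uncorrelated by sign symmetry), and obtain the two-regime tail from the log-concavity of the true linear marginals ($\psi_1\lesssim\sigma$ for symmetric log-concave variables) plus standard Bernstein. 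Both routes yield the stated exponent $cm\min(2^{2L}t^2/2^l,\,2^Lt/2^{l/2})$ with $b_L\log 5$ from the net; your version avoids the conflict-graph bookkeeping and the Beta/Dirichlet computation at the price of invoking Borell-type moment growth for log-concave measures, and you are right that the linear regime must come from the $\psi_1$/variance bound rather than the crude almost-sure bound $|Y_j|\lesssim 2^{l/2}$, which would lose a factor $2^L$.
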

\begin{proof}
Given the compactly supported (detail) wavelets $\{\psi_{lk} : (l,k) \in V_L\}$, define for $l_1 \leq l_2$ the conflict graphs $ G_{l_1,l_2} = (V_{l_1,l_2}, E) $ with vertex set $ V_{l_1,l_2} = \cup_{l=l_1}^{l_2} \{ (l,k) : k = 1, \dots, 2^l \} $, with edge $v_{(l,k)}v_{(l',k')} \in E$ if and only if $\supp(\psi_{l_1 k_1}) \cap \supp(\psi_{l_2 k_2}) \neq \emptyset$. The detail wavelets up until resolution level $L$ constitute the conflict graph $ G_{l_0,L} = (V_{L}, E) $. A \emph{clique} in this graph is a set of vertices such that all pairs of vertices are connected by an edge. 

Consider the following claim. 

\textbf{Claim: } There exists a constant $C_0$ depending only on the support of the mother wavelet, for all $l$ there exists an induced subgraph $G'_{l_0,L} \subset G_{l_0,L}$ with maximal clique size at most $C_0$, that also contains the induced subgraph with vertices indexed by $B_l$ and contains at least $2^L$ vertices in total.

To prove the claim, note that the number of overlapping wavelets at any resolution level $l$ is bounded by a fixed constant $A_0$ depending only on the support of the mother wavelet. We separate two cases based on the resolution level $l$. 

\begin{itemize}
    \item Case 1: $l = L$; each vertex in $G_{L,L}$ has at most $A_0$ neighbors in the same level, so the maximal clique size is at most $C_0 = A_0$. So we can $C = C_0$ and $c=1$ to satisfy the claim. 
    \item Case 2: $l < L$; by the same argument as in Case 1, the maximal clique size of $G_{l,l}$ is at most $A_0$. The support of a wavelet at level $l$ contains at most a clique of size $2A_0$ at level $L$. Hence, the induced subgraph $G_{l,l} \cup G_{L,L}$ has maximal clique size at most $C_0 = 2A_0^2$. 
\end{itemize}
Taking the maximum of $A_0$ and $2A_0^2$ gives the claim.

Given the subgraph $G'_{l_0,L}$ and $u \in \R^{s_L}$, consider the function 
\begin{equation*}
    g(x) = \sum_{(l,k) \in G_{l_0,L}'} \operatorname{sign}(u_{lk}) \frac{\psi_{lk}(x)}{C_0 \|\psi\|_\infty 2^{l/2+1}}.
\end{equation*}
The function $g$ is a linear combination of the wavelets $\psi_{lk}$ with $\operatorname{osc}(g) \leq 1$, since there are at most $C_0$ non-zero wavelets $\psi_{lk}$ in the sum, with $\|\psi_{lk}\|_\infty \leq 2^{l/2}\|\psi\|_\infty$. Hence, any $u \in \{x : \|x\|_{V_L} \leq 1\}$ satisfies 
\begin{equation*}
    u \in \left\{ u \, : \, |u_{lk}| \leq 1, \sum_{(l,k) \in G_{l_0,L}'} \frac{|u_{lk}|}{C_0 \|\psi\|_\infty 2^{l/2+1}} \leq 1 \right\}.
\end{equation*}
Let $U'^{(1)},\dots,U'^{(m)}$ be independent uniform draws from the subset above. Let $\bar{U}' = \frac{1}{m} \sum_{j=1}^m U'^{(j)}$. By Lemma \ref{lem:stochastic-domination-convex-bodies}, the marginal distribution of $\|\bar{U}'_{B_l}\|_2$ stochastically dominates that of $\|\bar{U}_{B_l}\|_2$. Furthermore, since $B_l$ is a subset of the vertices of $G'_{l_0,L}$, the constraint $\sum_{(l,k) \in G_{l_0,L}'} \frac{|u_{lk}|}{C_0 \|\psi\|_\infty 2^{l/2+1}} \leq 1$ implies that the $N \geq (c2^L) \vee 1$ random variables
\begin{equation*}
    \left\{ \frac{U_{lk}'^{(j)}}{C_0 \|\psi\|_\infty 2^{l/2+1}} \, : \, (l,k) \in G_{l_0,L}' \right\} 
\end{equation*}
are symmetric, mean zero such that $U_{lk}'^{(j)}$ Dirichlet distributed random variables weight parameters $c_l := C_0 \|\psi\|_\infty 2^{l/2+1}$, independently for each $j=1,\dots,m$. Consider a $1/2$-net of the unit sphere in $\R^{b_L}$, denoted by $\mathcal{N}$. We have that $|\mathcal{N}| \leq 5^{b_L}$ and $\|\bar{U}'_{B_l}\|_2 \leq 2 \max_{v \in \mathcal{N}} \langle \bar{U}'_{B_l}, v \rangle$. Hence, by a union bound,
\begin{equation*}
    \P\left( \|\bar{U}'_{B_l}\|_2 \geq t \right) \leq 5^{b_L} \P\left( \langle \bar{U}'_{B_l}, v \rangle \geq t/2 \right).
\end{equation*}
writing $s = \frac{t}{2 C_0 \|\psi\|_\infty 2^{l/2+1}}$, the latter display equals 
\begin{equation*}
    5^{b_L} \P\left( \frac{1}{m} \sum_{j=1}^m \sum_{i=1}^{b_L} v_i \zeta^{(j)}_i \beta_i^{(j)} \geq s \right) 
\end{equation*}
where for $j=1,\dots,m$, $i=1,\dots,b$, $\zeta^{(j)}_i \sim \text{Rad}(1/2)$ independent and $(\beta^{(j)}_i, 1 - \beta^{(j)}_i)$ Dirichlet $(b,N-b)$ random variables, independent for $j=1,\dots,m$. The proof is finished by recalling that $N \geq c 2^L$ and applying Lemma \ref{lem:sub-exponential-zeta-beta-product}.
\end{proof}

\begin{remark}\label{rmk:approximate-sampling-oscillation-norm}
    The noise vector $V_{s_{L^*}} = (V_{lk})_{(l,k) \in V_{L^*}}$ can be efficiently sampled by exploiting the decomposition in Lemma~\ref{lem:K-norm-mechanism} and the stochastic domination technique in Lemma~\ref{lem:stochastic-domination-convex-bodies}. Specifically, an approximation to the noise can be generated by sampling the Gamma-distributed radial component and the a uniformly distributed direction component dominating on the unit ball of the multiscale-oscillation norm.
\end{remark}

Combining this lemma with tail bounds for the Gamma distribution and the concentration results of Lemma \ref{lem:osc-ball-concentration}, we can now provide the required tail bounds for the noise of the exponential mechanism corresponding to the multiscale oscillation-norm.

\begin{proof}[Proof of Lemma \ref{lem:osc-exponential-mechanism-tails}]
    
    By Lemma \ref{lem:K-norm-mechanism}, we can decompose each random vector $W^{(j)}$ as $W^{(j)} = D^{(j)} U^{(j)}$, where $D^{(j)} \sim \Gamma(s_L + 1, 1/\theta)$ and $U^{(j)} \sim \cU(K_{V_L})$, with $D^{(j)}$ and $U^{(j)}$ independent.

    Under this decomposition $\|\bar{W}_{B_l}\|_2 \leq \max_{j} D^{(j)} \|\bar{U}_{B_l}\|_2$, and
\begin{equation*}
\1 \left\{ \|\bar{W}_{B_l}\|_2 \geq t \right\} \leq \1 \left\{ \max_{j} D^{(j)} \geq s \right\} + \1\left\{ \|\bar{U}_{B_l}\|_2 \geq t / s \right\}.
\end{equation*}
Multiplying by $\|\bar{W}_{B_l}\|_2^2$ and taking expectations, Cauchy-Schwarz bounds the first term as
\begin{equation}\label{eq:cs-split-noise-bound-1}
\sqrt{\E \|\bar{W}_{B_l}\|_2^4  \P\left( \max_{j} D^{(j)} \geq s \right)} \lesssim \frac{2^{4L}}{\theta^{2} m^{}}  \sqrt{\P\left( \max_{j} D^{(j)} \geq s \right)}. 
\end{equation}
By Lemma \ref{lem:sub-exponentiality-parameter-gamma}, $D^{(j)}$ is concentrated around its mean $\mu := (s_L + 1) / \theta$ where $s_L := |V_L|$, which combined with a union bound gives
\begin{equation}\label{eq:cs-split-noise-bound-2}
\P(\max_{j} D^{(j)} \geq (1 + \delta) \mu ) \leq  m e^{- c \delta^2 (s_L + 1)}
\end{equation}
for $0 < \delta < 1$, where $c > 0$ is a universal constant. Taking $s = (1 + \delta) \mu$, we proceed to bound $\E \|\bar{W}_{B_l}\|_2^2 \mathbbm{1}\left\{\|\bar{U}_{B_l}\|_2 \geq t / s \right\}$. By independence of the $D^{(j)}$'s with the $U^{(j)}$'s and an application of Cauchy-Schwarz, we have that 
\begin{equation*}
\E \|\bar{W}_{B_l}\|_2^2 \mathbbm{1}\left\{\|\bar{U}_{B_l}\|_2 \geq t / s \right\} \leq \frac{(s_L + 1)(s_L+2)}{\theta^2} \E \|\bar{U}_{B_l}\|_2^2 \mathbbm{1}\left\{\|\bar{U}_{B_l}\|_2 \geq t / s \right\}.
\end{equation*}
By Lemma \ref{lem:indicator-layer-cake}, we can bound the latter expectation as
\begin{equation}\label{eq:integral-layer-cake-split-noise-bound}
\frac{t^2}{s^2}\P(\|\bar{U}_{B_l}\|_2 \geq t / s) + 2 \int_{t/s}^\infty u \P(\|\bar{U}_{B_l}\|_2 \geq u) \, du,
\end{equation}
The factor (and integrand) $\P(\|\bar{U}_{B_l}\|_2 \geq u)$ is subsequently controlled by the tail bound of Lemma \ref{lem:osc-ball-concentration}; 
\begin{equation*}
\P(\|\bar{U}_{B_l}\|_2 \geq u) \lesssim \exp\left( b_L \log(5) - c_0 m \min\left( \frac{2^{2L} u^2}{2^{l}}, \frac{2^L u }{2^{l/2}} \right) \right).
\end{equation*}
Using that $s_L \asymp 2^L \implies s \asymp 2^L / \theta$, this yields
\begin{equation*}
\frac{(s_L + 1)(s_L+2)t^2}{\theta^2 s^2} \P(\|\bar{U}_{B_l}\|_2 \geq t / s) \lesssim t^2 \exp\left( b_L \log(5) - c m \min\left( \frac{t^2 \theta^2}{2^l }, \frac{t \theta}{2^{l/2} } \right) \right).
\end{equation*}
For the integral term in \eqref{eq:integral-layer-cake-split-noise-bound}, Lemma \ref{lem:exponential-minimum} yields that $\int_{t/s}^\infty u \P(\|\bar{U}_{B_l}\|_2 \geq u) \, du$ is bounded by
\begin{equation*}
 e^{b_L \log(5)} \left( \frac{1}{c_0 m \min(2^{2L} / 2^l, s 2^L /(t 2^{l/2}))} + \frac{1}{m^2 c_0^2 2^{L}} \right) e^{ - c_0 m \min( 2^{2L - l} (t/s)^2, 2^{L-l/2} (t/s))}.
\end{equation*}
Again using that $s \asymp 2^L / \theta$ and $t\;\gtrsim\,\frac{2^{l/2}}{\sqrt{m} \theta}$, the latter expression is bounded by a constant multiple of
\begin{equation*}
\frac{2^{l}}{m 2^{2L}}\exp\left( b_L \log(5) - c m \min\left( \frac{t^2 \theta^2}{2^l }, \frac{t \theta}{2^{l/2} } \right) \right).
\end{equation*}
Putting the above bounds together, we find 
\begin{equation*}
\E \|\bar{W}_{B_l}\|_2^2 \mathbbm{1}\left\{\|\bar{U}_{B_l}\|_2 \geq t / s \right\}  \lesssim \left(t^2 + \frac{2^{l}}{m \theta^2} \right) \exp\left( b_L \log(5) - c m \min\left( \frac{t^2 \theta^2}{2^l }, \frac{t \theta}{2^{l/2} } \right) \right).
\end{equation*}
Combining this with \eqref{eq:cs-split-noise-bound-1} and \eqref{eq:cs-split-noise-bound-2} gives the first statement of the lemma. For the second statement, apply the first statement with $t = c_0 \frac{b_L 2^{l/2}}{\sqrt{m} \theta}$ in order to find 
\begin{equation*}
\E \|\bar{W}_{B_l}\|_2^2 \lesssim \frac{2^l b_l}{m \theta^2} + \frac{2^{4L}}{m \theta^2} e^{-c_2 2^L},
\end{equation*}
and the statement follows since $2^{4L} e^{-c_2 2^L} \lesssim 1$ for $L \in \N$.
\end{proof}

We finish the section with the proof of Lemma \ref{lem:block-thresholding-tailbound-privacy-noise-only} and Lemma \ref{lem:term-thresholding-tailbound-privacy-noise-only}, which are direct consequences of Lemma \ref{lem:osc-exponential-mechanism-tails}. 

\begin{proof}[Proof of Lemma \ref{lem:block-thresholding-tailbound-privacy-noise-only}]
    We aim to apply Lemma~\ref{lem:osc-exponential-mechanism-tails}. Thus, $\bar{V}_{lB_{lj}}$ corresponds to $\bar{W}_{B_l}$ with $b_L = b_l$ and $\theta = \varepsilon n$. The condition $b_l / 2^{L^*} < 1$ holds since $b_l \asymp \log N$ and $2^{L^*} \asymp N$.
    
    Taking $t = \kappa \frac{2^{l/2} b_l}{\sqrt{m} n \varepsilon} = \kappa \frac{2^{l/2} b_l}{\sqrt{m} \theta}$, the condition $t \ge c_0 \frac{2^{l/2}}{\sqrt{m} \theta}$ holds if $\kappa \ge c_0 / b_l$. Since $b_l \ge 1$, choosing $\kappa \ge c_0$ suffices.
    
    By Lemma~\ref{lem:osc-exponential-mechanism-tails},
    \[
    \mathbb{E} \|\bar{V}_{lB_{lj}}\|_2^2 \mathbbm{1}\{\|\bar{V}_{lB_{lj}}\|_2 \ge t\} \lesssim \left(t^2 + \frac{1}{m \theta^2}\right) e^{b_l \log 5 - c_1 m \min\left( \frac{t^2 \theta^2}{2^l}, \frac{t \theta}{2^{l/2}} \right)} + \frac{2^{4L^*}}{m \theta^2} e^{-c_2 2^{L^*}}.
    \]
    The last term is $o\left( \frac{1}{m n^2 \varepsilon^2} \right)$ since $2^{L^*} \asymp N$ and $2^{4L^*} e^{-c_2 2^{L^*}} \to 0$ as $N \to \infty$.
    
    Now, $t^2 = \kappa^2 \frac{2^l b_l^2}{m \theta^2} \le \kappa^2 \frac{N b_l^2}{m \theta^2}$ (since $2^l \le 2^{L^*} \le N$). Also, $\frac{1}{m \theta^2} \le t^2$ for sufficiently large $\kappa$ (as $t \to \infty$ with $\kappa$). The exponent is $b_l \log 5 - c_1 \min(\kappa^2 b_l^2, \kappa b_l \sqrt{m})$. To ensure this is $\le -b_l$, we need $c_1 \min(\kappa^2 b_l^2, \kappa b_l \sqrt{m}) \ge b_l (\log 5 + 1)$. Choosing $\kappa \ge \max\left( c_0, \frac{\log 5 + 1}{c_1} \right)$ ensures this for all $m \ge 1$ and sufficiently large $N$ (hence large $b_l$). With this $\kappa$, the first term is $\lesssim t^2 e^{-b_l} \le \kappa^2 \frac{N b_l^2}{m \theta^2} \cdot \frac{1}{N} = \kappa^2 \frac{b_l^2}{m \theta^2}$. Plugging in $\theta^2 = n^2 \varepsilon^2$, this is $\kappa^2 \frac{b_l^2}{m n^2 \varepsilon^2}$. Absorbing $\kappa^2$ into $C$ yields the bound.
    \end{proof}

    \begin{proof}[Proof of Lemma \ref{lem:term-thresholding-tailbound-privacy-noise-only}]

        This follows immediately from Lemma~\ref{lem:osc-exponential-mechanism-tails} applied with $b_L = 1$, noting that the term $\frac{2^{4L}}{m \theta^2} e^{-c_2 2^L}$ is $o\left( \frac{1}{m \theta^2} \right)$ and can be absorbed into the constant in $\lesssim$, and setting $\theta = n \varepsilon$. The bound holds for $t \ge c_0 \frac{2^{l/2}}{\sqrt{m} n \varepsilon}$, where $c_0$ is the constant from Lemma~\ref{lem:osc-exponential-mechanism-tails}.
    
    \end{proof}

\subsection{Proof of Theorems \ref{thm:upper-bound-global-risk} and \ref{thm:upper-bound-pointwise}}\label{sec:proof-upper-bound-thms}

\begin{proof}
By Plancharel's theorem, we have that
\begin{align}\label{eq:global-risk-decomposition1}
    \E_f \| \hat{f}^{\textsc{BTPW}} - f \|_2^2 &= \sum_{k} \E_f (\hat{f}_{0k} - f_{0k})^2 + \sum_{l=l_0}^{L^*} \sum_{k=1}^{2^{l}} \E_f (\hat{f}_{lk} - f_{lk})^2 + \sum_{l > L^*} \sum_{k} f_{lk}^2 
\end{align}
The first sum on the right-hand side consists of $O(1)$ terms, each term bounded by
\begin{equation*}
    \text{Var}_f( \hat{\phi}_{lk} ) + \text{Var}_f(V_{0k}) \asymp \frac{\|\phi\|_\infty}{N} + \frac{1}{mn^2 \varepsilon^2}, 
\end{equation*}
where the variance of $V_{0k}$ bound follows from Lemma \ref{lem:osc-exponential-mechanism-tails}. Using Lemma \ref{lemma:coefficient_decay}, the third term in \eqref{eq:global-risk-decomposition1} is bounded by
\begin{equation}\label{eq:Lstar-tailbound}
    \sum_{l > L^*} \sum_{k} f_{lk}^2 \lesssim \sum_{l > L^*} 2^{-2l \alpha} \lesssim 2^{-2 L^* \alpha } \lesssim \frac{1}{N^{\frac{2\alpha}{2\alpha + 1}}},
\end{equation}
where the last inequality uses $L^* \asymp \log_2(N)$, $\alpha > 0$. Next, we turn to the second term in \eqref{eq:global-risk-decomposition1}. We have
\begin{align*}
    \sum_{l=l_0}^{L^*} \sum_{k=1}^{2^{l}} \E_f (\hat{f}_{lk} - f_{lk})^2 = \sum_{l=l_0}^{L^*} \sum_{j \in \cJ_l} \E_f \| \eta_{\tau_l} (\hat{f}_{lB_j}^{\textsc{PW}}) - f_{lB_j} \|_2^2, 
\end{align*}
where $f_{lB_j}$ denotes the vector of wavelet coefficients of $f$ in the block $B_j$ at resolution level $l$, and $\cJ_l$ is the set of all blocks at resolution level $l$. 

By the oracle inequality of Lemma \ref{lem:private-block-thresholding-oracle}, we have that 
\begin{equation}\label{eq:oracle-block-thresholding-applied}
    \E_f \| \eta_{\tau_l} (\hat{f}_{lB_j}^{\textsc{PW}}) - f_{lB_j} \|_2^2 \leq  \min \{ \|f_{lB_j}\|_2^2, 4\tau_l^2 \} + 4 \E_f \|Z_{lB_j}\|_2^2 \mathbbm{1}\{ \|Z_{lB_j}\|_2 > \tau_l \} ,
\end{equation}
where $Z_{lB_j} = (Z_{lk})_{(l,k) \in B_j}$ and $Z_{lk} = \hat{\psi}_{lk} - f_{lk} + V_{lk}$. We analyze the two terms in the display above separately, starting with the second term. As $\hat{\psi}_{lk} - f_{lk}$ is a sum of $N = m n$ i.i.d. mean zero random variables and $V_{lk}$ is the privacy noise, a union bound yields that the second term is bounded from above by
\begin{equation*}
     \E_f \|(\hat{\psi} - f_{\cdot \cdot})_{lB_j}\|_2^2 \mathbbm{1}\{ \|(\hat{\psi} - f_{\cdot \cdot})_{lB_j}\|_2 > \sqrt{2 \log N} \} + \E_f \|V_{lB_j}\|_2^2 \mathbbm{1}\{ \|V_{lB_j}\|_2 > \tau_l' \},
\end{equation*}
where we write $f_{\cdot \cdot} = (f_{lk})_{l,k}$. The first term is controlled by Lemma \ref{lem:non-private-block-thresholding-tailbound} below, which provides the bound
\begin{equation*}
    \E_f \|(\hat{\psi} - f_{\cdot \cdot})_{lB_j}\|_2^2 \mathbbm{1}\{ \|(\hat{\psi} - f_{\cdot \cdot})_{lB_j}\|_2 > \sqrt{2 \log N} \} \lesssim \frac{b_l}{N} e^{- c \log N}.
\end{equation*}
Lemma \ref{lem:osc-exponential-mechanism-tails} provides the second bound;
\begin{equation*}
    \E_f \|V_{lB_j}\|_2^2 \mathbbm{1}\{ \|V_{lB_j}\|_2 > \tau_l' \} \lesssim \frac{2^l b_l}{m n^2 \varepsilon^2} \exp\left(- 2 \log N \right) + \frac{2^{4L^*}}{m n^2 \varepsilon^2} e^{-c 2^{L^*}}.
\end{equation*}
Combining the above bounds and using that $2^{L^*} \leq N$, we obtain that
\begin{equation*}
    \sum_{l=l_0}^{L^*} \sum_{j \in \cJ_l} \E_f \|Z_{lB_j}\|_2^2 \mathbbm{1}\{ \|Z_{lB_j}\|_2 > \tau_l \} \lesssim \frac{1}{N} + \frac{1}{m n^2 \varepsilon^2}.
\end{equation*}
Using Lemma \ref{lemma:coefficient_decay}, we have that 
\begin{align*}
    \sum_{l = l_0}^{L^*} \sum_{j \in \cJ_l} \min \{ \|f_{lB_j}\|_2^2, 4\tau_l^2 \} &\leq \sum_{l = l_0}^{L} \sum_{j \in \cJ_l} 4 \tau_l^2 + \sum_{l > L}^{L^*} \sum_{k=1}^{2^l} \|f_{lk}\|_2^2 \\ 
    &\lesssim \frac{2^L}{N} + \frac{2^{2L} b_L}{ m n^2 \varepsilon^2} + 2^{-2L(\alpha + 1/2)},
\end{align*}
where for the first term on the right-hand side we used that there are $O(2^l)/b_l$ blocks at resolution level $l$, with $l \mapsto b_l$ increasing, and the bound on the second term follows in similar fashion as \eqref{eq:Lstar-tailbound}. Solve \(\frac{2^L}{N} \vee \frac{2^{2L} \log N}{m n^2 \varepsilon^2} \asymp 2^{-2L \alpha}\), yielding \(L \asymp \frac{1}{2\alpha+1} \log_2 (N / \log N)\) or \(L \asymp \frac{1}{2\alpha+2} \log_2 (m n^2 \varepsilon^2 / \log^2 N)\), depending on whether the private threshold dominates the non-private threshold; $\frac{b_L}{N} \geq \frac{b_L^2 2^{l}}{m n^2 \varepsilon^2}$. Combining the above bounds, and plugging in the value of $L$ and $b_L \asymp \log N$, we obtain that
\begin{equation*}
    \E_f \| \hat{f}^{\textsc{BTPW}} - f \|_2^2 \lesssim N^{-\frac{2\alpha}{2\alpha + 1}} + \left( \frac{m n^2 \varepsilon^2}{\log N} \right)^{-\frac{2\alpha}{2\alpha + 2}},
\end{equation*}
which establishes the statement of Theorem \ref{thm:upper-bound-global-risk}.

\end{proof}

\begin{proof}[Proof of Theorem \ref{thm:upper-bound-pointwise}]
The proof is in spirit similar as that of Theorem \ref{thm:upper-bound-global-risk}. Fix $t_0 \in (0,1)$, $\alpha$, $p$ such that $\nu = \alpha - 1/p > 1/2$. Let $\hat{T} = \hat{f}^{KWT}(t_0)$. The pointwise error $\E_f(\hat{T} - f(t_0))^2$ is bounded above by
\begin{align}\label{eq:pointwise-risk-decomposition}
    4\E_f\left[ \sum_{k \in S_0(t_0)} \left(  \hat{f}_{0k}^{\textsc{PW}} - f_{0k} \right) \phi_{k}(t_0)  \right]^2 &+ 4\E_f\left[\sum_{l=l_0}^{L^*} \sum_{k \in S_l(t_0)} \left(  \eta_{\tau_l}(\hat{f}_{lk}^{\textsc{PW}}) - f_{lk} \right) \psi_{lk}(t_0)  \right]^2  \\ &+ 2 \left(\sum_{ l > L^* } \sum_{k \in S_l(t_0)}f_{lk} \psi_{lk}(t_0)\right)^2, \nonumber
\end{align}
where $S_l(t_0)$ denotes the $O(1)$ set of wavelet coefficients at resolution level $l$ that are non-zero at the point $t_0$. Since $|\phi_k| \leq c_\phi$ and $|S_0(t_0)| = O(1)$, first term is of the order 
\begin{equation*}
    \text{Var}_f(\hat{f}_{0k}^{\textsc{PW}}) = \frac{c_\phi^2}{m n} + \frac{c_\phi^2 L^*}{m n^2 \varepsilon^2},
\end{equation*}
where the last equality follows from Lemma \ref{lem:osc-exponential-mechanism-tails}. By Lemma \ref{lemma:coefficient_decay}, $|f_{lk}| \leq R 2^{-l(\nu + \frac{1}{2})}$, which combined with the bound $|\psi_{lk}(t_0)| \leq 2^{l/2}c_\psi$ gives
\begin{equation*}
    \left|\sum_{l >L^*} \sum_{k \in S_l(t_0)} f_{lk} \psi_{lk}(t_0) \right| \leq R c_\psi \sum_{l>L^*} 2^{-l\nu} \lesssim 2^{-L^*\nu}.
\end{equation*}
The second term in \eqref{eq:pointwise-risk-decomposition} is bounded by 
\begin{equation}\label{eq:second-term-pointwise-ub-refbacklater}
    2 \left[\sum_{l=l_0}^{L^*} \sum_{k \in S_l(t_0)}\left( \E_f \left( \eta_{\tau_l}(\hat{f}_{lk}^{\textsc{PW}}) - f_{lk} \right)^2  \right)^{1/2} |\psi_{lk}(t_0)|\right]^2.
\end{equation}
Following the same reasoning as in the proof of Theorem \ref{thm:upper-bound-global-risk}, we can bound the expectation $\E_f \left( \eta_{\tau_l}(\hat{f}_{lk}^{\textsc{PW}}) - f_{lk} \right)^2$ by 
\begin{equation}\label{eq:oracle-block-thresholding-applied}
    \E_f | \eta_{\tau_l} (\hat{f}_{lk}^{\textsc{PW}}) - f_{lk} |^2 \leq  \min \{ f_{lk}^2, 4\tau_l^2 \} + 4 \E_f Z_{lk}^2 \mathbbm{1}\{ Z_{lk}^2 > \tau_l^2 \},
\end{equation}
where $Z_{lk} = \hat{f}_{lk} - f_{lk} + \bar{V}_{lk}$. The tail term $4\E_f Z_{lk}^2 \mathbbm{1}\{|Z_{lk}| > \tau_l\}$ is negligible by the same arguments as in the proof for the global risk (Theorem \ref{thm:upper-bound-global-risk}): first bounding it from above by
\begin{equation*}
    \E_f |(\hat{\psi} - f_{\cdot \cdot})_{lk}|^2 \mathbbm{1}\{ |(\hat{\psi} - f_{\cdot \cdot})_{lk}| > \sqrt{2 \log n} \} + \E_f |\bar{V}_{lk}|^2 \mathbbm{1}\{ |\bar{V}_{lk}| > \tau_l' \},
\end{equation*}
we find that the first term is $O(N^{-1})$ by Lemma \ref{lem:non-private-block-thresholding-tailbound}. For the second term, we apply Lemma \ref{lem:osc-exponential-mechanism-tails}, where we note the difference in terms of the threshold $\tau_l'$ compared to Theorem \ref{thm:upper-bound-global-risk}, which essentially recognizes the following two cases:
\begin{enumerate}[label=(\roman*)]
    \item If $\log(N) / m \geq 1$, $\tau_l' \asymp  \sqrt{\frac{\log(N)}{m n^2 \varepsilon^2}}$,
    \item  If $\log(N) / m < 1$, $\tau_l' \asymp  \sqrt{\frac{\sqrt{\log(N)}}{m n^2 \varepsilon^2}}$.
\end{enumerate}
Each of these two cases yield the bound 
\begin{equation*}
    \E_f |\bar{V}_{lk}|^2 \mathbbm{1}\{ |\bar{V}_{lk}| > \tau_l' \} \lesssim e^{- \log (N)} + \frac{2^{4L^*}}{(m n^2 \varepsilon^2)^2} e^{-c 2^{L^*}} = O\left(\frac{1}{N}\right).
\end{equation*}
We obtain that
\begin{equation*}
    \E_f \left( \hat{f}_{lk} - f_{lk} \right)^2 \lesssim \min \left\{ \frac{\log N}{N} + \frac{(\log N)^{L_{m,N}/\log N} 2^{l}}{m n^2 \varepsilon^2}, \; \; 2^{-2 l(\nu + 1/2)} \right\},
\end{equation*}
where $L_{m,N}$ is as defined in the theorem statement. Applying this bound in \eqref{eq:second-term-pointwise-ub-refbacklater}, the sum over $l$ involves terms of the form $2^{l/2} \sqrt{\min(v_l, b_l)}$, which behave geometrically: increasing for small $l$ (dominated by variance $v_l$) and decreasing for large $l$ (dominated by bias $b_l$). Thus, the sum is bounded by a constant times its maximum, which is achieved at the balancing level $l$ where the variance term balances the bias term. Choosing 
\[ L = \left\lceil \frac{1}{2\nu + 1} \log_2\left(\frac{N}{\log N}\right) \right\rceil \wedge \left\lceil \frac{1}{2\nu + 2} \log_2\left(\frac{m n^2 \varepsilon^2}{L_{m,N}}\right) \right\rceil \]
and splitting the sum over $l \le L$ and $l > L$ yields the desired bound, in the same way as in the proof of Theorem \ref{thm:upper-bound-global-risk}.
\end{proof}

\section{Proofs relating to the lower bounds: Theorems \ref{thm:adaptation-lower-bound-pointwise} and \ref{thm:global-lower-bound}}\label{sec:lowerbounds:appendix}

\subsection{Global risk lower bound proofs (Theorem \ref{thm:global-lower-bound})}\label{sec:global-risk-lower-bound-proofs}

We provide proofs of the intermediate results required for the proof of Theorem \ref{thm:global-lower-bound}. Section \ref{sec:properties-of-induced-fisher-information-matrix} establishes properties of the induced Fisher information matrix, and Section \ref{sec:proofs-of-lemmas-global-lower-bound-key-lemma-and-data-processing} contains the proofs of Lemmas \ref{lem:global-lower-bound-key-lemma} and \ref{lem:data-processing}.

\subsubsection{Some properties of the transcript-induced Fisher information matrix}\label{sec:properties-of-induced-fisher-information-matrix}

The following lemma establishes some properties of the Fisher information within the sequential federated framework described in Section \ref{sec:lower-bound}. Results of this flavor can be found in earlier literature (e.g. \cite{zamir1998proof}), however, they typically require transcripts to have dominated conditional distributions, which are generally not available in the $(\varepsilon,\delta)$-FDP setting. 

Consider a $\mu$-dominated model $\{ P_u : u \in \cU \}$ on a measurable space $(\cX, \mathscr{X})$ for some open set $\cU \subset \R^d$. Let $X=(X_1,\dots,X_n) \overset{\text{i.i.d.}}{\sim} P_u$ be random variables. Assume that the score function $\nabla_u \log p_u(X)$ is defined $\mu$-a.s. and is square integrable in expected Euclidian norm for all $u \in \cU$. Let $T$ be a random variable generated by the Markov kernel $K$ between $(\cX^n, \mathscr{X}^n)$ and $(\cT, \mathscr{T})$. Let $\E_u$ denote expectation with respect to the joint distribution of $(X_i)_{i=1}^n$ and $T$. 

If the transcript $T$ has a dominated conditional distribution, meaning that the collection $\{ A \mapsto K(A|x) : x \in \cX^n \}$ is dominated by a measure $\nu$ on $(\cT, \mathscr{T})$, then the $T$-induced score function $t \mapsto s_n(t|u)$ can be defined in the `strong sense': $\nabla_u \log q_u(t)$, where $q_u$ is the Radon-Nikodym derivative of the marginal distribution of $T$ is given by
\begin{equation*}
dQ_u(t) := \int \left( \prod_{i=1}^n p(x_i|u) \right) dK(t | x) d\mu^n(x).
\end{equation*}

However, it is a rather restrictive assumption under $(\varepsilon,\delta)$-DP to assume that such a $\nu$ exists. Whilst typical mechanisms using additive noise (e.g. the Gaussian mechanism) satisfy this assumption, the $\delta > 0$ in the definition of $(\varepsilon,\delta)$-DP in principle does not rule out the possibility of non-dominated conditional distributions, and for many mechanisms, it is not easy to verify that such a $\nu$ exists.


Below, we show that in a `weak sense', the $T$-induced score function can still be well-defined, and that this is sufficient for the purposes of deriving the results in e.g. \cite{zamir1998proof} and \cite{gill1995applications}, which are central to our proof of Theorem \ref{thm:global-lower-bound}.

The following lemma shows that the \emph{$T$-induced score function}, the function $t \mapsto s_n(t|u)$ which satisfies
\begin{equation*}
\E_u H(T) s_n(T|u) = \nabla_u \E_u H(T) \quad \forall H \in L_\infty(Q_u),
\end{equation*}
is well-defined as an element of $L_2(Q_u)$ and satisfies the identity $s_n(T|u) = \E_u \left[ s_n(X|u) \mid T \right]$.

\begin{lemma}\label{lem:score-well-defined}
Assume that for all $x \in \mathcal{X}^n$, the map $u \mapsto p(x|u)$ is differentiable with score function $s_n(x|u) = \sum_{i=1}^n \nabla_u \log p(x_i|u)$ in the sense that 
\begin{equation*}
\nabla_u \E_u H(X) = \sum_{i=1}^n \E_u \left[ H(X) \nabla_u \log p(X_i|u) \right] \quad \forall H \in L_\infty(P_u^n).
\end{equation*}
If $\E_u [\|s_n(X|u)\|^2] < \infty$, then the $T$-induced score function $t \mapsto s_n(t|u)$ is well-defined as an element of $L_2(Q_u)$. Furthermore,
\begin{equation*}
s_n(T|u) = \E_u \left[ s_n(X|u) \mid T \right].
\end{equation*}
\end{lemma}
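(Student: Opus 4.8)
The plan is to produce an explicit candidate for the $T$-induced score and then verify it satisfies the required weak identity. Set $g_u:=\mathbb{E}_u\!\left[s_n(X|u)\mid T\right]$; by the Doob--Dynkin (factorization) lemma this $\sigma(T)$-measurable random vector can be written as $g_u(T)$ for a measurable map $g_u:\mathcal T\to\mathbb R^d$, defined $Q_u$-a.e. I claim $g_u$ is exactly the $T$-induced score function $s_n(\cdot|u)$. First, membership in $L_2(Q_u)^d$ is immediate from conditional Jensen:
\[
\mathbb{E}_u\|g_u(T)\|^2=\mathbb{E}_u\big\|\mathbb{E}_u[s_n(X|u)\mid T]\big\|^2\le\mathbb{E}_u\|s_n(X|u)\|^2<\infty,
\]
so each coordinate of $g_u$ lies in $L_2(Q_u)$.

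The heart of the argument is to relate $\nabla_u\mathbb{E}_u H(T)$ to the score for a fixed $H\in L_\infty(Q_u)$. Choosing a genuinely bounded version of $H$, define the pullback $\bar H(x):=\int_{\mathcal T}H(t)\,K(dt\mid x)$, which is bounded by $\|H\|_\infty$ and measurable in $x$ since $K$ is a Markov kernel not depending on $u$; hence $\bar H\in L_\infty(P_u^n)$. Applying the differentiability hypothesis to $\bar H$ gives $\nabla_u\mathbb{E}_u\bar H(X)=\mathbb{E}_u\!\big[\bar H(X)\,s_n(X|u)\big]$, and in particular the left-hand gradient exists and is finite. Since $\mathbb{E}_u\bar H(X)=\mathbb{E}_u\mathbb{E}_u[H(T)\mid X]=\mathbb{E}_u H(T)$, it follows that $\nabla_u\mathbb{E}_u H(T)$ exists and equals $\mathbb{E}_u[\bar H(X)\,s_n(X|u)]$. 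Two applications of the tower property then finish it: since $s_n(X|u)$ is $\sigma(X)$-measurable, $\mathbb{E}_u[\bar H(X)\,s_n(X|u)]=\mathbb{E}_u[\mathbb{E}_u[H(T)\mid X]\,s_n(X|u)]=\mathbb{E}_u[H(T)\,s_n(X|u)]$; and since $H(T)$ is $\sigma(T)$-measurable, $\mathbb{E}_u[H(T)\,s_n(X|u)]=\mathbb{E}_u\!\big[H(T)\,\mathbb{E}_u[s_n(X|u)\mid T]\big]=\mathbb{E}_u[H(T)\,g_u(T)]$. Chaining these identities yields $\nabla_u\mathbb{E}_u H(T)=\mathbb{E}_u[H(T)\,g_u(T)]$ for every $H\in L_\infty(Q_u)$, which is precisely the defining property of the $T$-induced score function; this simultaneously shows that $s_n(\cdot|u)$ is well-defined in $L_2(Q_u)^d$ and that $s_n(T|u)=\mathbb{E}_u[s_n(X|u)\mid T]$.

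For uniqueness, if $s,s'\in L_2(Q_u)^d$ both satisfy the defining identity then $\mathbb{E}_u[H(T)(s-s')(T)]=0$ for all bounded $H$; since $L_\infty(Q_u)$ is dense in $L_2(Q_u)$, this forces $s=s'$ as elements of $L_2(Q_u)^d$. I expect the main obstacle, and essentially the only delicate point, to be the bookkeeping around $\bar H$: one must check it is genuinely (not merely $P_u^n$-essentially) bounded after passing to a bounded version of $H$, that it is jointly measurable, and that the two tower-property exchanges are justified by the integrability $s_n(X|u)\in L_1(P_u^n)$ together with boundedness of $H$. Everything else reduces to the stated differentiability hypothesis and standard Hilbert-space facts.
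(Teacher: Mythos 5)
Your proposal is correct and follows essentially the same route as the paper: take $g_u(T)=\E_u[s_n(X|u)\mid T]$ as the candidate, note $L_2(Q_u)$-membership via conditional Jensen, and verify the defining weak identity $\nabla_u\E_u H(T)=\E_u[H(T)g_u(T)]$ by reducing to the assumed differentiability applied to the kernel-averaged test function. Your explicit introduction of $\bar H(x)=\int H(t)\,K(dt\mid x)\in L_\infty(P_u^n)$ is exactly the step the paper performs implicitly when it interchanges $\nabla_u$ with the double integral, so the two arguments coincide, with yours merely making that justification (and the uniqueness/density remark) more explicit.
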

\begin{proof}
Since $s_n(X|u)$ is square integrable, the random variable $\E_u[s_n(X|u)|T]$ exists as an element of $L_2(Q_u)$. Write $g(T) = \E_u[s_n(X|u)|T]$. For any bounded measurable function $h(T)$, using the interchange of differentiation and integration assumed in the lemma, we have
\begin{align*}
\int h(t) g(t) dQ_u(t) &= \E_u [h(T) \E_u[s_n(X|u)|T]] = \E_u [h(T) s_n(X|u)] \\
&= \iint h(t) s_n(x|u) dK(t|x) p_u^n(x) d\mu^n(x) \\
&= \iint h(t) dK(t|x) \nabla_u p_u^n(x) d\mu^n(x) \\
&= \nabla_u \iint h(t) dK(t|x) p_u^n(x) d\mu^n(x) \\
&= \nabla_u \E_u [h(T)] = \nabla_u \int h(t) dQ_u(t),
\end{align*}
where we wrote $p_u^n(x) = \prod_{i=1}^n p(x_i|u)$. This identifies $g$ as the unique score function $t\mapsto s_n(t|u)$ in the $L_2$-sense.
\end{proof}

Define the \emph{$T$-induced Fisher information} as
\begin{equation*}
\cI(u) = \E_u \left[ s_n(T|u) s_n(T|u)^\top \right].
\end{equation*}

The $L_2$-characterization of the induced Fisher information matrix is sufficient for obtaining a Van Trees inequality for transcript-induced information, which is the content of the following lemma.

\begin{lemma}\label{lem:van-trees}
Let $\pi$ be a prior on $\cU \subset \R^d$ with density $\pi$ (with slight abuse of notation) with respect to Lebesgue measure, and assume that $\pi$ is absolutely continuous. Define the prior Fisher information as
\begin{equation*}
\cI(\pi) = \int_{\cU} \frac{\| \nabla \pi(u) \|^2}{\pi(u)} du,
\end{equation*}
assuming the quantity is finite.
Assume that $\pi(u) \to 0$ as $u \to \partial \cU$. Then for any estimator $\hat{u}(T)$ of $u$,
\begin{equation*}
\E_\pi \left[ \| \hat{u}(T) - u \|^2 \right] \geq \frac{d^2}{\E_\pi \text{Tr}(\cI(u)) + \cI(\pi)},
\end{equation*}
where $\cI(u)$ is the $T$-induced Fisher information matrix at $u$.
\end{lemma}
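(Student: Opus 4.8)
The plan is to run the classical multivariate van Trees argument, but using only the $L_2$-characterisation of the transcript-induced score provided by Lemma~\ref{lem:score-well-defined}, so that no dominating measure for the conditional law of $T$ given $X$ is ever required. As a preliminary reduction I would observe that if $\E_\pi\|\hat u(T)-u\|^2=\infty$ there is nothing to prove, and otherwise, since $\cU$ is bounded, one may replace $\hat u$ by its Euclidean projection onto a closed ball $B\supseteq\cU$: this does not increase $\|\hat u(T)-u\|$ for any $u\in\cU$ (projection onto a convex set containing $u$ is $1$-Lipschitz and fixes $u$), so without loss of generality each coordinate $\hat u_k(T)$ lies in $L_\infty(Q_u)$, which is exactly what is needed to invoke the induced-score identity. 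Throughout, $\E_\pi$ denotes expectation over $u\sim\pi$ and then $T\mid u$.

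The heart of the argument is the identity $\E_\pi[(\hat u_k(T)-u_k)\,\tilde s_k(u,T)]=1$ for every $k=1,\dots,d$, where $\tilde s_k(u,t):=\partial_{u_k}\log\pi(u)+s_{n,k}(t\mid u)$ is the ``Bayesian score''. To establish it I would set $b_k(u):=\E_u[\hat u_k(T)]$ and apply Lemma~\ref{lem:score-well-defined} twice: with $H=\hat u_k$ it gives $\E_u[\hat u_k(T)s_{n,k}(T\mid u)]=\partial_{u_k}b_k(u)$, and with $H\equiv 1$, together with $\E_u[s_n(T\mid u)]=\E_u\E_u[s_n(X\mid u)\mid T]=0$, it gives $\E_u[u_k s_{n,k}(T\mid u)]=0$; hence $\E_\pi[(\hat u_k(T)-u_k)s_{n,k}(T\mid u)]=\int_\cU\partial_{u_k}b_k(u)\,\pi(u)\,du$. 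Integrating by parts in the $u_k$-direction, and using that $b_k$ is bounded and $\pi(u)\to 0$ as $u\to\partial\cU$ so the boundary terms vanish, this equals $-\int_\cU b_k(u)\,\partial_{u_k}\pi(u)\,du$. For the prior part, $\E_\pi[(\hat u_k(T)-u_k)\partial_{u_k}\log\pi(u)]=\int_\cU(b_k(u)-u_k)\,\partial_{u_k}\pi(u)\,du$, and a second integration by parts gives $\int_\cU u_k\,\partial_{u_k}\pi(u)\,du=-\int_\cU\pi(u)\,du=-1$. Adding the two contributions, the $\int b_k\,\partial_{u_k}\pi$ terms cancel and the value $1$ remains.

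Given the identity, Cauchy--Schwarz yields $\E_\pi[(\hat u_k(T)-u_k)^2]\ge 1/\E_\pi[\tilde s_k(u,T)^2]$ for each $k$, and summing over $k$ followed by the AM--HM inequality $\sum_k a_k^{-1}\ge d^2/\sum_k a_k$ gives $\E_\pi\|\hat u(T)-u\|^2\ge d^2\big/\sum_{k=1}^d\E_\pi[\tilde s_k(u,T)^2]$. It then remains to identify $\sum_{k=1}^d\E_\pi[\tilde s_k(u,T)^2]=\E_\pi\|\nabla\log\pi(u)+s_n(T\mid u)\|^2$: expanding the square, the two diagonal terms give $\E_\pi\|\nabla\log\pi(u)\|^2=\int_\cU\|\nabla\pi(u)\|^2/\pi(u)\,du=\cI(\pi)$ and $\E_\pi\|s_n(T\mid u)\|^2=\E_\pi\,\text{Tr}(\cI(u))$, while the cross term $2\,\E_{u\sim\pi}\langle\nabla\log\pi(u),\E_u[s_n(T\mid u)]\rangle$ vanishes by the mean-zero property noted above. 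This is precisely the asserted bound.

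The step I expect to be the main obstacle is the rigorous justification of the two integration-by-parts manipulations, namely that $\int_\cU\partial_{u_k}\big(b_k(u)\pi(u)\big)\,du=0$ and $\int_\cU\partial_{u_k}\big(u_k\pi(u)\big)\,du=0$. These rest on three ingredients: (i) $b_k$ being bounded and differentiable with $\partial_{u_k}b_k(u)=\E_u[\hat u_k(T)s_{n,k}(T\mid u)]$, which is exactly the $L_2$-level statement of Lemma~\ref{lem:score-well-defined} and is what allows us to dispense with a density $q_u$ for the transcript; (ii) the hypotheses that $\pi$ is absolutely continuous with $\pi\to 0$ on $\partial\cU$ and that $\cU$ is bounded, so the slice-wise (Fubini) boundary contributions vanish; and (iii) $\cI(\pi)<\infty$, ensuring all quadratic forms in play are finite (if $\cI(\pi)+\E_\pi\,\text{Tr}(\cI(u))=\infty$ the stated inequality is vacuous). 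Everything else is routine bookkeeping.
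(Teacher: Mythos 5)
Your proposal is correct and follows essentially the same route as the paper: the paper's proof simply cites the multivariate van Trees inequality of \cite{gill1995applications} together with the mean-zero and differentiation-under-the-expectation ("integration by parts") properties of the induced score from Lemma \ref{lem:score-well-defined}, and your argument is exactly that classical Gill--Levit derivation written out in full, with the weak $L_2$-characterization of $s_n(T\mid u)$ doing the work that a transcript density would normally do. The only implicit extra assumption you introduce—boundedness of $\cU$, used for the projection/truncation of $\hat u$ and for the vanishing boundary terms—is harmless here, since the lemma is applied with $\cU$ a (union of) hypercube(s) and the same assumption underlies the cited formulation.
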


\begin{proof}
See e.g. \cite{gill1995applications}, Theorem 1. The result is a direct application of the multivariate van Trees inequality, noting that the score function $s_n(T|u)$ satisfies the moment condition $\E_u [s_n(T|u)] = 0$ and the `integration by parts' property derived in the proof of Lemma \ref{lem:score-well-defined}. The quantity $\E_\pi \text{Tr}(\cI(u))$ corresponds to the integrated Fisher information of the model.
\end{proof}

The next lemma decomposes the Fisher information for sequential observations, showing that the total information is the sum of the conditional information from each step. 

\begin{lemma}\label{lem:fisher-conditional-independence}
Let $T = (T_1, \dots, T_m)$ be a sequence of observations generated via a sequential mechanism, where for each $j=1,\dots,m$, $T_j$ is drawn from a distribution that depends only on the latent data $X_{}^{(j)}$ and the previous observations $T_{1}, \dots, T_{j-1}$, such that conditionally on $X$ and $T_{1}, \dots, T_{j-1}$, $T_j$ is independent of $u$, given by Markov kernel $K_j(\cdot | \cdot, \cdot)$. 

Then, $\E s_n(T|u) s_n(T|u)^\top$ equals
\begin{equation*}
  \sum_{j=1}^m \E_u \E_u \left[ s_n(X^{(j)}|u) \mid T_{1}, \dots, T_{j} \right] \E_u \left[ s_n(X^{(j)}|u) \mid T_{1}, \dots, T_{j} \right]^\top.
\end{equation*}
\end{lemma}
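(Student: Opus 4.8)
The plan is to combine the $L_2$-characterization of the transcript-induced score from Lemma~\ref{lem:score-well-defined} with a martingale-type orthogonality argument that exploits the sequential (chain) structure of the mechanism.

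First I would apply Lemma~\ref{lem:score-well-defined} to the full data vector $X = (X^{(1)},\dots,X^{(m)})$ together with the transcript $T=(T_1,\dots,T_m)$, viewing $T$ as the output of a single Markov kernel applied to $X$; this is legitimate precisely because each $T_j$, conditioned on $(X,T_1,\dots,T_{j-1})$, is independent of $u$. The lemma yields $s_n(T|u) = \E_u[\,s_n(X|u)\mid T\,]$ as an element of $L_2(Q_u)$. Since the servers' datasets are independent, the full-data score splits, $s_n(X|u) = \sum_{j=1}^m s_n(X^{(j)}|u)$, and hence $s_n(T|u) = \sum_{j=1}^m \E_u[\,s_n(X^{(j)}|u)\mid T\,]$.

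Next I would show that $\E_u[\,s_n(X^{(j)}|u)\mid T\,] = \E_u[\,s_n(X^{(j)}|u)\mid T_1,\dots,T_j\,] =: M_j$. This reduces to the conditional-independence statement $(T_{j+1},\dots,T_m)\perp X^{(j)}\mid(T_1,\dots,T_j)$, which follows by unwinding the chain: each $T_k$ with $k>j$ is a measurable function of $X^{(k)}$, of $(T_1,\dots,T_{k-1})$, and of exogenous randomness, while $X^{(j+1)},\dots,X^{(m)}$ are independent of $X^{(j)}$; an induction on $k\ge j+1$ then gives the claim. Consequently $s_n(T|u)=\sum_{j=1}^m M_j$ with each $M_j$ being $\sigma(T_1,\dots,T_j)$-measurable. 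Expanding, $\E_u s_n(T|u)s_n(T|u)^\top = \sum_{j,k}\E_u M_j M_k^\top$, and it remains to kill the off-diagonal terms. For $j<k$, condition on $(T_1,\dots,T_j)$: since $M_j$ is $\sigma(T_1,\dots,T_j)$-measurable, $\E_u M_j M_k^\top = \E_u\big[M_j\,\E_u[M_k^\top\mid T_1,\dots,T_j]\big]$, and the tower property gives $\E_u[M_k^\top\mid T_1,\dots,T_j] = \E_u[s_n(X^{(k)}|u)^\top\mid T_1,\dots,T_j]$. As $X^{(k)}$ is independent of $(X^{(1)},\dots,X^{(j)})$ and therefore of $(T_1,\dots,T_j)$, and the per-server score has mean zero (take $H\equiv 1$ in the differentiation-under-the-integral hypothesis), this equals $\E_u s_n(X^{(k)}|u)^\top = 0$. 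The case $j>k$ follows by transposition, so only the diagonal survives, which is exactly the asserted sum.

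The main obstacle I expect is carrying out the second and third steps rigorously within the ``weak-score'' regime of Section~\ref{sec:properties-of-induced-fisher-information-matrix}: every conditional-independence and tower-property manipulation must be phrased in terms of the well-defined joint law of data and transcripts rather than densities, and one must check that the $L_2(Q_u)$ identity $s_n(T|u)=\sum_j M_j$ is stable under conditioning on the coarser $\sigma$-algebras $\sigma(T_1,\dots,T_j)$. A secondary, routine point is the square-integrability of each $M_j$, which follows from Jensen's inequality, the standing assumption $\E_u\|s_n(X|u)\|^2<\infty$, and independence across servers.
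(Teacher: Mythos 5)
Your proposal is correct, and it reaches the same two key objects as the paper's proof — the per-server terms $M_j=\E_u[s_n(X^{(j)}|u)\mid T_1,\dots,T_j]$ and their mutual orthogonality — but it derives the decomposition $s_n(T|u)=\sum_j M_j$ by a slightly different route. The paper factorizes the transcript law as $dQ_u(t_1,\dots,t_m)=\bigotimes_j dQ_u(t_j\mid t_1,\dots,t_{j-1})$, writes the score as the sum of conditional scores $s_n(T_j\mid T_{1:j-1},u)$, identifies each with the martingale difference $\Delta_j=\E_u[s_n(X^{(j)}|u)\mid T_{1:j}]-\E_u[s_n(X^{(j)}|u)\mid T_{1:j-1}]$, and kills the cross terms by the martingale-difference property; the second term of $\Delta_j$ vanishes because $X^{(j)}$ is independent of $T_{1:j-1}$. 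You instead start from the global projection identity $s_n(T|u)=\E_u[s_n(X|u)\mid T]$ of Lemma~\ref{lem:score-well-defined}, split the data score across servers, and then reduce the conditioning $\sigma$-algebra from $\sigma(T)$ to $\sigma(T_1,\dots,T_j)$ via the conditional independence $(T_{j+1},\dots,T_m)\perp X^{(j)}\mid (T_1,\dots,T_j)$, finishing with a direct tower-property computation for the off-diagonal terms. Your conditional-independence claim and the cross-term argument are both sound (the latter is the same mechanism as the paper's MDS orthogonality, just unpacked). One modest advantage of your route, which you correctly flag as the delicate point, is that it never invokes conditional scores or conditional densities of the transcripts, only conditional expectations of the square-integrable data score; this fits more naturally with the paper's stated goal of working without dominated conditional transcript distributions, whereas the paper's chain-rule step implicitly manipulates $dQ_u(t_j\mid t_{1:j-1})$. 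The price is that you must prove the conditional-independence statement by induction along the chain, which the paper's factorization absorbs implicitly. Both arguments are valid; no gap.
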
 

\begin{proof}
In light of the conditional independence structure, the joint distribution of $T = (T_1, \dots, T_m)$ factorizes as
\begin{equation*}
dQ_u(t_1, \dots, t_m) = \bigotimes_{j=1}^m dQ_u(t_j | t_1, \dots, t_{j-1}).
\end{equation*}
We obtain that
\begin{equation*}
s_n(T|u) = \sum_{j=1}^m s_n(T_j|T_{1}, \dots, T_{j-1},u).
\end{equation*}
Since $T_j$ depends on $u$ only through $X^{(j)}$, and $X^{(j)}$ is independent of $X^{(-j)}$ given $u$, we find that $s_n(T_j|T_{1}, \dots, T_{j-1},u)$ equals
\begin{equation*}
 \E_u \left[ s_n(X^{(j)}|u) \mid T_{1}, \dots, T_{j} \right] - \E_u \left[ s_n(X^{(j)}|u) \mid T_{1}, \dots, T_{j-1} \right] =: \Delta_j.
\end{equation*}
The terms $\Delta_j$ form a martingale difference sequence with respect to the filtration $\cF_j = \sigma(T_{1}, \dots, T_{j})$. Indeed, $\E_u [\Delta_j | \cF_{j-1}] = 0$. Consequently, the cross terms in the expectation of the outer product vanish:
\begin{equation*}
\cI(u) = \E_u \left[ \left(\sum_{j=1}^m \Delta_j\right) \left(\sum_{l=1}^m \Delta_l\right)^\top \right] = \sum_{j=1}^m \E_u [\Delta_j \Delta_j^\top].
\end{equation*}
Crucially, since $X^{(j)}$ is independent of $T_{1}, \dots, T_{j-1}$, we have that
\begin{equation*}
\E_u [ s_n(X^{(j)}|u) | T_{1}, \dots, T_{j-1} ] = \E_u [ s_n(X^{(j)}|u) ] = 0,
\end{equation*}
which implies that $\Delta_j = \E_u [ s_n(X^{(j)}|u) \mid T_{1}, \dots, T_{j} ]$. The result follows.
\end{proof}

\subsubsection{Proofs of Lemmas \ref{lem:global-lower-bound-key-lemma} and \ref{lem:data-processing}}\label{sec:proofs-of-lemmas-global-lower-bound-key-lemma-and-data-processing}

\begin{proof}[Proof of Lemma \ref{lem:global-lower-bound-key-lemma}]

    Fix $\hat{f} \in \cF(\varepsilon,\delta)$, and let $T = (T^{(j)})_{j=1,\dots,m}$ denote the corresponding FDP transcripts. Let $\alpha_L$ is the value in $\tilde{\cA}$ closest to the solution of $2^{-L(\alpha+1)} = \tilde{\rho}_\alpha^{-1}$. Consider 
    \begin{equation*}
        F_{L}^U = 1 + \sum_{k=1}^{2^L} U_{Lk} \psi_{Lk},
    \end{equation*}
    where the vector $U_L = (U_{Lk})_{k=1}^{2^L}$ is supported on the hypercube 
    \begin{equation*}
        \cU_L = [-C_R 2^{-L(\alpha_L+1/2)}, C_R 2^{-L(\alpha_L+1/2)}]^{2^L}
    \end{equation*}
    under some distribution $\P^{U_L}$, with expectation operator denoted by$\E^{U_L}$.
    For each $L \in \cL$, $F_L^U$ is a probability density in $\cB_{pq}^{\alpha_L}(R)$. Consequently,
    \begin{align}\label{eq:global-lower-bound-key-lemma-1}
        \sup_{\alpha \in \tilde{\cA}}\sup_{f \in B_{pq}^\alpha(R)} \, \E_f  \, \tilde{\rho}_\alpha^{-2} \, \| \hat{f} - f \|_2^2  \geq \sup_{L \in \cL} \, \E^{U_L} \, \E_{F_L^U}  \, \tilde{\rho}_{\alpha_L}^{-2} \, \| \hat{f} - F_L^U \|_2^2.
       \end{align}
       By Plancharel's theorem, we have that
       \begin{equation*}
        \| \hat{f} - F_L^U \|_2^2 \geq \sum_{k=1}^{2^L} ( \hat{f}_{Lk} - U_{Lk} )^2,
       \end{equation*}
       where $\hat{f}_{Lk}$ denotes the wavelet coefficient of $\hat{f}$ at level $L$ and index $k$. Using the multivariate van Trees inequality (Lemma \ref{lem:van-trees} in Section \ref{sec:auxiliary_lemmas}), we obtain that  
       \begin{equation*}
           \tilde{\rho}_{\alpha_L}^{-2} \, \E^{U_L} \, \| (\hat{f}_{Lk})_{k=1}^{2^L} - (F^U_{Lk})_{k=1}^{2^L}\|_2^2 \geq \frac{2^{2L}}{ \tilde{\rho}^{2}_{\alpha_L} \, (\E^{U_L} [\text{Tr}(\cI_{L}(U_L))] + \cI(\P^{U_L}))},
       \end{equation*}
       where $\cI(\P^{U_L})$ is the Fisher information of the `prior' $\P^{U_L}$ (i.e. the marginal distribution of $U_L$), which is defined as 
       \begin{equation*}
           \cI(\P^{U_L}) = \int \frac{\left\| \nabla \pi_L(u) \right\|^2}{\pi_L(u)} \, du,
       \end{equation*}
       with $\pi_L(u)$ denoting the probability density of $U_L$, and $\cI_{L}(U_L)$ denotes the \emph{transcript induced Fisher information matrix} of the submodel obtained by considering $U_L \in \cU_L$. 

    The score of this submodel is given by
       \begin{equation*}
       S_L \equiv S_L(X_1,\dots,X_N) := \nabla_{(u)_L}  \sum_{j=1}^m \sum_{i=1}^n \log F_L^u(X_i^{(j)}) \mid_{u_L=U_L} = \sum_{j=1}^m S_L^{(j)},
       \end{equation*}
       where $S_L^{(j)} = \sum_{i=1}^n  \left(\frac{\psi_{Lk}(X_i^{(j)})}{\bar{F}_L^U(X_i^{(j)})} \right)_{k = 1}^{2^L}$. That is, by Lemma \ref{lem:score-well-defined}, we have that
       \begin{equation*}
           \cI_{L}(U_L) = \E_{{F}_L^U} \E_{{F}_L^U} \left[ S_L | T \right] \E_{{F}_L^U} \left[ S_L | T \right]^\top,
       \end{equation*}
       where we note that in the notation here ${F}_L^U$ is a product density over the $N$ data points $X_i^{(j)}$ for $j=1,\dots,m$ and $i=1,\dots,n$, and $\cI_{L}(U_L)$ coincides with $\cI_{L}(U)$ as defined in the lemma statement under the slight abuse of notation that $U_{(-L)} = 0$ is left out of the function notation.
       
       Noting that the resulting random variables $2^{L(\alpha_L+1/2)} U_{Lk}$ are nearly uniformly distributed on $[-C_R,C_R]$, a straightforward calculation finds that the induced prior Fisher information is of the order $2^{(2\alpha + 2) L}$, with constant that depends on $\eta$. Using $ 2^{\alpha L_\alpha } \asymp  \tilde{\rho}^{-1}_\alpha $ and $ 2^{- 2 L_\alpha } \tilde{\rho}^{2}_\alpha = \frac{\log (N)}{m n^2 \varepsilon^2}$, find that \eqref{eq:global-lower-bound-key-lemma-1} is lower bounded by a constant multiple of
       \begin{equation}\label{eq:global-lower-bound-key-lemma-1-bound-post-vantrees}
             \left({  \, \frac{\log (N)}{m n^2\varepsilon^2} \, \min_{L \in \cL} \, \E^{U_L} \text{Tr}(\cI_{L}(U_{L})) + 1}\right)^{-1}.
       \end{equation}
Clearly, $\E^{U_L} \text{Tr}(\cI_{L}(U_{L}))$ is bounded $\sup_{\P^U} \E^{U} \text{Tr}(\cI_{L}(U))$, where the supremum is over all distributions on $\cU := \cup_{L \in \cL} \cU_L$, with
\begin{equation*}
    \cI_{L}(U) = \E_{{F}_\cL^U} \E_{{F}_\cL^U} \left[ S_L | T \right] \E_{{F}_\cL^U} \left[ S_L | T \right]^\top
\end{equation*}
as defined in the lemma statement. Let $q_\cL$ be an element of the probability simplex over $\cL$, and note that the map $q \mapsto \sum_{L \in \cL} q_L \E^U \text{Tr}(\cI_{L}(U))$ is linear. Similarly, the map $\P^U \mapsto \E^U \text{Tr}(\cI_{L}(U))$ is linear. Due to the boundededness of the wavelets (and hence the score), $\text{Tr}(\cI_{L}(U))$ is bounded, making both of the aforementioned linear maps continuous.

The set of probability measures on $\cU$ is convex and weak-*-compact, and so is the probability simplex over the finite set $\cL$, so the corresponding min-max problem has a saddle point: there exists a probability measure $\P^U$ such that \eqref{eq:global-lower-bound-key-lemma-1-bound-post-vantrees} is bounded by 
\begin{equation*}
    \left({  \, \frac{\log (N)}{m n^2\varepsilon^2} \, \min_{L \in \cL} \, \E^{U} \text{Tr}(\cI_{L}(U)) + 1}\right)^{-1}.
\end{equation*}
Using the correspondence between $L \in \cL$ and $\alpha \in \tilde{\cA}$, the statement of the lemma follows.
    \end{proof}

            \begin{proof}[Proof of Lemma \ref{lem:data-processing}]
                Recall that, in the FDP setup of Definition \ref{def:federated_differential_privacy} and \ref{def:federated_differential_privacy_chained},
                \begin{equation*}
                    T^{(j)}| T^{(j-1)} \overset{d}{=} T^{(j)}| T^{(j-1)},\dots,T^{(1)}.
                \end{equation*}
                Due to the conditional independence of the transcripts, $\text{Tr}(\cI_{\cL}) = \sum_{j=1}^m \E[\text{Tr}(C_{T^{(j)}})]$, where $C_{T^{(j)}} = \E[S_{\cL}^{(j)} | T^{(j)},T^{(j-1)}] \E[S_{\cL}^{(j)} | T^{(j)},T^{(j-1)}]^T$ and $S_{\cL}^{(j)}$ is the stacked score for server $j$.
                
                For each server $j$, define 
                \begin{equation*}
                    G_{j,i} = \langle \E [ \bar{S}_{\cL}^{(j)} | T^{(j)}, T^{(j-1)}], S_{\cL}^{(j,i)}(X_i^{(j)}) \rangle, 
                \end{equation*}
                where $\bar{S}_{\cL}^{(j)} = \sum_{i=1}^n S_{\cL}^{(j,i)}(X_i^{(j)})$ is the score for server $j$, and $S_{\cL}^{(j,i)}(X_i^{(j)})$ is the single-observation score for the $i$-th sample on server $j$. Let $\breve{G}_{j,i} =\langle \bar{S}_{\cL}^{(j)} , S_{\cL}^{(j,i)}(\breve{X}_i^{(j)}) \rangle$ where $\breve{X}_i^{(j)}$ denotes an independent copy of $X_i^{(j)}$. 
                $|\psi_{lk}(x)| \leq 2^{l/2} \|\psi\|_\infty$. For 
                \begin{equation*}
                    u \in [-2^{-L(\alpha+1/2)} C_R, 2^{-L(\alpha+1/2)} C_R],
                \end{equation*}
                we have that 
                \begin{equation*}
                    \left| \frac{\psi_{lk}(x)}{F_{\cL}^u(x)} \right| \leq \frac{2^{l/2} \|\psi\|_\infty}{1 - 2^{- \alpha_{\min}/2 } \|\psi\|_\infty K R} \leq C_{\psi} 2^{l/2},
                \end{equation*}
                where $K$ in the second inequality depends on the support of the chosen wavelets.
        
                Consequently, we have that 
                \begin{equation}\label{eq:Gi-bounded}
                    |G_{j,i}| \leq C_{\psi}^2 n 2^{l} \quad \text{ and } \quad |\breve{G}_{j,i}| \leq C_{\psi}^2 n 2^{l}.
                \end{equation}
                Also,
                \begin{equation*}
                    \E_{F^U_{\cL}} \frac{\psi_{lk}(X_i^{(j)})}{F_{\cL}^U(X_i^{(j)})} = \int_0^1 \psi_{lk}(x)dx = 0.
                \end{equation*}
                Following Lemma 5.3 in \cite{Cai2024FL-NP-Regression}, we have that for any $M>0$,
                \begin{align*}
                    \text{Tr}(C_{T^{(j)}}) \leq & C n \varepsilon \sqrt{\text{Tr}(C_{T^{(j)}})} \sqrt{\lambda_{\max} (\E[S_{\cL}^{(j,1)} (S_{\cL}^{(j,1)})^T])} \\  &+ 2 M \delta + \int_M^\infty \P \left( (G_{j,i})^+ \geq t \right) dt + \int_M^\infty \P \left( (\breve{G}_{j,i})^- \geq t \right)dt,
                \end{align*}
                where $S_{\cL}^{(j,1)}$ is the single-observation score, $C > 0$ is universal, $(G_{j,i})^+ = \max(G_{j,i}, 0)$, and $(G_{j,i})^- = -\min(G_{j,i}, 0)$.
        
                Taking $M = C_{\psi}^2 n 2^{\max_{L \in \cL} L} \lesssim n N$ (since $\max L = O(\log N)$), we find that the latter two integrals are zero by \eqref{eq:Gi-bounded}. Under the assumption $\delta  \ll n^{} \varepsilon^2 / N $, the $M \delta$-term is $o(n^2 \varepsilon^2)$. Solving the quadratic inequality yields $\text{Tr}(C_{T^{(j)}}) \leq C n^2 \varepsilon^2$, where we use that $\lambda_{\max} (\E[S_{\cL}^{(j,1)} (S_{\cL}^{(j,1)})^T]) \lesssim 1$ due to orthogonality of the wavelet basis. Summing over $j=1,\dots,m$ gives the result.
            \end{proof}

\subsection{Pointwise risk lower bound proofs (Theorem \ref{thm:adaptation-lower-bound-pointwise})}\label{sec:pointwise-risk-lower-bound-proofs} 

We start with the proof of Theorem \ref{thm:adaptation-lower-bound-pointwise}, followed by the proof of Theorem \ref{thm:pointwise_cost_of_adaptation}. Auxiliary results are deferred to the sub-section at the end of this section.

\subsubsection{Proof of Theorem \ref{thm:adaptation-lower-bound-pointwise}}

Set
\begin{equation*}
B_N = \begin{cases}
\displaystyle\breve c\,\frac{\log A_N}{\sqrt{m}} & \text{if } m \le \log A_N, \\[6pt]
\sqrt{\breve c\,{\log A_N}} & \text{if } m > \log A_N,
\end{cases}
\quad\text{and}\quad
\Delta_N = \left(\frac{N}{\log A_N}\right)^{-\frac{\nu}{2\nu+1}}
\vee \left(\frac{m n^2\varepsilon^2}{B_N^2}\right)^{-\frac{\nu}{2\nu+2}},
\end{equation*}
with $\breve c>0$ chosen sufficiently small (specified below). Let $h$ be a compactly supported smooth bump function with $h(0)>0$, $\int h=0$, and $\|h\|_2>0$, and define
\begin{equation*}
g(x)= f_0(x) + a\,h\!\left(b\,(t_0-x)\right),
\qquad a:=c_1\Delta_N,\quad b:=c_2\,\Delta_N^{-1/\nu},
\end{equation*}
for fixed $c_1,c_2>0$ small so that $g\in\cB^\alpha_{p,q}(R)$; see Lemma~1 of \cite{cai2003pointwiseBesov}. Then
\begin{equation}\label{eq:Delta-and-TV}
\Delta := |g(t_0)-f_0(t_0)| \asymp \Delta_N,
\qquad
\|g-f_0\|_1 \asymp \frac{a}{b}.
\end{equation}
Since $f_0,g$ are densities, $p_{f_0,g}:=\|P_{f_0}-P_g\|_{\mathrm{TV}}=\tfrac12\|g-f_0\|_1 \asymp a/(2b)$.

Depending on which term in the definition of $\Delta_N$ is larger, we distinguish two regimes; one where the non-private rate dominates, and one where the private rate dominates.

\smallskip\noindent\textbf{Private regime:}
Assume the maximum in $\Delta_N$ is attained by $\Delta_N = (m n^2\varepsilon^2/B_N^2)^{-\nu/(2\nu+2)}$. With the above choice,
\begin{equation}\label{eq:TV-scaling-private}
\frac{a}{b}\asymp\Delta_N^{1+1/\nu}
=\left(\frac{B_N^2}{m n^2\varepsilon^2}\right)^{1/2}
\quad\Longrightarrow\quad
p_{f_0,g}\asymp\frac{{B_N}}{\sqrt{m}\, n\varepsilon}.
\end{equation}
Apply the private constrained-risk lemma (Lemma \ref{lem:private-constrained-risk-delta}) with the semi-metric
$\mathrm{d}(h_1,h_2)=|h_1(t_0)-h_2(t_0)|$, $f=f_0$, $g$ as above, and $\Delta$ from \eqref{eq:Delta-and-TV}. 
Let
\begin{equation*}
\gamma^2 := \frac{\E_{f_0}\!\left(\hat T - f_0(t_0)\right)^2}{\Delta^2}
\;\lesssim\; \frac{\log^{O(1)} A_N}{A_N}
\qquad\text{by }\eqref{eq:thm:adaptation-lower-bound-pointwise-assumption-clean}
\text{ and }\Delta^2\asymp \Delta_N^2.
\end{equation*}
Lemma \ref{lem:private-constrained-risk-delta} yields
\begin{align}
\E_g\!\left|\hat T - g(t_0)\right|^2
&\ge \frac{\Delta^2}{4}\Bigl( 1 - \sqrt{5}\,e^{m (\bar{\varepsilon}^2/2 \wedge \bar{\varepsilon}) + \log\gamma} - 4 m \bar{\delta} \Bigr),
\label{eq:key-bracket}
\end{align}
where $\bar{\varepsilon} = 6 \varepsilon n p_{f_0,g}$ and $\bar{\delta} = 4 e^{\bar{\varepsilon}} n \delta p_{f_0,g}$.

From \eqref{eq:TV-scaling-private}, we have
\begin{equation*}
\bar{\varepsilon} = 6\varepsilon n \, p_{f_0,g} \asymp \frac{B_N}{\sqrt{m}}.
\end{equation*}
Hence
\begin{equation*}
\frac{\bar{\varepsilon}^2}{2} \asymp \frac{B_N^2}{m}
\qquad\text{and}\qquad
m\,\bar{\varepsilon} \asymp B_N\sqrt{m}.
\end{equation*}
The quantity $\bar{\varepsilon}^2/2 \wedge \bar{\varepsilon}$ equals $\bar{\varepsilon}^2/2$ when $\bar{\varepsilon} \le 2$ (i.e., $B_N \lesssim \sqrt{m}$) and equals $\bar{\varepsilon}$ when $\bar{\varepsilon} > 2$ (i.e., $B_N \gtrsim \sqrt{m}$).

\smallskip
\textbf{Case 1: $m \le \log A_N$ and $B_N = \breve c\,\frac{\log A_N}{\sqrt{m}}$.}

\smallskip\noindent
Then
\begin{equation*}
\bar{\varepsilon} \asymp \breve c\,\frac{\log A_N}{m}.
\end{equation*}
Since $m \le \log A_N$, we have $\bar{\varepsilon} \gtrsim \breve c$. For $\breve c$ chosen large enough that $\bar{\varepsilon} \ge 2$, the minimum satisfies $\bar{\varepsilon}^2/2 \wedge \bar{\varepsilon} = \bar{\varepsilon}$, so
\begin{equation*}
m\,\bar{\varepsilon} \asymp \breve c\,\log A_N.
\end{equation*}
For the $\bar{\delta}$ term: $\bar{\delta} \asymp e^{\bar{\varepsilon}} n\delta \cdot \frac{B_N}{\sqrt{m}\,n\varepsilon} = e^{\bar{\varepsilon}} \delta \cdot \frac{B_N}{\sqrt{m}\,\varepsilon}$. Since $\bar{\varepsilon} \asymp \breve c\,\frac{\log A_N}{m} \le \breve c$ (as $m \ge 1$), we have $e^{\bar{\varepsilon}} = O(1)$. Thus
\begin{equation*}
m\,\bar{\delta} \asymp \frac{m\,\delta\, B_N}{\sqrt{m}\,\varepsilon} 
= \frac{\sqrt{m}\,\delta\,\breve c\,\log A_N}{\sqrt{m}\,\varepsilon}
= \frac{\breve c\,\delta\,\log A_N}{\varepsilon}
\;\ll\; \frac{1}{A_N}
\;\le\; \gamma^2,
\end{equation*}
by condition \eqref{eq:delta-condition}.

For the exponential term: since $\gamma^2 \lesssim \frac{\log^{O(1)} A_N}{A_N}$, we have
\begin{equation*}
\log\gamma \;\le\; -\tfrac{1}{2}\log A_N + O(\log\log A_N).
\end{equation*}
Therefore
\begin{equation*}
m\,\bar{\varepsilon} + \log\gamma 
\;\le\; \breve c'\log A_N - \tfrac{1}{2}\log A_N + O(\log\log A_N)
= -(\tfrac{1}{2} - \breve c')\log A_N + O(\log\log A_N).
\end{equation*}
Choosing $\breve c$ (and hence $\breve c'$) small enough that $\breve c' < 1/4$, we obtain
\begin{equation*}
e^{m\,\bar{\varepsilon} + \log\gamma} \;\lesssim\; A_N^{-c} \;\ll\; 1
\end{equation*}
for some constant $c > 0$. Thus the bracket in \eqref{eq:key-bracket} is $1 - o(1)$, and
\begin{equation*}
\E_g\!\left|\hat T - g(t_0)\right|^2 
\;\gtrsim\; \Delta_N^2 
\asymp \left(\frac{m n^2\varepsilon^2}{B_N^2}\right)^{-\frac{2\nu}{2\nu+2}}
= \left(\frac{m^2 n^2\varepsilon^2}{\log^2 A_N}\right)^{-\frac{2\nu}{2\nu+2}}.
\end{equation*}
Since $L_{m,N} = \frac{\log^2 A_N}{m}$ when $m \le \log A_N$, this matches the claimed rate.

\smallskip
\textbf{Case 2: $m > \log A_N$ and $B_N = \sqrt{\breve c\,\log A_N}$.}

\smallskip\noindent
Then
\begin{equation*}
\bar{\varepsilon} \asymp \sqrt{\frac{\breve c\,\log A_N}{m}}.
\end{equation*}
Since $m > \log A_N$, we have $\bar{\varepsilon} \lesssim \sqrt{\breve c}$. For $\breve c$ small enough that $\bar{\varepsilon} \le 2$, the minimum satisfies $\bar{\varepsilon}^2/2 \wedge \bar{\varepsilon} = \bar{\varepsilon}^2/2$, so
\begin{equation*}
m \cdot \frac{\bar{\varepsilon}^2}{2} \asymp m \cdot \frac{\breve c\,\log A_N}{m} = \breve c\,\log A_N.
\end{equation*}
For the $\bar{\delta}$ term: since $\bar{\varepsilon} = O(1)$, we have $e^{\bar{\varepsilon}} = O(1)$, and
\begin{equation*}
m\,\bar{\delta} \asymp \frac{m\,\delta\,B_N}{\sqrt{m}\,\varepsilon}
= \frac{\sqrt{m}\,\delta\,\sqrt{\breve c\,\log A_N}}{\varepsilon}
\;\ll\; \frac{\sqrt{m\log A_N}}{A_N}
\;\le\; \gamma^2,
\end{equation*}
by condition \eqref{eq:delta-condition} and the fact that $\gamma^2 \gtrsim A_N^{-1}\log^{O(1)} A_N$.

For the exponential term:
\begin{equation*}
m \cdot \frac{\bar{\varepsilon}^2}{2} + \log\gamma 
\;\le\; \breve c'\log A_N - \tfrac{1}{2}\log A_N + O(\log\log A_N)
= -(\tfrac{1}{2} - \breve c')\log A_N + O(\log\log A_N).
\end{equation*}
Choosing $\breve c$ small enough, we obtain
\begin{equation*}
e^{m\cdot\bar{\varepsilon}^2/2 + \log\gamma} \;\lesssim\; A_N^{-c} \;\ll\; 1
\end{equation*}
for some $c > 0$. Thus the bracket in \eqref{eq:key-bracket} is $1 - o(1)$, and
\begin{equation*}
\E_g\!\left|\hat T - g(t_0)\right|^2 
\;\gtrsim\; \Delta_N^2 
\asymp \left(\frac{m n^2\varepsilon^2}{B_N^2}\right)^{-\frac{2\nu}{2\nu+2}}
= \left(\frac{m n^2\varepsilon^2}{\log A_N}\right)^{-\frac{2\nu}{2\nu+2}}.
\end{equation*}
Since $L_{m,N} = \log A_N$ when $m > \log A_N$, this matches the claimed rate.

\smallskip\noindent\textbf{The non-private regime:}
If instead $\Delta_N \asymp (N/\log A_N)^{-\nu/(2\nu+1)}$, we appeal to the standard (non-private) constrained-risk argument (e.g.\ Theorem~1 of \cite{cai2003pointwiseBesov}). For completeness, Lemma \ref{lem:classical-adaptation-lower-bound} with the same perturbation $g$ shows
\begin{equation*}
\underset{N\to\infty}{\lim\sup}\ e^{-C\frac{B_N}{N}}\,
\E_{f_0}\!\left(\frac{g(X_1)}{f_0(X_1)}\right)^2 \;<\;\infty,
\end{equation*}
from which the classical constrained-risk inequality yields
\begin{equation*}
\E_g\!\left|\hat T - g(t_0)\right|^2 \;\gtrsim\; \Delta_N^2
= \left(\frac{N}{\log A_N}\right)^{-\frac{2\nu}{2\nu+1}}.
\end{equation*}

\subsubsection{Proof of Corollary \ref{cor:pointwise-adaptation-lb}}\label{sec:proof-pointwise-cost-of-adaptation}

\begin{proof}[Proof of Corollary \ref{cor:pointwise-adaptation-lb}]
    Take $f_0$ to be the uniform density on $[0,1]$, which lies in $\cB^{\alpha_1}_{p_1,q_1}(R')$ for some $R'<R$ and in the interior of the $\cB^{\alpha_2}_{p_2,q_2}(R)$ ball, with $f_0(t_0) = 1$. Since $\hat{T}$ is rate optimal for the smoother Besov class $\cB^{\alpha_1}_{p_1,q_1}$, it achieves the minimax rate
    \begin{equation*}
    \E_{f_0} (\hat{T} - f_0(t_0))^2 \lesssim \left(\frac{N}{\log N}\right)^{-\frac{2\nu_1}{2\nu_1 + 1}} \vee \left(\frac{m n^2 \varepsilon^2}{\log^2 N}\right)^{-\frac{2\nu_1}{2\nu_1 + 2}},
    \end{equation*}
    where $\nu_1 := \alpha_1 - 1/p_1 > \nu_2 := \alpha_2 - 1/p_2 > 1/2$. Since $f_0$ lies in the interior of the less smooth class $\cB^{\alpha_2}_{p_2,q_2}(R)$, the above risk is strictly smaller than the minimax rate over this class,
    \begin{equation*}
    \left(\frac{N}{\log N}\right)^{-\frac{2\nu_2}{2\nu_2 + 1}} \vee \left(\frac{m n^2 \varepsilon^2}{\log^2 N}\right)^{-\frac{2\nu_2}{2\nu_2 + 2}}.
    \end{equation*}
    Thus, condition \eqref{eq:thm:adaptation-lower-bound-pointwise-assumption-clean} holds with $\nu = \nu_2$ and
    \begin{equation*}
    A_N^{-1} = \frac{\left(\frac{N}{\log N}\right)^{-\frac{2\nu_1}{2\nu_1 + 1}} \vee \left(\frac{m n^2 \varepsilon^2}{\log^2 N}\right)^{-\frac{2\nu_1}{2\nu_1 + 2}}}{\left(\frac{N}{\log N}\right)^{-\frac{2\nu_2}{2\nu_2 + 1}} \vee \left(\frac{m n^2 \varepsilon^2}{\log^2 N}\right)^{-\frac{2\nu_2}{2\nu_2 + 2}}}.
    \end{equation*}
    Since $\nu_1 > \nu_2$, it follows that $A_N \to \infty$ as $N \to \infty$. Moreover, under the assumption $\varepsilon \gtrsim (\sqrt{m} n)^{-\omega}$ for some $\omega \in [0,1)$, we have $A_N \gtrsim N^{\gamma}$ for some $\gamma > 0$ (with the exact $\gamma$ depending on whether the non-private or private rate dominates). The conclusion of the corollary then follows immediately from Theorem \ref{thm:adaptation-lower-bound-pointwise}.
    \end{proof}

\subsubsection{Auxiliary lemmas to Theorem \ref{thm:adaptation-lower-bound-pointwise}}

The following lemma extends Lemma \ref{lem:private-constrained-risk-delta}, its `$\delta=0$' version presented in the main article for general $\delta \geq 0$.

\begin{lemma}\label{lem:private-constrained-risk-delta}
Consider a model $\{P_f \, : \, f \in \Theta\}$ on $(\cX,\mathscr{X})$ indexed by a semi-metric space $(\Theta,\mathrm{d})$, and $f,g \in \Theta$ such that $\mathrm{d}(f, g) \geq \Delta$ for some $\Delta > 0$. Consider servers $j=1,\ldots,m$ each with i.i.d. samples $X_1^{(j)},\ldots,X_n^{(j)}$ with distribution $P_h$ for $h \in \Theta$.

If an $(\varepsilon,\delta)$-FDP estimation protocol $\hat{T}$ on the basis of $(X^{(j)}_i)_{i=1,\dots,n}^{j=1,\dots,m}$ satisfies 
\begin{align*}
     \E_f \, \mathrm{d} \left(\hat{T}, f \right)^2 &\leq \gamma^2 \Delta^2 \quad \text{ for some } \gamma > \sqrt{m \bar{\delta}},
\end{align*}
then
\begin{align*}
     \E_g \, \mathrm{d} \left(\hat{T}, g \right)^2 \geq \frac{\Delta^2}{4} \left[1 - \sqrt{5} \exp \left( m (\bar{\varepsilon}^2/2 \wedge \bar{\varepsilon}) + \log \gamma \right) - 4 m \bar{\delta} \right],
\end{align*}
where $\bar{\varepsilon} = 6 n \varepsilon \|P_f - P_g \|_{\text{TV}}$ and $\bar{\delta} = 4 e^{\bar{\varepsilon}} n \delta \|P_f - P_g \|_{\text{TV}}$.
\end{lemma}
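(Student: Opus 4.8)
The plan is to carry out a privacy-adapted Brown--Low constrained-risk argument \cite{brown1996constrained}, in which the classical chi-square affinity between $P_f$ and $P_g$ is replaced by a \emph{contracted} affinity between the transcript laws. Write $p:=\|P_f-P_g\|_{\text{TV}}$ and, in the space of the full transcript $T=(T^{(1)},\dots,T^{(m)})$, set $A:=\{\mathrm d(\hat T,g)\le\Delta/2\}$ (a measurable event, since $\hat T$ is a function of $T$). By the triangle inequality in $(\Theta,\mathrm d)$, on $A$ one has $\mathrm d(\hat T,f)\ge\mathrm d(f,g)-\Delta/2\ge\Delta/2$, so Markov's inequality and the hypothesis $\E_f\mathrm d(\hat T,f)^2\le\gamma^2\Delta^2$ give $\P_f(A)\le4\gamma^2$; conversely $\E_g\mathrm d(\hat T,g)^2\ge\frac{\Delta^2}{4}(1-\P_g(A))$. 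The lemma thus reduces to the transfer inequality $\P_g(A)\le\sqrt5\,\gamma\,e^{m(\bar\varepsilon^2/2\wedge\bar\varepsilon)}+4m\bar\delta$, required to hold for every measurable $A$ with $\P_f(A)\le4\gamma^2$.

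For the transfer inequality I would first peel off the $\delta$-contribution. For each server $j$, couple $X^{(j)}\sim P_f^{\otimes n}$ with $\widetilde X^{(j)}\sim P_g^{\otimes n}$ through the coordinatewise maximal coupling of $P_f$ and $P_g$, so that the two datasets disagree on a random set of size $K_j\sim\mathrm{Bin}(n,p)$. Group privacy for the $(\varepsilon,\delta)$-FDP channel, together with the standard mixture characterisation of $(\varepsilon,\delta)$-indistinguishability, writes the law of $T^{(j)}$ given $\widetilde X^{(j)}$ as the law given $X^{(j)}$ reweighted by a factor confined to $[e^{-K_j\varepsilon},e^{K_j\varepsilon}]$, plus a residual of total mass at most $\frac{e^{K_j\varepsilon}-1}{e^\varepsilon-1}\delta\le K_je^{K_j\varepsilon}\delta$. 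Deleting this residual on all servers costs a bad event of probability at most $\sum_{j}\E[K_je^{K_j\varepsilon}]\delta\lesssim m(np)e^{\bar\varepsilon}\delta\asymp m\bar\delta$ (using $\bar\varepsilon=6n\varepsilon p$ and a binomial moment-generating-function bound); this produces the $4m\bar\delta$ term, and the hypothesis $\gamma>\sqrt{m\bar\delta}$ is precisely what keeps the resulting bound informative. Off this bad event the joint transcript law $\P_g^T$ admits a density $\rho=\prod_{j=1}^m\rho^{(j)}$ with respect to $\P_f^T$, and Cauchy--Schwarz gives $\P_g(A)\le\sqrt{\P_f(A)}\sqrt{\E_f\rho^2}+4m\bar\delta\le2\gamma\sqrt{\E_f\rho^2}+4m\bar\delta$, the constant $2$ degrading to $\sqrt5$ to absorb the $(1-\delta)^{-1}$-type inflation incurred by the mixture step (and equal to $2$ when $\delta=0$).

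It then remains to bound the contracted affinity $\E_f\rho^2=\prod_{j=1}^m\E_f(\rho^{(j)})^2\le e^{2m(\bar\varepsilon^2/2\wedge\bar\varepsilon)}$. Here a single-server private chi-square contraction estimate of Duchi--Jordan--Wainwright type \cite{duchi2018minimax} is the workhorse: because the local channel acts identically on the coordinates on which the coupled datasets agree, the discrepancy between the server-$j$ transcript laws under $f$ and under $g$ is of order the \emph{squared} overlap deficit, not the deficit itself, so that, after carrying out the coupling and estimating $\E[(e^{K_j\varepsilon}-1)(1-e^{-K_j\varepsilon})]$ and $\E[e^{K_j\varepsilon}]$ against the binomial law and substituting $\bar\varepsilon=6n\varepsilon p$, one obtains $\E_f(\rho^{(j)})^2\le e^{\bar\varepsilon^2}$ when $\bar\varepsilon\lesssim1$ and $\E_f(\rho^{(j)})^2\le e^{\bar\varepsilon}$ when $\bar\varepsilon\gtrsim1$ --- i.e.\ the dichotomy $e^{\bar\varepsilon^2\wedge 2\bar\varepsilon}$ after multiplying over the $m$ servers. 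Combining this with the Cauchy--Schwarz display and plugging back into the skeleton yields the claimed lower bound; setting $\delta=0$ makes the residual-deletion step vacuous, improves the constant from $\sqrt5$ to $2$, and recovers Lemma~\ref{lem:private-constrained-risk}.

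I expect the crux to be this single-server contraction. A naive bound $\rho^{(j)}\le e^{K_j\varepsilon}$ fed through the binomial moment generating function produces only a dependence \emph{linear} in the overlap deficit $np$, which is too weak in the heavily distributed regime (for instance local DP with $np\ll1$), whereas the sharp quadratic branch $e^{\bar\varepsilon^2}$ genuinely requires exploiting the invariance of the channel on the data overlap, so that the effective perturbation enters quadratically in the total-variation distance. Making this interlock cleanly with the $(\varepsilon,\delta)$ mixture decomposition --- so that the ``core'' surviving the deletion of the $\delta$-mass still enjoys both the two-sided confinement in $[e^{-K_j\varepsilon},e^{K_j\varepsilon}]$ and the correct normalisation --- and doing so without assuming that the transcript channels are dominated by a common reference measure, is where the real technical effort lies; the coupling, the Cauchy--Schwarz step, and the product structure across servers are then routine bookkeeping.
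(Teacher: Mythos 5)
Your proposal is correct and follows essentially the same route as the paper's proof: the Brown--Low constrained-risk skeleton with the event $\{\mathrm d(\hat T,g)\le \Delta/2\}$, the maximal-coupling/group-privacy reduction to a two-sided transcript likelihood-ratio bound with parameter $\bar\varepsilon=6n\varepsilon\|P_f-P_g\|_{\text{TV}}$ plus an $m\bar\delta$ residual (packaged in the paper as Lemma \ref{lem:coupling-pointwise-lb}, via Karwa--Vadhan), and Cauchy--Schwarz against a product chi-square bound, with the $\sqrt5$ arising exactly as you describe from absorbing the $m\bar\delta\le\gamma^2$ switching cost into $\P_f(E^c)\le 4\gamma^2$. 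The one step you single out as the crux---the per-server bound $\E_f(\rho^{(j)})^2\le e^{\bar\varepsilon^2\wedge 2\bar\varepsilon}$---the paper obtains not from a Duchi--Jordan--Wainwright channel contraction but from the elementary extremal fact (Lemma \ref{lem:chi-square-two-sided}, a Hoeffding two-point reduction) that a normalized likelihood ratio confined to $[e^{-\bar\varepsilon},e^{\bar\varepsilon}]$ automatically satisfies $\E L^2\le 2\cosh(\bar\varepsilon)-1\le e^{\bar\varepsilon^2}$, so the quadratic branch needs no further input about the channel's action on the data overlap beyond what the coupling step already delivered.
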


\begin{proof}[Proof of Lemma \ref{lem:private-constrained-risk-delta}]
 Let $E$ denote the event $\{\mathrm{d}(\hat{T}, g) \geq c\Delta\}$. We have
\begin{equation*}
    \E_g \, \mathrm{d} \left(\hat{T}, g \right)^2 \geq (c\Delta)^2 \, \P_g\left( E \right).
\end{equation*}
We prove the lemma for general $c \in (0,1)$, plugging in $c = 1/2$ obtains the statement of the lemma. By combining the triangle inequality with the assumptions of the lemma and Markov's inequality, find that
\begin{equation}\label{eq:bound-Ec-Pf}
    \P_f\left( E^c \right) \leq \frac{\gamma^2}{(1-c)^2}.
\end{equation}
Let $T^{(j)}$ denote the transcript at server $j=1,\dots,m$. By Lemma \ref{lem:coupling-pointwise-lb}, there exists transcripts $\tilde{T}^{(j)}$ such that $\tilde{T}^{(j)}|[\tilde{T}^{(-j)},X_{n:1}^{(m:1)}]$ is in distribution equal to $\tilde{T}^{(j)}|[\tilde{T}^{(j-1:1)},X_{n:1}^{(j)}]$, $\| \P_h^{T^{(j)}|T^{(j-1:1)}} - \P_h^{\tilde{T}^{(j)}|\tilde{T}^{(j-1:1)}} \|_{\text{TV}} \leq \bar{\delta}$ for $h \in \{f,g\}$ and 
\begin{equation*} 
    \log \left( \frac{d\P_g^{\tilde{T}^{(j)}|\tilde{T}^{(j-1:1)}}}{d\P_f^{\tilde{T}^{(j)}|\tilde{T}^{(j-1:1)}}} \right) \in [ - \bar{\varepsilon} \, , \, \bar{\varepsilon} ].
\end{equation*}
Write $\tilde{T}$ for the estimator $\hat{T}$ with transcripts $T^{(j)}$ replaced by $\tilde{T}^{(j)}$. Let $\tilde{E}$ denote the event $\{\mathrm{d}(\tilde{T}, g) \geq c\Delta\}$. Following the conditional independence structure of the transcripts, we have that
\begin{equation*}
    \E_f \left[ \left( \frac{\P_g^{\tilde{T}}}{\P_f^{\tilde{T}}} \right)^2 \right] = \prod_{j=1}^m \E_f \left[ \left( \frac{\P_g^{\tilde{T}^{(j)}|\tilde{T}^{(j-1:1)}}}{\P_f^{\tilde{T}^{(j)}|\tilde{T}^{(j-1:1)}}} \right)^2 \right].
\end{equation*}
By the bound on the likelihood ratio, the latter expression is bounded by $\exp(2m\bar{\varepsilon})$.
By Lemma \ref{lem:chi-square-two-sided}, the right-hand side is bounded by $e^{m \bar{\varepsilon}^2}$.
By combining the coupling characterization of total variation with a standard data processing inequality and tensorization inequality (see Lemmas \ref{lem:coupling_and_TV}, \ref{lem:TV_distance_data_processing} and \ref{lem:TV_distance_product_measures}), we obtain
\begin{equation}\label{eq:switch-E-to-tilde-E}
    \P_f\left( E^c \right) \leq \P_f\left( \tilde{E}^c \right) + m \bar{\delta} \quad \text{ and } \quad \P_g\left( E^c \right) \leq \P_g\left( \tilde{E}^c \right) + m \bar{\delta}. 
\end{equation}
Combining the above with the Cauchy-Schwarz inequality, we find that 
\begin{align*}
    \P_g\left( E \right) = 1 - \P_g\left( E^c \right) \geq 1 - e^{m( \bar{\varepsilon}^2/2 \wedge \bar{\varepsilon})} \sqrt{\P_f( \tilde{E}^c ) } - m \bar{\delta}. 
\end{align*}
Using \eqref{eq:bound-Ec-Pf} and the fact that $m \bar{\delta} < \gamma^2$, the result follows from the inequality ${\P_f( \tilde{E}^c )} \leq 5 \gamma^2 $.
\end{proof}

The following lemmas are standard, we provide references for their proofs.

\begin{lemma}\label{lem:TV_distance_product_measures}
    Let $P = \bigotimes_{j=1}^m P_j$ and $Q = \bigotimes_{j=1}^m Q_j$ for probability measures $P_j,Q_j$ defined on a common measurable space $(\cX,\mathscr{X})$, with probability densities $p_j,q_j$ for $j=1,\dots m$. It holds that
    \begin{equation*}
    \| P - Q \|_{\mathrm{TV}} \leq \underset{j=1}{\overset{m}{\sum}} \|P_j - Q_j\|_{\mathrm{TV}}. 
    \end{equation*}
    \end{lemma}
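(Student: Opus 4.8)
The plan is to use the coupling characterization of total variation distance (Lemma \ref{lem:coupling_and_TV}) together with an independent product coupling. First, for each $j=1,\dots,m$ pick a coupling $(X_j,Y_j)$ of $P_j$ and $Q_j$ — a pair of random variables on a common probability space with marginal laws $P_j$ and $Q_j$ — that attains $\mathbb{P}(X_j \neq Y_j) = \|P_j - Q_j\|_{\mathrm{TV}}$; such a coupling exists by the coupling characterization. Carrying out these $m$ constructions on mutually independent probability spaces and taking the product, the vector $(X,Y) := \big((X_1,\dots,X_m),(Y_1,\dots,Y_m)\big)$ has marginals $P = \bigotimes_{j=1}^m P_j$ and $Q = \bigotimes_{j=1}^m Q_j$, hence is a coupling of $P$ and $Q$.

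Next, observe that $\{X \neq Y\} = \bigcup_{j=1}^m \{X_j \neq Y_j\}$, so by a union bound
\[
  \mathbb{P}(X \neq Y) \;\leq\; \sum_{j=1}^m \mathbb{P}(X_j \neq Y_j) \;=\; \sum_{j=1}^m \|P_j - Q_j\|_{\mathrm{TV}}.
\]
Applying the coupling characterization in the other direction — that $\|P - Q\|_{\mathrm{TV}} \leq \mathbb{P}(X \neq Y)$ for \emph{any} coupling $(X,Y)$ of $P$ and $Q$ — yields the claimed bound $\|P-Q\|_{\mathrm{TV}} \leq \sum_{j=1}^m \|P_j - Q_j\|_{\mathrm{TV}}$. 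As an alternative route, one can argue by a hybrid/telescoping decomposition: setting $R_k = P_1 \otimes \cdots \otimes P_k \otimes Q_{k+1} \otimes \cdots \otimes Q_m$ for $k=0,\dots,m$, one has $R_0 = Q$, $R_m = P$, the triangle inequality gives $\|P-Q\|_{\mathrm{TV}} \leq \sum_{k=1}^m \|R_k - R_{k-1}\|_{\mathrm{TV}}$, and since $R_k$ and $R_{k-1}$ differ only in their $k$-th factor, the invariance of total variation under tensoring with a common probability measure gives $\|R_k - R_{k-1}\|_{\mathrm{TV}} = \|P_k - Q_k\|_{\mathrm{TV}}$.

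There is no genuine obstacle here; this is a routine lemma. The single point that merits a line of justification is that the independent product of the $m$ optimal couplings really is a coupling of the product measures (equivalently, the invariance of total variation under tensoring with a common factor), which follows immediately from Fubini's theorem applied to the product of densities, or from writing total variation as a supremum over measurable rectangles.
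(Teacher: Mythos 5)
Your proof is correct. Note that the paper does not actually prove this lemma; it simply cites \cite{tsybakov2009introduction}, Chapter~2, so your argument supplies a self-contained justification rather than mirroring an argument in the text. Both of your routes are sound: the maximal-coupling-plus-union-bound argument and the hybrid/telescoping argument with $R_k = P_1\otimes\cdots\otimes P_k\otimes Q_{k+1}\otimes\cdots\otimes Q_m$, where $\|R_k - R_{k-1}\|_{\mathrm{TV}} = \|P_k - Q_k\|_{\mathrm{TV}}$ because tensoring with a common factor preserves total variation. The only point worth flagging is a normalization mismatch with the paper's own coupling lemma (Lemma~\ref{lem:coupling_and_TV}), which states $\|P-Q\|_{\mathrm{TV}} = 2\,\P(X\neq \tilde X)$ for the maximal coupling, whereas you use the convention $\|P_j-Q_j\|_{\mathrm{TV}} = \P(X_j\neq Y_j)$; since the claimed inequality is homogeneous in the total variation distance, the factor of $2$ appears on both sides and cancels, so this is a matter of convention rather than a gap — just be consistent with whichever normalization you invoke.
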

    \begin{proof}
        See e.g. \cite{tsybakov2009introduction}, Chapter 2. 
    \end{proof}

    \begin{lemma}\label{lem:TV_distance_data_processing}
        Let $(\cX,\mathscr{X})$ and $(\cY,\mathscr{Y})$ be two measurable spaces and let $K:\mathscr{Y} \times \cX \to [0,1]$ be a Markov kernel. For any probability measures $P,Q$ defined on $\mathscr{X}$ it holds that
        \begin{equation*}
        \| P K - QK \|_{\mathrm{TV}} \leq \| P - Q \|_{\mathrm{TV}}.
        \end{equation*}
    \end{lemma}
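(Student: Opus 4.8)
The plan is to prove this by the dual (variational) characterization of total variation distance, which reduces the claim to a one-line estimate on integrals of $[0,1]$-valued functions against the signed measure $P-Q$. Recall that, with the normalization used throughout the paper, $\|P-Q\|_{\mathrm{TV}} = \sup_{A}|P(A)-Q(A)|$, and that by definition of the push-forward we have $(PK)(A)=\int_{\cX} K(A\mid x)\,P(dx)$ and $(QK)(A)=\int_{\cX} K(A\mid x)\,Q(dx)$ for $A\in\mathscr{Y}$, where for each fixed $A$ the map $x\mapsto K(A\mid x)$ is $\mathscr{X}$-measurable and takes values in $[0,1]$ (this is part of the definition of a Markov kernel).

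First I would fix an arbitrary $A\in\mathscr{Y}$ and write, by linearity of the integral,
\begin{equation*}
(PK)(A)-(QK)(A)=\int_{\cX} K(A\mid x)\,(P-Q)(dx).
\end{equation*}
Next I would invoke the Jordan--Hahn decomposition $P-Q=(P-Q)_{+}-(P-Q)_{-}$ into mutually singular nonnegative finite measures, noting that $(P-Q)_{+}(\cX)=(P-Q)_{-}(\cX)=\|P-Q\|_{\mathrm{TV}}$. Since $h:=K(A\mid\cdot)$ satisfies $0\le h\le 1$, both $\int h\,d(P-Q)_{+}$ and $\int h\,d(P-Q)_{-}$ lie in $[0,\|P-Q\|_{\mathrm{TV}}]$, so their difference has absolute value at most $\|P-Q\|_{\mathrm{TV}}$; that is, $|(PK)(A)-(QK)(A)|\le\|P-Q\|_{\mathrm{TV}}$. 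Taking the supremum over $A\in\mathscr{Y}$ yields $\|PK-QK\|_{\mathrm{TV}}\le\|P-Q\|_{\mathrm{TV}}$.

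An equivalent route, matching the coupling language used elsewhere in the paper, is to take an optimal coupling $(X,X')$ of $P$ and $Q$ with $\P(X\neq X')=\|P-Q\|_{\mathrm{TV}}$ (Lemma \ref{lem:coupling_and_TV}), then generate $Y$ from $K(\cdot\mid X)$ and $Y'$ from $K(\cdot\mid X')$ using a shared source of randomness so that $Y=Y'$ whenever $X=X'$; then $(Y,Y')$ is a coupling of $PK$ and $QK$ with $\P(Y\neq Y')\le\P(X\neq X')$, giving the bound. Either way there is no genuine obstacle here: the only points requiring care are the bookkeeping around the chosen normalization of $\|\cdot\|_{\mathrm{TV}}$ and the joint measurability of the kernel, both of which are built into the definitions; alternatively one may simply cite \cite{tsybakov2009introduction}, Chapter~2, as is done for the companion Lemma \ref{lem:TV_distance_product_measures}.
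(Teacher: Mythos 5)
Your proof is correct. The paper itself does not prove this lemma — it simply cites \cite{lecam2000asymptotics} — so your argument is a self-contained substitute rather than a variant of the paper's reasoning. Both of your routes are sound: the variational one correctly reduces the claim to bounding $\int K(A\mid\cdot)\,d(P-Q)$ via the Jordan--Hahn decomposition (and your observation that $(P-Q)_+(\cX)=(P-Q)_-(\cX)=\sup_A|P(A)-Q(A)|$ for probability measures is exactly right), while the coupling route matches the machinery the paper actually deploys around Lemma \ref{lem:coupling_and_TV}. One small caveat worth flagging: the paper is not internally consistent about the normalization of $\|\cdot\|_{\mathrm{TV}}$ (Lemma \ref{lem:coupling_and_TV} writes $\|P-Q\|_{\mathrm{TV}}=2\,\P(X\neq\tilde X)$, which corresponds to $\int|p-q|$, whereas the pointwise lower-bound proof uses $\|P_{f_0}-P_g\|_{\mathrm{TV}}=\tfrac12\|g-f_0\|_1$, i.e.\ the sup-over-sets convention you adopt). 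This does not affect the data-processing inequality, which is homogeneous and holds under either convention, but if you use the coupling route you should state the coupling identity consistently with whichever normalization you fix.
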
	
    \begin{proof}
        See e.g. \cite{lecam2000asymptotics}.
    \end{proof}

    \begin{lemma}\label{lem:coupling_and_TV}
        For any two probability measures $P$ and $Q$ on a measurable space $(\cX, \mathscr{X})$ with $\cX$ a Polish space and $\mathscr{X}$ its Borel sigma-algebra. There exists a coupling $\P^{X,\tilde{X}}$ such that
        \begin{equation*}
        \| P - Q \|_{\mathrm{TV}} = 2 \P^{X,\tilde{X}} \left( X \neq \tilde{X} \right).
        \end{equation*}
        \end{lemma}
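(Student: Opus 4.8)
The plan is to exhibit the classical \emph{maximal coupling} of $P$ and $Q$ and to verify that it attains the claimed equality. First I would fix a common dominating measure, say $\mu := P + Q$, and write $p = dP/d\mu$ and $q = dQ/d\mu$ for the associated densities. Setting $\beta := \int_{\cX} (p \wedge q)\, d\mu$, the pointwise identity $|p-q| = p + q - 2(p\wedge q)$ integrates to $\|P-Q\|_{\mathrm{TV}} = \int_{\cX}|p-q|\,d\mu = 2(1-\beta)$ under the $L^1$ normalization of total variation used in the paper. Hence it suffices to construct a coupling $\P^{X,\tilde X}$ with $\P^{X,\tilde X}(X \neq \tilde X) = 1-\beta$.

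Next I would define this coupling as a two-component mixture on $\cX \times \cX$, realized on an auxiliary probability space carrying an independent $\mathrm{Bernoulli}(\beta)$ variable $B$: on $\{B=1\}$ set $X = \tilde X$ with common law having $\mu$-density $(p\wedge q)/\beta$; on $\{B=0\}$ draw $X$ and $\tilde X$ \emph{independently}, with respective $\mu$-densities $(p - p\wedge q)/(1-\beta)$ and $(q - p\wedge q)/(1-\beta)$ (when $\beta \in \{0,1\}$ one of the two branches is simply omitted). A one-line check of marginals then confirms this is a genuine coupling: the $\mu$-density of $X$ is $\beta \cdot \tfrac{p\wedge q}{\beta} + (1-\beta)\cdot\tfrac{p - p\wedge q}{1-\beta} = p$, so $X \sim P$, and symmetrically $\tilde X \sim Q$.

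Finally I would compute $\P^{X,\tilde X}(X \neq \tilde X)$. On $\{B=1\}$ one has $X = \tilde X$ by construction. On $\{B=0\}$ the density $p - p\wedge q$ is supported on $\{p > q\}$ while $q - p\wedge q$ is supported on $\{q > p\}$; these two sets are disjoint, and since $X$ and $\tilde X$ are drawn independently from the respective normalized densities, $X \neq \tilde X$ holds $\P^{X,\tilde X}$-a.s. on $\{B=0\}$. Therefore $\P^{X,\tilde X}(X \neq \tilde X) = \P(B=0) = 1-\beta = \tfrac12\|P-Q\|_{\mathrm{TV}}$, which is the assertion. There is no genuine obstacle here, as the construction is entirely standard; the only points requiring a little care are the disjoint-support argument that upgrades the generic coupling inequality $\P(X\neq\tilde X)\ge \tfrac12\|P-Q\|_{\mathrm{TV}}$ to an equality, the bookkeeping in the degenerate cases $\beta\in\{0,1\}$, and keeping the $L^1$ normalization of $\|\cdot\|_{\mathrm{TV}}$ straight (which is precisely what produces the factor $2$). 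The Polish assumption is used only to guarantee that the product space and the auxiliary randomization can be realized on a common Borel probability space; alternatively one may invoke a standard reference, e.g.\ \cite{tsybakov2009introduction}, Chapter~2, or \cite{lecam2000asymptotics}.
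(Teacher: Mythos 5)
Your proof is correct, and it is exactly the classical maximal-coupling construction that the paper itself does not spell out: the paper's ``proof'' is simply a citation to Section~8.3 of Thorisson's coupling monograph, whereas you reconstruct that argument self-contained (common part $p\wedge q$ with mass $\beta$, independent draws from the disjointly supported residual densities, and the identity $\int|p-q|\,d\mu = 2(1-\beta)$ giving the factor~$2$ under the $L^1$ normalization used in the statement). Your handling of the degenerate cases $\beta\in\{0,1\}$ and of the disjoint-support step that upgrades the coupling inequality to an equality is accurate, so there is nothing to fix.
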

        \begin{proof}
            See e.g.\ Section 8.3 in~\cite{thorisson2000coupling}.
        \end{proof}	

\begin{lemma}\label{lem:coupling-pointwise-lb}
    Let $T_f,T_g$ be $(\varepsilon,\delta)$-DP transcripts based on $n$ i.i.d. samples from a distributions $P_f$ and $P_g$, respectively, defined on the same sample space. Write $\bar{\varepsilon}:= 6 n \|P_f - P_g \|_{\text{TV}}$ and $\bar{\delta} := 4 e^{\bar{\varepsilon}} n \delta\|P_f - P_g \|_{\text{TV}}$. 
    
    Then, there exists a random variables $\tilde{T}_f,\tilde{T}_g$ such that $\| \P^{T_h} - \P^{\tilde{T}_h} \|_{\text{TV}} \leq \delta'$ for $h \in \{f,g\}$ and 
    \begin{equation*}
        D_{KL}\left(\P^{\tilde{T}_f}, \P^{\tilde{T}_g}\right) \leq \min \left\{ \bar{\varepsilon} \, , \, (\bar{\varepsilon})^2 \right\}.
    \end{equation*}
\end{lemma}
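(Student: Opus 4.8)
The plan is to trade the $(\varepsilon,\delta)$-guarantee — which speaks of a single changed observation — against the number of samples on which the two underlying $n$-sample datasets genuinely disagree, and to control that number through the maximal coupling of $P_f$ and $P_g$. Set $p:=\|P_f-P_g\|_{\mathrm{TV}}$ and write $P_f=(1-p)\mu+p\nu_f$, $P_g=(1-p)\mu+p\nu_g$ with $\mu:=(P_f\wedge P_g)/(1-p)$. The transcript laws then decompose as $\P^{T_f}=\sum_{S}\pi_S Q_{f,S}$ and $\P^{T_g}=\sum_{S}\pi_S Q_{g,S}$, where $S$ ranges over subsets of $\{1,\dots,n\}$, $\pi_S=p^{|S|}(1-p)^{n-|S|}$ so that $|S|\sim\mathrm{Bin}(n,p)$, $Q_{h,S}$ is the transcript law when the coordinates in $S$ are drawn from $\nu_h$ and the rest from $\mu$, and crucially $Q_{f,\emptyset}=Q_{g,\emptyset}=:Q_0$. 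For fixed $S$, the datasets behind $Q_{f,S}$, $Q_{g,S}$ and $Q_0$ differ pairwise in at most $|S|$ coordinates, so group privacy upgrades the $(\varepsilon,\delta)$-guarantee to $(|S|\varepsilon,\delta_{|S|})$-indistinguishability of any two of these laws with $\delta_k\le k e^{k\varepsilon}\delta$, and this remains true after averaging over subsets of a fixed cardinality.

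Next I would truncate and clean. Fix a level $K$ and let $\tilde T_h$ equal $T_h$ on $\{|S|\le K\}$ and otherwise be an independent resample from $\P^{T_h\mid|S|\le K}$, so that $\|\P^{T_h}-\P^{\tilde T_h}\|_{\mathrm{TV}}\le\Pr(\mathrm{Bin}(n,p)>K)$; a Chernoff/Bernstein tail bound with $K$ of order $np$ (up to an absolute constant, with a logarithmic correction when $np$ is small) makes this probability small, and on $\{|S|\le K\}$ the effective group-privacy parameter is $K\varepsilon\lesssim\bar\varepsilon$. The standard conversion of $(\varepsilon',\delta')$-indistinguishability into exact $\varepsilon'$-indistinguishability up to $O(\delta')$ total variation then produces cleaned laws $\tilde Q_{h,S}$ ($|S|\le K$) with $\log(\mathrm{d}\tilde Q_{f,S}/\mathrm{d}\tilde Q_{g,S})\in[-|S|\varepsilon,|S|\varepsilon]\subseteq[-\bar\varepsilon,\bar\varepsilon]$; since the defective part of the DP guarantee is only invoked on $\{|S|\ge1\}$, which carries probability at most $np$, the cleaning costs total-variation mass of order $np\,e^{K\varepsilon}\delta$, keeping the overall perturbation below $\bar\delta=4e^{\bar\varepsilon}n\delta p$. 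The resulting $\tilde T_f,\tilde T_g$ carry exactly the bounded-likelihood-ratio property used downstream in Lemma~\ref{lem:private-constrained-risk-delta}, and in particular $D_{KL}(\P^{\tilde T_f}\,\|\,\P^{\tilde T_g})\le\bar\varepsilon$, which already settles the claim when $\bar\varepsilon\ge1$.

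To obtain the sharper estimate $D_{KL}\le\bar\varepsilon^2$ when $\bar\varepsilon\le1$, a term-by-term bound on the KL divergence of the mixtures is too lossy; instead I would retain the common component $Q_0$. Writing the two cleaned, truncated laws as $(1-w)Q_0+wR_f$ and $(1-w)Q_0+wR_g$ with $w=1-\pi_\emptyset$ and $R_f,R_g$ within an $e^{O(\bar\varepsilon)}$ likelihood-ratio band of $Q_0$, the inequality $\chi^2\big((1-w)Q_0+wR_f\,\|\,(1-w)Q_0+wR_g\big)\le\frac{w^2}{1-w}\,\E_{Q_0}\!\big[(\mathrm{d}R_f/\mathrm{d}Q_0-\mathrm{d}R_g/\mathrm{d}Q_0)^2\big]$ and the elementary estimate $\E_{Q_0}[(\mathrm{d}R_f/\mathrm{d}Q_0-\mathrm{d}R_g/\mathrm{d}Q_0)^2]\lesssim\bar\varepsilon^2$ (a mean-zero integrand confined to an interval of length $O(\bar\varepsilon)$) give $\chi^2\lesssim(np)^2\bar\varepsilon^2\le\bar\varepsilon^2$ whenever $np\le1$; when $np$ is large one instead uses the convexity bound $D_{KL}\le\sum_{S\ne\emptyset}\pi_S D_{KL}(\tilde Q_{f,S}\|\tilde Q_{g,S})\lesssim\E[(|S|\varepsilon)^2]\asymp(np\varepsilon)^2\asymp\bar\varepsilon^2$, valid since $|S|\varepsilon\lesssim\bar\varepsilon\le1$ there. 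In either case $D_{KL}\le\log(1+\chi^2)\le\chi^2\lesssim\bar\varepsilon^2$, completing the bound.

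I expect the main obstacle to be the calibration of the truncation level $K$ together with the bookkeeping it entails: $K$ must be large enough that the binomial tail $\Pr(\mathrm{Bin}(n,p)>K)$ sits below the target budget $\bar\delta$, yet small enough that $K\varepsilon\lesssim\bar\varepsilon$, and these requirements pull in opposite directions exactly when $np$ is small and $\delta$ very small — the regime that drives the pointwise lower bound — so one must exploit that $\bar\delta$ itself carries the factor $np$ coming from the probability that any coordinate differs at all. The secondary difficulty is upgrading the crude ``$\bar\varepsilon$'' KL bound to the quadratic ``$\bar\varepsilon^2$'' bound in the high-privacy regime, which is what forces the shared-mass argument rather than a sum over mixture components.
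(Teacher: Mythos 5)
Your overall route (maximal coupling of $P_f,P_g$, group privacy over the differing coordinates, a cleaning step to obtain an exactly bounded log-likelihood ratio, then a $\chi^2$/KL conversion) is morally the same as what the paper does, except that the paper simply cites the two ingredients: Lemma~6.1 of Karwa--Vadhan gives that the marginal transcript laws are $(\bar\varepsilon,\bar\delta)$-indistinguishable with additive slack proportional to $\delta$, and Lemma~I.5 of \cite{cai2023private} converts this into $\tilde T_f,\tilde T_g$ within total variation $\bar\delta$ of the originals with $\log(d\P^{\tilde T_f}/d\P^{\tilde T_g})\in[-\bar\varepsilon,\bar\varepsilon]$; the bound $D_{KL}\le\min\{\bar\varepsilon,\bar\varepsilon^2\}$ then follows from the two-sided ratio bound exactly (cf.\ Lemma~\ref{lem:chi-square-two-sided}), with no case analysis and no hidden constants.

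The genuine gap in your argument is the truncation step. You discard the event $\{|S|>K\}$ with $K\asymp np$ and charge its probability to the total-variation budget, but the lemma requires $\|\P^{T_h}-\P^{\tilde T_h}\|_{\mathrm{TV}}\le\bar\delta$ with $\bar\delta = 4e^{\bar\varepsilon}n\delta\|P_f-P_g\|_{\mathrm{TV}}$, a quantity proportional to $\delta$ and equal to zero when $\delta=0$ (which is precisely the case invoked for Lemma~\ref{lem:private-constrained-risk}). The binomial tail $\P(\mathrm{Bin}(n,p)>K)$ is not proportional to $\delta$: for $\delta=0$ it can only be beaten by taking $K=n$, destroying $K\varepsilon\lesssim\bar\varepsilon$; for small $\delta$ it requires $K$ growing like $\log(1/\delta)$, again incompatible with $K\varepsilon\lesssim\bar\varepsilon$ whenever $np\lesssim 1$ --- exactly the per-server regime (e.g.\ $m>\log A_N$, so $n\varepsilon\,p\lesssim 1$) driving the pointwise lower bound, where the perturbation enters the final risk bound as $4m\bar\delta$ and a constant-size or $e^{-cnp}$-size per-server TV error would render the conclusion vacuous. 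Noting that $\bar\delta$ ``carries the factor $np$'' does not resolve this: the mismatch is with the factor $\delta$, not $np$. The fix is to avoid truncation altogether, as in the cited Karwa--Vadhan lemma, whose hybrid/per-coordinate argument uses that the DP inequality also ties each mixture component to its neighbour, so the large-$|S|$ components contribute only an additive term of order $e^{\bar\varepsilon}np\,\delta$; cleaning is applied only afterwards, at TV cost proportional to $\bar\delta$. Two lesser issues: the pairwise cleanings of $Q_{f,S}$, $Q_{g,S}$ and $Q_0$ would have to be made mutually consistent for your common-component $\chi^2$ bound to apply, and your two-case argument only delivers $D_{KL}\lesssim\bar\varepsilon^2$ up to unspecified constants rather than the stated $\min\{\bar\varepsilon,\bar\varepsilon^2\}$.
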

\begin{proof}
    By Lemma 6.1 in \cite{karwa2017finite}, we have that 
    \begin{equation*}
        \P_g\left( T \in A  \right) \leq e^{\bar{\varepsilon}} \P_f\left( T \in A \right) + \bar{\delta}.
    \end{equation*}
    The proof now follows by Lemma I.5 of \cite{cai2023private}.
\end{proof}

The following lemma can be seen as Property 1 of \cite{cuff2016privacyMI}, but derived for the Chi-square divergence instead of KL-divergence.

\begin{lemma}\label{lem:chi-square-two-sided}
    Let $P\ll Q$ with likelihood ratio $L=\frac{dP}{dQ}$. 
    Assume a two-sided bound
    \[
      e^{-\varepsilon} \;\le\; L \;\le\; e^{\varepsilon} 
      \qquad (Q\text{-a.s.}),\ \ \varepsilon\ge 0.
    \]
    Then
    \[
      1+\chi^2(P\|Q)=\E_Q[L^2]\;\le\;e^{\varepsilon}+e^{-\varepsilon}-1
      =2\cosh(\varepsilon)-1
      \;\le\;e^{\varepsilon^2}.
    \]
    \end{lemma}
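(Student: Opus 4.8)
The plan is to work directly with the second moment $\E_Q[L^2]$, since $1+\chi^2(P\|Q)=\E_Q[L^2]$ is the defining identity for the $\chi^2$-divergence. First I would record the two constraints at our disposal: the pointwise bound $e^{-\varepsilon}\le L\le e^{\varepsilon}$ ($Q$-a.s.), and the normalization $\E_Q[L]=1$ (which holds because $P\ll Q$ is a probability measure). The key step is to exploit a convexity/endpoint argument: for a random variable $L$ confined to the interval $[a,b]$ with $a=e^{-\varepsilon}$, $b=e^{\varepsilon}$, the quadratic $x\mapsto x^2$ lies below the chord joining $(a,a^2)$ and $(b,b^2)$, i.e. $x^2\le (a+b)x-ab$ for all $x\in[a,b]$. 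Taking expectations under $Q$ and using $\E_Q[L]=1$ gives $\E_Q[L^2]\le (a+b)-ab = e^{\varepsilon}+e^{-\varepsilon}-e^{\varepsilon}e^{-\varepsilon}=e^{\varepsilon}+e^{-\varepsilon}-1=2\cosh(\varepsilon)-1$.

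The final step is the elementary inequality $2\cosh(\varepsilon)-1\le e^{\varepsilon^2}$ for all $\varepsilon\ge 0$. I would verify this by comparing Taylor series: $2\cosh(\varepsilon)-1=1+\sum_{k\ge 1}\frac{\varepsilon^{2k}}{(2k)!}$, while $e^{\varepsilon^2}=1+\sum_{k\ge 1}\frac{\varepsilon^{2k}}{k!}$, and since $(2k)!\ge k!$ for every $k\ge 1$ the inequality follows termwise. (Alternatively, one can note $2\cosh(\varepsilon)-1\le \cosh(\varepsilon)^2 = \cosh(2\varepsilon)/2 + 1/2$... but the series comparison is cleanest and I would use that.)

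None of these steps presents a genuine obstacle — this is a short lemma. The one point worth stating carefully is the justification of the chord inequality $x^2\le(a+b)x-ab$ on $[a,b]$: this is just the factorization $(b-x)(x-a)\ge 0$ for $x\in[a,b]$, expanded. So the proof is essentially: (i) $(e^{\varepsilon}-L)(L-e^{-\varepsilon})\ge 0$ $Q$-a.s.; (ii) expand and take $\E_Q[\cdot]$ using $\E_Q[L]=1$; (iii) conclude $\E_Q[L^2]\le 2\cosh(\varepsilon)-1$; (iv) bound $2\cosh(\varepsilon)-1\le e^{\varepsilon^2}$ by term-by-term comparison of the even Taylor series. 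The mild subtlety, if any, is simply making sure the normalization $\E_Q[L]=1$ is invoked (it is what makes the bound tight at $\varepsilon=0$) rather than a cruder bound like $\E_Q[L^2]\le b^2=e^{2\varepsilon}$, which would not suffice for the stated conclusion.
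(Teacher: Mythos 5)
Your proof is correct, and it takes a somewhat different route from the paper for the main step. Where you use the elementary chord inequality $(e^{\varepsilon}-L)(L-e^{-\varepsilon})\ge 0$, expand, and take expectations using $\E_Q[L]=1$ to get $\E_Q[L^2]\le e^{\varepsilon}+e^{-\varepsilon}-1$, the paper instead invokes Hoeffding's reduction principle: the maximum of $\E[e^{2Y}]$ over $Y\in[-\varepsilon,\varepsilon]$ subject to $\E[e^{Y}]=1$ is attained by a two-point law on $\{\pm\varepsilon\}$, and then the extremal value is computed explicitly. Both arguments rest on the same underlying fact that the extremal distribution sits at the endpoints, but your version verifies it by a one-line algebraic identity rather than citing an extremal-distribution result, which is arguably cleaner and more self-contained; the paper's approach generalizes more readily if one wanted to bound other convex functionals of $L$ under the same constraints. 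For the final inequality $2\cosh(\varepsilon)-1\le e^{\varepsilon^2}$, the paper uses $\cosh(x)\le e^{x^2/2}$ together with the identity $e^{\varepsilon^2}-(2e^{\varepsilon^2/2}-1)=(e^{\varepsilon^2/2}-1)^2\ge 0$, while your termwise Taylor comparison also works; note only the small slip that the even-series coefficient of $2\cosh(\varepsilon)-1$ is $2\varepsilon^{2k}/(2k)!$ rather than $\varepsilon^{2k}/(2k)!$ for $k\ge 1$, which is harmless since $2\,k!\le (2k)!$ for all $k\ge 1$, so the termwise bound still goes through.
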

    
    \begin{proof}
    Let $Y=\log L\in[-\varepsilon,\varepsilon]$. We want to maximize 
    $\E[e^{2Y}]$ subject to the constraint $\E[e^{Y}]=\E_Q[L]=1$. 
    By Hoeffding's reduction principle \cite{Hoeffding1963} for convex functionals, the maximum is achieved by a two--point distribution supported on $\{\pm\varepsilon\}$: say $Y=\varepsilon$ with probability $a$ and $Y=-\varepsilon$ with probability $1-a$. 
    The constraint
    \[
      a e^{\varepsilon} + (1-a)e^{-\varepsilon} = 1
    \]
    determines $a=(1-e^{-\varepsilon})/(e^{\varepsilon}-e^{-\varepsilon})$. 
    Then
    \[
      \E[e^{2Y}] = a e^{2\varepsilon}+(1-a)e^{-2\varepsilon}
      = e^{\varepsilon}+e^{-\varepsilon}-1.
    \]
    Finally,
    \[
      e^{\varepsilon}+e^{-\varepsilon}-1
      = 2\cosh(\varepsilon)-1
      \;\le\; e^{\varepsilon^2},
    \]
    since $\cosh x \le e^{x^2/2}$ for all $x$, and 
    $e^{\varepsilon^2}-(2e^{\varepsilon^2/2}-1)=(e^{\varepsilon^2/2}-1)^2\ge 0$. 
    \end{proof}

The following lemma was proven in \cite{cai2003pointwiseBesov}; we provide a proof here for completeness.
    
\begin{lemma}\label{lem:classical-adaptation-lower-bound}
Let $\Delta_N = \left(\frac{N}{B_N}\right)^{-\frac{\nu}{2\nu + 1}}$ for a sequence $1 \leq B_N \ll N$ and consider a compactly supported function $h$ such that $h(0) > 0$, $\|h\|_2^2 > 0$, $\int h \, dx = 0$ and define $g(x) = f_0(x) + a h(b(t_0 - x))$, for $a = c \Delta_N$. $b = \Delta_N^{- \frac{1}{\nu}}$ for $c > 0$. 

It holds that 
\begin{equation*}
    \underset{N \to \infty}{\lim \sup} \; e^{-C {B_N}/N} \E_{f_0} \left( \frac{g}{f_0}(X_1)  \right)^2 < \infty,
\end{equation*}
for some constant $C > 0$ depends on $c$, $\|h\|_2^2$ and $f_0(t_0)$.
\end{lemma}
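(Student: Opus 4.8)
The plan is to evaluate the second moment directly. Writing $g = f_0 + \Delta$ with $\Delta(x) := a\,h(b(t_0-x))$, we have
\[
\E_{f_0}\!\left(\frac{g(X_1)}{f_0(X_1)}\right)^2 = \int \frac{g(x)^2}{f_0(x)}\,dx = \int f_0 + 2\int \Delta + \int \frac{\Delta(x)^2}{f_0(x)}\,dx,
\]
where the integrals are over the support of $f_0$, and I will show the right-hand side equals $1 + O(B_N/N)$, whence the statement follows via $1+x\le e^x$. The first term is $1$ since $f_0$ is a density. For the cross term, note that $\Delta_N\to 0$ (since $B_N\ll N$ and $\nu>0$), hence $b=\Delta_N^{-1/\nu}\to\infty$, so for $N$ large the compact support of $\Delta$ lies in a small neighbourhood of $t_0$ on which $f_0>0$ — here I use continuity of $f_0$ (which holds by the Besov embedding $\cB^{\alpha}_{p,q}\hookrightarrow C^0$ when $\alpha-1/p>0$) together with $f_0(t_0)>0$ — and then the substitution $u=b(t_0-x)$ gives $\int\Delta = \tfrac{a}{b}\int h = 0$.

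The heart of the argument is the quadratic term. The same substitution yields $\int \Delta(x)^2/f_0(x)\,dx = \tfrac{a^2}{b}\int h(u)^2/f_0(t_0-u/b)\,du$. Since $h$ has compact support and $f_0(t_0-u/b)\to f_0(t_0)>0$ uniformly over that support as $b\to\infty$, for $N$ large $f_0(t_0-u/b)\ge f_0(t_0)/2$ there, so this integral is at most $\tfrac{2\|h\|_2^2}{f_0(t_0)}\cdot\tfrac{a^2}{b}$. It then remains to simplify $a^2/b$: with $a = c\Delta_N$, $b = \Delta_N^{-1/\nu}$ and $\Delta_N = (N/B_N)^{-\nu/(2\nu+1)}$,
\[
\frac{a^2}{b} = c^2\,\Delta_N^{\,2+1/\nu} = c^2\,\Delta_N^{\,(2\nu+1)/\nu} = c^2\left(\frac{N}{B_N}\right)^{-1} = c^2\,\frac{B_N}{N}.
\]
Hence $\E_{f_0}(g(X_1)/f_0(X_1))^2 \le 1 + C\,B_N/N$ with $C := 2c^2\|h\|_2^2/f_0(t_0)$, and therefore $e^{-CB_N/N}\,\E_{f_0}(g(X_1)/f_0(X_1))^2 \le e^{-CB_N/N}(1+CB_N/N)\le 1$, which gives $\limsup_{N\to\infty}e^{-CB_N/N}\,\E_{f_0}(g(X_1)/f_0(X_1))^2\le 1<\infty$. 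Equivalently, this bounds the $\chi^2$-type quantity $\E_{f_0}[(dP_g^N/dP_{f_0}^N)^2]=(\E_{f_0}(g(X_1)/f_0(X_1))^2)^N\le e^{CB_N}$ entering the classical constrained-risk inequality.

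This is essentially a routine computation. The only point needing a word of care is that $\Delta$'s support eventually sits inside $\{f_0>0\}$ and $f_0$ is bounded below there, which is precisely what continuity of $f_0$, $f_0(t_0)>0$, and $b\to\infty$ supply; the rest is change-of-variables bookkeeping plus the exponent identity $2+1/\nu=(2\nu+1)/\nu$. The only (mild) obstacle is keeping the algebra honest so that the power of $\Delta_N$ collapses exactly to $B_N/N$; note that if one only has $a\asymp b^{-\nu}$ up to constants — which is all that is needed to ensure $g\in\cB^{\alpha}_{p,q}(R)$ via Lemma~1 of \cite{cai2003pointwiseBesov} — the same bound holds with $C$ absorbing those constants.
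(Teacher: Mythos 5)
Your proof is correct and follows essentially the same route as the paper's: expand the square, note the cross term vanishes since $\int h = 0$, bound the quadratic term using continuity of $f_0$ and $f_0(t_0)>0$ on a small neighbourhood of $t_0$ (which contains the shrinking support of the bump for large $N$), and collapse $a^2/b$ to $c^2 B_N/N$ via the exponent identity $2+1/\nu=(2\nu+1)/\nu$. Your write-up is, if anything, slightly more careful about why the perturbation's support sits inside $\{f_0>0\}$ and about the final $1+x\le e^x$ step.
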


\begin{proof}
Using that $\int f_0 =1$ and $\int h = 0$,
    \begin{align*}
         \E_{f_0} \left[\left(\frac{g( X_1)}{f_0( X_1)}\right)^2 \right]
        &= \E_{f_0} \left[\left(1+ \frac{a h^2(b(t_0 - X_1))}{f_0( X_1)}\right)^2 \right]= \left[1+ \E_{f_0} \frac{a^2 h^2(b(t_0 - X_1))}{f^2_0( X_1)} \right].
    \end{align*}
    Next, note that since $f_0(t_0)>0$ and $f_0$ is continuous, there exists a constant $c_0 > 0$ and $M > 0$ such that $f_0(t) \geq c_0$ for $t \in (t_0 - M, t_0 + M)$. As $\Delta_N \to 0$ implies that $b \to \infty$, we find that for $N$ large enough, the support of $h^2(b(t_0 - x))$ is contained in $(t_0 - M, t_0 + M)$. We have that
    \begin{align*}
        \E\frac{a^2 h^2(b(t_0 - X_1))}{f^2_0(X_1)} &= \int_{t_0 - b^{-1} M}^{t_0 + b^{-1} M} \frac{h^2(t)}{f_0(t)} dt \leq \frac{2 a^2 b^{-1} M \|h\|_2^2}{c_0} \leq \frac{2 B_N M \|h\|_2^2}{c_0 N},
    \end{align*}
     from which the result follows.
\end{proof}

\subsubsection{Construction of a super-efficient proportion estimator}\label{sec:appendix:DP-hodge}

We construct a super-efficient private proportion estimator displaying the super-efficiency phenomenon of Example \ref{example} as follows. The transcripts  
    \begin{equation*}
        T^{(j)} = \frac{1}{n} \sum_{i=1}^n Y^{(j)}_i + \frac{2}{n \varepsilon} W^{(j)} \quad \text{ with } W^{(j)} \iid \operatorname{Lap}(1) \text{ for } j = 1,\dots,m
    \end{equation*}
    satisfy $(\varepsilon,0)$-FDP (see e.g. \cite{dwork2006calibrating}). On the basis of these transcripts, one could compute a private version of the Hodge estimator 
    \begin{equation*}
        \hat{T} = \begin{cases}
            1/2 \quad \text{ if } | m^{-1} \sum_{j=1}^m T^{(j)} - 1/2 | \leq \frac{C \log (N)}{\sqrt{m n \varepsilon^2}} \\
            m^{-1} \sum_{j=1}^m T^{(j)} \quad \text{ otherwise.}
        \end{cases}
    \end{equation*}
    It is easy to see that $\hat{T}$ attains the CDP and LDP minimax rates for any fixed $p$ (see e.g. \cite{duchi2013local}):
    \begin{align*}
        \E_p | \hat{T} - p |^2 &\lesssim 1/n + (n^2 \varepsilon^2)^{-1} \quad \text{for } m = 1, \\ 
        \E_p | \hat{T} - p |^2 &\lesssim (m \varepsilon^2)^{-1} \quad \quad \quad \; \; \text{for } n=1.
    \end{align*}
    In particular, for $C > 0$ large enough, the estimator is super-efficient at $p = 1/2$: 
    \begin{equation*}
        \E_{1/2} | \hat{T} - 1/2 |^2 \lesssim N^{- c C}
    \end{equation*}
    for some constant $c > 0$.

    \begin{lemma}\label{lem:coupling-pointwise-lb}
        Let $T_f,T_g$ be $(\varepsilon,\delta)$-DP transcripts based on $n$ i.i.d. samples from a distributions $P_f$ and $P_g$, respectively, defined on the same sample space. Write $\bar{\varepsilon}:= 6 n \|P_f - P_g \|_{\text{TV}}$ and $\bar{\delta} := 4 e^{\bar{\varepsilon}} n \delta\|P_f - P_g \|_{\text{TV}}$. 
        
        Then, there exists a random variables $\tilde{T}_f,\tilde{T}_g$ such that $\| \P^{T_f} - \P^{\tilde{T}_f} \|_{\text{TV}} \leq \delta'$ and 
        \begin{equation*}
            \log \left( \frac{d\P^{\tilde{T}_f}}{d\P^{\tilde{T}_g}}\right) \in [- \bar{\varepsilon} \, , \, \bar{\varepsilon} ].
        \end{equation*}
        \end{lemma}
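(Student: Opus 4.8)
The plan is to obtain this as a two-step reduction to known results. The first step lifts the per-record $(\varepsilon,\delta)$-guarantee of the transcript channel to a two-sided $(\bar\varepsilon,\bar\delta)$-indistinguishability between the two \emph{marginal} transcript laws $\P^{T_f}$ and $\P^{T_g}$; the second step converts such approximate indistinguishability into an exact likelihood-ratio bound at the price of a total-variation perturbation of size $\bar\delta$.

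For the first step, I would couple the underlying samples coordinatewise by the maximal coupling (Lemma~\ref{lem:coupling_and_TV}): letting $X_1,\dots,X_n$ be i.i.d.\ from $P_f$ and $X_1',\dots,X_n'$ i.i.d.\ from $P_g$, the coupling makes the coordinates disagree independently, each with probability $\|P_f-P_g\|_{\text{TV}}$, so the number $K$ of disagreements is $\operatorname{Bin}(n,\|P_f-P_g\|_{\text{TV}})$ with mean $n\|P_f-P_g\|_{\text{TV}}$. On $\{K=k\}$ the two datasets differ in exactly $k$ records, so iterating the $(\varepsilon,\delta)$-DP property of the channel $k$ times along a path of single-record changes (group privacy) gives $\P(T\in A\mid\text{first dataset})\le e^{k\varepsilon}\P(T\in A\mid\text{second dataset})+k e^{(k-1)\varepsilon}\delta$. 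Integrating over $K$ and using a multiplicative Chernoff bound to control $\P\!\big(K\ge 6 n\|P_f-P_g\|_{\text{TV}}\big)$ — absorbing the large-$K$ tail into the $\bar\delta$ term — yields $\P^{T_g}(A)\le e^{\bar\varepsilon}\P^{T_f}(A)+\bar\delta$ for all measurable $A$, together with the symmetric inequality. This is precisely Lemma~6.1 of \cite{karwa2017finite}, which I would invoke directly rather than reprove.

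For the second step, I would use the standard equivalence that two probability measures which are $(\bar\varepsilon,\bar\delta)$-indistinguishable in both directions can each be modified on an event of mass at most $\bar\delta$ so that the modified measures $\tilde T_f,\tilde T_g$ satisfy $d\P^{\tilde T_f}/d\P^{\tilde T_g}\in[e^{-\bar\varepsilon},e^{\bar\varepsilon}]$, equivalently $\log(d\P^{\tilde T_f}/d\P^{\tilde T_g})\in[-\bar\varepsilon,\bar\varepsilon]$, while keeping $\|\P^{T_h}-\P^{\tilde T_h}\|_{\text{TV}}\le\bar\delta$ for $h\in\{f,g\}$. I would cite this in the precise form of Lemma~I.5 of \cite{cai2023private}. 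Combining the two steps gives the statement.

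The main obstacle is the bookkeeping inside the first step: group privacy inflates $\delta$ multiplicatively by $e^{(k-1)\varepsilon}$, so one has to verify that, summed against the law of $K$ and with the exponentially small tail $\P(K\ge 6 n\|P_f-P_g\|_{\text{TV}})$ included, everything really does collapse to $\bar\delta=4 e^{\bar\varepsilon} n\delta\|P_f-P_g\|_{\text{TV}}$, and in particular that the universal constant $6$ appearing in $\bar\varepsilon$ makes the Chernoff slack negligible; some extra care is needed in the regime $n\|P_f-P_g\|_{\text{TV}}\lesssim 1$, where $K$ is typically $0$ or $1$. Since this is exactly the situation analysed in \cite{karwa2017finite}, I would lean on that reference; the second step is routine.
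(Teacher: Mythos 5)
Your proposal is correct and follows essentially the same route as the paper: the paper's proof consists precisely of invoking Lemma~6.1 of \cite{karwa2017finite} to obtain the two-sided $(\bar{\varepsilon},\bar{\delta})$-indistinguishability of the marginal transcript laws, and then Lemma~I.5 of \cite{cai2023private} to produce the modified transcripts with the exact likelihood-ratio bound. The additional discussion of the group-privacy/Chernoff bookkeeping inside the first step is material internal to the cited lemma and is not reproved in the paper either.
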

        \begin{proof}
        By Lemma 6.1 in \cite{karwa2017finite}, we have that 
        \begin{equation*}
            \P_g\left( T \in A  \right) \leq e^{\bar{\varepsilon}} \P_f\left( T \in A \right) + \bar{\delta}.
        \end{equation*}
        The proof now follows by Lemma I.5 of \cite{cai2023private}.
        \end{proof}

\section{Additional proofs and technical lemmas}\label{sec:technical_lemmas}

In this section, we provide the proofs of the lemmas stated in the main text.

\subsection{Besov spaces and wavelets}\label{sec:besov_space_lemmas}

In a slight abuse of notation, we shall denote the father wavelet by $\psi_{l_0  k} = \phi_{l_0 +1, k}$ and represent any function $f\in L_2[0,1]$ in the form
\begin{align}\label{eq:wavelet_transform}
f=\sum_{l=l_0'}^{\infty}\sum_{k=0}^{2^{l}-1} f_{lk} \psi_{lk},
\end{align}
for some $l_0' \leq l_0$. Next, we shall characterize Besov spaces for $0 < \alpha < A$ through the wavelet decomposition. Loosely speaking, Besov space $\cB^\alpha_{p,q}$ containts functions having $\alpha$ bounded derivatives in $L_p$-space, with $q$ giving a finer control of the degree of smoothness. We refer the reader to \cite{triebel1992theory} for a detailed description. Wavelet bases allow characterization of the Besov spaces, where $\alpha$, $p$ and $q$ are parameters that capture the decay rate of wavelet basis coefficients. 

Let us define the norms 
\begin{equation}\label{eq:besov_norm-equivalence}
\| f\|_{\cB^{\alpha}_{p,q}}^{wav} \asymp  \begin{cases}
    \left( \underset{l=l_0}{\overset{\infty}{\sum}} \left(2^{l(\alpha+1/2-1/p)} \left\| (f_{lk})_{k=0}^{2^l-1} \right\|_p\right)^{q}\right)^{1/q} &\text{ for } 1 \leq q < \infty, \\
    \; \underset{l\geq l_0}{\sup} \; 2^{l(\alpha+1/2-1/p)} \left\| (f_{lk})_{k=0}^{2^l-1} \right\|_p   &\text{ for } q = \infty,
\end{cases}
\end{equation}
for $\alpha\in(0,A)$, $1\leq q \leq \infty$, $1 \leq p \leq \infty$. The above definition of the Besov space and norm is equivalent to the one given in \eqref{eq:Besov_space_adaptation_lower_bound} (see e.g. Chapter 4 in \cite{gine_mathematical_2016}).

There are two crucial properties of wavelets that we shall highlight here, which are repeatedly used in the proof of the main theorems. 

\begin{lemma}\label{lemma:coefficient_decay}
Let $ f \in \cB^\alpha_{p,q}(R) $ with $ p \geq 2 $, $ 1\leq q\leq \infty $. Then, for every level $ l\ge0 $,
\begin{enumerate}
\item[(i)] for every index $ k=0,1,\dots,2^l-1 $, we have
    \begin{equation*}
    |f_{lk}| \leq C R\, 2^{-l(\alpha+1/2-1/p)},
    \end{equation*}
    for some constant $C>0$.
\item[(ii)] Moreover,
    \begin{equation*}
    \sum_{k=0}^{2^l-1} |f_{lk}|^2 \leq C R^2 \, 2^{-2 l \alpha},
    \end{equation*}
    where $C>0$ is a universal constant.
\end{enumerate}
\end{lemma}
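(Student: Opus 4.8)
\textbf{Proof plan for Lemma \ref{lemma:coefficient_decay}.}

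The plan is to deduce both claims directly from the wavelet characterization of the Besov norm in \eqref{eq:besov_norm-equivalence}, together with an elementary $\ell_p$-to-$\ell_2$ norm comparison on $\R^{2^l}$. First I would observe that membership $f \in \cB^\alpha_{p,q}(R)$ gives, via \eqref{eq:besov_norm-equivalence}, a uniform bound on the $\ell_p$-norm of each level's coefficient vector: for every $l \ge l_0$,
\begin{equation*}
  \left\| (f_{lk})_{k=0}^{2^l-1} \right\|_p \;\le\; C R\, 2^{-l(\alpha + 1/2 - 1/p)},
\end{equation*}
where for $q < \infty$ this follows because the single term indexed by $l$ is dominated by the full ($q$-)sum defining the norm, and for $q = \infty$ it is immediate from the supremum. (Strictly, one uses the norm equivalence constant from \eqref{eq:besov_norm-equivalence}, absorbed into $C$.)

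For part (i): since $|f_{lk}| \le \|(f_{lk'})_{k'}\|_p$ for every fixed $k$ (the max is below the $\ell_p$-norm), the displayed bound immediately yields $|f_{lk}| \le C R\, 2^{-l(\alpha + 1/2 - 1/p)}$, which is the claim. For part (ii): here I would use that $p \ge 2$, so on $\R^{2^l}$ the $\ell_2$-norm is dominated by the $\ell_p$-norm, $\|v\|_2 \le \|v\|_p$ (equivalently $\sum_k |v_k|^2 \le (\sum_k |v_k|^p)^{2/p}$ for $p\ge 2$, by monotonicity of $\ell_p$-norms in $p$). Squaring the level bound then gives
\begin{equation*}
  \sum_{k=0}^{2^l-1} |f_{lk}|^2 \;\le\; \left\| (f_{lk})_{k} \right\|_p^2 \;\le\; C^2 R^2\, 2^{-2l(\alpha + 1/2 - 1/p)} \;\le\; C^2 R^2\, 2^{-2l\alpha},
\end{equation*}
where the last inequality uses $\alpha + 1/2 - 1/p \ge \alpha$ for $p \ge 2$ together with $2^{-l} \le 1$. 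This establishes (ii) with a universal constant.

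There is essentially no serious obstacle here — the lemma is a direct unpacking of the wavelet norm equivalence — so the only care needed is bookkeeping: making sure the norm-equivalence constant in \eqref{eq:besov_norm-equivalence} is correctly carried through, handling the $q=\infty$ and $q<\infty$ cases of the Besov norm uniformly, and being explicit that the step $\|v\|_2 \le \|v\|_p$ requires $p\ge 2$ (this is exactly where the standing hypothesis $p \ge 2$ enters, and it is also why the exponent can be relaxed from $\alpha + 1/2 - 1/p$ to $\alpha$ in (ii)). I would also note for completeness that the coefficient decay at the coarsest levels $l < l_0$ (the father-wavelet part, under the notational convention $\psi_{l_0 k} = \phi_{l_0+1,k}$) is handled identically, as there are only $O(1)$ such coefficients and they are bounded by $\|f\|_p \le \|f\|_{\cB^\alpha_{p,q}} \lesssim R$.
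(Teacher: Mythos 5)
Part (i) of your argument is correct and is exactly the paper's proof. Part (ii), however, contains a genuine error: the inequality you invoke, $\|v\|_2 \le \|v\|_p$ for $p\ge 2$, is backwards. Monotonicity of $\ell_p$-norms gives $\|v\|_p \le \|v\|_2$ when $p \ge 2$ (the $\ell_p$-norms are \emph{decreasing} in $p$), so your claimed step $\sum_k |v_k|^2 \le \bigl(\sum_k |v_k|^p\bigr)^{2/p}$ fails — e.g.\ $v=(1,1)$, $p=4$ gives $2 \not\le \sqrt{2}$. The error also shows up in the conclusion you draw from it: your intermediate bound $\sum_k |f_{lk}|^2 \le C^2R^2\,2^{-2l(\alpha+1/2-1/p)}$ would be strictly stronger than the lemma, and it is false in general. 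For instance, with $p=q=\infty$ take $f_{lk}=R\,2^{-l(\alpha+1/2)}$ for all $k$ at level $l$: the Besov constraint holds, yet $\sum_k |f_{lk}|^2 = R^2 2^{-2l\alpha}$, not $R^2 2^{-2l(\alpha+1/2)}$. So the rate $2^{-2l\alpha}$ in (ii) is tight and cannot be reached by a dimension-free comparison of $\ell_2$ with $\ell_p$.

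The paper's proof fixes exactly this point: to pass from the level-wise $\ell_p$ bound to an $\ell_2$ bound one must pay the dimension factor, $\|v\|_2 \le d^{1/2-1/p}\|v\|_p$ with $d = 2^l$ (Hölder). Squaring and inserting $\|v\|_p \le R\,2^{-l(\alpha+1/2-1/p)}$ gives $\sum_k |f_{lk}|^2 \le 2^{l(1-2/p)}\,R^2\,2^{-2l(\alpha+1/2-1/p)} = R^2\,2^{-2l\alpha}$; note the exponents cancel exactly, so there is no slack of the kind your write-up suggests. With that one step replaced, the rest of your argument (the level-wise bound extracted from \eqref{eq:besov_norm-equivalence}, the treatment of $q<\infty$ versus $q=\infty$, and the remark on the $O(1)$ coarse-level coefficients) is fine.
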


\begin{proof}
Part (i) follows from the wavelet characterization of the Besov norm \eqref{eq:besov_norm-equivalence}: we have
    \begin{equation*}
    \left( \sum_{l=l_0}^\infty \left[2^{l(\alpha+1/2-1/p)} \left\| (f_{lk})_{k=0}^{2^l-1} \right\|_{\ell_p} \right]^q \right)^{1/q} \leq R.
    \end{equation*}
    which means that for any fixed level $ l $,
    \begin{equation*}
    2^{l(\alpha+1/2-1/p)} \left\| (f_{lk})_{k=0}^{2^l-1} \right\|_{\ell_p} \leq R.
    \end{equation*}
    The bound on the maximum follows immediately since
    \begin{equation*}
    \max_{0\leq k<2^l} |f_{lk}| \leq \left\| (f_{lk})_{k=0}^{2^l-1} \right\|_{\ell_p}.
    \end{equation*}

For part (ii), let $d=2^l$ denote the dimension at level $l$ and $v = (f_{lk})_{k=0}^{d-1} \in \mathbb{R}^d$. By H\"older's inequality (or the norm monotonicity for $p \geq 2$), we have
    \begin{equation*}
    \|v\|_2 \leq d^{1/2 - 1/p} \|v\|_p.
    \end{equation*}
    Squaring both sides yields
    \begin{equation*}
    \|v\|_2^2 \leq d^{1 - 2/p} \|v\|_p^2 \leq d^{1 - 2/p} \cdot R^2 \cdot 2^{-2l(\alpha + 1/2 - 1/p)},
    \end{equation*}
    where the second inequality uses the bound from part (i). The result follows by substituting $d = 2^l$ and the norm-equivalence mentioned above.
\end{proof}

\subsection{Auxiliary and technical lemmas}\label{sec:auxiliary_lemmas}

The following results are either technical lemmas or known, and are included for the sake of completeness.

\begin{lemma}\label{lem:sub-exponential-zeta-beta-product}
Consider for $j=1,\dots,m$, $i=1,\dots,b$ independent $\zeta^{(j)}_i \sim \text{Rad}(1/2)$ and $(\beta^{(j)}_i, 1 - \beta^{(j)}_i)$ Dirichlet $(b,N-b)$ random variables, writing $\zeta^{(j)} = (\zeta^{(j)}_1,\dots,\zeta^{(j)}_b)$ and $\beta^{(j)} = (\beta^{(j)}_1,\dots,\beta^{(j)}_b)$, $N \geq 5$. Consider component-wise product $X_j := \zeta^{(j)} \circ \beta^{(j)}$. 

Then, 
\begin{equation*}
    \P\left( \left| \frac{1}{m} \sum_{j=1}^m X_j \right| \geq \tau \right) \leq 2 \exp\left( - c m \min\left( \frac{N^2 \tau^2}{4}, \frac{\tau N}{2} \right) \right).
\end{equation*}
\end{lemma}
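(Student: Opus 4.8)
\textbf{Proof plan for Lemma~\ref{lem:sub-exponential-zeta-beta-product}.}
The plan is to establish a sub-exponential (Bernstein-type) tail bound for the normalized sum $\frac1m\sum_{j=1}^m X_j$, where each $X_j = \zeta^{(j)}\circ\beta^{(j)}$ is a single scalar (the statement treats $X_j$ as the relevant scalar coordinate, or equivalently after fixing a unit direction as in the proof of Lemma~\ref{lem:osc-ball-concentration}). The key observation is that $\beta^{(j)}_i$ is a $\mathrm{Beta}(b, N-b)$ random variable with mean $b/N$, and since $N\ge 5$ and $b\ge 1$, its fluctuations are tightly controlled: a $\mathrm{Beta}(b,N-b)$ variable is bounded in $[0,1]$ and concentrates on a scale of order $1/N$. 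Multiplying by the independent Rademacher sign $\zeta^{(j)}_i$ produces a symmetric, mean-zero random variable that is bounded by $1$ almost surely and has variance of order $1/N^2$ (more precisely $\mathrm{Var}\asymp b(N-b)/(N^2(N+1)) \le 1/N^2$ when we look at the appropriate coordinate, and $\le b/N^2$ for the relevant sum).

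The main steps, in order, would be: (i) compute or bound the moment generating function of a single summand $X$ (the scalar of interest): using symmetry, $\E e^{\lambda X} = \E\cosh(\lambda \beta)$ where $\beta\in[0,1]$ has mean $\mu\le b/N \le 1/5$; since $\cosh(x)\le e^{x^2/2}$ and $\beta$ is bounded by $1$, one gets $\E e^{\lambda X}\le \exp\!\big(c_1 \lambda^2 \sigma^2\big)$ for $|\lambda|\le c_2 N$ and $\sigma^2 \asymp 1/N^2$ (here using that $\E\beta^2 \le \mu \cdot 1$ gives the variance proxy, and the boundedness by $1$ gives the range restriction $|\lambda|\lesssim N$). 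This is precisely the condition that $X$ is sub-exponential with parameters $(\sigma,1/N)$ up to constants. (ii) By independence across $j$, the MGF of the sum tensorizes: $\E\exp(\lambda\sum_j X_j)\le \exp(c_1 m\lambda^2\sigma^2)$ for $|\lambda|\le c_2 N$. (iii) Apply the standard Chernoff argument to $\frac1m\sum_j X_j$: optimizing over $\lambda$ in the admissible range $|\lambda|\le c_2 N$ yields the two-regime bound
\[
\P\!\left(\Big|\tfrac1m\textstyle\sum_j X_j\Big|\ge\tau\right)\le 2\exp\!\left(-c\, m\min\!\Big(\tfrac{\tau^2}{\sigma^2},\ \tau N\Big)\right),
\]
and substituting $\sigma^2 \asymp 1/N^2$ gives the $\min(N^2\tau^2, \tau N)$ form stated (the constants $1/4$ and $1/2$ inside the min being absorbed into the universal $c$ or tracked explicitly).

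The step I expect to be the main obstacle is step (i): getting the sub-exponential parameters right for the $\mathrm{Beta}(b,N-b)$ factor uniformly in $b$ and $N$ with $N\ge 5$. One has to be slightly careful because $b$ can be as large as comparable to $N$ (in which case $\mu = b/N$ need not be small), so the clean bound $\E\beta^2\le\mu$ is not by itself enough for the stated scaling — one should instead use that $\E\beta^2 = \frac{b(b+1)}{N(N+1)}$ and that $\mathrm{Var}(\beta) = \frac{b(N-b)}{N^2(N+1)}$, and combine with the almost-sure bound $\beta\le 1$. In the application (the proof of Lemma~\ref{lem:osc-ball-concentration}) we have $b = b_L \ll 2^L \le N$, so $\mu\to 0$ and the variance is $\asymp b/N^2$; threading this dependence on $b$ through correctly (the sum over $i=1,\dots,b$ contributes an extra factor $b$ in the variance proxy, matching the factor $b_L$ that appears downstream) is the delicate bookkeeping, but it is entirely routine once the single-variable MGF bound is in place. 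The remaining steps (ii)–(iii) are the textbook Bernstein/Chernoff machinery and present no difficulty.
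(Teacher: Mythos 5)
Your plan follows the same skeleton as the paper's proof (condition on the $\beta$'s, use the Rademacher bound $\cosh x\le e^{x^2/2}$, control Beta moments, tensorize over $j$, Chernoff/Bernstein), but the two places where you locate the work are exactly where your version breaks. First, the distributional bookkeeping: in the paper's argument the $b$ variables $\beta^{(j)}_1,\dots,\beta^{(j)}_b$ within one server are \emph{exchangeable, dependent} coordinates of a flat Dirichlet on $N$ coordinates, so each marginal is $\mathrm{Beta}(1,N-1)$ (second moment $2/(N(N+1))$), and the vector structure is disposed of by the convexity step $e^{s\sum_i v_i^2\beta_i^2}\le\sum_i v_i^2 e^{s\beta_i^2}$ together with $\sum_i v_i^2=1$, reducing everything to a single $\mathrm{Beta}(1,N-1)$ and giving a bound with \emph{no} dependence on $b$, as the lemma asserts. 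Your reading ($\beta_i\sim\mathrm{Beta}(b,N-b)$ i.i.d., variance proxy $\asymp b/N^2$, an "extra factor $b$ matching the $b_L$ downstream") does not deliver the stated bound: the exponent in Lemma~\ref{lem:osc-ball-concentration} carries no $b_L$ (that factor enters only through the $5^{b_L}$ net cardinality), and for the symmetrized variable the relevant second moment is $\mathbb{E}[(\zeta\beta)^2]=\mathbb{E}\beta^2=\tfrac{b(b+1)}{N(N+1)}\asymp b^2/N^2$, not $\mathrm{Var}(\beta)\asymp b/N^2$ (the sign kills the mean but not the squared mean). Indeed, under your literal reading the claimed inequality is false once $b\asymp N$ (a single coordinate $\zeta_1\beta_1$ is then of constant size, so the average over $m$ cannot satisfy a tail of order $e^{-cmN\tau}$). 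The missing idea is precisely the exchangeability/convexity reduction to $\mathrm{Beta}(1,N-1)$.

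Second, the m.g.f.\ range. You claim $\mathbb{E}e^{\lambda X}\le e^{c_1\lambda^2/N^2}$ for all $|\lambda|\le c_2 N$ from $\cosh x\le e^{x^2/2}$ plus $\beta\le 1$; this derivation cannot work. After the cosh step you must control $\mathbb{E}e^{\lambda^2\beta^2/2}$, and since $\mathbb{P}(\beta\ge 1/2)=2^{-(N-1)}$ for $\mathrm{Beta}(1,N-1)$ one has $\mathbb{E}e^{\lambda^2\beta^2/2}\ge e^{\lambda^2/8-N\log 2}$, which exceeds $e^{c_1\lambda^2/N^2}$ as soon as $\lambda\gg\sqrt N$. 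So the quadratic overshoot in $\cosh x\le e^{x^2/2}$ caps the usable range at $|\lambda|\lesssim\sqrt N$ and can only certify a linear branch of order $\sqrt N\,\tau$, not the claimed $N\tau/2$; obtaining the latter requires exploiting the sub-exponential scale $1/N$ of the $\mathrm{Beta}(1,N-1)$ marginal directly (e.g.\ bounding $\mathbb{E}\cosh(\lambda\beta)$ for $|\lambda|\lesssim N$ via the Beta m.g.f., or via the representation $\beta_i=E_i/\sum_{k=1}^N E_k$ with i.i.d.\ exponentials). To be fair, the paper's own write-up is loose on this same point (it verifies the moment bound only for a small exponent range before invoking Bernstein), but your proposal, which singles out this step and then asserts it follows from boundedness by $1$, does not close that gap either.
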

\begin{proof}
Let $v \in \R^b$ be of unit length and fix any $t \in \R$. We have
\begin{align*}
    \E e^{t \langle X_j, v \rangle} &= \E e^{t \sum_{i=1}^b v_i \zeta^{(j)}_i \beta^{(j)}_i} = \E \prod_{i=1}^b \E\left[ e^{t v_i \zeta^{(j)}_i \beta^{(j)}_i} | \beta^{(j)}_i \right] = \E \prod_{i=1}^b \cosh(t v_i \beta^{(j)}_i) \\
    &\leq \E \prod_{i=1}^b e^{(t v_i \beta^{(j)}_i)^2 / 2} = \E e^{t^2 \sum_{i=1}^b v_i^2 (\beta^{(j)}_i)^2 / 2}.
\end{align*}
By convexity of $x\mapsto e^{t x}$,
\begin{equation*}
e^{t\sum_{j=1}^b v_j^2 \beta_j^2}\le \sum_{j=1}^b v_j^2 e^{t\beta_j^2}.
\end{equation*}
Taking expectations and using exchangeability of the Dirichlet coordinates,
\begin{equation*}
\E e^{t\sum_{j=1}^b v_j^2 \beta_j^2} \le \sum_{j=1}^b v_j^2\,\E e^{t\beta_j^2}
= \E e^{t\beta_1^2},
\quad\text{where }\;\beta_1\sim\mathrm{Beta}(1,N-1).
\end{equation*}

Now bound $\E e^{t\beta_1^2}$. For $0\le t\le 2/N$ (hence $t\le1$ when $N\ge2$) and $u\in[0,1]$ we have $e^u\le 1+u+u^2$. With $u=t\beta_1^2$,
\begin{equation*}
\E e^{t\beta_1^2}\le 1 + t\,\E[\beta_1^2] + t^2\,\E[\beta_1^4].
\end{equation*}
For $\beta_1\sim\mathrm{Beta}(1,N-1)$ the moments are
\begin{equation*}
\E[\beta_1^2]=\frac{2}{N(N+1)},\qquad
\E[\beta_1^4]=\frac{24}{N(N+1)(N+2)(N+3)}.
\end{equation*}
Using $1+x\le e^{x}$,
\begin{equation*}
\E e^{t\beta_1^2}\le
\exp\!\left(\frac{2t}{N(N+1)}+\frac{24t^2}{N(N+1)(N+2)(N+3)}\right).
\end{equation*}
Since $t\le 2/N$,
\begin{equation*}
\frac{24t^2}{N(N+1)(N+2)(N+3)}
\le
\frac{48}{N^2(N+1)(N+2)(N+3)}\,t
\le \frac{1}{N^2}\,t,
\end{equation*}
and also $\frac{2}{N(N+1)}\le \frac{2}{N^2}$. Therefore
\begin{equation*}
\E e^{t\beta_1^2}\le \exp\!\left(\frac{3t}{N^2}\right).
\end{equation*}
Combining with the first step yields that $X_j$ is mean-zero and $(2/N)$-sub-exponential. The result now follows by Bernstein's inequality for sums of independent sub-exponential random variables (see e.g. Theorem 2.8.1 in \cite{vershynin2018hdp}).
\end{proof}

The next lemma is a standard tail-bound for Gamma centered distributions.

\begin{lemma}\label{lem:sub-exponentiality-parameter-gamma}
Let 
\begin{equation*}
D\sim \Gamma(\alpha,\theta), \quad X:=D-\E[D].
\end{equation*}
For $\tau \le 2 \alpha \theta$, it holds that
\begin{equation*}
\P \left( X \geq \tau \right) \leq \exp\left( - \frac{\tau^2}{8 \alpha \theta^2} \right).
\end{equation*}
\end{lemma}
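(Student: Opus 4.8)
\textbf{Proof plan for Lemma \ref{lem:sub-exponentiality-parameter-gamma}.}

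The plan is to use the standard Chernoff method on the centered Gamma random variable $X = D - \E[D]$, where $\E[D] = \alpha\theta$. First I would recall the moment generating function of $D \sim \Gamma(\alpha,\theta)$: for $s < 1/\theta$ we have $\E[e^{sD}] = (1-s\theta)^{-\alpha}$, and therefore
\begin{equation*}
\E[e^{sX}] = e^{-s\alpha\theta}(1-s\theta)^{-\alpha} = \exp\bigl(-\alpha(s\theta + \log(1-s\theta))\bigr).
\end{equation*}
The next step is to control the exponent. Using the elementary inequality $-\log(1-u) \le u + \frac{u^2}{2(1-u)}$ valid for $u \in [0,1)$, and restricting to $u = s\theta \in [0,1/2]$ so that $1-u \ge 1/2$, we obtain $-s\theta - \log(1-s\theta) \le (s\theta)^2$, hence $\E[e^{sX}] \le \exp(\alpha\theta^2 s^2)$ for all $0 \le s \le 1/(2\theta)$. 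This shows $X$ is sub-Gaussian on the relevant range of $s$.

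With this bound in hand, the Chernoff inequality gives, for any $\tau > 0$ and any admissible $s \in [0, 1/(2\theta)]$,
\begin{equation*}
\P(X \ge \tau) \le e^{-s\tau}\,\E[e^{sX}] \le \exp\bigl(\alpha\theta^2 s^2 - s\tau\bigr).
\end{equation*}
Optimizing the quadratic in $s$ suggests the choice $s = \tau/(2\alpha\theta^2)$. This choice satisfies the constraint $s \le 1/(2\theta)$ precisely when $\tau \le \alpha\theta$, which is implied by the hypothesis $\tau \le 2\alpha\theta$ provided we are slightly less aggressive; to land exactly on the stated constant, I would instead take $s = \tau/(4\alpha\theta^2)$, which satisfies $s \le 1/(2\theta)$ whenever $\tau \le 2\alpha\theta$, and plugging this in yields
\begin{equation*}
\P(X \ge \tau) \le \exp\!\left(\alpha\theta^2 \cdot \frac{\tau^2}{16\alpha^2\theta^4} - \frac{\tau^2}{4\alpha\theta^2}\right) = \exp\!\left(-\frac{3\tau^2}{16\alpha\theta^2}\right) \le \exp\!\left(-\frac{\tau^2}{8\alpha\theta^2}\right),
\end{equation*}
which is the claimed bound.

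There is no serious obstacle here; this is a routine sub-Gaussian-type tail estimate for a Gamma variable on the bounded-deviation regime $\tau \le 2\alpha\theta$. The only point requiring a little care is making the constants line up: one must choose the Chernoff parameter $s$ small enough to stay within the radius of convergence of the MGF (i.e. $s\theta < 1$) while still using the range restriction $\tau \le 2\alpha\theta$ to guarantee admissibility, and then verify that the resulting exponent beats $\tau^2/(8\alpha\theta^2)$. Any reasonable choice of $s$ proportional to $\tau/(\alpha\theta^2)$ with a small enough constant works; the computation above exhibits one such choice.
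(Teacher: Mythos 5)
Your proposal is correct and follows essentially the same route as the paper's proof: a Chernoff bound on the centered Gamma via its MGF, bounding $-\log(1-s\theta)$ by $s\theta$ plus a quadratic term for $s\theta \le 1/2$, and then taking $s = \tau/(4\alpha\theta^2)$, which is admissible exactly under the hypothesis $\tau \le 2\alpha\theta$. The only (immaterial) difference is that you use the slightly sharper estimate $-\log(1-u) \le u + u^2$ where the paper uses $u + 2u^2$; both yield the stated constant.
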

\begin{proof}
We have $\E[D]=\alpha\theta$ and the MGF of $D$ for $0 \leq t<1/\theta$ is given by $\E\exp(tD)=(1-\theta t)^{-\alpha}$. Thus, the MGF of $X$ satisfies
\begin{equation*}
\ln\E\exp(tX)=-t\alpha\theta-\alpha\ln(1-\theta t).
\end{equation*}
Using the Taylor expansion $-\ln(1-\theta t)=\theta t+\frac{(\theta t)^2}{2}+\frac{(\theta t)^3}{3}+\cdots$, for $|\theta t|\le 1/2$ it holds that
\begin{equation*}
-\ln(1-\theta t)\le \theta t+ 2(\theta t)^2.
\end{equation*}
It follows that for $|t|\le 1/(2\theta)$,
\begin{equation*}
\E\exp(tX)\le \exp\Bigl(2\alpha\,\theta^2t^2\Bigr).
\end{equation*}
By a Chernoff bound, we have for $0 \leq s \leq 1 / (2t \theta)$ 
\begin{align*}
\P \left( X \geq \tau \right) \leq e^{-s\tau} \E e^{s \tau X} \leq e^{-s\tau + 2 s^2 \tau^2 \alpha \theta^2}.
\end{align*}
Setting $s = \tau / (4 \alpha \theta^2)$ yields the desired result and satisfies the condition $s\tau \leq 2 \alpha \theta$.
\end{proof}

\begin{lemma}\label{lem:indicator-layer-cake}
For any nonnegative random variable $Z$, we have that
\begin{equation*}
\E\Bigl[ Z^2\,\mathbf{1}\{Z\ge \tau\} \Bigr] 
=\tau^2\,\P\{Z\ge \tau\} + 2\int_{\tau}^\infty s\,\P\{Z\ge s\}\,ds.
\end{equation*}
\end{lemma}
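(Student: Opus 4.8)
The statement is the standard layer-cake (Fubini) identity for the truncated second moment of a nonnegative random variable $Z$, applied to the tail $\{Z \ge \tau\}$. The plan is to write $Z^2 = \int_0^Z 2s\,ds$ pointwise, then multiply by the indicator and take expectations, using Tonelli's theorem to interchange the expectation and the $ds$-integral since everything is nonnegative. First I would observe that on the event $\{Z \ge \tau\}$,
\begin{equation*}
  Z^2 = \tau^2 + \int_{\tau}^{Z} 2s\,ds = \tau^2 + \int_{\tau}^{\infty} 2s\,\mathbf{1}\{s \le Z\}\,ds,
\end{equation*}
so that $Z^2\,\mathbf{1}\{Z \ge \tau\} = \tau^2\,\mathbf{1}\{Z \ge \tau\} + \int_{\tau}^{\infty} 2s\,\mathbf{1}\{Z \ge s\}\,ds$, where I also used $\mathbf{1}\{s \le Z\}\,\mathbf{1}\{Z\ge\tau\} = \mathbf{1}\{Z \ge s\}$ for $s \ge \tau$.

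Next I would take expectations of both sides. The first term gives $\tau^2\,\P\{Z \ge \tau\}$ directly. For the second term, since the integrand $(s,\omega)\mapsto 2s\,\mathbf{1}\{Z(\omega) \ge s\}$ is nonnegative and jointly measurable, Tonelli's theorem permits exchanging $\E$ and $\int_{\tau}^{\infty}(\cdot)\,ds$, yielding $\int_{\tau}^{\infty} 2s\,\E[\mathbf{1}\{Z \ge s\}]\,ds = 2\int_{\tau}^{\infty} s\,\P\{Z \ge s\}\,ds$. Combining the two pieces gives the claimed identity. No integrability hypothesis is needed because both sides are elements of $[0,\infty]$ and the manipulations are valid in the extended reals; if $\E[Z^2\mathbf{1}\{Z\ge\tau\}] = \infty$ the right-hand side is likewise infinite, and conversely.

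There is essentially no obstacle here — the only point requiring a word of care is the measurability and nonnegativity justifying the use of Tonelli, which is immediate. I would keep the write-up to a few lines, since this is a routine technical lemma invoked elsewhere (e.g.\ in the proof of Lemma~\ref{lem:osc-exponential-mechanism-tails}) purely for bookkeeping.
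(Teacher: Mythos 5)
Your proof is correct and amounts to the same layer-cake/Tonelli argument as the paper: you derive the pointwise identity $Z^2\,\mathbf{1}\{Z\ge\tau\}=\tau^2\,\mathbf{1}\{Z\ge\tau\}+\int_\tau^\infty 2s\,\mathbf{1}\{Z\ge s\}\,ds$ and interchange expectation and integration, whereas the paper applies the layer-cake formula to $X=Z^2\mathbf{1}\{Z\ge\tau\}$, splits the integral at $\tau^2$, and substitutes $t=s^2$ — a purely cosmetic difference. Your remark that no integrability assumption is needed (both sides live in $[0,\infty]$) is a fine, if minor, addition.
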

\begin{proof}
Define
\begin{equation*}
X = Z^2\,\mathbf{1}\{Z\ge \tau\}.
\end{equation*}
Since $X\ge 0$, by the layer-cake representation we have
\begin{equation*}
\E[X] = \int_0^\infty \P\{X\ge t\}\,dt.
\end{equation*}
Note that the support of $X$ is contained in $\{0\}\cup [\tau^2,\infty)$. Therefore, we split the integral at $t=\tau^2$:
\begin{equation*}
\E[X] = \int_0^{\tau^2} \P\{X\ge t\}\,dt + \int_{\tau^2}^\infty \P\{X\ge t\}\,dt.
\end{equation*}

For $0\le t<\tau^2$:  
On this range, if $Z\ge \tau$ then $Z^2\ge \tau^2 > t$; hence,
\begin{equation*}
\{X\ge t\} = \{Z^2\,\mathbf{1}\{Z\ge \tau\} \ge t\} = \{Z\ge \tau\}.
\end{equation*}
Thus,
\begin{equation*}
\int_0^{\tau^2}\P\{X\ge t\}\,dt 
=\tau^2\,\P\{Z\ge \tau\}.
\end{equation*}

For $t\ge \tau^2$:  
On the event $\{Z\ge \tau\}$ we have $X = Z^2$; hence,
\begin{equation*}
\{X\ge t\} = \{Z^2 \ge t\}.
\end{equation*}
Consequently,
\begin{equation*}
\int_{\tau^2}^\infty \P\{X\ge t\}\,dt 
=\int_{\tau^2}^\infty \P\{Z^2\ge t\}\,dt.
\end{equation*}

Therefore, 
\begin{equation*}
\E\Bigl[ Z^2\,\mathbf{1}\{Z\ge \tau\} \Bigr] 
=\tau^2\,\P\{Z\ge \tau\} + \int_{\tau^2}^\infty \P\{Z^2\ge t\}\,dt.
\end{equation*}

Performing the change of variable $t=s^2$ (with $s\ge \tau$), we have
\begin{equation*}
\int_{\tau^2}^\infty \P\{Z^2\ge t\}\,dt 
=\int_{\tau}^\infty \P\{Z^2\ge s^2\}\,2s\,ds.
\end{equation*}
Since $Z$ is nonnegative, $\{Z^2\ge s^2\}=\{Z\ge s\}$. Therefore,
\begin{equation*}
\int_{\tau^2}^\infty \P\{Z^2\ge t\}\,dt = 2\int_{\tau}^\infty s\,\P\{Z\ge s\}\,ds.
\end{equation*}
This completes the proof.
\end{proof}

The following technical lemma is in conjunction with the previous lemma used to derive our tail bounds.

\begin{lemma}\label{lem:exponential-minimum}
Consider $a, b, c, d > 0$. It holds that
\begin{equation*}
    \int_{d}^\infty u e^{- c \min(a u^2, b u)} \, du \leq \left( \frac{1}{c \min(a, b/d)} + \frac{1}{c^2 b^2} \right) e^{ - c \min(a d^2, b d)}.
\end{equation*}
\end{lemma}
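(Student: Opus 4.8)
\textbf{Proof proposal for Lemma \ref{lem:exponential-minimum}.}

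The plan is to split the integral at the crossover point $u_* = b/a$, where the two arguments $au^2$ and $bu$ of the minimum coincide. On $[d,\infty)$ this gives (at most) two regimes: one where $\min(au^2,bu)=au^2$ and one where it equals $bu$. I would treat each regime with a direct Gaussian-type or exponential-type tail estimate, and then recombine, absorbing everything into the stated bound. The key elementary facts I would use are: (i) for a Gaussian tail, $\int_s^\infty u e^{-c a u^2}\,du = \frac{1}{2ca}e^{-cas^2}$ exactly; and (ii) for an exponential tail, $\int_s^\infty u e^{-cbu}\,du = \left(\frac{s}{cb}+\frac{1}{c^2b^2}\right)e^{-cbs}$ exactly (integration by parts).

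The steps in order. First, if $d \ge b/a$, then on the entire range $\min(au^2,bu)=bu$, so by (ii) the integral equals $\left(\frac{d}{cb}+\frac{1}{c^2b^2}\right)e^{-cbd}$; here $\min(a,b/d)=b/d$ since $d\ge b/a \iff b/d\le a$, so $\frac{d}{cb}=\frac{1}{c(b/d)}=\frac{1}{c\min(a,b/d)}$ and $\min(ad^2,bd)=bd$, giving exactly the claimed bound (indeed with room to spare). Second, if $d < b/a$, split as $\int_d^{b/a} + \int_{b/a}^\infty$. On $[d,b/a]$ we have $\min=au^2$, so by (i), $\int_d^{b/a} u e^{-cau^2}\,du \le \int_d^\infty u e^{-cau^2}\,du = \frac{1}{2ca}e^{-cad^2}$; note $\min(a,b/d)=a$ in this case (since $b/d > a$) and $\min(ad^2,bd)=ad^2$ (since $d<b/a \iff ad^2<bd$), so this term is $\le \frac{1}{c\min(a,b/d)}e^{-c\min(ad^2,bd)}$, well within budget. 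On $[b/a,\infty)$ we have $\min=bu$, so by (ii), $\int_{b/a}^\infty u e^{-cbu}\,du = \left(\frac{b/a}{cb}+\frac{1}{c^2b^2}\right)e^{-cb^2/a} = \left(\frac{1}{ca}+\frac{1}{c^2b^2}\right)e^{-cb^2/a}$; since $b^2/a > ad^2$ and $b^2/a > bd$ (both because $d<b/a$), we have $e^{-cb^2/a}\le e^{-c\min(ad^2,bd)}$, and $\frac{1}{ca}=\frac{1}{c\min(a,b/d)}$ in this regime, so this term contributes $\le\left(\frac{1}{c\min(a,b/d)}+\frac{1}{c^2b^2}\right)e^{-c\min(ad^2,bd)}$. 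Third, I would combine the two pieces: their sum is at most $\left(\frac{2}{c\min(a,b/d)}+\frac{1}{c^2b^2}\right)e^{-c\min(ad^2,bd)}$, which is slightly weaker than stated; to land exactly on the claimed constant I would instead, in the $d<b/a$ case, bound $\int_d^{b/a}ue^{-cau^2}du$ more carefully by noting that the whole integral $\int_d^\infty u e^{-c\min(au^2,bu)}du$ is monotone and can be compared directly to $\int_d^\infty u e^{-cbu}\,du$ when $b\le a$ (so $\min=bu$ throughout, reducing to the first case) versus comparing to $\int_d^\infty u e^{-c\min}\le \int_d^\infty u\,e^{-c a u\min(u,b/a)}$; the cleanest route is simply to verify the displayed inequality as stated allows the factor-$2$ slack to be absorbed, since $\frac{1}{c\min(a,b/d)}$ already dominates $\frac{1}{2c\min(a,b/d)}$ — rereading the claim, the constant $1$ (not $2$) in front is what must be matched, so I would sharpen by observing that in the split case the first sub-integral's bound $\frac{1}{2ca}e^{-cad^2}$ and the second's $\frac1{ca}e^{-cb^2/a}$ both have denominator $ca=c\min(a,b/d)$ and the sum of prefactors is $\frac{1}{2ca}+\frac{1}{ca}+\frac{1}{c^2b^2} = \frac{3}{2ca}+\frac{1}{c^2b^2}$, which still exceeds $1$; hence I would instead argue that $e^{-cb^2/a}\le e^{-cad^2}\cdot e^{-c(b^2/a - ad^2)}$ and the exponential gap kills the extra $\frac{1}{2ca}$ factor, or—most simply—strengthen the target's right-hand side is robust to constants and just present the factor-$2$ version if the paper's downstream use only needs it up to constants.

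\textbf{Main obstacle.} The only real subtlety is matching the exact constant $1$ in front of $\frac{1}{c\min(a,b/d)}$ rather than a harmless absolute constant; the structure of the proof (split at $b/a$, apply the two exact integral identities, compare $\min(ad^2,bd)$ to $b^2/a$) is entirely routine. I expect the cleanest resolution is to observe that when $b \ge a$ one is always in the "exponential" regime on $[d,\infty)$ giving the bound with no slack, and when $b < a$ one bounds $\int_d^\infty u e^{-c\min(au^2,bu)}\,du \le \int_d^\infty u e^{-cau\,(u\wedge (b/a))}\,du$ and substitutes $u\wedge(b/a)\ge d\wedge(b/a)=d$ only on the tail — but this over-weights the head; so in practice I would accept an absolute constant in place of $1$ if the statement as written turns out to need the factor-$2$, since no later result in the paper depends on the precise value.
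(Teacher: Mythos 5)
Your route is essentially the paper's: split at the crossover $u_0=b/a$, use the exact Gaussian and exponential tail identities, and compare exponents. The case $d\ge b/a$ you handle exactly as the paper does, with the exact constant. The only shortfall is in the case $d<b/a$, where you bound $\int_d^{b/a}u e^{-cau^2}\,du$ by $\int_d^{\infty}u e^{-cau^2}\,du=\tfrac{1}{2ca}e^{-cad^2}$, i.e.\ you discard the lower-order term, and therefore end up with the prefactor $\tfrac{3}{2ca}+\tfrac{1}{c^2b^2}$. The stated constant comes out if you instead keep the exact value $\int_d^{b/a}u e^{-cau^2}\,du=\tfrac{1}{2ca}\bigl(e^{-cad^2}-e^{-cau_0^2}\bigr)$: the negative term $-\tfrac{1}{2ca}e^{-cau_0^2}$ cancels half of the prefactor $\tfrac{1}{ca}e^{-cau_0^2}$ produced by the exponential piece on $[u_0,\infty)$, leaving $\tfrac{1}{2ca}e^{-cad^2}+\bigl(\tfrac{1}{2ca}+\tfrac{1}{c^2b^2}\bigr)e^{-cau_0^2}\le\bigl(\tfrac{1}{ca}+\tfrac{1}{c^2b^2}\bigr)e^{-cad^2}$, which is exactly the claim since $\min(a,b/d)=a$ and $\min(ad^2,bd)=ad^2$ in this regime.

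None of the rescues you sketch actually closes this gap at the stated constant: in particular, the idea that the exponential gap $e^{-c(b^2/a-ad^2)}$ "kills" the extra $\tfrac{1}{2ca}$ fails uniformly, because $b^2/a-ad^2\to 0$ as $d\uparrow b/a$, so that factor can be arbitrarily close to $1$. You are right, however, that the downstream application (the tail bound in Lemma~\ref{lem:osc-exponential-mechanism-tails}) only uses this inequality up to absolute constants, so your factor-$3/2$ version would suffice there; and in fact the paper's own final display in this case is written loosely (it too drops the negative term before bounding), so the exact constant in the statement really is recovered only through the cancellation described above.
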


\begin{proof}
Let $u_0 = b/a > 0$ and $m = \min(a d^2, b d)$. Note that $m = a d^2$ if $d \leq u_0$ and $m = b d$ if $d > u_0$. Also, $\min(a, b/d) = a$ if $d \leq u_0$ and $\min(a, b/d) = b/d$ if $d > u_0$.

\textbf{Case 1: $d > u_0$.} Here, $\min(a u^2, b u) = b u$ for all $u \geq d > u_0$, so
\begin{equation*}
\int_d^\infty u \, e^{- c b u} \, du = \left[ - \frac{u}{c b} e^{- c b u} - \frac{1}{c^2 b^2} e^{- c b u} \right]_d^\infty = \left( \frac{d}{c b} + \frac{1}{c^2 b^2} \right) e^{- c b d}.
\end{equation*}
Since $m = b d$ and $\min(a, b/d) = b/d$, the right-hand side is
\begin{equation*}
\left( \frac{1}{c (b/d)} + \frac{1}{c^2 b^2} \right) e^{- c m} = \left( \frac{d}{c b} + \frac{1}{c^2 b^2} \right) e^{- c b d},
\end{equation*}
which matches exactly.

\textbf{Case 2: $d \leq u_0$.} Split the integral at $u_0$:
\begin{equation*}
\int_d^\infty u \, e^{- c \min(a u^2, b u)} \, du = \int_d^{u_0} u \, e^{- c a u^2} \, du + \int_{u_0}^\infty u \, e^{- c b u} \, du.
\end{equation*}
For the first integral,
\begin{equation*}
\int_d^{u_0} u \, e^{- c a u^2} \, du = \left[ -\frac{1}{2 c a} e^{- c a u^2} \right]_d^{u_0} = \frac{1}{2 c a} \left( e^{- c a d^2} - e^{- c a u_0^2} \right).
\end{equation*}
For the second integral, since $u_0 = b/a$ and $c b u_0 = c a u_0^2$,
\begin{equation*}
\int_{u_0}^\infty u \, e^{- c b u} \, du = \left[ -\frac{u}{c b} e^{- c b u} - \frac{1}{c^2 b^2} e^{- c b u} \right]_{u_0}^\infty = \left( \frac{u_0}{c b} + \frac{1}{c^2 b^2} \right) e^{- c b u_0} = \left( \frac{1}{c a} + \frac{1}{c^2 b^2} \right) e^{- c a u_0^2}.
\end{equation*}
Thus, the total integral is
\begin{equation*}
\frac{1}{2 c a} \left( e^{- c a d^2} - e^{- c a u_0^2} \right) + \left( \frac{1}{c a} + \frac{1}{c^2 b^2} \right) e^{- c a u_0^2}.
\end{equation*}
Since $e^{- c a u_0^2} \leq e^{- c a d^2}$ (with equality only if $d = u_0$), we bound
\begin{equation*}
\frac{1}{2 c a} e^{- c a d^2} - \frac{1}{2 c a} e^{- c a u_0^2} + \left( \frac{1}{c a} + \frac{1}{c^2 b^2} \right) e^{- c a u_0^2} \leq \frac{1}{2 c a} e^{- c a d^2} + \left( \frac{1}{c a} + \frac{1}{c^2 b^2} \right) e^{- c a d^2}.
\end{equation*}
This simplifies to
\begin{equation*}
\left( \frac{3}{2 c a} + \frac{1}{c^2 b^2} \right) e^{- c a d^2} \leq \left( \frac{1}{c a} + \frac{1}{c^2 b^2} \right) e^{- c a d^2},
\end{equation*}
since $\frac{3}{2 c a} \leq \frac{1}{c a}$. Here, $m = a d^2$ and $\min(a, b/d) = a$, so the bound is
\begin{equation*}
\left( \frac{1}{c \min(a, b/d)} + \frac{1}{c^2 b^2} \right) e^{- c m},
\end{equation*}
as required. In both cases, the integral is bounded by the stated expression, completing the proof.
\end{proof}

    The following lemma provides the tail bound for the non-privacy related noise.

    \begin{lemma}\label{lem:non-private-block-thresholding-tailbound}
        Consider $l \in \{ l_0,\dots,L^* \}$ and $S \subset \{1,\dots,2^l\}$ of size $|S| := b_l \leq \lceil \log N \rceil$, and let $B_l = \{ (l,k) : k \in S \}$.
    
        For all $c_0 \geq 2$, there exists a constant $c_1 > 0$ such that
        \begin{equation*}
            \E_f \|(\hat{\psi} - f)_{B_l}\|_2^2 \,
            \mathbbm{1}\!\left\{ \|(\hat{\psi} - f)_{B_l}\|_2 > \sqrt{\tfrac{c_0 \log N}{N}} \right\}
            \;\lesssim\; \frac{b_l}{N} \, e^{- c_1 \log N}.
        \end{equation*}
    \end{lemma}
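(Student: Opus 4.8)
The plan is to reduce, via Cauchy--Schwarz together with a fourth-moment bound (the layer-cake identity of Lemma~\ref{lem:indicator-layer-cake} works equally well), to a single tail estimate for the Euclidean norm $Z:=\|(\hat\psi-f)_{B_l}\|_2$, and then to establish it by concentration of an i.i.d.\ sum. Set $\tau:=\sqrt{c_0\log N/N}$ and note $\hat\psi_{lk}-f_{lk}=\tfrac1N\sum_{i=1}^N W_{i,k}$ with $W_{i,k}:=\psi_{lk}(X_i)-f_{lk}$, so that $(\hat\psi-f)_{B_l}=\tfrac1N\sum_{i=1}^N W_i$, $W_i:=(W_{i,k})_{k\in S}\in\R^{b_l}$, is a normalized sum of i.i.d.\ mean-zero vectors. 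A Rosenthal/Pinelis-type moment inequality for Hilbert-space-valued sums, combined with $\E_f\|W_i\|_2^2\le b_l\|f\|_\infty$, $\|W_i\|_2\lesssim 2^{l/2}$ and $2^l\le 2^{L^*}\lesssim N$, gives $\E_f Z^4\lesssim (b_l/N)^2$; hence $\E_f[Z^2\mathbbm{1}\{Z\ge\tau\}]\le\sqrt{\E_f Z^4}\,\sqrt{\P_f(Z\ge\tau)}\lesssim\tfrac{b_l}{N}\sqrt{\P_f(Z\ge\tau)}$, and it remains to show $\P_f(Z\ge\tau)\lesssim N^{-2c_1}$ with $c_1$ growing with $c_0$.

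\textbf{Quantitative inputs and concentration.} Two standard facts about compactly supported $A$-regular wavelets are used: $|\psi_{lk}(x)|=2^{l/2}|\psi(2^lx-k)|\le 2^{l/2}\|\psi\|_\infty$, and each $x\in[0,1]$ lies in the support of at most $A_0$ of the $\{\psi_{lk}\}_k$, where $A_0$ depends only on $\operatorname{supp}(\psi)$. Together with Lemma~\ref{lemma:coefficient_decay}(ii) (which bounds $\|(f_{lk})_{k\in S}\|_2\le CR\,2^{-l\alpha}\le CR$) these give $\|W_i\|_2\lesssim 2^{l/2}$; and, since $\alpha>1/2\ge 1/p$ for $p\ge2$ forces $f$ bounded with $\|f\|_\infty$ a constant, orthonormality yields $\E_f\|W_i\|_2^2=\sum_{k\in S}\mathrm{Var}_f(\psi_{lk}(X_1))\le\|f\|_\infty\sum_{k\in S}\|\psi_{lk}\|_2^2=b_l\|f\|_\infty$ and $\sup_{\|v\|_2\le1}\mathrm{Var}_f(\langle v,W_1\rangle)\le\lambda_{\max}(\E_f[W_1W_1^\top])\le\|f\|_\infty$. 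Writing $Z=\sup_{\|v\|_2\le1}\langle v,\tfrac1N\sum_i W_i\rangle$ and applying a Talagrand--Bousquet inequality for suprema of bounded empirical processes — with weak variance $\le\|f\|_\infty$, mean $\E_f Z\le\sqrt{b_l\|f\|_\infty/N}$, and envelope $\lesssim 2^{l/2}$ — produces a Bernstein-type tail $\P_f(Z\ge s)\le C\exp\!\big(-c\,N\min\{s^2/b_l,\,s/2^{l/2}\}\big)$. At levels $l$ where $b_l$ is much smaller than $\log N$ (in particular small $l$, where $b_l=2^l$) this already gives $\P_f(Z\ge\tau)\lesssim N^{-cc_0}$, which suffices.

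\textbf{The main obstacle.} The delicate regime is that of levels with $b_l\asymp\log N$, and most acutely $l\asymp L^*$ where in addition $2^{l/2}\asymp\sqrt N$: then $\tau^2=c_0\log N/N$ is only a bounded multiple of $\E_f Z^2\asymp b_l\|f\|_\infty/N$, so the plain Bernstein/empirical-process bound of the previous paragraph merely produces a constant. A naive $1/2$-net on the unit sphere of $\R^{b_l}$ (or a coordinatewise union bound) is likewise fatally lossy, since $b_l\asymp\log N$ forces a prefactor $5^{b_l}\ge N^{c}$ that swamps any exponential gain. The route around this is to exploit that at such levels each observation perturbs only $O(1)$ coordinates: after absorbing the $O(2^{-l\alpha})$-scale contributions of the $f_{lk}$'s (negligible for $\alpha>1/2$ by Lemma~\ref{lemma:coefficient_decay}), one has $Z^2\approx\tfrac1{N^2}\sum_{k\in S}\big(\sum_i\psi_{lk}(X_i)\big)^2$, which is controlled by the counts $N_k:=\#\{i:X_i\in\operatorname{supp}(\psi_{lk})\}$, $k\in S$ — sub-Poisson variables with $\E_f N_k=O(1)$ on $O(\log N)$ dyadic intervals overlapping only $O(1)$ times. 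Chernoff control of the total count $\sum_{k\in S}N_k$ (whose mean is $\asymp\log N$, hence with genuine $N^{-c}$ upper tails), together with the cancellation inside each $\sum_i\psi_{lk}(X_i)$ rather than the lossy triangle inequality, is what must be leveraged to get the required decay; because the $N_k^2$ are heavier-tailed than sub-exponential, this large-deviation bookkeeping is the part of the argument that requires the most care.
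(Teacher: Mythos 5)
Your reduction (Cauchy--Schwarz plus a fourth-moment bound, then a tail estimate for $Z=\|(\hat\psi-f)_{B_l}\|_2$) is sound and is in spirit the same as the paper's route (layer-cake plus a Bernstein bound on $\|\cdot\|_2^2$). But the proof is not complete, and the part you yourself flag as requiring "the most care" is a genuine gap. First, a diagnostic slip: having correctly computed the weak variance $\sup_{\|v\|_2\le1}\mathrm{Var}_f(\langle v,W_1\rangle)\le\|f\|_\infty=O(1)$, you then state the Bousquet bound with variance proxy $b_l$, i.e.\ $\exp(-cN\min\{s^2/b_l,\,s/2^{l/2}\})$. With the correct $O(1)$ proxy the Gaussian term at $s=\tau$ is already $\exp(-c\,c_0\log N/\|f\|_\infty)=N^{-c\,c_0/\|f\|_\infty}$ for \emph{every} block size (provided $c_0$ is large enough relative to $\|f\|_\infty$ that $\tau$ dominates $\E_fZ$ --- note $c_0\ge2$ alone does not guarantee this), so $b_l\asymp\log N$ is not in itself an obstacle. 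Conversely, your claim that levels with $b_l\ll\log N$ are "already" handled does not follow from the bound as you wrote it (e.g.\ $b_l=\sqrt{\log N}$ gives only $e^{-c\sqrt{\log N}}$). The one genuine obstruction is the envelope term: $N\tau/2^{l/2}\gtrsim\log N$ holds iff $2^{l}\lesssim c_0N/\log N$.

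Second, and more seriously, the proposed escape at the top levels via Poisson counts cannot deliver $e^{-c_1\log N}$, because the event you need to rule out is a deviation of a single $O(1)$-mean count by only $\sqrt{c_0\log N}$, and Poisson upper tails at that scale are $\exp(-\Theta(\sqrt{\log N}\,\log\log N))\gg N^{-c_1}$. Concretely, at a level with $2^{l}\asymp N$ (Haar wavelets, $f$ uniform), placing $M=\lceil\sqrt{c_0\log N}\rceil$ sample points in the positive half of $\mathrm{supp}(\psi_{l1})$ and none in the negative half occurs with probability $\asymp e^{-M\log M+O(M)}=e^{-O(\sqrt{\log N}\log\log N)}$ and forces $|\hat f_{l1}-f_{l1}|^2\ge M^2/N\ge\tau^2$; the resulting contribution to the left-hand side is $\frac{c_0\log N}{N}e^{-O(\sqrt{\log N}\log\log N)}$, which exceeds $\frac{b_l}{N}e^{-c_1\log N}$ for every fixed $c_1>0$. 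So no amount of careful large-deviation bookkeeping closes this case: the inequality itself fails there. The statement is provable only on the range $2^{l}\lesssim N/\log N$ (e.g.\ reading $L^*=\lceil\ln N\rceil$ so that $2^{L^*}=N^{\ln2}$), and on that range your own Talagrand/Bernstein bound with the correct $O(1)$ variance proxy finishes the proof immediately, with no counting argument needed --- which is essentially what the paper's (itself rather loose) Bernstein computation does.
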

    
    \begin{remark}\label{rmk:non-private-block-thresholding-tailbound}
        For the constant $c_1 = 1$, it suffices to take $c_0 = 5$. 
    \end{remark}
    
    \begin{proof}
        Define $ E = (\bar{\psi} - f)_{B_l} = (\bar{\psi}_{lk} - f_{lk})_{k \in S} $ and let $ W = \|E\|_2^2 $. Then
        \begin{equation*}
        \E_f \!\left[ W \, \mathbbm{1}\!\left\{ W > \tfrac{c_0 \log N}{N} \right\} \right].
        \end{equation*}
        By Lemma~\ref{lem:indicator-layer-cake},
        \begin{equation}\label{eq:layer-cake-block}
            \E_f [W \mathbbm{1}\{ W > \tau \}] = \tau \P(W > \tau) + \int_\tau^\infty s \, \P(W > s) \, ds,
        \end{equation}
        where $ \tau = c_0 \tfrac{\log N}{N} $.
    
        Each coordinate satisfies $ U_k := \bar{\psi}_{lk} - f_{lk} $, with 
        \begin{equation*}
        \E_f U_k^2 = N^{-1}\Var_f(\psi_{lk}(X_i)) \;\lesssim\; \frac{2^l}{N}.
        \end{equation*}
        Thus
        \begin{equation*}
        \E_f W \;\lesssim\; \frac{b_l 2^l}{N}.
        \end{equation*}
    
        The random variable $W$ is a sum of $b_l$ independent, sub-exponential terms (variance proxy $\sigma^2 \lesssim b_l 2^l / N$, envelope $M \asymp 2^l/N$). Hence Bernstein’s inequality yields
        \begin{equation*}
        \P(W > s) \;\leq\; \exp\!\left( - \frac{N s^2}{2 b_l 2^l + (2/3) s \cdot c_\psi^2 2^l} \right).
        \end{equation*}
    
        Applying this bound in~\eqref{eq:layer-cake-block}, with $\tau = c_0 \tfrac{\log N}{N}$ and using that $b_l \leq \log N$, $2^l \leq N$, and $L^* \asymp \log_2 N$, one obtains
        \begin{equation*}
        \tau \P(W > \tau) \;\lesssim\; \frac{b_l}{N} e^{-c_1 \log N}, 
            \qquad 
        \int_\tau^\infty s \P(W > s) \, ds \;\lesssim\; \frac{1}{N} e^{-c_1 \log N}.
        \end{equation*}
    
        Combining the two contributions establishes the claim.
    \end{proof}


    \putbib[references]
\end{bibunit}

\end{document}